\theoremstyle{plain}
\newtheorem{theorem}{Theorem}[section]
\newtheorem{lemma}[theorem]{Lemma}
\newtheorem{corollary}[theorem]{Corollary}
\newtheorem{proposition}[theorem]{Proposition}
\theoremstyle{definition}
\newtheorem{definition}[theorem]{Definition}
\newtheorem{example}[theorem]{Example}
\theoremstyle{remark}
\newtheorem*{remark}{Remark}
\newtheorem*{notation}{Notation}
\newcommand{\pbscale}{.25}
\newcommand{\pboffset}{.5}
\newcommand{\xycorner}[3]{\save [];#2**{}?(\pbscale)="a",[];#1**{}?(\pbscale);"a"**{}?(\pboffset);"a"**\dir{-},[];#3**{}?(\pbscale);"a"**{}?(\pboffset);"a"**\dir{-} \restore}
\newcommand{\po}[1][{1}]{\xycorner{[l]**{}?(#1)}{[ul]**{}?(#1)}{[u]**{}?(#1)}}
\newcommand{\leftbox}[2]{{}\phantom{#1} \save []+L*+<.5pc>!!<0pt,\the\fontdimen22\textfont2>!L{#1#2} \restore}
\newcommand{\rightbox}[2]{{}\phantom{#2} \save []+R*+<.5pc>!!<0pt,\the\fontdimen22\textfont2>!R{#1#2} \restore}
\newcommand{\mcA}{\mathcal A}
\newcommand{\mcB}{\mathcal B}
\newcommand{\mcC}{\mathcal C}
\newcommand{\mcD}{\mathcal D}
\newcommand{\mcE}{\mathcal E}
\newcommand{\mcF}{\mathcal F}
\newcommand{\mcG}{\mathcal G}
\newcommand{\mcI}{\mathcal I}
\newcommand{\mcK}{\mathcal K}
\newcommand{\mcL}{\mathcal L}
\newcommand{\mcM}{\mathcal M}
\newcommand{\mcO}{\mathcal O}
\newcommand{\mcP}{\mathcal P}
\newcommand{\mcR}{\mathcal R}
\newcommand{\mcS}{\mathcal S}
\def\A{\operatorname{\mathcal{A}}}
\def\C{\operatorname{\mathcal{C}}}
\def\D{\operatorname{\mathcal{D}}}
\def\O{\operatorname{\mathcal{O}}}
\def\Z{\operatorname{\mathbb{Z}}}
\def\N{\operatorname{\mathbb{N}}}
\def\R{\operatorname{\mathbb{R}}}
\newcommand{\bbZ}{\mathbb Z}
\newcommand{\colim}{\operatorname{colim}}
\newcommand{\incl}{\operatorname{in}}
\newcommand{\Op}{\operatorname{Op}}
\newcommand{\pr}{\operatorname{pr}}
\newcommand{\sheafify}{\operatorname{sh}}
\newcommand{\Spec}{\operatorname{Spec}}
\newcommand{\spec}{\operatorname{spec}}
\newcommand{\Pts}{\operatorname{Pts}}
\newcommand{\red}{\mathrm{r}}
\newcommand{\fix}{\mathrm{f}}
\newcommand{\mono}{\mathrm{m}}
\newcommand{\can}{\mathrm{can}}
\newcommand{\lan}{\operatorname{lan}}
\newcommand{\mcFL}{\operatorname{\mcF\mcL}}
\newcommand{\El}{\operatorname{El}}
\newcommand{\FinLoc}{\operatorname{\mathsf{FinLoc}}}
\newcommand{\DistOp}{\operatorname{\mathsf{DistOp}}}
\newcommand{\rad}{\operatorname{rad}}
\newcommand{\im}{\operatorname{im}}
\newcommand{\yoneda}{y}
\newcommand{\yonedash}{\yoneda'}
\newcommand{\Cone}{C}
\newcommand{\CAT}{\operatorname{\mathsf{CAT}}}
\newcommand{\PSH}[1]{\operatorname{\mathsf{PSH}}_{#1}}
\newcommand{\PSh}[1]{\operatorname{\mathsf{PSh}}_{#1}}
\newcommand{\SH}[2]{\operatorname{\mathsf{SH}}^{#1}_{#2}}
\newcommand{\Sh}[2]{\operatorname{\mathsf{Sh}}^{#1}_{#2}}
\newcommand{\Top}{\operatorname{\mathsf{Top}}}
\def\LRSp{\operatorname{\mathsf{LRSp}}}
\def\RSp{\operatorname{\mathsf{RSp}}}
\def\CRing{\operatorname{\mathsf{CRing}}}
\def\Ring{\operatorname{\mathsf{Ring}}}
\def\Top{\operatorname{\mathsf{Top}}}
\def\Euc{\operatorname{\mathsf{Euc}}}
\def\Set{\operatorname{\mathsf{Set}}}
\def\SET{\operatorname{\mathsf{SET}}}
\newcommand{\AP}{(\mcA, \mcP)}
\newcommand{\ATop}{\Top_\mcA}
\newcommand{\APTop}{\Top_{\mcA,\mcP}}
\newcommand{\rMArMPTop}{\Top_{\red_\mcM\mcA,\red_\mcM\mcP}}
\newcommand{\APAffSch}{\operatorname{\mathsf{AffSch}}_{\mcA,\mcP}}
\newcommand{\op}{\mathrm{op}}
\newcommand{\Ra}{\Rightarrow}
\newcommand{\la}{\leftarrow}
\newcommand{\lra}{\longrightarrow}
\newcommand{\xlra}[1]{\xrightarrow{\ #1\ }}
\newcommand{\xla}[1]{\xleftarrow{#1}}
\newcommand{\xlla}[1]{\xleftarrow{\ #1\ }}
\newcommand{\cof}[1][]{\mathbin{\:\!\!\xymatrix@1@C=15pt{{}\ar@{ >->}[r]^{#1} & {}}}}
\newcommand{\fib}[1][]{\mathbin{\:\!\!\xymatrix@1@C=15pt{{}\ar@{->>}[r]^{#1} & {}}}}
\newcommand{\loc}{\mathbin{\:\!\!\xymatrix@1@C=15pt{{}\ar[r]^\sim & {}}}}
\newcommand{\rightarrows}{\mathbin{\:\!\!\xymatrix@1@C=15pt{{}\ar@<.5ex>[r] \ar@<-.5ex>[r] & {}}}}
\newcommand{\pbd}[1][{1}]{\xycorner{[r]**{}?(#1)}{[ur]**{}?(#1)}{[u]**{}?(#1)}}
\newcommand{\regepi}[1][]{\mathbin{\:\!\!\xymatrix@1@C=15pt{{}\ar@{->>}[r]^{#1} & {}}}}
\newcommand{\hra}[1][]{\mathbin{\:\!\!\xymatrix@1@C=15pt{{}\ar@{c->}[r]^{#1} & {}}}}
\begin{document}

\author{J.\ Jurka\footnote{The research of J.J.\ was supported by the Grant agency of the Czech republic under the grant 22-02964S.}, T.\ Perutka, L.\ Vok\v{r}\'{i}nek\footnote{The research of L.V.\ was supported by the Grant agency of the Czech republic under the grants 19-00902S and 22-02964S.}}
\title{Constructing generalized schemes using cone injectivity}
\date{\today}
\maketitle
\begin{abstract}
We provide a generalization of the construction of a spectrum of a commutative ring as a locally ringed space, applicable to cone injectivity classes in general contexts, especially in locally finitely presentable categories.
In its full generality, the spectrum functor fails to be fully faithful and we study reasonable sufficient conditions under which it is. Further, assuming the full faithfulness, we introduce a generalization of another concept from algebraic geometry -- the functor of points -- and prove equivalence of the two resulting notions of schemes.
\end{abstract}

\newpage
\tableofcontents

\newpage
\section{Introduction}

Ever since the time of Alexander Grothendieck, people have been amazed by the power of the scheme theoretic methods he has developed. Although schemes might appear somewhat esoteric at first sight, their applications are far reaching -- not only in algebraic geometry, but also for example in number theory. 

Therefore, it comes as no surprise that people have been trying to use the ideas of scheme theory in many different contexts (e.g.\ $p$-adic or $\mathbb{F}_1$ geometry), hoping for similar success. How should one do that? When constructing schemes, one first defines categories $\RSp$ and $\LRSp$ of \emph{ringed} and \emph{locally ringed spaces} and then a \emph{spectrum functor}
\[\Spec\colon \CRing^{op}\to \LRSp.\]
Having these notions, it is easy to define schemes as locally ringed spaces admitting an open cover by objects in the image of $\Spec$ (\emph{affine schemes}).

Usually, it is not that hard do define appropriate analogues of $\RSp$ and $\LRSp$; the difficult part is to construct the spectrum functor. Even in the classical setting, the construction is quite involved, entirely non-obvious, and thus it is not clear how to transfer it to different settings. However, the categorical viewpoint can help us at this point as $\Spec$ enjoys a universal property: it is a right adjoint to the functor of ``global sections'' $$\Gamma\colon \LRSp\to \CRing^{op}$$ which is easy to define in any context. So if we try to construct some analogue of $\Spec$, we can make sure that our construction is the correct one by verifying the adjointness property.

In our paper, we do not study \emph{analogues} of schemes, but rather a \emph{generalization} of schemes: our aim is to produce an abstract framework such that the examples of spectrum functors in the literature appear as special cases of a general construction. Generalized schemes were already studied; originally by Hakim \cite{Hakim}, more recently for example by Brandenburg \cite{Brandenburg}, Toën and Vaquié \cite{TV}, or Lurie \cite{DAG} (in the $\infty$-categorical setting). In our setting, the ``basic objects'' need not be commutative rings, but rather objects of an arbitrary locally finitely presentable category $\A$. Our generalized spectrum functor is then of the form $$\Spec\colon \A^{op}\to\APTop$$ where the codomain is the appropriate generalization of the category of locally ringed spaces.

The first important novelty of our approach is that we deduce the existence of $\Spec$ \textit{formally} via the Adjoint Functor Theorem. Hence, we do not need to guess what the $\Spec$ might look like but rather deduce the construction already knowing it exists. Another advantage of our approach is that one is led naturally to considering colimits of generalized ringed and locally ring spaces that are of great importance regardless of the proof. However, unlike its proof, the main theorem is not new -- it is very much just a categorical version of the logic-oriented results of \cite{Coste}, see also \cite{Aratake}. In addition, there is a more general approach of \cite{Diers} that we comment on later.

The price for the generalization is that, unlike in the classical case of commutative rings, the spectrum functor $\Spec$ needs not be fully faithful, so that the ``affine scheme'' $\Spec R$ does not give a faithful picture of the algebraic object $R$. While this question was treated in the above mentioned papers, we did not find the criteria very practical. In our take, we identify a number of obstructions to fully faithful $\Spec$ in the second part of the paper. For most of the obstructions, if these do not vanish, it is possible to replace $\mcA$ by a better-behaved subcategory and finally arrive at one for which $\Spec$ is fully faithful.

The full faithfulness of the spectrum functor is important for the final part of our paper that generalizes the so-called functor of points approach which studies a locally ringed space $X$ through maps from affine schemes: $NX = \LRSp(\Spec -, X) \in [\mcA, \Set]$. This presheaf $NX$ satisfies a natural sheaf condition; in our generalized setting, we obtain a functor 
\[(\text{generalized locally ringed spaces}) \xlra{N} (\text{sheaves on $\A^{op}$})\]
valued in a certain sheaf category. In recent developments of algebraic and arithmetic geometry, one often defines analogues of schemes sitting in this category\footnote{e.g.\ in the recent development of analytic spaces by Clausen and Scholze, although that takes places in the $\infty$-categorical setting} rather than inside generalized locally ringed spaces. The main theorem of Part III of our paper shows that the restriction of $N$ to the appropriate subcategories of schemes on both sides, is an equivalence.


\section{Main results}

For the purposes of this section, we assume $\mcA$ to be a locally finitely presentable category. All results hold under milder assumptions which we will make more precise later. 

In $\mcA$, we fix a set of cones
\[C=\{(a_{ij} \colon A_i \to B_{ij})\mid i\in I\}\]
with all $A_i, B_{ij}$ finitely presentable and all $a_{ij}$ epic. We consider the subcategory\footnote{We have chosen the letter '$\mcP$' as for 'points' -- each point of $\Spec R$ is associated with an object of $\mcP$.} $\mcP$ of $\mcA$ spanned by objects cone-injective w.r.t.\ $C$ -- the \emph{local} objects -- and maps injective w.r.t.\ the collection of all the $a_{ij}$ -- the \emph{admissible} maps. 
We then define \emph{localizations} of $R\in \mcA$ as all the morphisms $R\to K$ obtained by transfinite compositions of pushouts of coproducts of the maps $a_{ij}$. We say that a localization is \emph{finite} if it is obtained using finite coproducts in a finite number of steps, and we say that it is a \emph{local form} if its codomain lies in $\mcP$. Taking all local forms (up to isomorphism) $R\to P_\alpha$ of a given $R$, the factorization system generated by all the $a_{ij}$ produces a factorization of $R\to\prod_{\alpha} P_{\alpha}$ through an object $\red R$ which we call the \textit{reduction} of $R$. We have the full subcategory $\red \mcA\subseteq \A$ of reduced objects. We will make these definitions precise and further comment on them at the beginning of Part I.

This approach as well as the notation are motivated by classical scheme theory: if we choose $\mcA$ to be the category of commutative rings, we get the subcategory of local rings and local homomorphisms as a category $\mcP$ of local objects and admissible maps for the cone 
\[\xymatrix@C=0pc@R=1pc{
	& \Z[x,y]/(x+y-1) \ar[ld] \ar[rd] \\
	\Z[x,x^{-1},y]/(x+y-1) & & \Z[x,y,y^{-1}]/(x+y-1)
}\]
(together with a certain empty cone to avoid the trivial ring, but we will comment on this later). In this setting, finite localizations of $R$ correspond to localizations -- in sense of ring theory -- w.r.t.\ elements of $R$ and local forms correspond to localizations w.r.t.\ prime ideals of $R$. Each object is reduced (non-reduced objects occur only when $\Spec$ is not fully faithful).

In order to make things geometric, we need to define the analogues of ringed spaces and locally ringed spaces in this generalized setting. The idea is simple: we replace sheaves of rings on topological spaces by $\A$-valued sheaves.

\begin{definition}
The category $\ATop$ of \emph{$\mcA$-spaces} is the category of topological spaces $X$ equipped with an $\mcA$-valued sheaf $\mcO_X \colon \Op(X)^\op \to \mcA$. Maps are continuous maps $\varphi \colon X \to Y$ together with $\varphi^{\sharp} \colon \mcO_Y \to \varphi_*\mcO_X$ or equivalently $\varphi_\sharp \colon \varphi^* \mcO_Y \to \mcO_X$.

The non-full subcategory $\APTop \subseteq \ATop$ of \emph{$\AP$-spaces} consists of those $\mcA$-spaces whose stalks $\mcO_{X,p}$ lie in $\mcP$ (i.e.\ are local) and those maps for which the induced maps on stalks $\varphi_p \colon \mcO_{Y,\varphi(p)} \to \mcO_{X,p}$ also lie in $\mcP$ (i.e.\ are admissible).
\end{definition}


In the classical case where $\mcA$ is the category of commutative rings, $\mcP$ is the category of local rings and local homomorphisms, $\ATop=\RSp$, $\APTop=\LRSp$ are categories of ringed spaces and locally ringed spaces. Then the functor $\Spec$ is a right adjoint to the functor of global sections seen as a composition $\Gamma\circ\incl$ as below
\[\xymatrix{
	\LRSp \ar@{c->}[r]^-\incl & \RSp \ar[r]^-\Gamma & \CRing^{op} \ar@/^3ex/[ll]^-\Spec
}\]
In order to generalize this to our setting, we start by noticing that the functor $\Gamma\colon \Top_{\A}\to \A^{op}$, defined still as $\Gamma(X,\O_X) = \mcO_X(X)$, has a right adjoint $R \mapsto R^*$, the one-point space with global sections $R$. Hence, in order to find the right adjoint to $\Gamma\circ \incl$, it suffices to find the right adjoint to $\incl$. 

\[\xymatrix{
\APTop \ar@<0.5ex>[r]^-{\incl} & \ATop \ar@<0.5ex>@{-->}[l]^-{\spec} \ar@<0.5ex>[r]^-{\Gamma} & \mcA^\op \ar@<0.5ex>[l]^-{(-)^*}
}\]

We call this hypothetical functor $\spec$ and its existence is a content of our first main theorem.



\begin{theorem} \label{theorem:existence_of_spec}
The inclusion $\incl \colon \APTop \subseteq \ATop$ admits a right adjoint $\spec$. The image $\Spec R = \spec R^*$ of the one-point space $R^*$ is the $\AP$-space with:
\begin{itemize}
\item
	underlying set consisting of all isomorphism classes of local forms of $R$,
\item
	topology generated by the open sets $\Pts k$ where, for a finite localization $k \colon R \cof K$, the set $\Pts k$ consists exactly of (isomorphism classes of) local forms of $R$ which factor through $k$,
\item
	the structure sheaf obtained as the sheafification of the right Kan extension of the canonical presheaf $\mcO_R^\can(\Pts k) = \red K$.
\end{itemize}   
\end{theorem}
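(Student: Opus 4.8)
The statement has two parts: the abstract existence of the right adjoint $\spec$, and the concrete identification of $\spec R^*$. I would treat them in this order.

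For existence I would apply the General Adjoint Functor Theorem, presenting $\spec X$, for a fixed $\mcA$-space $X$, as a terminal object of the comma category of $\AP$-spaces over $X$ (whose objects are $\AP$-spaces $Y$ together with a map $\incl Y \to X$ in $\ATop$). Two inputs are needed: enough colimits and a solution set. Coproducts cause no trouble: the disjoint union of $\AP$-spaces carries the side-by-side sheaf, so its stalks are unchanged, hence local, and the induced stalk maps over $X$ are still admissible; thus a disjoint union of $\AP$-spaces over $X$ is again one. The delicate colimits are coequalizers, where the quotient of underlying spaces is harmless but the structure sheaf is computed by an equalizer that need not preserve cone-injectivity of stalks; showing that the relevant coequalizers nevertheless exist inside $\APTop$ is the technical heart of existence. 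For the solution set I would use that $\mcA$ is locally finitely presentable and that all $A_i$, $B_{ij}$ are finitely presentable: any map $\incl Y \to X$ is controlled by a bounded amount of finitely presentable data on a basis of opens and on the stalks, so up to the relevant factorization only a set of objects $Y \to X$ can arise. Freyd's terminal object theorem then yields $\spec X$, and in particular $\spec R^*$.

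To identify $\spec R^*$ with the described $\AP$-space I would verify its universal property directly. Using $\Gamma \dashv (-)^*$, a map $\incl Y \to R^*$ in $\ATop$ is the same datum as a morphism $R \to \mcO_Y(Y)$ in $\mcA$, so the claim is a natural bijection $\APTop(Y, \Spec R) \cong \mcA(R, \mcO_Y(Y))$. The underlying set is pinned down by probing with one-point local spaces: for $P \in \mcP$ one has $\mcA(R, P) \cong \APTop(P^*, \Spec R)$, and each $g \colon R \to P$ factors essentially uniquely as a local form $R \cof P_\alpha$ followed by an admissible map $P_\alpha \to P$ (the factorization system produces a localization $R \to P_\alpha$ with admissible $P_\alpha \to P$, and cone-injectivity of $P$ transfers across the admissible map to make $P_\alpha$ local); the assignment $g \mapsto [R \cof P_\alpha]$ exhibits the points of $\Spec R$ as isomorphism classes of local forms. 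For a finite localization $k \colon R \cof K$ a local form lies in $\Pts k$ exactly when it factors through $k$, and declaring the $\Pts k$ basic open gives the coarsest topology making the relevant structure maps continuous. The presheaf $\mcO_R^\can(\Pts k) = \red K$ is functorial in $k$ (a refinement $\Pts k' \subseteq \Pts k$ induces $\red K \to \red K'$), so its right Kan extension along the basis, followed by sheafification, produces $\mcO_{\Spec R}$; its stalk at $[R \cof P_\alpha]$ is the filtered colimit $\colim \red K$ over finite localizations through which $P_\alpha$ factors, which equals $\red P_\alpha = P_\alpha$ because a local object is reduced, so the stalks are local and $\Spec R \in \APTop$. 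I would note that the reduction $\red$ is forced rather than optional: the global sections $\mcO_{\Spec R}(\Spec R) = \red R$ must be the value on which the adjunction unit $R \to \red R$ lands, and an unreduced value would break the bijection with $\mcA(R, \mcO_Y(Y))$.

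The principal obstacle is the colimit analysis behind existence: unlike coproducts, coequalizers of $\AP$-spaces are not created by $\ATop$, so one must argue separately that cone-injectivity of stalks and admissibility of stalk maps can be restored under quotients, the very phenomenon that forces $\red$ to appear. A secondary subtlety, in the concrete description, is matching the abstract terminal object with the explicit recipe: one has to check that the chosen basis $\{\Pts k\}$ misses no local form (every local form is approximated by the finite localizations through which it factors, using finite presentability) and that Kan-extend-then-sheafify reproduces precisely the universal structure sheaf rather than merely a sheaf with the correct stalks.
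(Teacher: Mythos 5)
Your overall architecture — Adjoint Functor Theorem for existence (cocompleteness plus a solution set), then identification of points by probing with one-point local spaces and of the structure sheaf by Kan extension followed by sheafification — is the same as the paper's. However, the proposal hinges on a claim that is false, and moreover self-defeating. You assert that ``coequalizers of $\AP$-spaces are not created by $\ATop$'' and that cone-injectivity of stalks must be ``restored under quotients.'' The paper proves exactly the opposite: for a diagram lying \emph{inside} $\APTop$, the admissibility of the diagram maps makes the canonical open covers $U^{\mathrm{max}}_{kj}$ compatible across the diagram, so they glue to open covers of the colimit computed in $\ATop$, and Lemmas~\ref{lemma:local_criterion} and~\ref{lemma:local_criterion_map} then show that this colimit is already an $\AP$-space with admissible cocone components. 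This is not a detail you can trade for a different construction: if $\incl$ has a right adjoint it must preserve colimits, so if the coequalizer in $\APTop$ genuinely differed from the one in $\ATop$, the theorem you are proving would be false; equally, Freyd's terminal-object theorem needs the comma categories $\incl/X$ to be cocomplete, which requires precisely that $\ATop$-colimits of $\AP$-spaces stay in $\APTop$. So the technical heart is to show that \emph{no} repair of stalks is needed, not to perform one. (The quotient that does appear in the paper's proof, $X/{\sim}$, serves a different purpose: it bounds the size of the underlying spaces for the solution set, identifying points with the same image and isomorphic stalk localizations — a step your solution-set sketch is missing, since finite presentability of the $A_i$, $B_{ij}$ alone does not cut the proper class of underlying spaces over $X$ down to a set.)

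Your justification for the appearance of $\red$ is also incorrect. You claim $\mcO_{\Spec R}(\Spec R) = \red R$, i.e.\ that the counit is $R \to \red R$; in general $\Gamma\Spec R \neq \red R$, because sheafification changes global sections. Indeed, Example~\ref{example:spec_not_fully_faithful} exhibits a reduced object ($R = \red R$ the coordinate ring of two curves meeting non-transversely, with $\mcP$ the integral domains) for which $R \to \Gamma\Spec R$ is not an isomorphism — were your claim true, $\Spec$ would be fully faithful on $\red\mcA$, which it is not. The reduction enters the description for a different reason: the naive prescription $\Pts k \mapsto K$ is not well defined, since $\Pts k$ does not determine $K$; the well-defined value is the union $R_U = \colim_{\Pts k = U} K$ of all finite localizations with the same point set, which the paper identifies with $\red K$. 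Relatedly, your plan to ``verify the universal property directly'' for the described space is never carried out: probing with one-point spaces pins down points and stalks, but not the topology or the sheaf, and the paper closes this gap by a genuine two-sided comparison — it builds a stalkwise isomorphism $\Spec R \to \Spec' R$ onto the explicitly described space and then uses the (already established) universal property of $\Spec R$ to produce the inverse, so the abstract existence is used essentially and cannot be decoupled from the identification.
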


\begin{proof}
The proof is given in the first part of the paper, mainly throughout Sections~\ref{section:existence_Spec} and~\ref{section:concrete_description_Spec}.
\end{proof}

In Section \ref{sec: diers}, we relate our results to the work of Diers and Osmond (\cite{Osmond}, \cite{Diers}). In Section \ref{sec:examples}, we cover some examples of geometric spaces which fall into our framework.

\bigskip

We denote the composite adjunction as $\Gamma \dashv \Spec$. In Part II, we analyze this adjunction carefully with emphasis on its fixed points, leading to the in-depth study of obstructions to Spec being fully faithful. That seems not to appear in the current literature and it lets us identify the abstract properties which enables Spec to be fully faithful (see Example \ref{ex: classic spec ff} for the classical case). Also, in case Spec is not fully faithful, we can often find a restriction that is fully faithful restriction and has the same image.

We say that $R \in \mcA$ is a fixed point of the adjunction $\Gamma \dashv \Spec$ if the counit $\varepsilon \colon R \to \overline R := \Gamma \Spec R$ (written here in $\mcA$ rather than $\mcA^\op$) is an isomorphism. We will write $\fix\mcA$ for the full subcategory of $\mcA$ on these fixed points. Dually, an $\AP$-space $X$ is a fixed point for this adjunction, and we write $X \in \fix\APTop$, if the unit $\eta \colon X \to \Spec \Gamma X$ is an isomorphism. It is classical that any adjunction restricts to an equivalence
\[\fix\mcA^\op \simeq \fix\APTop\]
and we may thus study the fixed points equivalently via their spectra. We will show that
\[\mcP \subseteq \fix\mcA \subseteq \red\mcA \subseteq \mcA.\]
Thus, in order for $R \in \mcA$ to be a fixed point, it is necessary that $R$ is reduced 
and, consequently, we may safely restrict to $\red\mcA$, especially when fully faithful $\Spec$ is desired. At the same time, we may restrict to the subcategory $\mono\mcA$ of objects for which the collection of all local forms is jointly monic; again $\fix\mcA \subseteq \mono\mcA$. For our next theorem, we say that a map $f \colon R \to S$ is \emph{flat} if the cobase change $f_* \colon R/\mcA \to S/\mcA$ preserves finite limits.

\begin{theorem} \label{theorem:Gamma_Spec_idempotent}
Assume that $\mcA = \red \mcA$ and $\mcA = \mono \mcA$. If $\Spec R$ is compact (e.g.\ when each cone in $\Cone$ has finitely many components) and if all local forms of $R$ are flat then $\varepsilon \colon R \to \overline R$ induces an isomorphism on spectra and $\overline R$ is then a fixed point. If this happens for all $R \in \mcA$, then the adjunction $\Gamma \dashv \Spec$ is idempotent and thus:
\begin{itemize}
\item
	$\fix\mcA \subseteq \mcA$ is reflective (with reflection $\varepsilon$) and equals the full image of $\Gamma$,
\item
	$\fix\APTop \subseteq \APTop$ is also reflective and equals the full image of $\Spec$, denoted $\APAffSch$ (consisting of affine schemes -- see below),
\item
	There results an equivalence $\fix\mcA^\op \simeq \APAffSch$.
\end{itemize}
\end{theorem}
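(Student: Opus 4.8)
The plan is to isolate the one genuinely geometric input — that $\varepsilon_R$ becomes invertible after applying $\Spec$ — and to derive everything else formally from the adjunction $\Gamma \dashv \Spec$. Writing $\eta$ and $\varepsilon$ for its unit and counit, the two triangle identities read $\Spec\varepsilon_R \circ \eta_{\Spec R} = \mathrm{id}_{\Spec R}$ in $\APTop$ and $\varepsilon_{\Gamma X} \circ \Gamma\eta_X = \mathrm{id}_{\Gamma X}$ in $\mcA^\op$. Granting that $\Spec\varepsilon_R$ is an isomorphism, the first identity exhibits $\eta_{\Spec R}$ as its inverse, hence invertible; applying $\Gamma$ and substituting $X = \Spec R$ into the second identity then forces $\varepsilon_{\overline R} = \varepsilon_{\Gamma\Spec R}$ to be invertible, i.e.\ $\overline R \in \fix\mcA$. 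This disposes of the last clause of the first assertion by pure diagram chasing, with no appeal to the standing hypotheses.

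The substance is thus to prove that $\Spec\varepsilon_R \colon \Spec\overline R \to \Spec R$ is an isomorphism in $\APTop$ under the hypotheses $\mcA = \red\mcA = \mono\mcA$, compactness of $\Spec R$, and flatness of all local forms of $R$. I would check this against the explicit description of $\Spec$ in Theorem~\ref{theorem:existence_of_spec}, level by level. On points: $\Spec\varepsilon_R$ sends a local form $\overline R \to P$ to its restriction $R \to \overline R \to P$, and conversely each local form $R \to P$ is, by the very construction of $\Spec R$, the composite of $\varepsilon_R$ with the restriction of $\overline R = \Gamma(\Spec R, \mcO_{\Spec R})$ to the stalk $\mcO_{\Spec R, p} \cong P$; I expect $\mono\mcA$ to make this assignment a bijection on isomorphism classes. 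On topology: I would show that the finite localizations of $R$ and those of $\overline R$ obtained by cobase change along $\varepsilon_R$ cut out matching basic opens $\Pts k$, flatness ensuring that these cobase changes remain finite localizations detecting the same sets of local forms. On structure sheaves: since $\mcA = \red\mcA$ the canonical presheaf simplifies to $\mcO^{\can}(\Pts k) = K$, and one compares the two right Kan extensions and their sheafifications, using compactness of $\Spec R$ to express the global sections $\overline R$ as a finite limit that flatness permits one to transport across $\varepsilon_R$. I expect this sheaf-theoretic comparison to be the main obstacle: $\overline R$ is itself extracted as a limit from the structure sheaf of $\Spec R$, so controlling its interaction with finite localizations — and verifying that re-spec-ing reproduces the original topology and structure sheaf — is precisely where compactness and flatness do real work rather than bookkeeping, whereas the comparison of points and of the underlying lattice of opens should be comparatively routine given the earlier development.

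Finally, once $\Spec\varepsilon_R$ is an isomorphism for every $R$, the second assertion is formal. The natural transformation $\Spec\varepsilon$ being invertible is exactly the criterion for idempotency of $\Gamma \dashv \Spec$, so the adjunction is idempotent, and I would invoke the standard structure theory of idempotent adjunctions. The comonad $\Gamma\Spec$ being idempotent, its fixed points form a coreflective subcategory of $\mcA^\op$ — equivalently $\fix\mcA$ is reflective in $\mcA$ with reflection map $\varepsilon$ and reflector $R \mapsto \overline R$ — coinciding with the replete essential image of $\Gamma$; dually the fixed points of the idempotent monad $\Spec\Gamma$ form a reflective subcategory $\fix\APTop$ of $\APTop$ equal to the essential image of $\Spec$, which is $\APAffSch$ by definition. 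The equivalence $\fix\mcA^\op \simeq \APAffSch$ is then the restriction of $\Gamma \dashv \Spec$ to its fixed points — the general fact recorded just before the theorem — combined with the identification $\fix\APTop = \APAffSch$.
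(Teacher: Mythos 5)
Your formal skeleton is correct and is exactly the paper's: from $\Spec\varepsilon_R$ invertible, the triangle identities give that $\eta_{\Spec R}$ is invertible and then that $\varepsilon_{\overline R} = \varepsilon_{\Gamma\Spec R}$ is invertible, so $\overline R \in \fix\mcA$; and once this holds for all $R$, the criterion of Theorem~\ref{theorem:idempotent_adjunction} (that $\Spec\,\varepsilon\,\Gamma$ be an isomorphism) applies and yields the three bullet points. The paper leaves the triangle-identity step implicit, so spelling it out is a small improvement, not a deviation.

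The genuine gap is the substantive claim -- that under the stated hypotheses $\varepsilon_R$ is a geometric isomorphism -- which you explicitly defer as ``the main obstacle'' and for which your sketch both misassigns the hypotheses and omits the ideas that make the paper's proof (Theorem~\ref{thm:good_objects_one}) work. Flatness is not needed to ensure that cobase changes of finite localizations remain finite localizations: that is automatic, since pushouts of cell complexes are cell complexes. Its actual role is that pushout along a flat local form $p \colon R \to P$ preserves the \emph{finite limit} $H^0 K^\bullet$ of a finite distinguished hyperopcover. Likewise, compactness does not express $\overline R$ ``as a finite limit'': by the Heller--Rowe formula of Section~\ref{subsec:Heller_Rowe_formula} (where $\mcA = \mono\mcA$ enters), $\overline R$ is the \emph{filtered colimit} $\colim_{R \to K^\bullet} H^0 K^\bullet$ over distinguished hyperopcovers, and compactness (Theorem~\ref{theorem:compactness}) serves only to restrict this colimit to finite hyperopcovers so that flatness can act. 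The paper then argues: push a finite hyperopcover out along an arbitrary local form $p$; flatness makes the square with sides $R \to H^0 K^\bullet$ and $P \to H^0 \widetilde K^\bullet$ cocartesian; Lemma~\ref{lemma:factorization_pushout} identifies $P$ with a component $\widetilde K^0_{i_0}$; Proposition~\ref{prop:split_hyperopcover} (this is where $\mcA = \red\mcA$ and $\mcA = \mono\mcA$ are used) shows $P \to H^0 \widetilde K^\bullet$ is an isomorphism; and the pushout criterion for geometric isomorphisms, Theorem~\ref{thm:characterizing_spec_iso}, concludes that each $R \to H^0 K^\bullet$ is a geometric isomorphism, whence so is $\varepsilon_R$ after $\Spec$ turns the filtered colimit into a limit of isomorphisms. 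Your proposed level-by-level comparison of points, opens and structure sheaves cannot get off the ground without this machinery: to compare points one must identify the local forms of $\overline R$, and the only access to $\overline R$ is through the colimit formula; in the paper the bijection on points is a \emph{consequence} of the pushout criterion (Proposition~\ref{prop:geometric_isomorphism_easy}), not a ``comparatively routine'' preliminary, and $\mono\mcA$ plays no role in it of the kind you expect. So what you have is a correct formal reduction of the theorem to Theorem~\ref{thm:good_objects_one}, together with an inaccurate plan for the latter rather than a proof of it.
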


\begin{proof}
The claim about compactness is Theorem~\ref{theorem:compactness}. That $\overline R$ is a fixed point is proved as Theorem~\ref{thm:good_objects_one}. This implies that the adjunction $\Gamma \dashv \Spec$ is idempotent and the rest is contained in Theorem~\ref{theorem:idempotent_adjunction}.
\end{proof}

In general, this does not imply that $\varepsilon$ itself is an isomorphism, i.e.\ that $R$ is a fixed point. However, we have the following recognition result.

\begin{proposition}
Assume that $\mcA = \red \mcA$. A map $f \colon R \to S$ induces an isomorphism on spectra $f^* \colon \Spec S \xrightarrow\cong \Spec R$ if and only if its pushout along any local form $p \colon R \cof P$
\[\xymatrix{
R \ar[r]^-{f} \ar@{ >->}[d]_-p & S \ar@{ >->}[d] \\
P \ar[r]_-\cong & f_* P \po
}\]
is an isomorphism.
\end{proposition}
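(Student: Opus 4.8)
The plan is to analyze $f^*$ pointwise and stalkwise, reducing both implications to the behaviour of finite localizations under $f$. Write $(\mcL,\mcR)$ for the factorization system generated by the $a_{ij}$, so $\mcL$ is the class of localizations and $\mcR$ the class of admissible maps; recall that $\mcL\cap\mcR$ consists of isomorphisms and that every localization is an epimorphism. Since $\mcA=\red\mcA$ we have $\red K=K$, so $\mcO^{\can}_R(\Pts k)=K$ and the stalk of $\Spec R$ at a point $p\colon R\cof P$ is the filtered colimit $\colim_k K=P$ over the finite localizations $k\colon R\cof K$ through which $p$ factors. On points, $f^*$ sends a local form $q\colon S\cof Q$ to the $\mcL$-part $f^*(q)\colon R\cof P'$ of the $(\mcL,\mcR)$-factorization of $R\xra{f}S\xra{q}Q$, and the accompanying admissible map $P'\to Q$ is the stalk map of $f^*$ at $q$. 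Two compatibilities with pushouts drive the argument: for a finite localization $k$ the pushout $f_*k\colon S\cof f_*K$ is again a finite localization with $(f^*)^{-1}(\Pts k)=\Pts(f_*k)$ (so $f^*$ is continuous), and the universal property of the pushout identifies the fibre of $f^*$ over $p$ with the fibre of $a^*\colon\Spec(f_*P)\to\Spec P$ over the closed point $\mathrm{id}_P$, where $a\colon P\to f_*P$ is the cobase change of $f$. Indeed, a local form $q$ lies over $p$ exactly when $R\to Q$ factors as $R\xra{p}P\to Q$ with $P\to Q$ admissible, and such factorizations correspond to local forms $\tilde q\colon f_*P\to Q$ with $\tilde q\circ a\in\mcR$.

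For $(\Leftarrow)$, assume $a\colon P\xra{\cong}f_*P$ for every local form $p$. Then $a^*$ is an isomorphism and the fibre over $\mathrm{id}_P$ is a single point: a local form $\tilde q\colon f_*P\cong P\to Q$ with $\tilde q\circ a\in\mcR$ satisfies $\tilde q\in\mcL\cap\mcR$, hence is an isomorphism, so $Q\cong P$ and the stalk map $P\to Q$ is an isomorphism. Thus $f^*$ is a bijection on points and an isomorphism on all stalks. Granting that $f^*$ is moreover a homeomorphism, the stalkwise isomorphism of the two structure sheaves upgrades to a genuine isomorphism, since both are sheaves; this yields an isomorphism of $\AP$-spaces.

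For $(\Rightarrow)$, assume $f^*$ is an isomorphism of $\AP$-spaces, hence in particular a homeomorphism. Then the opens $\Pts(f_*k)=(f^*)^{-1}(\Pts k)$ generate the topology of $\Spec S$, so at any point $q$ the system $\{f_*k\}$ is cofinal among the finite localizations of $S$ through which $q$ factors. Passing to colimits, the stalk of $\Spec S$ at $q$ is $\colim_k f_*K=f_*(\colim_k K)=f_*P$, and under this identification the stalk map of $f^*$ at $q$ becomes the cobase change $a\colon P\to f_*P$. As $f^*$ is an isomorphism its stalk maps are isomorphisms, whence $a$ is an isomorphism, as required.

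The step I expect to be the main obstacle is the topology matching in $(\Leftarrow)$: proving that $f^*$ is open, equivalently that the pushed-forward opens $\{\Pts(f_*k)\}$ form a basis of $\Spec S$. Concretely one must show that every finite localization $k'\colon S\cof K'$ is refined, near each of its points, by one of the form $f_*k$. Here I would exploit the finite presentability of $k'$ relative to $S$ together with the identification $Q\cong f_*P=\colim_k f_*K$ of stalks obtained above: the map $K'\to Q$ factors through some $f_*K$, and chasing this factorization back through the pushout square should produce the required refinement. In $(\Rightarrow)$ this cofinality is supplied for free by the homeomorphism, which is precisely why that direction is the easier of the two.
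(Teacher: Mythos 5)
Your proof takes the same route as the paper's: your $(\Leftarrow)$ reproduces Proposition~\ref{prop:geometric_isomorphism_easy} (mutually inverse actions on points via the pushout square, identification of the stalk maps, and openness via finite presentability of a finite localization mapping into the filtered colimit $f_*P = \colim_k f_*K$), and your $(\Rightarrow)$ follows the forward half of Theorem~\ref{thm:characterizing_spec_iso}. The $(\Leftarrow)$ direction, including the openness argument you single out as the main obstacle, is correct; your sketch for it is exactly the paper's argument.

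The genuine gap is in $(\Rightarrow)$, at the very step you declare comes ``for free.'' From the homeomorphism you correctly get that the opens $\Pts(f_*k)$ form a neighbourhood basis at $q$, but cofinality of $\{f_*k\}$ in the diagram of finite localizations of $S$ through which $q$ factors requires more: for each such $l \colon S \cof L$ one needs an actual factorization $L \to f_*K$ under $S$, whereas the basis property only yields a containment of opens $\Pts(f_*k) \subseteq \Pts l$. Converting a containment of distinguished opens into a factorization of the corresponding finite localizations is not formal; it is precisely Corollary~\ref{cor:dist_open_finite_localization_correspondence} (equivalently the lemma preceding it), and its proof is where the hypothesis $\mcA = \red\mcA$ does its real work: one lifts the localization $S \cof L$ against the map from $f_*K$ to the product of its local forms, which is admissible exactly because $f_*K$ is reduced. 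That some such input is unavoidable can be seen concretely in the integral-domain setting (local objects the domains, localizations the quotients): for a field $F$, the map $f \colon F \to S = F[\epsilon]/(\epsilon^2)$ induces an isomorphism of one-point spectra, so your basis step goes through verbatim, yet the family $\{f_*k\} = \{\mathrm{id}_S\}$ is not cofinal among the finite localizations through which $q \colon S \to S/(\epsilon)$ factors (there is no map $S/(\epsilon) \to S$ under $S$), and the conclusion fails, since the pushout of $f$ along the unique local form $\mathrm{id}_F$ is $f$ itself. This $S$ is of course not reduced, so the Proposition itself is untouched -- but your $(\Rightarrow)$ argument never invokes reducedness after the basis step, so it cannot be complete as written. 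Far from being the easier direction, $(\Rightarrow)$ is the one that needs the correspondence between distinguished opens and finite localizations; once that lemma is inserted, your proof closes up and coincides with the paper's.
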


\begin{proof}
This is essentially just Theorem~\ref{thm:characterizing_spec_iso}.
\end{proof}

We may then say that local forms \emph{detect isomorphisms} if the condition from the proposition implies that $f$ itself is an isomorphism. This is known to hold in the classical case and we give a counterexample for (reduced) integral domains. We may conclude:

\begin{corollary}
Assume that $\mcA = \red \mcA$ and $\mcA = \mono \mcA$. If all local forms of $R$ are flat and detect isomorphisms then $R$ is a fixed point.\qed
\end{corollary}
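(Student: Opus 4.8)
The goal is to verify that the counit $\varepsilon\colon R\to\overline R$ is an isomorphism, since this is exactly the defining property of a fixed point. The plan is to reduce this to the pushout condition appearing in the recognition Proposition above, and then to dispatch that condition using flatness. First I would observe that, because $\mcA=\red\mcA$, that Proposition applies verbatim to $f=\varepsilon$: the counit induces an isomorphism on spectra if and only if its pushout along every local form $p\colon R\cof P$ is an isomorphism. Granting such a spectral isomorphism, the hypothesis that local forms \emph{detect isomorphisms} then says precisely that this pushout condition forces $\varepsilon$ itself to be an isomorphism, which is what we want. So the entire content is concentrated in showing that $\varepsilon$ induces an isomorphism on spectra, equivalently that each pushout $P\to P\sqcup_R\overline R$ is invertible.

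To produce this spectral isomorphism I would invoke \Cref{theorem:Gamma_Spec_idempotent}: under the standing assumptions $\mcA=\red\mcA$ and $\mcA=\mono\mcA$, together with flatness of all local forms of $R$, the counit $\varepsilon$ induces an isomorphism on spectra. The underlying mechanism is that flatness lets one localize $\overline R$ compatibly with localizing $R$, so that the local forms of $\overline R$ are exactly the images under $\varepsilon$ of the local forms of $R$, carrying the same stalks $P$; consequently the pushout of a local form $p$ along $\varepsilon$ is split by the restriction-to-stalk map $\lambda_p\colon\overline R\to P$ (using $\lambda_p\varepsilon=p$), and the splitting is forced to be a two-sided inverse.

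The main obstacle is the compactness assumption on $\Spec R$ present in \Cref{theorem:Gamma_Spec_idempotent} but absent from the corollary: the global sections $\overline R=\Gamma\Spec R$ are computed as a limit over a cover of $\Spec R$, and flatness preserves only \emph{finite} limits, so it interacts cleanly with this limit exactly when the cover may be taken finite, i.e.\ when $\Spec R$ is compact. To argue in the non-compact case I would instead work one local form at a time: fix $p\colon R\cof P$, form the pushout $P\sqcup_R\overline R$, exhibit the canonical comparison map $P\to P\sqcup_R\overline R$ as a split monomorphism via $\lambda_p$, and then upgrade ``split mono'' to ``isomorphism'' by testing against all local forms of $P\sqcup_R\overline R$ and invoking joint monicity ($\mcA=\mono\mcA$) together with the flatness of $p$. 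This stalk-local passage from split monomorphism to isomorphism is where the real work sits, and is the step I expect to be delicate.

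Once every such pushout has been shown to be an isomorphism, the recognition Proposition converts this into the statement that $\varepsilon$ induces an isomorphism on spectra, and the detect-isomorphisms hypothesis then yields that $\varepsilon$ is an isomorphism. Hence $R$ is a fixed point, completing the argument.
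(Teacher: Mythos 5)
Your main line is exactly the paper's: flatness of the local forms makes $\varepsilon_R$ a geometric isomorphism (Theorem~\ref{thm:good_objects_one}, packaged into Theorem~\ref{theorem:Gamma_Spec_idempotent}); Theorem~\ref{thm:characterizing_spec_iso} (the recognition Proposition, applicable since $\mcA=\red\mcA$) converts this into the statement that the pushout of $\varepsilon_R$ along every local form is an isomorphism; and the detect-isomorphisms hypothesis then forces $\varepsilon_R$ itself to be an isomorphism, i.e.\ $R\in\fix\mcA$. This three-step assembly is precisely what the paper's \qed stands for, and you have reconstructed it correctly.

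You are also right that there is a compactness mismatch, but your proposed repair does not close it. The paper's corollary tacitly inherits the hypothesis ``$\Spec R$ compact'' from Theorem~\ref{theorem:Gamma_Spec_idempotent} (the phrase ``We may conclude'' refers to that theorem's setting); its omission from the corollary's statement is a slip of the paper, not an assertion that compactness is dispensable. In your patch, the split-mono observation is sound: $\lambda_p\varepsilon_R=p$ does split the pushout inclusion $P\to P+_R\overline R$. But promoting the splitting to an isomorphism is exactly where finiteness enters the paper's argument: in the proof of Theorem~\ref{thm:good_objects_one}, flatness of $p$ is used to make $p_*$ commute with the limit $H^0K^\bullet$ of a hyperopcover, and since flat maps preserve only \emph{finite} limits, one must restrict to finite hyperopcovers, which is what compactness (via the Heller--Rowe formula, Section~\ref{subsec:Heller_Rowe_formula}) provides. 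Your alternative --- proving the retraction $P+_R\overline R\to P$ monic by showing that every local form of the pushout factors through it --- requires control over the local forms of $P+_R\overline R$; since $\Spec$ sends pushouts to pullbacks, these correspond to points of $\Spec P\times_{\Spec R}\Spec\overline R$, so the required control amounts to prior knowledge of how $\Spec\overline R$ sits over $\Spec R$, which is the very thing being established. Joint monicity ($\mcA=\mono\mcA$) does not supply this. So the step you call ``delicate'' is a genuine gap: either compactness should be added to the hypotheses (as the paper should have done), or a new idea is needed at that point.
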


\bigskip

Finally, in Part III, we study the relationship between $\AP$-spaces and certain sheaves on $\mcA^\op$, assuming that $\mcA = \fix\mcA$, i.e.\ that $\Spec$ is fully faithful; in the situation of Theorem~\ref{theorem:Gamma_Spec_idempotent}, this can be achieved by restricting from $\mcA$ to $\fix\mcA$. Namely, consider the functor $NX = \APTop(\Spec -, X)$, something that algebraic geometers usually call the \emph{functor of points} of $X$ (whereas category theorists prefer to call this a nerve functor). It has a partial left adjoint given by the left Kan extension $|\ | = \lan_y \Spec$:
\[\xymatrix{
& \mcA^\op \ar[ld]_-y \ar[rd]^-\Spec \\
[\mcA, \Set] \ar@{-->}@<0.5ex>[rr]^-{|\ |} & & \APTop \ar@<0.5ex>[ll]^-{N}
}\]
Our assumption on $\Spec$ being fully faithful implies that both triangles commute. For our last theorem, we say that an $\AP$-space is an \emph{affine scheme} if it lies in the image $\APAffSch$ of $\Spec$ and a \emph{scheme} is then an $\AP$-space that admits an open cover by affine schemes. To give a corresponding definition in $[\mcA, \Set]$, we introduce a sheaf condition that forces an affine scheme to be a union of affine subschemes if this happens on the level of $\AP$-spaces. Working in the resulting category of sheaves, a sheaf is an \emph{affine scheme} if it lies in the image of $y$ and a \emph{scheme} if it admits an ``open cover'' by affine schemes; we postpone the precise definition of an open cover. This is our last main theorem concerning $\AP$-spaces.

\begin{theorem}
Assume that $\Spec$ is fully faithful. When restricted to full subcategories of schemes on both sides the partial adjunction $|\ | \dashv N$ becomes an equivalence of categories.
\end{theorem}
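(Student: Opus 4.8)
The plan is to bootstrap the already-available equivalence on affines up to an equivalence on schemes by a gluing argument, the essential point being that both $|{-}|$ and $N$ transport the open covers that define schemes into one another. Throughout I will use, as recorded in the excerpt, that the full faithfulness of $\Spec$ makes both triangles commute, i.e.\ $N\Spec \cong y$ and $|{-}|\,y \cong \Spec$. Since $y$ and $\Spec$ are fully faithful with the stated images, this already shows that $N$ and $|{-}|$ restrict to mutually inverse equivalences between the affine schemes $\APAffSch$ and the representable sheaves; in particular the unit $yR \to N|yR|$ and the counit $|N\Spec R| \to \Spec R$ are isomorphisms on the affine pieces. This is the base case of the whole argument.

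Next I would analyze how the two functors interact with open immersions and covers, and use this to check that they really do restrict to the subcategories of schemes. One verifies that $N$ carries an open immersion $U \cof X$ of $\AP$-spaces to an open subfunctor inclusion $NU \cof NX$, and an affine open cover $\{U_i \to X\}$ to a covering family $\{yR_i \to NX\}$ for the topology defining the ambient category of sheaves -- this is exactly the behaviour the sheaf condition was introduced to force. A scheme $X$ is then the colimit in $\APTop$ of the diagram of its affine open subschemes, which one may present as the \v{C}ech-type coequalizer
\[
\coprod_{i,j} U_i \cap U_j \;\rightrightarrows\; \coprod_i U_i \;\longrightarrow\; X
\]
(with the overlaps $U_i\cap U_j$ themselves resolved by their affine opens). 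The crucial claim is that $N$ sends this particular colimit to the corresponding colimit of the representables $yR_i$ in the sheaf category, so that $NX$ is the glued sheaf and hence a scheme on the sheaf side. In the other direction, $|{-}|$ is a left adjoint and therefore preserves colimits; applied to the analogous gluing colimit presenting a sheaf-theoretic scheme $F$ it yields the colimit of the affines $\Spec R_i$, which is by definition a scheme in $\APTop$. Thus $N$ and $|{-}|$ do restrict to functors between the two subcategories of schemes.

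It then remains to see that the unit and counit are isomorphisms on schemes, and this follows formally from the base case together with the colimit-preservation just described. Given a scheme $X$ glued from affine opens $U_i$, the counit $|NX| \to X$ is obtained by applying $|{-}|$ to the presentation $NX = \colim yR_i$; since $|{-}|$ preserves this colimit and the counit is an isomorphism on each affine piece $U_i \cong |yR_i|$, it is an isomorphism on $X$ by the uniqueness of colimiting cocones. The unit $F \to N|F|$ is treated symmetrically, using that $N$ preserves the gluing colimit of $F$ and is an isomorphism on each representable piece. Combining the two gives the asserted equivalence of the full subcategories of schemes.

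The main obstacle is the colimit-preservation property of $N$ in the second step. Because $N$ is only a right adjoint it cannot preserve colimits in general, so the argument must exploit that open-cover colimits are of a very special, van~Kampen type, and that the topology defining the sheaf category was chosen precisely so that the nerve takes such colimits to colimits of sheaves. Concretely this amounts to matching descent for $\AP$-spaces along open covers with descent for sheaves, i.e.\ to verifying that the \v{C}ech diagram above is sent by $N$ to an effective-descent (colimit) diagram. The two delicate points are the behaviour on the non-affine overlaps $U_i \cap U_j$ and the interaction of sheafification with the gluing; both I would reduce to the already-established affine equivalence together with the compatibility of $N$ with open immersions, so that the whole of the second step rests only on the affine case and the sheaf condition.
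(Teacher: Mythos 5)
Your architecture matches the paper's --- establish the affine fixed points, then bootstrap along gluing colimits using that $|\ |$ is cocontinuous and that $N$ preserves the relevant colimits --- but the two claims you defer at the end are exactly where the mathematical content lies, and neither reduces to ``the affine case plus the sheaf condition'' in the way you suggest. Consider the claim that $N$ sends a hypercover colimit to a colimit, i.e.\ that the canonical map $H_0 NU_\bullet \to NX$ is an isomorphism in $\Sh{J}{\mcA^\op}$. The epi half can indeed be handled by affine locality: one pulls the hypercover back along an arbitrary $\Spec R \to X$ (using that $N$ is fully faithful on maps from affines), refines the result by a distinguished hyperopcover, and uses that $J$-covers of representables become isomorphisms after sheafification. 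But the mono half is not a descent or affine-locality statement at all. The paper obtains it from a (regular epi, mono) factorization theorem for augmented semi-simplicial objects in the coproduct completion $\coprod \Sh{J}{\mcA^\op}$ (Theorem~\ref{thm:epi_mono_factorization}), and in order to invoke it has to add a standing hypothesis for this part of the paper --- the existence of (regular epi, mono) factorizations in $\mcA$ and in $\coprod\mcA$, guaranteed e.g.\ by universality of colimits (Theorem~\ref{theorem:factorization_from_universal_colimits}). Nothing in your sketch produces this monicity, and it cannot be extracted from the affine equivalence alone.

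The second gap is on the sheaf side: you treat a scheme $F \in \Sh{J}{\mcA^\op}$ as being, by definition, the colimit of its \v{C}ech-type diagram of affine opens, so that applying the cocontinuous $|\ |$ becomes automatic. But in this framework a scheme in $\Sh{J}{\mcA^\op}$ is only required to \emph{admit} a collection of open embeddings $\yoneda R_i \to F$ that are jointly surjective on local objects; that $F$ actually equals the colimit $H_0 \yoneda R^\bullet$ of the associated hypercover diagram is a theorem (Theorem~\ref{thm:scheme_functors}), not a definition. Its proof needs (i) the Affine Communication Lemma (Theorem~\ref{thm:affine_communication_lemma}) to refine the pairwise overlaps $\yoneda R_{i_0} \times_F \yoneda R_{i_1}$, which need not be representable, by distinguished opens so as to obtain a semi-simplicial object of representables with jointly epic matching maps; (ii) the same factorization theorem to conclude that $H_0 \yoneda R^\bullet \to F$ is monic; and (iii) Lemma~\ref{lemma:surjectivity_of_sheaves_from_local_objects} to conclude that this monic open embedding, being surjective on local objects, is an isomorphism. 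So what you flag as ``the main obstacle'' is genuinely open in your write-up: both halves of it rest on the epi/mono factorization machinery in the coproduct completion (plus the extra hypothesis on $\mcA$ it requires), which your proposed reduction to the affine equivalence and the sheaf condition does not supply.
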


\begin{proof}
This is proved as Corollary~\ref{corollary:equivalence_of_schemes}.
\end{proof}

This important result has already been proved in some special cases: in the context of classical schemes, this is an old result proved in \cite{Demazure} and in case of $\mathbb{F}_1$-schemes, this was proved by Vezzani in \cite{Vezzani}.

Throughout the text, we further prove two purely categorical results of independent interest. The first one is Theorem \ref{thm:universal_property_sheaves} on the universal property of categories of sheaves. While the result is folklore at least for sheaves on small categories, we prove it for sheaves on categories which are not necessarily small -- which increases the difficulty as we must prove the existence of certain left Kan extensions. The second one is Theorem \ref{theorem:factorization_from_universal_colimits} on categorical properties of the coproduct completion which appears to be new, supplementing the results of Adámek and Rosický in \cite{HowNice}.

\newpage
\part{Existence and description of Spec}

In this part, it is enough to assume that $\mcA$ be a complete and cocomplete category such that:
\begin{itemize}
\item
	For any topological space $X$, the inclusion $\Sh{}{X} \to \PSh{X}$ of $\mcA$-valued sheaves into $\mcA$-valued presheaves admits a left adjoint -- the sheafication functor $\sheafify$.
\item
	The isomorphisms of sheaves are reflected by the (total) stalk functor: $F \to G$ is an isomorphism if and only if the induced map $F_p \to G_p$ is an isomorphism, for each point $p \in X$.
\end{itemize}

Both these properties hold if $\mcA$ is locally finitely presentable; we will elaborate on this in Section~\ref{subsec:Heller_Rowe_formula}.

We will use the easily verified fact that $\sheafify$ then automatically preserves stalks: The stalk of a presheaf $F$ at $p \in X$ is the pullback $p^*F$ and we have
\[\mcA(F_p, R) \cong \mcA(F, p_*R) \cong \mcA(\sheafify F, p_*R) \cong \mcA((\sheafify F)_p, R)\]
since the pushforward $p_*$ preserves sheaves.

\section{Definitions and examples}

\begin{definition}
A \emph{cone} in $\A$ is a family of maps $(a_j\colon A\to B_j)_{j \in J}$ with a specified common domain which we sometimes call a summit of the cone; thus, empty cones (those with $J = \emptyset$) having different summits are considered different.
\end{definition}

Let $\Cone=\{(a_{ij} \colon A_i \to B_{ij})\mid i\in I\}$ be a fixed set of cones in $\mcA$ with finitely presentable domains and codomains (the index $i$ ranges over a set $I$, while, for a fixed $i \in I$, the index $j$ ranges over a set $J_i$). In addition, we require that all the components $a_{ij}$ are \emph{epimorphisms}.

\begin{definition}
We say that an object $R \in \mcA$ is \emph{local} (or that $R$ is cone injective w.r.t.\ $\Cone$) if, for each $i$, and for each map $A_i \to R$, there exists $j\in J_i$ and an extension
\[\xymatrix{
A_i \ar[r] \ar[d]_-{a_{ij}} & R \\
B_{ij} \ar@{-->}[ru]
}\]
\end{definition}


Since we assume that $A_i \to B_{ij}$ are epimorphisms, the extension is unique (the index $j$ need not be). For a set of cones $\Cone$ as above, denote by $\Cone^{\downarrow}$ the collection of all the maps $a_{ij}$ appearing in the cones from $\Cone$.

\begin{definition}
A map $f \colon R \to S$ is \emph{admissible} if it has the right lifting property w.r.t.\ $\Cone^{\downarrow}$, i.e.\ for each commutative square as below, there exists a lift making both triangles commute:
\[\xymatrix{
A_i \ar[r] \ar[d]_-{a_{ij}} & R \ar[d]^-f \\
B_{ij} \ar@{-->}[ru] \ar[r] & S
}\]
\end{definition}

We note that in this case we do not make use of the common domain $A_i$ for the maps $a_{ij}$, with varying $j$. Notationally, we will emphasize that $f$ is admissible by decorating the map as $f \colon R \loc S$.

We define $\mcP \subseteq \mcA$ to be the non-full subcategory of local objects and admissible maps.

\begin{definition}
We say that $f$ is a \emph{localization} (or that $f$ is cellular w.r.t.\ $\Cone^{\downarrow}$) if $f \colon R \to S$ is obtained as a transfinite composition of pushouts of coproducts of the $a_{ij}$.

Further, we say that $f$ is a \emph{finite localization} if the transfinite composition and all the coproducts used are finite.
\end{definition}

Notationally, we will emphasize that $f$ is a localization by decorating the map as $f \colon R \cof S$. Localizations have the left lifting property w.r.t.\ admissible maps, implying in particular that a map that is both a localization and admissible must be an isomorphism.

\begin{definition}
A \emph{local form} of $R$ is a localization $p \colon R \cof P$ with local codomain $P\in\mcP$.
\end{definition}

We also recall that $R$ is reduced if the canonical map $R \to \prod_\alpha P_\alpha$, with components all the local forms of $R$, is admissible. Apart from the following examples, this notion will not be used until Section~\ref{sec:reduction}.

\begin{example}
Our main illustrating example lives in the category of commutative rings.
Localness with respect to the cone
\[\xymatrix@C=0pc@R=1pc{
	& \Z[x,y]/(x+y-1) \ar[ld] \ar[rd] \\
	\Z[x,x^{-1},y]/(x+y-1) & & \Z[x,y,y^{-1}]/(x+y-1)
}\]
means that a commutative ring $R$ is a local ring or the trivial ring. In order to get rid of the trivial ring we add the empty cone with summit the trivial ring $\{0 = 1\}$. Localness with respect to this empty cone simply means that there is no map $\{0 = 1\}\to A$ (no extension exists for the cone is empty). Admissibility of a ring homomorphism $f$ means simply that it reflects invertibility: $f(r)$ invertible $\Ra$ $r$ invertible. (Finite) localizations are obtained by adding inverses to (a finite number of) elements. Local forms are the localizations at prime ideals, since these are exactly the local localizations. All objects are reduced, as any non-invertible element lies in some prime ideal and is thus non-invertible in the product of all localizations.
\end{example}

\begin{example}
There are two modifications whose spectra have the same underlying set as the spectrum of the previous example. The first modification is induced by the cone
\[\xymatrix@C=0pc@R=1pc{
	& \bbZ[x, y]/(xy) \ar[ld] \ar[rd] \\
	\bbZ[x, y]/(x) & & \bbZ[x, y]/(y)
}\]
and the empty cone with summit $\{0=1\}$. Its local objects are the integral domains, its admissible maps are the injective maps, and its local forms are obtained as quotients by various prime ideals.

The second modification is induced by
\[\xymatrix@C=2pc@R=1pc{
	& \bbZ[x] \ar[ld] \ar[rd] \\
	\bbZ[x,x^{-1}] & & \bbZ[x]/(x)
}\]
and the empty cone with summit $\{0=1\}$. Its local objects are  the fields, its admissible maps are the maps that are injective and reflect invertibility, and its local forms are obtained as fraction fields of the quotients by various prime ideals.

In both these modifications, reduced objects are the reduced rings in the usual sense, i.e.\ those with trivial nilradical. In particular, the yet to be proved inclusion $\fix\mcA \subseteq \red\mcA$ shows that $\Spec$ fails to be fully faithful in these modifications and it is thus necessary to restrict to reduced commutative rings. However, even this restriction fails to be fully faithful, as Example~\ref{example:spec_not_fully_faithful} shows.
\end{example}

\section{Proof of the existence of $\Spec$} \label{section:existence_Spec}

In the first part of the paper, we deal with the existence and description of the spectrum functor -- Theorem~\ref{theorem:existence_of_spec}. We start proving the existence of the adjoint $\spec$ to the inclusion of $\APTop$ into $\ATop$. This will be an application of the adjoint functor theorem, so we will prove that all colimits exist in $\ATop$ and that $\APTop$ is closed under them and finally we will verify the solution set condition.

%

\subsection{Characterization of $\AP$-spaces and $\AP$-maps}

First we prove a useful characterization of $\AP$-spaces among $\mcA$-spaces. Let $X \in \ATop$ and $f \colon A_i \to \mcO_X(U)$ be a map. For each $j\in J_i$, we define a subset $U_j^\mathrm{max} \subseteq U$ to consist of all the points $p \in U$ for which an extension through $B_{ij}$ exists in the diagram
\[\xymatrix{
A_i \ar[r]^-f \ar[d] & \mcO_X(U) \ar[r] & \mcO_{X,p} \\
B_{ij} \ar@{-->}[rru]
}\]
By the finite presentability of $A_i$ and $B_{ij}$, one may then replace the stalk $\mcO_{X,p}$ by $\mcO_X(V_p)$ for some open $V_p \ni p$ and, by uniqueness of the extensions, these are compatible with restrictions and the sheaf condition for $\mcO_X$ thus provides an extension
\[\xymatrix{
A_i \ar[r]^-f \ar[d] & \mcO_X(U) \ar[r] & \mcO_X(U_j^\mathrm{max}) \\
B_{ij} \ar@{-->}[rru]_-{g_j}
}\]
Moreover, we see that $U_j^\mathrm{max}=\bigcup_{p\in U_j^\mathrm{max}}V_p$ is open. When $X \in \APTop$ the sets $U_j^\mathrm{max}$ give a particular open cover of $U$ which we refer to as the \emph{canonical} open cover. This proves the necessity part of the following criterion in which any open cover suffices.

\begin{lemma} \label{lemma:local_criterion}
An $\mcA$-space $X$ is an $\AP$-space if and only if for each map $f \colon A_i \to \mcO_X(U)$ there exists an open cover $U = \bigcup U_j$ together with ``partial'' extensions $g_j \colon B_{ij} \to \mcO_X(U_j)$ as above.
\end{lemma}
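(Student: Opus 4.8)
Necessity is established by the discussion preceding the statement: when $X \in \APTop$, the canonical sets $U_j^{\mathrm{max}}$ form an open cover of $U$, and the sheaf condition for $\mcO_X$ supplies the required partial extensions $g_j$. So the plan is to prove the reverse implication, namely that the existence of \emph{some} open cover of $U$ carrying partial extensions, for every $f \colon A_i \to \mcO_X(U)$, forces every stalk of $X$ to be local.

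First I would fix a point $p \in X$ and check cone injectivity of $\mcO_{X,p}$ directly from the definition. Take any index $i$ and any map $h \colon A_i \to \mcO_{X,p}$. The stalk is the filtered colimit $\mcO_{X,p} = \colim_{U \ni p} \mcO_X(U)$ over the directed system of open neighbourhoods of $p$ (ordered by reverse inclusion), and $A_i$ is finitely presentable, so $\mcA(A_i, -)$ commutes with this colimit. Hence $h$ factors through one of the structure maps: there is an open $U \ni p$ and a map $f \colon A_i \to \mcO_X(U)$ whose composite with the canonical map $\mcO_X(U) \to \mcO_{X,p}$ is $h$.

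Now I would feed this $f$ into the hypothesis, obtaining an open cover $U = \bigcup_j U_j$ and extensions $g_j \colon B_{ij} \to \mcO_X(U_j)$ with $g_j \circ a_{ij}$ equal to the restriction of $f$ to $U_j$. Since the $U_j$ cover $U$ and $p \in U$, I may choose $j$ with $p \in U_j$; composing $g_j$ with the canonical map $\mcO_X(U_j) \to \mcO_{X,p}$ gives $\tilde{g} \colon B_{ij} \to \mcO_{X,p}$. Passing the commuting triangle over $U_j$ down to the germ at $p$ — and using that restricting from $U$ to $U_j$ and then taking the germ at $p$ agrees with taking the germ at $p$ on $U$ — yields $\tilde{g} \circ a_{ij} = h$. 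Thus $h$ extends through $B_{ij}$, so $\mcO_{X,p}$ is cone injective, i.e.\ local; as $p$ was arbitrary, $X \in \APTop$.

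The argument is essentially formal and I do not anticipate a real obstacle. The one genuinely load-bearing step is the factorization of $h$ through a section over a neighbourhood, which is exactly where finite presentability of $A_i$ enters; everything else is bookkeeping of restriction maps. It is worth noting that, in contrast to the necessity direction, this implication uses neither the finite presentability of the $B_{ij}$ nor the uniqueness of extensions (hence neither the epi hypothesis on the $a_{ij}$): a single extension at each point $p$ suffices.
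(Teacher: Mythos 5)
Your proposal is correct and follows essentially the same route as the paper's proof: both directions match, and in particular the sufficiency argument — factor a map $A_i \to \mcO_{X,p}$ through some $\mcO_X(U)$ via finite presentability of $A_i$, apply the hypothesis to get a cover with partial extensions, pick $U_j \ni p$, and compose back down to the stalk — is exactly the paper's argument, just with the bookkeeping spelled out more explicitly. Your closing observation that sufficiency needs neither the epi hypothesis on the $a_{ij}$ nor finite presentability of the $B_{ij}$ is also accurate.
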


\begin{proof}
It remains to prove sufficiency. For any extension problem
\[\xymatrix{
A_i \ar[r] \ar[d] & \mcO_{X,p} \\
B_{ij} \ar@{-->}[ru]
}\]
we may first replace the stalk by $\mcO_X(U)$, then use the property from the statement to get an open cover $U = \bigcup U_j$, choose $j$ so that $p \in U_j$ and then get an extension
\[\xymatrix{
A_i \ar[r] \ar[d] & \mcO_X(U_j) \ar[r] & \mcO_{X,p} \\
B_{ij} \ar@{-->}[ru]
}\]
to $\mcO_X(U_j)$ and thus also to $\mcO_{X,p}$, as required.
\end{proof}

Next we give a similar criterion for maps.

\begin{lemma} \label{lemma:local_criterion_map}
A map of $\mcA$-spaces $\varphi\colon X\to Y$ is admissible if and only if the map of structure sheaves $\varphi_\sharp \colon \varphi^*\mcO_Y(U) \to \mcO_X(U)$ is admissible for every $U$.
\end{lemma}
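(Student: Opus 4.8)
The plan is to prove both implications by passing between the sections $\varphi_\sharp \colon \varphi^*\mcO_Y(U) \to \mcO_X(U)$ and the stalk maps $\varphi_p \colon \mcO_{Y,\varphi(p)} \to \mcO_{X,p}$, using throughout that $(\varphi^*\mcO_Y)_p \cong \mcO_{Y,\varphi(p)}$ and that the domains and codomains $A_i, B_{ij}$ of the generators $a_{ij}$ are finitely presentable. Consequently, maps out of $A_i$ or $B_{ij}$ into the filtered colimits $\mcO_{X,p} = \colim_{V \ni p} \mcO_X(V)$ and $(\varphi^*\mcO_Y)_p = \colim_{V \ni p} \varphi^*\mcO_Y(V)$ factor through sections over some neighborhood, and any two such factorizations agreeing in the stalk already agree over a smaller neighborhood.

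For the necessity (``only if'') direction, I assume each $\varphi_p$ is admissible, fix an open $U$, and consider a commutative square with top $f \colon A_i \to \varphi^*\mcO_Y(U)$, bottom $g \colon B_{ij} \to \mcO_X(U)$, left edge $a_{ij}$, and right edge $\varphi_\sharp$. For every $p \in U$ the induced square on stalks is solved by admissibility of $\varphi_p$, the solution being unique since $a_{ij}$ is epic. Finite presentability of $B_{ij}$ realizes this stalk solution as a map $h_p \colon B_{ij} \to \varphi^*\mcO_Y(V_p)$ over some open $V_p \ni p$, and shrinking $V_p$ (using finite presentability of $A_i$ for the upper triangle and of $B_{ij}$ for the lower one) I arrange that both triangles already commute over $V_p$. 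The sets $V_p$ cover $U$, and on each overlap $V_p \cap V_q$ the restrictions of $h_p$ and $h_q$ solve the same restricted lifting problem, hence coincide by the uniqueness granted by $a_{ij}$ being epic. Since the representable $\mcA(B_{ij}, -)$ preserves the equalizer expressing the sheaf condition for $\varphi^*\mcO_Y$ on the cover $\{V_p\}$, the compatible family $(h_p)$ glues to a single $h \colon B_{ij} \to \varphi^*\mcO_Y(U)$; the triangle identities $h \circ a_{ij} = f$ and $\varphi_\sharp \circ h = g$, valid after restriction to each $V_p$, promote to identities over $U$ by separatedness, so $h$ solves the original problem and $\varphi_\sharp$ is admissible.

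For the sufficiency (``if'') direction, I assume $\varphi_\sharp$ is admissible over every open and fix a point $p$ together with a lifting problem for $\varphi_p$ against $a_{ij}$. By finite presentability of $A_i$ and $B_{ij}$ the top and bottom maps of this problem descend to sections over some neighborhood $U \ni p$, and a further shrinking (again by finite presentability of $A_i$) makes the square commute already at the level of sections over $U$. Admissibility of $\varphi_\sharp$ over $U$ then supplies a lift $B_{ij} \to \varphi^*\mcO_Y(U)$, whose image in the stalk at $p$ solves the original problem; hence $\varphi_p$ is admissible, and this for all $p$ means $\varphi$ is admissible.

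I expect the main obstacle to be the gluing in the necessity direction: the pointwise lifts are a priori unrelated, and what makes the argument work is precisely the standing hypothesis that the $a_{ij}$ are epimorphisms, which upgrades ``existence of a lift'' to ``existence of a \emph{unique} lift'' and thereby forces the local solutions to agree on overlaps so that the sheaf condition can assemble them. Once this is isolated, the remaining manipulations are routine finite-presentability arguments interchanging stalks and sections.
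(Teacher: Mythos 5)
Your proof is correct and takes essentially the same route as the paper: the paper likewise reduces the lemma to the statement that a map of sheaves is objectwise admissible iff it is stalkwise admissible, proving the hard (necessity) direction exactly as you do, by local lifts made unique by the epimorphy of the $a_{ij}$ and glued via the sheaf condition on $\varphi^*\mcO_Y$. The only cosmetic difference is in the easy (sufficiency) direction, which the paper compresses into the remark that admissible maps are closed under filtered colimits --- precisely the finite-presentability descent from stalks to sections that you spell out explicitly.
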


\begin{proof}
It is quite easy to verify that a map of sheaves is objectwise admissible iff it is stalkwise admissible: The forward implication follows from the fact that admissible maps are closed under filtered colimits and it works also for presheaves. For the backward implication one finds lifts locally in a neighbourhood of every point, by uniqueness and the sheaf condition for $\mcF$ they provide a global lift:
\[\xymatrix{
A_i \ar[r] \ar[d] & \mcF(U) \ar[r] \ar[d] & \mcF(U_p) \ar[r] & \mcF_p \ar[d] \\
B_{ij} \ar[r] \ar@{-->}[rrru] \ar@{-->}[rru] \ar@{.>}[ru] & \mcG(U) \ar[rr] & & \mcG_p
}\]
Now apply this to the map $\varphi_\sharp \colon \varphi^* \mcO_Y \to \mcO_X$ whose effect on stalks is
\[\varphi_p \colon \mcO_{Y, \varphi(p)} = (\varphi^*\mcO_Y)_p \to \mcO_{X, p}\]
(the stalk at $p$ is given by the inverse image along the inclusion of $p$ in $X$).
\end{proof}

\subsection{Colimits in $\ATop$ and $\APTop$}

We proceed with the proof of the main theorem. First of all, we can see that the category $\Top_{\mcA}$ of $\mcA$-spaces is cocomplete using the Grothendieck fibration $\Top_{\mcA}\to \Top$ (see Example~\ref{ex:ATop}). Secondly, we show that $\APTop$ is closed under colimits, i.e.\ when all the stalks in a diagram $X_k$, $k \in \mcK$, are local and maps between them are admissible, the same is true for the colimit $X = \colim X_k$ and the components $\lambda_k$ of the colimit cocone.
\[\xymatrix@R=0.5pc{
\rightbox{U_k \subseteq {}}{X_k} \ar[rd]^-{\lambda_k} \ar[dd]_-{\alpha_*} \\
& \leftbox{X}{{} \supseteq U} \\
\rightbox{U_l \subseteq {}}{X_l} \ar[ru]_-{\lambda_l}
}\]
Consider an open subset $U \subseteq X$ and a map $A_i \to \mcO_X (U)$. Then, for each $k$, the preimage $U_k = (\lambda_k)^{-1}(U) \subseteq X_k$ admits, by Lemma~\ref{lemma:local_criterion}, the canonical open cover $U_k = \bigcup U_{kj}^\mathrm{max}$. The $U_{kj}^\mathrm{max}$ are compatible under taking inverse images in the diagram, i.e.\ $p \in U_{kj}^\mathrm{max}$ iff $\alpha_*p \in U_{lj}^\mathrm{max}$, for in the diagram
\[\xymatrix{
A_i \ar[r] \ar[d]_-{a_{ij}} & \mcO_{X_l, \alpha_*p} \ar[d]^-\sim \\
B_{ij} \ar@{-->}[r] \ar@{-->}[ru] & \mcO_{X_k, p}
}\]
one extension exists if and only if the other exists, by virtue of the admissibility of the map between stalks. Therefore, by Example~\ref{example:Top}, the collection of open sets $U_{kj}^\mathrm{max}$ describes an open set $U_j \subseteq X$. Since also $\mcO_X(U_j) = \lim \mcO_{X_k}(U_{kj}^\mathrm{max})$ by Example~\ref{ex:ATop}, the (unique) partial extensions $B_{ij} \to \mcO_{X_k}(U_{kj}^\mathrm{max})$ yield a partial extension $B_{ij} \to \mcO_X(U_j)$ and Lemma~\ref{lemma:local_criterion} proves $X \in \APTop$.

The admissibility of the components of the colimit cocone $\lambda_k$ is similar, using Lemma~\ref{lemma:local_criterion_map}: We need to find a lift in
\[\xymatrix{
A_i \ar[r] \ar[d]_-{a_{ij}} & \leftbox{\lambda_k^*\mcO_X(U)}{{} = \colim_{V \supseteq \lambda_k(U)} \mcO_X(V)} \ar[d] \\
B_{ij} \ar[r] \ar@{-->}[ru] & \mcO_{X_k}(U)
}\]
By finite presentability of $A_i$ we get a factorization through some $\mcO_X(V)$ in the colimit. Expressing this as $\lim \mcO_{X_l}(\lambda_l^{-1}(V))$ we consider, for each $l$, the maximal open subset $W_l \subseteq \lambda_l^{-1}(V)$ where the extension
\[\xymatrix{
A_i \ar[r] \ar[d]_-{a_{ij}} & \mcO_{X_l}(\lambda_l^{-1}(V)) \ar[d] \\
B_{ij} \ar@{-->}[r] & \mcO_{X_l}(W_l)
}\]
exists; by assumption, $U \subseteq W_k$. As in the previous part, the $W_l$ describe an open subset $W \subseteq V$ with $\lambda_k(U) \subseteq W$ and so the extensions $B_{ij} \to \mcO_{X_l}(W_l)$ describe a single extension $B_{ij} \to \mcO_X(W)$ and finally a lift in the original diagram.

\subsection{Solution set condition}

This is the technical part of the proof: While colimits of $\AP$-spaces are useful outside of the existence proof, we do not find other uses for the arguments of this part. The solution set condition for the inclusion $\mathrm{in} \colon \APTop \to \ATop$ is the condition that the comma category $\mathrm{in}/Y$ contains a weakly terminal set of objects. We will be using more comma categories associated with inclusions of subcategories and we do not want to introduce names for each of the inclusions, so we will use an alternative notation:
\begin{notation}
    Let $i\colon\mathcal D\to\mcC$ be the inclusion of the subcategory $\mathcal D$ into $\mcC$. Then for any object $A$ of $\mcC$ we will denote the comma category $i/A$ by $\mathcal D/_{\mcC} A$.
\end{notation}

This should be reminiscent of the usual notation for pullback (a lax version of which the comma category is an example). 

Thus, given $Y \in \ATop$ we need to find a weakly terminal subset of $\APTop/_{\ATop}Y$. Let $(\varphi, \varphi_\sharp) \colon (X, \mcO_X) \to (Y, \mcO_Y)$ be an arbitrary map with $X \in \APTop$. Using the fibration structure on the forgetful functor $\ATop \to \Top$ as in Example \ref{ex:ATop}, we obtain the vertical-cartesian factorization of $(\varphi, \varphi_\sharp)$ as follows:
\[(\varphi, \varphi_\sharp) \colon (X, \mcO_X) \xlra{(1,\varphi_{\sharp})} (X, \varphi^*\mcO_Y) \xlra{(\varphi,1)} (Y, \mcO_Y)\]
This essentially reduces the possible structure sheaves $\mcO_X$ to those of the form $\varphi^*\mcO_Y$ except the latter does not have stalks in $\mcP$. Thus, for each open $U \subseteq X$, apply the usual small object argument to $\varphi_\sharp \colon \varphi^* \mcO_Y(U) \to \mcO_X(U)$, thus producing by functoriality a factorization
\[\mcO_X \xlla{\chi_\sharp} F \xlla{\psi_\sharp} \varphi^* \mcO_Y\]
in $\PSh{X}$ and upon replacing $F$ by its sheafification also in $\Sh{}{X}$. Lemma~\ref{lemma:local_criterion_map} shows that the presheaf version of $F \to \mcO_X$ is also stalkwise admissible. Since the stalks of $\mcO_X$ are local, so are those of $F$.


This means that $\varphi$ factors through the $(\mcA,\mcP)$-space $(X, F)$,
\[(\varphi, \varphi_\sharp) \colon (X, \mcO_X) \xlra{(1,\chi_\sharp)} (X, F) \xlra{(\varphi,\psi_\sharp)} (Y, \mcO_Y)\]
and in effect we reduced all the possible $\mcO_X$ to those obtained from $\varphi^*\mcO_Y$ by localization followed by sheafification; for a fixed $\varphi \colon X \to Y$ there is a set of such, by the proof of Theorem~\ref{thm:components_multireflection_small}. However, the underlying spaces $X$ and maps $\varphi$ still form a proper class. Thus, form an equivalence relation on $X$: $p \sim p'$ if their images in $Y$ agree, i.e.\ $\varphi(p) = q = \varphi(p')$ and if the two induced maps
\[\xymatrix@=0.5pc{
F_p \ar@{-->}[dd]_-\cong \\
& & \mcO_{Y,q} \ar[llu] \ar[lld] \\
F_{p'}
}\]
are isomorphic -- note that, as localizations (we obtained $F$ via small object argument), they are epis so the isomorphism will be unique. Now we need to show that $X/{\sim}$ inherits a structure sheaf from $X$ with local stalks and that $\varphi$ factors through it:
\[\varphi \colon (X, \mcO_X) \xlra{(1,\varphi)} (X, F) \xlra{\pr} (X/{\sim}, F/{\sim}) \xlra{\overline \varphi} (Y, \mcO_Y).\]
This will finish the proof since the $\AP$-spaces of the form $(X/{\sim}, F/{\sim})$ form, up to isomorphism, a set: localizations of each $\mcO_{Y,q}$ form a set by Theorem~\ref{thm:components_multireflection_small}, thus we get a set of all possible underlying sets $X/{\sim}$, thus a set of possible topologies and, finally, a set of possible structure sheaves as above.

To finish the proof, we remark that $X/{\sim}$ is a colimit; namely, it is a coequalizer of
\[\xymatrix{
\sim \ar@/^1ex/[r]^-{\pr_l} \ar@/_1ex/[r]_-{\pr_r} & X
}\]
with the equivalence relation ${\sim} \subseteq X \times X$ topologized discretely and given the structure sheaf with stalks at $(p, p')$ the local $F_p \cong F_{p'}$. Since this diagram lies in $\APTop$, so does its colimit.

\section{Factorization system generated by cones}

To conclude the existence part of Theorem~\ref{theorem:existence_of_spec}, we have to prove Theorem~\ref{thm:components_multireflection_small} about the number of local forms of a given object $R \in \mcA$. At the same time, we will prepare ground for the description part of Theorem~\ref{theorem:existence_of_spec}.

There is a factorization system associated with the collection $C^\downarrow$ of maps $a_{ij}$. A factorization is obtained by the small object argument and uniqueness is guaranteed by the generating maps $a_{ij}$ being epic; this implies that the factorization is functorial. Also note that by uniqueness every cofibration is in fact cellular (apply the small object argument to the cofibration and apply uniqueness).

We recall the notation $\cof$ for localizations and $\loc$ for admissible maps. We will make a frequent use of the following simple lemma:

\begin{lemma} \label{lemma:factorization_pushout}
If $f$ is epic then the any commutative square as below is cocartesian:
\[\xymatrix{
R \ar[r]^-f \ar[d]_-h & S \ar[d]^-g \\
T \ar@{=}[r] & T
}\]
\end{lemma}

\begin{proof}
Since $h = g f$, the square can be decomposed as
\[\xymatrix@R=0.75pc{
R \ar[r]^-f \ar[d]_-f & S \ar@{=}[d] \\
S \ar@{=}[r] \ar[d]_-g & S \ar[d]^-g \po \\
T \ar@{=}[r] & T \po
}\]
with the upper square cocartesian since $f$ is assumed epic.
\end{proof}

\begin{lemma} \label{lemma:cancellation_properties}
In a commutative triangle
\[\xymatrix@=1pc{
R \ar[rr]^-h \ar[rd]_-f & & T \\
& S \ar[ru]_-g
}\]
\begin{itemize}
\item
	$h$ admissible $\Ra$ $f$ admissible;
\item
	$h$ localization and $f$ epi $\Ra$ $g$ localization;
\item
	$h$ finite localization and $f$ epi $\Ra$ $g$ finite localization.
\end{itemize}
\end{lemma}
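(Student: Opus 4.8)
The plan is to treat the three implications separately: the first by a direct lifting argument, and the second and third by a single uniform cobase-change construction.

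For the first implication, suppose $h = gf$ is admissible and consider a square testing admissibility of $f$, i.e.\ maps $A_i \to R$ and $B_{ij} \to S$ with $f \circ (A_i \to R) = (B_{ij}\to S)\circ a_{ij}$. Composing the bottom edge with $g$ and using $gf = h$ produces a square against $h$, which admits a lift $\ell \colon B_{ij} \to R$ since $h$ is admissible. This $\ell$ already satisfies $\ell a_{ij} = (A_i \to R)$; to see that it solves the original lifting problem I only need $f\ell = (B_{ij}\to S)$, and this follows because both composites agree after precomposition with the epimorphism $a_{ij}$. Thus $f$ is admissible. Note this uses only that the $a_{ij}$ are epic, which is consistent with the hypothesis of the first item not requiring $f$ to be epic.

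For the second and third implications I would build the cellular structure of $g$ out of that of $h$ by cobase change along $f$. Write $h$ as a transfinite composition $R = T_0 \to T_1 \to \cdots \to T$ whose successor steps are pushouts of coproducts of the $a_{ij}$, and set $S_m := S \sqcup_R T_m$. Since the cobase-change functor $f_* \colon R/\mcA \to S/\mcA$ preserves colimits, the tower $(S_m)$ is again a transfinite composition with $S_0 = S$ whose successor steps are pushouts of the same coproducts of $a_{ij}$ (the attaching maps now factoring through $S_m$); hence $S \to S \sqcup_R T$ is a localization, and a finite one whenever $h$ is finite. The canonical map $S \to S\sqcup_R T$ is exactly $g$ followed by the comparison $S\sqcup_R T \to T$, so it remains to identify this comparison as an isomorphism.

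This is where Lemma~\ref{lemma:factorization_pushout} and the hypothesis that $f$ is epic enter: since $f$ is epic, the square with $gf = h$ along one side and the identity on $T$ along the other is cocartesian, which says precisely that $T$, equipped with $g$ and $\mathrm{id}_T$, is the pushout $S \sqcup_R T$. Consequently the comparison $S\sqcup_R T \to T$ is an isomorphism and $g$ is exhibited as the (finite) localization $S \to S\sqcup_R T$. I expect the main subtlety to be the bookkeeping in the filtration step -- verifying that cobase change along $f$ turns each cell of $h$ into an honest pushout of a coproduct of $a_{ij}$ over $S_m$, and that it preserves both the transfinite composition at limit stages and, crucially, finiteness in the third implication. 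A lifting (left-lifting-property) argument would suffice for arbitrary localizations, but it does not visibly respect finiteness, which is why the explicit transport of the cellular filtration is the right tool here.
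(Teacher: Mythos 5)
Your proof is correct and follows essentially the same route as the paper: the first item by the same epi-cancellation lifting argument, and the second and third by using Lemma~\ref{lemma:factorization_pushout} to exhibit $g$ as the pushout of $h$ along the epi $f$. The paper simply cites that (finite) localizations are stable under cobase change, whereas you unpack this stability by explicitly transporting the cellular filtration through $f_*$ --- a standard verification the paper leaves implicit.
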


\begin{proof}
The first point is easy to verify (using that the $a_{ij}$ are epic), as is the second, we proceed with the third point. The square
\[\xymatrix{
R \ar[r]^-f \ar[d]_-h & S \ar[d]^-g \\
T \ar@{=}[r] & T
}\]
is a pushout by Lemma~\ref{lemma:factorization_pushout}, i.e.\ $g$ is a pushout of $h$, so a finite localization.
\end{proof}

\begin{lemma} \label{lemma:initial_object_into_B}
The components of the comma category $R/_\mcA\mcP$ are in bijection with isomorphism classes of local forms of $R$.

Every local form $R \cof P$ is an initial object of the respective component.
\end{lemma}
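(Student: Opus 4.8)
The plan is to exploit the factorization system generated by $\Cone^\downarrow$ together with the fact that localizations are epic. Recall that an object of $R/_\mcA\mcP$ is a map $f\colon R\to P$ with $P\in\mcP$, and a morphism $(P,f)\to(P',f')$ is an admissible map $h\colon P\loc P'$ with $hf=f'$. Since each $a_{ij}$ is epic and localizations are assembled from the $a_{ij}$ by coproducts, pushouts and transfinite composition (all of which preserve epimorphisms), every localization is epic; in particular, any two morphisms out of a local form that agree after precomposition with its structure map must coincide. I would organize the argument into three steps: (i) every object of $R/_\mcA\mcP$ receives a morphism from some local form; (ii) every local form is initial in its connected component; (iii) assembling these into the claimed bijection.

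For step (i), given an object $(P,f)$ I would factor $f\colon R\to P$ as $R\cof P'\loc P$. The domain $P'$ is then local: for any $A_i\to P'$, compose with $P'\loc P$, use locality of $P$ to extend along $a_{ij}$ to a map $B_{ij}\to P$, and then lift this along the admissible map $P'\loc P$ to obtain $B_{ij}\to P'$. Hence $R\cof P'$ is a local form and the admissible map $P'\loc P$ constitutes a morphism $(P',R\cof P')\to(P,f)$ in $R/_\mcA\mcP$. In particular, every connected component contains a local form.

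Step (ii) is the heart of the argument. Uniqueness of a morphism out of a local form $p\colon R\cof P$ is immediate, since $p$ is epic. For existence I would show that the property ``$P$ admits an admissible map over $R$ to a given object'' propagates along every morphism of $R/_\mcA\mcP$ in both directions; as $P$ admits the identity to itself and $(P,p)$ is joined to any object of its component by a zigzag, this yields the required morphism. Propagation forward is just composition of admissible maps. Propagation backward across a morphism $u\colon(Q_1,f_1)\to(Q_2,f_2)$, starting from an admissible $h_2\colon P\loc Q_2$ over $R$, is the main obstacle: I form the commutative square with top $p\colon R\cof P$, left $f_1$, bottom $u\colon Q_1\loc Q_2$ and right $h_2$, and use that localizations have the left lifting property with respect to admissible maps to produce a lift $h_1\colon P\to Q_1$ with $h_1p=f_1$ and $uh_1=h_2$. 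That $h_1$ is admissible then follows from the cancellation property of Lemma~\ref{lemma:cancellation_properties} applied to the triangle $h_2=uh_1$, which completes the backward step.

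Finally, for step (iii): step (ii) shows that each component has an initial object which is a local form. Initial objects are unique up to unique isomorphism, so two local forms lying in the same component are isomorphic; conversely, an isomorphism of local forms is in particular an (admissible) morphism of $R/_\mcA\mcP$ and hence keeps the two objects in a single component. Combined with step (i), this establishes a bijection between connected components of $R/_\mcA\mcP$ and isomorphism classes of local forms, while simultaneously recording that each local form is the initial object of its own component.
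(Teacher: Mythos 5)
Your proof is correct and takes essentially the same route as the paper: both factor each map $R \to Q$ with $Q \in \mcP$ through the (localization, admissible) factorization system, and both rely on the lifting property of localizations against admissible maps, the cancellation properties of Lemma~\ref{lemma:cancellation_properties}, and epicity of localizations for uniqueness. The only organizational difference is that you propagate an admissible map out of a fixed local form backward along a zigzag via an explicit lift, whereas the paper packages the same lift as the statement that the localization part of the factorization is invariant up to isomorphism along morphisms of $R/_\mcA\mcP$; your spelled-out verification in step (i) that the middle object $P'$ is local is asserted only parenthetically in the paper.
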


\begin{proof}
By factoring a map $f \colon R \to Q$ with $Q\in \mcP$ into a localization $p$ followed by an admissible map (thus $p$ is a local form), we obtain a map $p \to f$ in $R/_\mcA\mcP$; for a fixed $f$, the object $p$ is unique up to isomorphism and, upon fixing it, the map $p \to f$ is unique. For a map $f \to f'$ in $R/_\mcA\mcP$ the factorization gives a diagram on the left, interpreted in $R/_\mcA\mcP$ on the right:
\[\xymatrix@=.5pc{
& & & & P \ar[rr]^-\sim \ar[dd] & & Q \ar[dd]^-\sim & & & & p \ar[rr] \ar[dd] & & f \ar[dd] \\
\mcA \colon & & R \ar@{ >->}[rru]^-p \ar@{ >->}[rrd]_-{p'} & & & & & & & & & & & & {:}R/_\mcA\mcP \\
& & & & P' \ar[rr]^-\sim & & Q' & & & & p' \ar[rr] & & f'
}\]
By the previous lemma, the map $P \to P'$ is both a localization and admissible, thus an isomorphism and hence $p\cong p'$. This implies that the object $p \in R/_\mcA\mcP$ is shared by the whole component and as such is initial.
\end{proof}

The previous lemma essentially says that the inclusion $\mcP \to \mcA$ admits a multi-adjoint (see \cite{Osmond}), i.e.\ that $\mcP$ is a multi-reflective subcategory, except for the size issues that we address now.

\begin{theorem}\label{thm:components_multireflection_small}
The class of components $\pi_0(R/_\mcA\mcP)$ is small, i.e.\ a set. In other words, the collection of local forms of $R$ up to isomorphism is a set.
\end{theorem}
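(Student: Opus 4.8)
The plan is to reduce the smallness of $\pi_0(R/_\mcA\mcP)$ to co-wellpoweredness of $\mcA$. By Lemma~\ref{lemma:initial_object_into_B} the components of $R/_\mcA\mcP$ are in bijection with isomorphism classes of local forms $p \colon R \cof P$, so it suffices to prove that these form a set. The first observation I would make is that every localization is an epimorphism: each generating map $a_{ij}$ is epic by hypothesis, and epimorphisms are closed under coproducts, pushouts, and transfinite composition, which are exactly the operations generating localizations. Hence a local form $p \colon R \cof P$ presents a \emph{quotient object} of $R$, and two local forms are isomorphic as objects of $R/_\mcA\mcP$ precisely when they present the same quotient object (the comparison isomorphism $P \to P'$ under $R$ is unique because $p, p'$ are epic). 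Thus the isomorphism classes of local forms inject into the collection of quotient objects of $R$.

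The decisive step is then to invoke the classical fact that every locally finitely presentable category is co-wellpowered, so that each object $R$ admits only a set of quotient objects. This at once bounds the local forms and hence $\pi_0(R/_\mcA\mcP)$. The one point requiring attention is that localizations need not be strong or regular epimorphisms (their orthogonality is only against admissible maps, not against all monomorphisms), so one genuinely needs co-wellpoweredness with respect to \emph{all} epimorphisms rather than merely the strong ones; for locally presentable categories this stronger statement does hold, and it is exactly what delivers the result.

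If one prefers an argument staying within the small-object-argument machinery already developed, the alternative is to bound the presentability rank of the codomains $P$ directly. Here I would fix a regular cardinal $\lambda$ with $R$ and all $A_i, B_{ij}$ being $\lambda$-presentable, with $|C^\downarrow| < \lambda$, and with the additional ``sharpness'' property that $|\mcA(A_i, X)| < \lambda$ whenever $X$ is $\lambda$-presentable. Running the cone-injective small object argument from $R$, one attaches fewer than $\lambda$ cells at each of fewer than $\lambda$ stages; each stage is a $\lambda$-small colimit, so every $R_\alpha$ stays $\lambda$-presentable, and by regularity together with the finite presentability of the $A_i$ the process closes off at stage $\lambda$ at a local object. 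Every local form is then (up to isomorphism) represented by a $\lambda$-presentable codomain, and since $\lambda$-presentable objects form a set up to isomorphism and each $\mcA(R,P)$ is a set, the local forms form a set.

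The main obstacle in either route is the cardinality control. In the slick route it is isolated in the invocation of co-wellpoweredness (and in checking that it applies to arbitrary, not just strong, epimorphisms); in the constructive route it is the choice of a sufficiently closed regular cardinal $\lambda$ together with the verification that the capped small-object argument terminates with a $\lambda$-presentable, genuinely local codomain. Everything else—the epimorphy of localizations, and the translation between components, isomorphism classes of local forms, and quotient objects via Lemma~\ref{lemma:initial_object_into_B}—is routine once this bound is in place.
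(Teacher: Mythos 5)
Your proposal is correct, but your primary route is genuinely different from the paper's. The paper argues directly: by finite presentability of the $A_i$, the small object argument factors a local form $p \colon R \cof P$ as a \emph{countable} relative cell complex followed by an admissible map, and uniqueness of the factorization (forced by the $a_{ij}$ being epic) shows $p$ is itself a countable cell complex; since at each of the countably many steps the possible attachments are parametrized by a set of triples $(i,j,f)$ (no multiplicities needed, again by epicity), the possible local forms up to isomorphism form a set. You instead observe that localizations are epimorphisms (epis being closed under coproducts, pushouts and transfinite composition), so local forms inject into the quotient objects of $R$, and then invoke co-wellpoweredness of locally presentable categories -- and you correctly flag the one real subtlety, namely that this must be co-wellpoweredness with respect to \emph{all} epimorphisms, not just strong ones, which is exactly what Theorem~1.58 of \cite{AdamekRosicky} supplies. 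This is shorter and also immediately recovers the paper's closing remark that all localizations of $R$ form a set up to isomorphism. What it costs is generality: Part~I of the paper runs under milder hypotheses ($\mcA$ complete and cocomplete with the sheafification and stalk conditions, not necessarily locally presentable), and co-wellpoweredness is not available there, whereas the paper's counting argument needs only local smallness, finite presentability of the cone domains and codomains, and epicity of the $a_{ij}$. Your fallback route -- capping the small object argument by a sharp regular cardinal $\lambda$ so that every local form has $\lambda$-presentable codomain -- is essentially a cardinality-bounding variant of the paper's argument, again tied to local presentability (both for the existence of such $\lambda$ and for the fact that $\lambda$-presentable objects form a set up to isomorphism); it works, but the paper's bound ``countably many steps, a set of choices per step'' is more elementary and avoids choosing $\lambda$ altogether.
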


\begin{proof}
We will present a more refined argument later. For the time being, start with a local form $p \colon R \cof P$. By finite presentability, the small object argument gives a factorization of $p$ into a countable relative cell complex followed by an admissible map. The uniqueness of the factorization gives that $p$ is, in fact, itself a countable relative cell complex. In each step, there is no need to glue a cell of one shape $a_{ij} \colon A_i \to B_{ij}$ along one attaching map $f \colon A_i \to R$ multiple times, since the $a_{ij}$'s are epic. Thus, each step allows for a set of alternatives (given by sets of triples $(i, j, f)$ used in that step) and there is a countable number of steps.
\end{proof}

In fact, we proved that the collection of localizations of $R$, up to isomorphism, is small.

\section{Proof of the concrete description} \label{section:concrete_description_Spec}

We continue with the proof of Theorem~\ref{theorem:existence_of_spec}, namely its descriptive part.

\subsection{Points and stalks}

Consider $\mcP^* \subseteq \APTop$ the full subcategory consisting of one-point spaces (they are exactly the spaces $P^*$ for $P \in \mcP$). We will now describe the points and the stalks of $(X, \mcO_X) \in \APTop$ categorically, in a way similar to Lemma~\ref{lemma:initial_object_into_B}. Since we will need to distinguish between an $\AP$-space and its underlying topological space, we will use $(X, \mcO_X)$ for the first and $X$ for the second.

\begin{lemma} \label{lemma:terminal_object_from_Bstar}
The components of the comma category $\mcP^*/_{\APTop}(X,\mcO_X)$ are in bijection with points of $X$.

Every point $p \colon * \to X$ induces a map $\overline p \colon (\mcO_{X,p})^* \to (X,\mcO_X)$ (a cartesian lift of $p$) that is a terminal object of the respective component.
\end{lemma}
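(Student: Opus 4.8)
The plan is to unwind the comma category $\mcP^*/_{\APTop}(X,\mcO_X)$ completely and then recognize the asserted bijection as the statement that its connected components are exactly the fibres of the ``underlying point'' projection, each of which carries a terminal object. First I would describe the objects. A map $P^* \to (X,\mcO_X)$ in $\APTop$ consists of a continuous map $* \to X$, i.e.\ a choice of point $p \in X$, together with a structure map $\varphi_\sharp \colon \varphi^*\mcO_X \to \mcO_{P^*}$, which at the single point is just a map $\mcO_{X,p} \to P$; the $\APTop$-condition forces this map to be admissible. Thus an object is a triple $(p, P, \alpha)$ with $P \in \mcP$ and $\alpha \colon \mcO_{X,p} \to P$ admissible. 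A morphism to a second such triple $(p', P', \alpha')$ is a map $P^* \to (P')^*$ over $(X,\mcO_X)$, i.e.\ an admissible $h \colon P' \to P$ with $h\alpha' = \alpha$; comparing the underlying continuous maps shows such a morphism can exist only when $p = p'$.

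From this description the assignment $(p,P,\alpha) \mapsto p$ is a functor to the discrete set $X$ which is constant on every morphism, so the comma category splits as a disjoint union $\coprod_{p \in X}\mcC_p$ of the full subcategories $\mcC_p$ of objects lying over a fixed $p$. It then suffices to prove that each $\mcC_p$ is nonempty and connected, which I would do by exhibiting a terminal object. Since $X \in \APTop$, the stalk $\mcO_{X,p}$ is local, hence lies in $\mcP$, and the identity (which is admissible) furnishes the cartesian lift $\overline p \colon (\mcO_{X,p})^* \to (X,\mcO_X)$, namely the object $(p, \mcO_{X,p}, \mathrm{id})$ of $\mcC_p$; this settles nonemptiness.

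For terminality, given any $(p, P, \alpha) \in \mcC_p$, a morphism to $\overline p$ is an admissible $h \colon \mcO_{X,p} \to P$ with $h\circ \mathrm{id} = \alpha$, which forces $h = \alpha$; since $\alpha$ is admissible by hypothesis this $h$ is a legitimate, and manifestly the unique, such morphism, so $\overline p$ is terminal in $\mcC_p$. A category with a terminal object is connected, whence each $\mcC_p$ is a single component. The points of $X$ therefore index $\pi_0$ bijectively, and $\overline p$ is the promised terminal object of the component over $p$.

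The routine but error-prone part, and the one I would treat most carefully, is the bookkeeping of the structure maps and the direction of the admissibility conditions: identifying $\varphi^*\mcO_X$ at the point with the stalk $\mcO_{X,p}$, checking that composition of one-point $\AP$-maps corresponds to composition of the underlying $\mcA$-maps in the correct (contravariant) order, and confirming that $\overline p$ is genuinely cartesian. Once these identifications are pinned down, the categorical content is immediate and runs exactly parallel to Lemma~\ref{lemma:initial_object_into_B}, with ``initial object / local form'' replaced by ``terminal object / stalk.''
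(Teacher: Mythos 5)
Your proof is correct and takes essentially the same approach as the paper: the paper phrases the argument via the Grothendieck fibration $\APTop \to \Top$, where your factorization of an arbitrary $\varphi \colon P^* \to (X,\mcO_X)$ as the sheaf-level map $\alpha$ followed by $\overline p$ is exactly its vertical--cartesian factorization, whose uniqueness gives terminality of $\overline p$ in its component. Your explicit unwinding of the objects, morphisms, and admissibility conditions of the comma category is a hands-on rendering of the same content, with no mathematical difference.
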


\begin{proof}
Similarly to Example~\ref{ex:ATop}, $\APTop \to \Top$ is a fibrational Grothendieck construction for the diagram $\Top^\op \to \CAT$, sending a space $X$ to the opposite of the category of $\mcA$-valued sheaves on $X$ with stalks in $\mcP$ (the inverse image preserves stalks). Thus, the cartesian lift of $p \colon * \to X$ is the map $\overline p \colon (p^*\mcO_X)^* \to (X, \mcO_X)$ whose sheaf component is the identity; clearly $p^*\mcO_X = \mcO_{X,p}$. Thus, the (vertical, cartesian) factorization of a map $\varphi \colon P^* \to (X, \mcO_X)$,
\[\varphi \colon P^* \to (\mcO_{X,p})^* \to (X, \mcO_X),\]
gives a unique map from $\varphi$ to $\overline p$ in the comma category $\mcP^*/_{\APTop}(X,\mcO_X)$.
\end{proof}


The adjunction $\Gamma \dashv \Spec$, when restricted to $\mcP^* \subseteq \APTop$ yields
\[\APTop(P^*, \Spec R) \cong \mcA(R, \Gamma P^*) = \mcA(R, P),\]
naturally in $P$ and $R$, thus giving
\[\mcP^*/_{\APTop} \Spec R \cong (R/_\mcA\mcP)^\op,\]
still natural in $R$. Lemmata~\ref{lemma:initial_object_into_B} and~\ref{lemma:terminal_object_from_Bstar} give the following conclusions:
\begin{itemize}
\item
	(components) The points $p \in \Spec R$ are identified with isomorphism classes of local forms $p \colon R \cof P$.
\item
	(terminal objects) The stalk at $p \in \Spec R$ is identified with $P$.
\end{itemize}

The naturality in $R$ gives an interpretation of the map induced by $f \colon R \to S$ on the spectra: Let $q \in \pi_0(S /_\mcA \mcP)^\op$ be a point of $\Spec S$, represented by a local form $q \colon S \cof Q$. Its image under $f^* \colon \pi_0(S /_\mcA \mcP)^\op \to \pi_0(R /_\mcA \mcP)^\op$ is represented by a local form $p \colon R \cof P$ is obtained by factoring $qf$ into a localization $p$ followed by an admissible map $g$, as in
\[\xymatrix{
R \ar@{ >->}[r]^-p \ar[d]_-f & P \ar[d]_-\sim^-g & & \rightbox{p \in {}}{\Spec R} & P^* \ar[l]_-{p} \\
S \ar@{ >->}[r]_-q & Q & & \rightbox{q \in {}}{\Spec S} \ar[u]_-{f^*} & Q^* \ar[l]^-{q} \ar[u]_-{g^*}
}\]
Thus $f^*(q) = p$ and the square on the right commutes since it corresponds to $qf=gp$ in $\mcA(R, Q)$ under the above correspondence. Since $p$ and $q$ are cartesian and $g^*$ is vertical, the map on stalks $(f^*)_q \colon P \to Q$ is just $g$.

%

\subsection{Distinguished open sets}

We will now exhibit certain open subsets of $\Spec R$. It will be convenient to denote the structure sheaf of $\Spec R$ simply as $\O_R := \mcO_{\Spec R}$. The counit of the adjunction gives a canonical map $\varepsilon_R \colon R \to \Gamma \Spec R$ (for commutative rings, this happens to be an isomorphism, but generally, this is not the case). Thus, $R$ admits a canonical map into all objects in the structure sheaf $\mcO_R(U)$.

Let $k \colon R \cof K$ be a finite localization. In our identification of points with local forms, $k^* \colon \Spec K \to \Spec R$ is given by composing a local form $K \cof P$ with the epic $k$ (resulting in a local form of $R$) and as such is injective on points. We will now show that its image is an open subset: to match the classical terminology, we will call these open sets \emph{distinguished} and will denote them $\Pts k$.

Starting from the other side, a local form $p \colon R \cof P$ lies in $\Pts k$ iff $p$ factors through $k$; if this is the case, it factors through $\O_R(U)$, for some $U \ni p$:
\[\xymatrix{
\rightbox{p \colon {}}{R} \ar@{ >->}[r]^-k \ar[rrd] & K \ar[r] \ar@{-->}[rd] & \leftbox{P}{{} = \colim_{U \ni p} \O_R(U)} \\
& & \O_R(U) \ar[u] \ar[r] & P'
}\]
(by finite presentability of $K \in R/\mcA$). Consequently, for any $p' \in U$, the corresponding localization $p' \colon R \to P'$ factors through $K$ as well and thus $U \subseteq \Pts k$.

We will now show that, in fact, $\Spec K \to \Spec R$ is an \emph{open embedding}: that is, a map of $\A$-spaces $(i, i^\sharp) \colon (X,\O_X) \to (Y,\O_Y)$ which is a homeomorphism $i$ onto an open subset on the topological level and on the level of sheaves, $i_\sharp \colon i^*\O_Y\to\O_X$ is an isomorphism. Since the image $\Pts k$ of $k^*$ is open, we may consider the restriction of $\Spec R$ to this image and thus obtain a map $\Spec K \to \Spec R|_{\Pts k}$. We will now construct a map in $\APTop$ in the opposite direction by showing that a factorisation
\[\xymatrix{
& \Spec K \ar[d] \\
\Spec R|_{\Pts k} \ar@{c->}[r] \ar@{-->}[ru] & \Spec R
}\]
exists by translating it to an equivalent extension problem (where the bottom middle term is identified with the global sections of $\Spec R|_{\Pts k}$):
\[\xymatrix{
& & K \ar@{-->}[ld] \ar@{.>}[lld] \\
\O_R(U) & \O_R(\Pts k) \ar[l] & R \ar[u] \ar[l]
}\]
By the above arguments, such an extension exists locally in some neighbourhood $U$ of any given $p \in \Spec K$ -- the dotted arrow in the diagram. Since these local extensions are unique and $\O_R$ is a sheaf, they glue to give an extension to $\O_R(\Pts k)$ as required. It follows that $k^* \colon \Spec K \to \Spec R$ is an open embedding and since it is also a stalkwise iso, it gives an isomorphism $\Spec K \cong \Spec R|_{\Pts k}$. Using this identification, the counit $K \to \Gamma \Spec K$ then becomes $K \to \mcO_R(\Pts k)$.

\subsection{Topology and structure sheaf}

Finally, we will show that the distinguished opens $\Pts k$ form a basis of the topology of $\Spec R$ and give a description of the structure sheaf $\O_R$, slightly different from that claimed in the main statement; the relationship of the two approaches is addressed in Section~\ref{sec:reduction}.

We will prove the claims by constructing from $\Spec R$ a minimalistic version of it that is still an $\AP$-space with the same stalks; the universal property will then ensure that it is in fact isomorphic to $\Spec R$. Concretely, let $\Spec' R$ be $\Spec R$ equipped with a topology generated by the basis $\Pts k$, for all finite localizations $R \to K$ -- these are clearly closed under finite intersections (these correspond to pushouts of localizations). The identity of the underlying sets is a continuous map $\varphi \colon \Spec R \to \Spec' R$ that we will now promote to a map of $\AP$-spaces and then show to be an isomorphism. In order to do so, we need to define a structure sheaf $\mcO'_R$ on $\Spec' R$ and provide it with a map $\mcO'_R \to \varphi_* \mcO_R$ where the codomain is easily seen to be just the restriction of $\mcO_R$ to the opens of $\Spec' R$. We know that the counit is a map $K \to \mcO_K(\Pts k)=\mcO_R(\Pts k)$ and it is thus tempting to set $\mcO'_R(\Pts k) = K$. This prescription is only defined on the distinguished opens and does not satisfy the sheaf condition; most importantly, it is not well defined since $\Pts k$ does not determine $K$.

We can correct the above deficiency by considering, for any distinguished open $U$, the diagram $\mcFL_U$ of all finite localizations $k$ with $\Pts k = U$ and defining $R_U = \colim_{k \in \mcFL_U} K$, i.e.\ the union of all finite localizations with the same associated point set $U$. This is easily seen to be a localization $r_U \colon R \cof R_U$, possibly infinite. At the same time, for any $k \in \mcFL_U$, the corresponding component of the colimit cocone is a localization $K \to R_U$ that induces an isomorphism
\[\Spec K \cong \lim_{k \in \mcFL_U} \Spec K \cong \Spec R_U,\]
since the diagram of spectra is simply connected (in fact, cofiltered) and constant. By our identification of the spectra of localizations with subsets of $\Spec R$, this means in particular that $\Pts r_U = U$.

\begin{lemma}
If $V \subseteq U$ then a factorization $R_U \to R_V$ exists.
\end{lemma}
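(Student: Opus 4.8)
The plan is to build the map $R_U \to R_V$ out of the colimit presentation $R_U = \colim_{k \in \mcFL_U} K$ and to check naturality under $R$. Concretely, for each finite localization $k \colon R \cof K$ with $\Pts k = U$, I will produce a map $\phi_k \colon K \to R_V$ with $\phi_k \circ k = r_V$. Because $k$ is a localization, hence an epimorphism, $\phi_k$ is the \emph{unique} map with this property; consequently the $\phi_k$ are automatically compatible with the transition maps of the diagram $\mcFL_U$ (any transition map $t \colon K_1 \to K_2$ over $R$ satisfies $\phi_{k_2} \circ t \circ k_1 = r_V = \phi_{k_1} \circ k_1$, whence $\phi_{k_2} \circ t = \phi_{k_1}$), and therefore assemble into a map $R_U \to R_V$ satisfying $(R_U \to R_V) \circ r_U = r_V$, i.e.\ a genuine restriction.

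The heart of the argument is a pushout trick that keeps everything finite. Fix once and for all a finite localization $k'_0 \colon R \cof K'_0$ representing $V$, i.e.\ with $\Pts k'_0 = V$ (possible since $V$ is distinguished). Given $k \in \mcFL_U$, form the pushout of $k'_0$ along $k$:
\[
\xymatrix{
R \ar@{ >->}[r]^-k \ar@{ >->}[d]_-{k'_0} & K \ar@{ >->}[d]^-m \\
K'_0 \ar[r] & S' \po
}
\]
Since $k'_0$ is a finite localization, so is its pushout $m \colon K \cof S'$, and hence the composite $k'' := m \circ k \colon R \cof S'$ is again a finite localization. I then identify its point set: as the point set of a pushout of localizations is the intersection of the point sets of the two factors (a local form $p \colon R \cof P$ factors through $R \to S'$ if and only if it factors through both $k$ and $k'_0$, by the universal property of the pushout), one computes
\[
\Pts k'' = \Pts k \cap \Pts k'_0 = U \cap V = V,
\]
where the last equality is exactly where the hypothesis $V \subseteq U$ enters. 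Thus $k'' \in \mcFL_V$, so the colimit cocone of $R_V = \colim_{k \in \mcFL_V} K$ supplies a map $\sigma_{k''} \colon S' \to R_V$ with $\sigma_{k''} \circ k'' = r_V$. Setting $\phi_k := \sigma_{k''} \circ m$ then gives $\phi_k \circ k = \sigma_{k''} \circ m \circ k = \sigma_{k''} \circ k'' = r_V$, as required.

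I expect the only genuinely delicate steps to be (i) the verification that pushouts and composites of finite localizations remain finite localizations, which is immediate from the definitions since localizations are closed under these operations and finiteness is manifestly preserved, and (ii) the identification $\Pts(\text{pushout}) = \Pts k \cap \Pts k'_0$, which must be argued from the universal property and \emph{not} from any comparison between inclusions of point sets and factorizations of finite localizations (such a comparison can fail before reduction, so it should be avoided here). The decisive design choice is to push out against a \emph{finite} representative $k'_0$ of $V$ rather than against the possibly infinite $r_V \colon R \cof R_V$ itself: this is precisely what guarantees that $k''$ is finite, hence lands in $\mcFL_V$, so that the cocone of $R_V$ is available to define $\phi_k$.
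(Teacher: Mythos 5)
Your proof is correct and takes essentially the same approach as the paper: the paper likewise fixes a finite localization $L \in \mcFL_V$, pushes out each $K \in \mcFL_U$ along it (observing that the pushout has point set $U \cap V = V$ because $\Spec$ sends pushouts in $\mcA$ to pullbacks of spaces), and assembles the resulting functor $\mcFL_U \to \mcFL_V$ together with the canonical natural transformation into a map of colimits $R_U \to R_V$. Your only deviations are presentational: you verify the intersection formula directly from the universal property of the pushout, and you replace the explicit functoriality/naturality check by the uniqueness argument coming from epicness of localizations.
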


\begin{proof}
Let $L \in \mcFL_V$ and define a functor $\mcFL_U \to \mcFL_V$ by a pushout along $R \to L$. This is well-defined since $\Spec$ takes pushouts in $A$ to pullbacks in $\APTop$. The bottom map in the following diagram 
\[\xymatrix{
R \ar[r] \ar[d] & L \ar[d] \\
K \ar[r] & L+_RK \po
}\]
then gives a natural transformation
\[\xymatrix@=.5pc{
\mcFL_U \ar[dd] \ar[rrd] \\
{} \POS[];[rr]**{}?(.33)*{\Downarrow} & & \mcA \\
\mcFL_V \ar[rru]
}\]
with the diagonal functors the canonical inclusions and thus induces the required map $R_U \to R_V$ by taking colimits.
\end{proof}

We thus have a partial presheaf defined on the distinguished opens of $\Spec' R$, given by $U \mapsto R_U$ and we extend it to a presheaf on $\Spec' R$ using a right Kan extension\footnote{With respect to the arguments in the next paragraph, a more reasonable choice seems to be that of the left Kan extension. However, the result would almost never be equal to the structure sheaf $\mcO_R$, while Proposition~\ref{prop:Spec_fully_faithful_ORcan_sheaf} shows that the right Kan extension works better in this respect. At the same time, the sheafification of both Kan extensions is the same (they agree on distinguished opens).} along the inclusion of distinguished opens into all opens; since the inclusion is full, the result is indeed an extension which we call the \emph{canonical presheaf} and denote $\O_R^{can}$. Finally, we define $\O_R' = \sheafify \O_R^\textrm{can}$ as the sheafification.

The counits $K \to \mcO_R(\Pts k)$ induce a natural map $R_U \to \mcO_R(U)$ for $U$ distinguished open and thus, for any open $V \subseteq \Spec' R$, also
\[\lim_{\text{d.o. } U \subseteq V} R_U \to \lim_{\text{d.o. } U \subseteq V} \mcO_R(U)\]
where the left hand side is the usual formula for the right Kan extension $\mcO_R^\textrm{can}(V)$ and the right hand side is $\mcO_R(V)$ by the sheaf property of $\mcO_R$, since the collection of all distinguished open subsets of $V$ forms a particular open cover of $V$. Finally, the resulting map $\mcO_R^\textrm{can} \to \varphi_* \mcO_R$ induces a map from the sheafification $\mcO'_R \to \varphi_* \mcO_R$ and this finishes the construction of $\varphi \colon \Spec R \to \Spec R'$. We need to verify that this is a map of $\AP$-spaces. Both the right Kan extension and the sheafification preserve stalks (the first does since a stalk can be computed as a colimit over any neighbourhood basis, e.g.\ that formed by all distinguished open neighbourhoods, the second by the general property of the sheafification) and, thus, the stalk of $\mcO_R'$ at $p \colon R \to P$ is
\[\O'_{R,p} = \colim_{U \ni p} R_U \cong \colim_{\Pts k \ni p} K = \colim_{p \colon R \to K \to P} K \cong P\]
where the last isomorphism comes from the general procedure of rewriting a relative cell complex as a directed colimit of finite cell complexes, see~\cite{fsoa}. This implies easily that $\varphi \colon \Spec R \to \Spec' R$ is a stalkwise isomorphism and as such is, in particular, a map of $\AP$-spaces. Since $\Spec' R$ admits a canonical map into $R^*$, the universal property of $\Spec R$ gives a filler $\psi$ in the diagram:
\[\xymatrix{
\Spec R \ar[d]_-\varphi \ar[rd] \\
\Spec' R \ar[r] \ar@{-->}[d]_-\psi & R^* \\
\Spec R \ar[ru]
}\]
As $\varphi$ and $\psi$ are the identity maps on the underlying point sets, this shows that the topologies coincide; since $\varphi$ was also shown to be a stalkwise isomorphism, it is then an isomorphism of $\AP$-spaces.

\section{Connection to the spectrum of Diers} \label{sec: diers}


We relate our spectral construction to a Diers spectrum as described in \cite{Diers} and \cite{Osmond}. A \emph{Diers context} is a right multi-adjoint $U\colon \mcP\to \mcA$ with $\mcA$ locally finitely presentable satisfying certain technical condition: any local form $p\colon R\to P$ is a filtered colimit of all maps $k\colon R\to K$ factorizing $p$ with $K$ finitely presentable in $R/\mcA$ such that $k$ is left orthogonal to the maps in the image of $U$. We then define the category $\Top_U$ of $U$-spaces as the following pseudo-pullback in $\CAT$ (where the map $\ATop\to \prod \mcA$ is a joint stalk functor):

\[\begin{tikzcd}
	{\mathsf{Top}_U} & {\mathsf{Top}_{\mathcal A}} \\
	{\prod \mathcal P} & {\prod \mathcal A}
	\arrow["{\prod U}", from=2-1, to=2-2]
	\arrow["{\text{stalk}}", from=1-2, to=2-2]
	\arrow[from=1-1, to=2-1]
	\arrow[from=1-1, to=1-2]
\end{tikzcd}\]

In particular, if $\mcP$ is the subcategory of local objects and admissible maps with respect to a set of cones satisfying the prerequisites of Theorem \ref{theorem:existence_of_spec}, the inclusion $U\colon \mcP\to\A$ is a Diers context (it is a multi-reflection by Lemma~\ref{lemma:initial_object_into_B} and the technical condition is satisfied since any local form is a filtered colimit of finite localizations factoring through it). The category $\Top_U$ is then precisely $\APTop$. We have the following theorem of Osmond, building heavily on the work of Diers in \cite{Diers}:

\begin{theorem}
    (\cite{Osmond}, Thm. 2.14) The functor $\Top_U\to \ATop$ admits a right adjoint $\spec$. 
\end{theorem}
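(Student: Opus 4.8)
The plan is to recognize this statement as a restatement of Theorem~\ref{theorem:existence_of_spec} in the language of Diers contexts, and to indicate how the proof of the latter transports to the general setting. Under the identification $\Top_U \simeq \APTop$ recorded above, the functor $\Top_U \to \ATop$ is precisely the inclusion $\incl \colon \APTop \to \ATop$ whenever $U$ arises from a set of cones, so in that case the existence of $\spec$ is exactly the content of Theorem~\ref{theorem:existence_of_spec}, established in Sections~\ref{section:existence_Spec}--\ref{section:concrete_description_Spec} by the adjoint functor theorem. For a general Diers context $U \colon \mcP \to \mcA$ I would run the same three-step argument, with the multi-reflective structure of $U$ playing the role that cone-injectivity plays for us, and the technical filtered-colimit condition in the definition of a Diers context playing the role of finite presentability of the cone components. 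Since we want a \emph{right} adjoint, the relevant instance of the adjoint functor theorem asks that $\Top_U$ be cocomplete, that the functor to $\ATop$ preserve colimits, and that it satisfy the solution set condition.

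First I would establish the two colimit conditions simultaneously. The category $\ATop$ is cocomplete via the Grothendieck fibration $\ATop \to \Top$ of Example~\ref{ex:ATop}: a colimit is formed by taking the colimit of underlying spaces and assembling the structure sheaf through the pointwise limit formula for sections. It then suffices to show that these colimits lift to $\Top_U$, i.e.\ that if every stalk of a diagram lies in the image of $U$ and every comparison map of stalks is admissible, then so do the stalks of the colimit and the maps induced by its cocone. This is the maximal-cover argument around Lemma~\ref{lemma:local_criterion}; in the general Diers setting the compatibility of the canonical covers under inverse image is guaranteed by admissibility exactly as before, and the required partial extensions exist because every local form is a filtered colimit of the finitely presentable orthogonal factorizations postulated by the Diers condition. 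This lifting gives both cocompleteness of $\Top_U$ and preservation of colimits by the functor to $\ATop$.

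The hard part, as in our own proof, is the solution set condition: given $Y \in \ATop$ one must exhibit a weakly terminal set in the comma category $\Top_U/_{\ATop}Y$. Here I would mimic Section~\ref{section:existence_Spec}: factor an arbitrary $\varphi \colon X \to Y$ with $X \in \Top_U$ through its vertical--cartesian factorization, replace $\mcO_X$ by the localization of $\varphi^*\mcO_Y$ produced by the small object argument followed by sheafification, and then collapse the underlying space along the equivalence relation identifying points whose induced stalk data agree up to (necessarily unique) isomorphism. The resulting quotient is again an object of $\Top_U$, being a coequalizer of a diagram lying in $\Top_U$, and the collection of such quotients is a set. The crucial size bound --- that the localizations of a fixed object of $\mcA$ form a set up to isomorphism --- is Theorem~\ref{thm:components_multireflection_small} in the cone case; in the general Diers case it is precisely what the technical condition buys us, since writing every local form as a filtered colimit of finitely presentable factorizations bounds the data needed to specify it.

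I expect the solution set condition to be the only genuine obstacle. Without the finite-presentability and filtered-colimit hypotheses built into the notion of a Diers context, the possible structure sheaves and underlying point sets form a proper class and the adjoint functor theorem cannot be invoked; the entire force of Diers' and Osmond's hypotheses is concentrated in making this step go through, just as finite presentability of the $A_i$ and $B_{ij}$ is what drives Theorem~\ref{thm:components_multireflection_small} in our treatment.
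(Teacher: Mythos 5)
You have correctly identified that, when $U$ arises from a set of cones, $\Top_U \simeq \APTop$ and the statement reduces to Theorem~\ref{theorem:existence_of_spec}. But note that the paper does not prove the statement under discussion at all: it is quoted as Theorem 2.14 of \cite{Osmond}, and the authors explicitly remark that the proof method there (and in \cite{Diers}) is the reverse of theirs --- the spectrum is first constructed explicitly and only afterwards shown to be right adjoint. So your plan of running the adjoint-functor-theorem argument in the general Diers setting is a genuinely different route from the cited proof, and it needs to stand on its own.

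It does not, as written, because every sheaf-theoretic step of the paper's AFT argument leans on the hypothesis that the generating maps $a_{ij}$ are \emph{epimorphisms}, and a general Diers context supplies no analogue of this. The gluing step behind Lemma~\ref{lemma:local_criterion} (and hence the whole closure-under-colimits argument) needs extensions along $a_{ij}$ to be unique, so that locally constructed extensions agree on overlaps and patch by the sheaf condition; in a Diers context, left-orthogonality gives unique lifts only against maps in the image of $U$ (the admissible maps), not against the restriction maps of $\mcO_X$, so compatibility on intersections is not automatic and the canonical-cover construction breaks. Likewise, the quotient step of the solution-set argument explicitly uses that the maps $\mcO_{Y,q} \to F_p$ are epic so that the comparison isomorphism $F_p \cong F_{p'}$ is \emph{unique}; without this, the equivalence relation and the structure sheaf on $X/{\sim}$ are not well defined. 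Finally, for a general Diers context $U \colon \mcP \to \mcA$ is merely a functor, not a subcategory inclusion, so $\Top_U \to \ATop$ is not an inclusion: objects of $\Top_U$ carry chosen lifts of their stalks as extra structure, and your comma-category bookkeeping would have to track that structure throughout. The technical filtered-colimit condition you invoke addresses only the size bound (the analogue of Theorem~\ref{thm:components_multireflection_small}); it is not a substitute for the uniqueness supplied by epicness, and this is presumably exactly why Diers and Osmond prove the theorem by explicit construction instead.
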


This shows that if $\mcA$ is locally finitely presentable, our Theorem \ref{theorem:existence_of_spec} is a special case of the theorem above. What differs are the proof methods: in both \cite{Diers} and \cite{Osmond} the spectrum is first described explicitly and only then, this description is used to prove that $\spec$ is an adjoint.

\section{Further examples} \label{sec:examples}

%
%
We are now going to identify spectra and schemes for some particular choices of $\mcA$ and $\mcP$.

\subsection{Ordinary (relative) schemes} \label{subsec: classical_schemes}

If we start with $\mcA$ the category of commutative $R$-algebras for any commutative ring $R$ and $\mcP$ the subcategory of local $R$-algebras (i.e.\ the underlying ring is local) and local homomorphisms, we can recognize $\mcP$ as a subcategory of local objects and admissible maps with respect to the cone
\[\xymatrix@C=0pc@R=1pc{
	& R[x,y]/(x+y-1) \ar[ld] \ar[rd] \\
	R[x,x^{-1},y]/(x+y-1) & & R[x,y,y^{-1}]/(x+y-1)
}\]
together with the empty cone with summit the trivial ring $\{0=1\}$. $\A$ and $\mcP$ satisfy conditions of the main theorem, hence we can construct the spectrum functor, which is just the relative $\Spec$ used in algebraic geometry. Schemes in this context are thus just $R$-schemes in the usual sense. For an $R$-algebra $A$, points of $\Spec A$ correspond to prime $R$-ideals of $A$.

\subsection{Deitmar’s $\mathbb{F}_1$-schemes} \label{subsec: Deitmar}


$\mathbb{F}_1$-geometry is a part of algebraic geometry which tries to define a geometry behaving like ``schemes over field with one element'' in order to prove the Riemann hypothesis in a similar way the Weil conjectures were proved. So far, most of the results are just conjectural as the optimal setting has not yet been found. For a nice exposition of different proposals for $\mathbb{F}_1$-schemes, see \cite{Funland}. 

Here, we are going to take a look at $\mathbb{F}_1$-schemes in the sense of Deitmar and Kato (\cite{Deitmar}, \cite{Funland}). We start with the category $\A=\mathbf{CMon}$ of commutative monoids and their homomorphisms and we consider the cone injectivity with respect to the following cone with \emph{additive} monoids of natural numbers and integers (where the map on the left is the identity and the map on the right is the inclusion): 
\[\xymatrix@C=2pc@R=1pc{
	& \N \ar[ld]_-1 \ar[rd]^-i \\
	\N & & \Z
}\]
Therefore, \emph{any} monoid is local (obviously, any map $\mathbb{N}\to M$ factors through the identity on $\N$). Admissible maps are homomorphisms reflecting invertible elements; this is precisely what the injectivity w.r.t.\ $i$ says. So, the subcategory $\mcP$ of local objects and admissible maps is a wide subcategory given by all monoids and maps that reflect invertibility.

Schemes defined in $(\A,\mcP)$-spaces in this context are $\mathbb{F}_1$-schemes in sense of Deitmar \cite{Deitmar}. For a commutative monoid $M$, points of $\Spec M$ look fairly similar to a spectrum of a ring in the previous example. Indeed, if we have a map $\N\to M$ sending $1$ to $a\in M$, pushout of $i$ corresponds adding inverse of $a$ to $M$: 
\[\xymatrix{
\N \ar[r]^-{1 \mapsto a} \ar[d]_-i & M \ar[d] \\
\Z \ar[r] & M[a^{-1}]
}\]
Recall that an ideal of a monoid $M$ is a subset $I\subseteq M$ such that $I\cdot M\subseteq I$ and it is prime if $xy\in I$ implies $x\in I$ or $y\in I$ for any $x,y\in M$. Then the set underlying $\Spec M$ is precisely the set of prime ideals of $M$.

\subsection{$C^{\infty}$-schemes} \label{subsec: Coo}

In this section, we are going to define a framework in which smooth manifolds arise as affine schemes for certain algebraic objects. This was developed in \cite{MoerdijkReyes} and further studied for example in \cite{Joyce}.

In this context, $\A=C^{\infty}\Ring$ is the category of $C^\infty$-rings, i.e.\ the category of product preserving functors $\Euc\to \Set$ where $\Euc$ is the category with $\R^n$ as objects (for $n\ge0$) and smooth functions $\R^m\to \R^n$ as morphisms. In other words, $\A$ is the category of $\Set$-valued models for the Lawvere theory $\Euc$ and as such it is locally finitely presentable. 

For any $C^\infty$-ring $\mathfrak C\colon \Euc\to\Set,$ the ``underlying set'' $\mathfrak C(\R)$ has a structure of a commutative $\R$-algebra\footnote{Note that addition and multiplication of real numbers are smooth maps $\R^2\to \R$ and as such they induce addition and multiplication on $\mathfrak C(\R)$.}. The assignment $\mathfrak{C}\mapsto \mathfrak C(\R)$ then gives a functor from $C^\infty$-rings to commutative $\R$-algebras.


Denote by $\mcP$ the subcategory of those $C^\infty$-rings $\mathfrak{C}$ such that the underlying $\R$-algebra $\mathfrak{C}(\R)$ is local (in the classical sense) and similarly for admissible maps. We claim that the inclusion $\mcP\to \A$ is a right multiadjoint. Since $C^{\infty}\Ring$ is a category of models for a Lawvere theory, we can build a free $C^\infty$-ring $C^{\infty}[x,y] = C^{\infty}(\R^2)$ on two generators $x,y$ as well as a localization $\mathfrak C[a^{-1}]$ inverting an element $a\in\mathfrak C(\R)$. It is known that we can also form a quotient $\mathfrak{C}/I$ for any $R$-ideal $I$ of underlying $\R$-algebra $\mathfrak{C}(\R)$ (cf. \cite{MoerdijkReyes}). Hence we are able to recognize $\mcP$ as the subcategory of local objects and admissible maps with respect to the following cone
\[\xymatrix@C=-1pc@R=1pc{
	& C^\infty[x,y]/(x+y-1) \ar[ld] \ar[rd] \\
	(C^\infty[x,y]/(x+y-1))[x^{-1}] & & (C^\infty[x,y]/(x+y-1))[y^{-1}]
}\]
together with the empty cone with summit the trivial $C^{\infty}$-ring. It is not hard to see that both maps appearing in the cone above are epic and all objects are finitely presentable. Statement then follows from Theorem \ref{theorem:existence_of_spec}. 

The corresponding spectrum functor was explicitly constructed in \cite{MoerdijkReyes}; points of $\Spec \mathfrak{C}$ are so-called $C^{\infty}$-radical prime ideals of $\mathfrak{C}.$

\newpage
\part{Obstructions to fully faithful Spec}

\section{Overview of the obstructions}

In the second part of the paper, we want to study conditions, under which $\Spec$ is fully faithful. This is the case in classical algebraic geometry and it turns out to be very important also in the generalized context. It is well known that this is equivalent to the counit $\varepsilon$ of $\Gamma \dashv \Spec$ being an isomorphism. We define $\fix\mcA \subseteq \mcA$ to be the full subcategory of the fixed points of the adjunction, i.e.\ those objects $R \in \mcA$ at which the counit $\varepsilon_R \colon R \to \overline R$ (interpreted in $\mcA$) is an isomorphism, where we abbreviate $\overline R = \Gamma \Spec R$.

There is a slightly weaker condition of $\varepsilon_R$ being a \emph{geometric isomorphism}, i.e.\ inducing an isomorphism on spectra. In such a situation, $\fix \mcA$ is exactly the full image of $\Gamma$, it is a reflective subcategory and $\Spec|_{\fix\mcA}$ is fully faithful. This is the content of a general well known theorem that we summarize first.

\subsection{Idempotent adjunctions}

A monad $T$ on $\mcA$ is said to be \emph{idempotent} if its multiplication $\mu$ is an isomorphism or, equivalently, if its unit $\eta$ satisfies $T \eta = \eta T$. In this case, the Eilenberg-Moore category $\mcA^T$ of algberas happens to be a reflective subcategory of all objects at which $\eta$ is an isomorphism, i.e.\ the fixed points of $T$, and the reflection is given by $T$. There is an obvious dual notion of an idempotent comonad. An adjunction $L \dashv R$ is said to be \emph{idempotent} if the associated monad $RL$ is idempotent or, equivalently, if the associated comonad $LR$ is idempotent. In this case the fixed points of $RL$ are the full image of $R$ and the fixed points of $LR$ are the full image of $L$ and the adjunction restricts to an equivalence between these, very much like for a Galois connection. Since the multiplication for $RL$ is given by $\mu = R \varepsilon L$ we thus obtain the following theorem.

\begin{theorem} \label{theorem:idempotent_adjunction}
Given an adjunction $L \dashv R$, assume that the transformation $R \varepsilon L$ is an isomorphism. Then the adjunction restricts to an equivalence between the full image of $R$ and the full image of $L$. \qed
\end{theorem}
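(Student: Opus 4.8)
The plan is to recast the hypothesis as idempotency of the monad $T=RL$, use the fact that the fixed points of an idempotent monad are exactly its essential image, and then match these fixed points against the full images of $R$ and $L$ so that the equivalence follows by restriction. Write $L\colon\mcC\to\mcD$ and $R\colon\mcD\to\mcC$, let $\eta$ denote the unit and $\varepsilon$ the counit, so the multiplication of $T$ is $\mu=R\varepsilon L$, invertible by assumption. From the monad unit laws $\mu\circ\eta T=\mathrm{id}_T=\mu\circ T\eta$ and invertibility of $\mu$ one gets $\eta T=T\eta=\mu^{-1}$; in particular each $\eta_{Tc}$ is invertible, so every object in the essential image of $T$ is a fixed point, while conversely a fixed point $c$ lies in the essential image via $\eta_c\colon c\to Tc$. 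Thus the fixed points of $T$ coincide with its essential image, in accordance with the discussion preceding the theorem; dually I will use the comonad $G=LR$ with comultiplication $\delta=L\eta R$.

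The first real step is to identify the fixed points of $T$ with $\mcC_0$, the full image of $R$. The inclusion of the essential image of $T=RL$ into $\mcC_0$ is immediate since $RLc=R(Lc)$. For the reverse inclusion I claim every $Rd$ is a fixed point, i.e.\ $\eta_{Rd}$ is invertible. The triangle identity $R\varepsilon\circ\eta R=\mathrm{id}_R$ already exhibits $\eta_{Rd}$ as a split mono. For the opposite composite, naturality of $\eta$ along the morphism $R\varepsilon_d\colon RLRd\to Rd$ gives $\eta_{Rd}\circ R\varepsilon_d=RLR\varepsilon_d\circ\eta_{RLRd}$; since $RLRd=T(Rd)$ and $\eta T=T\eta$, the term $\eta_{RLRd}$ equals $RL\eta_{Rd}$, whence
\[\eta_{Rd}\circ R\varepsilon_d=RLR\varepsilon_d\circ RL\eta_{Rd}=RL(R\varepsilon_d\circ\eta_{Rd})=RL(\mathrm{id}_{Rd})=\mathrm{id}_{RLRd}.\]
So $\eta_{Rd}$ is also split epi, hence invertible, and $\mcC_0$ is exactly the fixed points of $T$.

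Next I pass to the comonad side to treat $\mcD_0$, the full image of $L$. Since every $\eta_{Rd}$ is invertible, the triangle identity forces $R\varepsilon=(\eta R)^{-1}$ to be invertible, and therefore so is $LR\varepsilon$. The comonad counit law $LR\varepsilon\circ\delta=\mathrm{id}_{LR}$ (which is just $L$ applied to $R\varepsilon\circ\eta R=\mathrm{id}_R$) then shows $\delta=(LR\varepsilon)^{-1}$ is invertible, so $G=LR$ is an idempotent comonad. Running the argument of the previous paragraph in the dual category establishes that the fixed points of $G$ (the objects at which $\varepsilon$ is invertible) are exactly $\mcD_0$; in particular $\varepsilon_{Lc}$ is invertible for every $c$.

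Finally I assemble the equivalence. The functor $L$ carries $\mcC_0$ into $\mcD_0$ and $R$ carries $\mcD_0$ into $\mcC_0$, these being the respective full images, and moreover $RLc\in\mcC_0$ and $LRd\in\mcD_0$, so the components $\eta_c$ (for $c\in\mcC_0$) and $\varepsilon_d$ (for $d\in\mcD_0$) are morphisms of the respective subcategories. By the two preceding paragraphs these components are invertible, so the restricted adjunction $L|_{\mcC_0}\dashv R|_{\mcD_0}$ has invertible unit and counit and is therefore an adjoint equivalence $\mcC_0\simeq\mcD_0$. The step I expect to be the genuine obstacle is passing from the hypothesis, which only controls $\varepsilon$ \emph{after} applying $R$, to actual invertibility of $\varepsilon$ on the image of $L$: because $R$ need not reflect isomorphisms, this cannot be read off directly, and it is precisely the naturality computation for $\eta_{Rd}$ together with the comonad counit law that supplies it.
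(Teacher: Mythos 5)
Your proposal is correct and follows essentially the same route as the paper, which simply invokes the standard theory of idempotent adjunctions (idempotent monad $RL$, fixed points equal the full image of $R$, dually for the comonad $LR$, and the adjunction restricts to an equivalence); your write-up supplies in detail exactly the naturality computations that the paper leaves as classical.
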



\subsection{The consecutive obstructions}

In our situation, the condition from the previous theorem reads specifically that $\Spec \varepsilon \Gamma$ should be an isomorphism. In such a situation, we will ultimately replace $\mcA$ by $\fix\mcA$, so it is not a big deal to restrict from $\mcA$ to an arbitrary reflective subcategory $\mcB$ satisfying $\fix\mcA \subseteq \mcB \subseteq \mcA$. This will enable us to prove useful properties of $\Spec|_\mcB$ and finally give conditions under which the counit becomes a geometric isomorphism on $\mcB$. We will introduce two such reflective subcategories $\red\mcA$ and $\mono\mcA$: The subcategory $\red\mcA$ of reduced objects enables for a tighter (bijective, in fact) correspondence between distinguished open sets and finite localizations. The subcategory $\mono\mcA$ of mono-reduced objects allows for more efficient computation with the sheafification (e.g.\ all the canonical presheaves are monopresheaves). Both are introduced in Section~\ref{sec:reduction}, together with a precise description of geometric isomorphisms that allows to determine when these are actually isomorphisms. Assuming that $\mcA = \red \mcA$ and $\mcA = \mono \mcA$, Section~\ref{sec:flat_local_forms} gives conditions for $\varepsilon$ to be a geometric isomorphism.
\[\xymatrix{
& & & \red \mcA \ar@{c->}[rd] \\
\mcP \ar@{c->}[r] & \fix \mcA \ar@{c->}[r] & \im \Gamma \ar@{c->}[ru] \ar@{c->}[rd] & & \mcA \\
& & & \mono \mcA \ar@{c->}[ru]
}\]
In the classical example of local rings where $\Spec$ is fully faithful, all these inclusions except the one on the left happens to be identities.

\bigskip

We now summarize the obstructions to full faithfulness of $\Spec$:
\begin{itemize}
\item
    $\mcA = \red \mcA$ and $\mcA = \mono \mcA$; if not, replace $\mcA$ by $\red \mcA \cap \mono \mcA$.
\item
    $\varepsilon$ should be a geometric isomorphism (at least on the image of $\Gamma$).
\item
    Every geometric isomorphism should be an isomorphism; if not, replace $\mcA$ by $\fix\mcA$.
\end{itemize}

\section{Cone small object argument}

Here we present a way of seeing local objects and admissible maps at the same time in a uniform way. This will allow us to present a variation of the small object argument that constructs all the local forms at once and endows this collection with a universal property -- it is a (multi)reflection. Our formulation takes place in the product completion of $\mcA$.

\subsection{Product completion}

First a quick reminder on product completions. We define $\prod \mcA$ as the opfibrational Grothendieck construction of the functor $\Set^\op \to \CAT$, $I \mapsto \mcA^I$, so that it forms an opfibration over $\Set^\op$. We will now describe this category in more elementary terms. An object of $\prod \mcA$ is a pair $(I, (A_i)_{i \in I})$ where $I$ is a set and $(A_i)_{i \in I}$ is an $I$-indexed collection in $\mcA$; since the indexing set $I$ is implicit in the collection, we will use just $(A_i)_{i \in I}$ or even $(A_i)$ to denote this object. A singleton family consisting of $A$ will be denoted simply by $A$, i.e.\ we consider $\mcA$ as a subcategory of $\prod \mcA$. Finally the empty collection will be denoted $(\ )$. A map $f \colon (A_i)_{i \in I} \to (B_j)_{j \in J}$ consists of a map of sets $J \to I$ and a collection of maps $A_{f(j)} \to B_j$ for each $j \in J$. Such a map is then a product of the maps $f_i \colon A_i \to (B_j)_{j \in J_i}$, where $J_i = f^{-1}(i)$.
\[\xymatrix@=0.5pc{
& & A_i \ar[ldd]_-{f_j} \ar[rdd]^-{f_{j'}} \\
f_i \colon {} \\
& B_j & \cdots & B_j'
}\]
We think of each $f_i$ as a cone, since it is just a collection of maps $f_j \colon A_i \to B_j$ with a common domain, the \emph{components} of the cone. In the opposite direction, we will say that the $f_i$ are \emph{factors} of $f$ and, more generally, a factor is the product of the $f_i$ over any subset of $I$.

\subsection{Cone weak factorization systems}

We will now study lifting properties which we denote by $f \boxslash g$. A lifting problem in $\prod \mcA$ looks like
\[\xymatrix{
(A_i) \ar[r] \ar[d]_-f & (X_k) \ar[d]^-g \\
(B_j) \ar[r] \ar@{-->}[ru] & (Y_l)
}\]
and is similarly a product of lifting problems (the products in the diagram are taken in $\prod A$)
\[\xymatrix{
A_i \ar[r] \ar[d]_-{f_i} & (X_k)_{k \in K_i} \ar@{=}[r] \ar[d]^-{g_i} & \prod_{k \in K_i} X_k \ar[d]^-{\prod_{k \in K_i} g_k} \\
(B_j)_{j \in J_i} \ar[r] \ar@{-->}[ru] \ar@{.>}[rru] & (Y_l)_{l \in L_i} \ar@{=}[r] & \prod_{k \in K_i} (Y_l)_{l \in L_k}
}\]
and solving the original problem is equivalent to solving each of these problems. We see that $f \boxslash g$ iff for every decomposition of $g$ into factors $g_i$ we have $\forall i \colon f_i \boxslash g_i$. Since one can use identities on the empty collection for some of these factors $g_i$, it is easy to see that $f \boxslash g$ implies $f_i \boxslash g$ for all $i$. In other words, the left class $\mcL$ in any weak factorization system $(\mcL, \mcR)$ is closed under factors. As usual, $\mcR$ is closed under products. The above analysis also shows the implications: $\mcL$ is closed under products with the identity on the initial object $\Ra$ $\mcR$ is closed under factors $\Ra$ $\mcL$ is closed under products.

\begin{lemma}
Let $(\mcL, \mcR)$ be a weak factorization system on $\prod \mcA$. Then $\mcL$ is closed under products iff $\mcR$ is closed under factors. \qed
\end{lemma}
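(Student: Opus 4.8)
The plan is to prove the two implications of the equivalence separately, both resting on the decomposition of lifting problems in $\prod\mcA$ recorded above: a lifting problem of $f=\prod_i f_i$ against $g$ splits, via the structure function of its top edge, into subproblems pairing each cone $f_i$ with a factor of $g$, and the whole problem is solvable iff all subproblems are. I would also use freely that $\mcL$ is closed under factors and $\mcR$ under products, as already established. Throughout I rely on the standard facts that in a weak factorization system $\mcL={}^\boxslash\mcR$ and $\mcR=\mcL^\boxslash$, and that identities — in particular $\mathrm{id}_0$ on the initial object $0$ of $\prod\mcA$ — lie in $\mcL\cap\mcR$.

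For the implication ``$\mcR$ closed under factors $\Ra$ $\mcL$ closed under products,'' I would take a family $(f_s)_s$ in $\mcL$ and an arbitrary $g\in\mcR$, and show $\prod_s f_s\boxslash g$. Decomposing any lifting problem of $\prod_s f_s$ against $g$, each resulting subproblem pairs a cone of some $f_s$ with a factor of $g$. The cone lies in $\mcL$ since $\mcL$ is closed under factors, while the factor lies in $\mcR$ by hypothesis; hence each subproblem is solvable and the whole problem is solved. As $g$ was arbitrary, $\prod_s f_s\in{}^\boxslash\mcR=\mcL$.

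For the converse, ``$\mcL$ closed under products $\Ra$ $\mcR$ closed under factors,'' let $g'=\prod_{k\in S}g_k$ be a factor of some $g=\prod_{k\in K}g_k\in\mcR$; I must check $f\boxslash g'$ for every $f\in\mcL$. The key move is to pad $f$: since $\mcL$ is closed under products and $\mathrm{id}_0\in\mcL$, we have $f\times\mathrm{id}_0\in\mcL$, hence $f\times\mathrm{id}_0\boxslash g$. Given a lifting problem of $f$ against $g'$, I would extend it to one of $f\times\mathrm{id}_0$ against the full $g$ by routing every leftover cone $g_k$ with $k\notin S$ through the single added cone $\mathrm{id}_0$ — this is possible because a cone may have arbitrarily many components, so the structure function of the top edge can send all of $K\setminus S$ to the index of $\mathrm{id}_0$, using the unique maps out of $0$. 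The padded problem is solvable; in its decomposition the subproblem attached to $\mathrm{id}_0$ is trivially solved (it pairs an isomorphism with a map), and the remaining subproblems are exactly those of the original problem against $g'$. Reading off their solutions yields the desired lift, so $g'\in\mcL^\boxslash=\mcR$. Since closure under products trivially entails closure under products with $\mathrm{id}_0$, these two implications close the equivalence.

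The step I expect to be the main obstacle is the extension in the converse: one must check that a single copy of $\mathrm{id}_0$ genuinely absorbs all of $K\setminus S$ at once (rather than one index at a time) and that the padded square commutes, so that the initial-object subproblem becomes vacuous while the $S$-part reproduces the original problem verbatim. This is precisely where the minimal hypothesis ``products with the identity on the initial object'' is used and where the unique maps out of $0$ are indispensable.
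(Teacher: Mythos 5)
Your proof is correct and takes essentially the same route as the paper: the paper's (terse) justification of this lemma is exactly the chain ``$\mcL$ closed under products with the identity on the initial object $\Rightarrow$ $\mcR$ closed under factors $\Rightarrow$ $\mcL$ closed under products'' established in the preceding paragraph, using the same decomposition of lifting problems into cone-versus-factor subproblems and the same padding trick with $\mathrm{id}$ on the initial object that you employ. Your write-up merely spells out the details (commutativity of the padded square via the unique maps out of $0$, and restriction of the padded lift to the original problem) that the paper leaves implicit.
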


\begin{definition}
A \emph{cone weak factorization system} is a system in the product completion that satisfies the equivalent conditions of the previous lemma.
\end{definition}

Intuitively, both classes $\mcL$ and $\mcR$ are determined by the cones that they contain. We will thus say that a cone weak factorization system is \emph{generated} by a collection $\Cone$ of cones $a_i \colon A_i \to (B_{ij})_{j \in J_i}$ if $\mcR$ is the class of all factors of maps in $\mcI^\boxslash$ and $\mcL$ is the corresponding class ${}^\boxslash\mcR$ (we do not need to add products since $\mcR$ is closed under factors).

Again, we call the elements of $\mcL$ localizations and the elements of $\mcR$ admissible maps. It will be of some importance to describe explicitly what it means for $R \to (S_l)$ to be admissible, depending on the cardinality of the indexing set. For the empty indexing set, the lifting problem
\[\xymatrix{
A_i \ar[r] \ar[d]_-{a_i} & R \ar[d] \\
(B_{ij}) \ar[r] \ar@{-->}[ru] & (\ )
}\]
is exactly the localness of $R$ with respect to the cone $a_i$ from the introduction. For a singleton, the lifting problem
\[\xymatrix{
A_i \ar[r] \ar[d]_-{a_i} & R \ar[d]^-f \\
(B_{ij}) \ar[r] \ar@{-->}[ru] & S
}\]
is exactly the injectivity of $f$ with respect to each component $a_{ij}$, i.e.\ the admissibility from the introduction. The case of more than one factor is of limited use for us, except when related to the collection $\Cone^{\downarrow}$ of the cone components rather than the collection $\Cone$ of cones:
\[\xymatrix{
A_i \ar[r] \ar[d]_-{a_{ij}} & R \ar[d]^-f \\
B_{ij} \ar[r] \ar@{-->}[ru] & (S_l)
}\]
is exactly the joint admissibility of $f$ with respect to the components $a_{ij}$ or, equivalently, the admissibility of the map $R \to \prod S_l$ in $\mcA$ (with the product interpreted in $\mcA$).

\subsection{Cone small object argument}

We will now present a variation of the small object argument that takes into account the compatibility with products. We need to assume smallness of the $A_i$; to make the notation simpler, we will assume that these are finitely presentable. Because of our application, we will formulate the small object argument directly for a map $R \to (\ )$, i.e.\ it produces an $\mcL$-injective (=local) replacement of $R$.

Making $(R_k) \to (\ )$ local is equivalent to making local each of the factors $R_k \to (\ )$. This is achieved as usual by attaching cells in all possible ways:
\[\xymatrix{
\coprod_{i \in I,\, f \colon A_i \to R_k} A_i \ar[r] \ar[d] & R_k \ar[d] \\
\coprod_{i \in I,\, f \colon A_i \to R_k} (B_{ij}) \ar[r] & R_k' \po
}\]
where the bottom left is $(\coprod_{i \in I,\, f \colon A_i \to R_k} B_{ij(i, f)})$ with the product indexed over all functions $j$ taking a pair $(i, f)$ to some $j(i, f) \in J_i$; in plain words, the function chooses for each attaching map a shape of the attached cell and the product then ranges over all possible choices and then so does the pushout $R_k'$. Applying this procedure for each factor $R_k$ and taking product of the results then produces $(R_k)' = \prod_k R_k'$; this is then indexed by the factor $k$ and a choice function $j$ for the cell shapes. Finally, one applies this procedure countably many times $R^{(0)} = R$, $R^{(n+1)} = R^{(n)}{}'$ and takes the colimit $R^{(\infty)} = \colim_n R^{(n)}$. The factors of $R^{(\infty)}$ are now indexed by a sequence of choice functions, one for each step; in effect, in each step one glues cells along all possible attaching maps, choosing always cell shapes in all possible ways. One proves in the usual way that this produces a local object, using finite presentability of the $A_i$.

We will now prove a crucial property of this version of small object argument when applied to cones $a_i \colon A_i \to (B_{ij})$ with all components epic. To make things easier, we add to this cone also all the possible wide pushouts of the components (this clearly does not affect the cone injectivity). We may then determine, for any map $f \colon A_i \to P$ to a local object, the maximal $B_{ij}$ to which $f$ admits an extension. We will now show that $R^{(\infty)}$ contains all local forms of $R$. Thus, let $p \colon R \to P$ be a local form. For each $f \colon A_i \to R$ consider the maximal $B_{ij}$ for which an extension in
\[\xymatrix{
A_i \ar[r]^-f \ar[d] & R \ar@{ >->}[d]^-p \\
(B_{ij}) \ar@{-->}[r] & P
}\]
exists. Using this particular choice of $j = j(i, f)$ we obtain a factor $R_1$ of $R^{(1)}$ through which $p$ factors, so that every lifting problem as below has a solution in $R_1$:
\[\xymatrix@R=1pc{
A_i \ar[r]^-f \ar[dd] & R \ar@{ >->}[d]^-{p_1} \\
& R_1 \ar@{ >->}[d] \\
(B_{ij}) \ar[r] \ar@{-->}[ru] & P
}\]
(this is so for the maximal $B_{ij}$ and thus also for any smaller). Now proceed inductively with $R$ replaced by $R_1$ etc.\ and finally obtain a particular factor $R_\infty$ of $R^{(\infty)}$ such that
\[\xymatrix{
A_i \ar[r] \ar[d] & R_\infty \ar@{ >->}[d] \\
(B_{ij}) \ar[r] \ar@{-->}[ru] & P
}\]
i.e.\ the map $R_\infty \to P$ belongs to $\mcR$. Since it also belongs to $\mcL$ by Lemma~\ref{lemma:cancellation_properties} (cancellation lemma), it must be an isomorphism and thus $P$ is one of the factors of $R^{(\infty)}$.

The local object $R^{(\infty)}$ contains factors multiple times. By disposing off the extra occurences (which is achieved by a retract so that this is still an $(\mcL, \mcR)$-factorization), we obtain a map $R \to (P_\alpha)$ that has a strong universal property, namely it is a multi-reflection of $R$ into $\mcP$, i.e.\ it satisfies the following universal lifting property:
\[\xymatrix{
R \ar[r]^-f \ar@{ >->}[d] & Q \ar[d]^-\sim \\
(P_\alpha) \ar[r]_-\sim \ar@{-->}[ru]^-{\exists!}_-\sim & (\ )
}\]
The existence of a lift follows from factoring $f \colon R \cof P \loc Q$ using the ordinary weak factorization system associated with $\Cone^{\downarrow}$; since $P$ is then a local form of $R$, it is one of the $P_\alpha$ and, by construction, a unique such; the factorization is also unique.

\begin{theorem} \label{thm:multi_reflection}
The inclusion $\mcP \to \mcA$ admits a multi-reflection. For each $R \in \mcA$, this is the collection $R \to (P_\alpha)$ of all local forms of $R$ (one representative of each isomorphism class). \qed
\end{theorem}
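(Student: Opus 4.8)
The plan is to read the statement off the cone small object argument developed just above, whose output is already a map of exactly the desired shape. Concretely, I would apply the cone small object argument to the map $R \to (\ )$ in $\prod\mcA$, producing an $(\mcL,\mcR)$-factorization $R \cof R^{(\infty)} \loc (\ )$; the right-hand map being admissible says precisely that $R^{(\infty)}$ is local, while the left-hand map is a localization. By construction the factors of $R^{(\infty)}$ are indexed by sequences of choice functions, each such factor $R_\infty$ being a pushout of cells attached to $R$ and hence a localization $R \cof R_\infty$ with local codomain — that is, a local form of $R$.

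The core of the argument is then to identify the isomorphism classes of factors of $R^{(\infty)}$ with the local forms of $R$. One inclusion is immediate from the previous paragraph. For the converse, given an arbitrary local form $p \colon R \cof P$, I would run the construction making at each stage the maximal cell choice for every attaching map $A_i \to R^{(n)}$ relative to $p$; this yields a distinguished factor $R_\infty$ of $R^{(\infty)}$ together with a factorization $R \cof R_\infty \to P$ in which $R_\infty \to P$ lies in $\mcR$. Since $R \cof R_\infty$ and $R \cof P$ are both localizations, the cancellation Lemma~\ref{lemma:cancellation_properties} forces $R_\infty \to P$ into $\mcL$ as well; being simultaneously a localization and admissible, it is an isomorphism, so $P$ occurs as a factor. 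Discarding the repeated occurrences by a retract — which leaves the $(\mcL,\mcR)$-factorization intact — then produces a single map $R \to (P_\alpha)$ whose components run over exactly one representative of each isomorphism class of local form; this index set is legitimately a set by Theorem~\ref{thm:components_multireflection_small}.

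It remains to verify the universal lifting property exhibiting $R \to (P_\alpha)$ as a multi-reflection into $\mcP$. For existence, given $f \colon R \to Q$ with $Q$ local, I would factor $f$ through the ordinary weak factorization system on $\mcA$ generated by $\Cone^\downarrow$, obtaining $R \cof P \loc Q$; the intermediate object $P$ is then a local form, hence isomorphic to a unique $P_\alpha$, and the composite $P_\alpha \cong P \loc Q$ supplies the required admissible lift, which simultaneously selects the index $\alpha$. For uniqueness I would invoke that the generating maps $a_{ij}$ are epic, so that admissible extensions are unique, together with the initiality of local forms inside their components of $R/_\mcA\mcP$ (Lemma~\ref{lemma:initial_object_into_B}): these pin down both the chosen index and the factoring map.

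The step I expect to carry the most weight is the identification of factors with local forms — specifically the maximal-cell-choice construction and the appeal to cancellation to upgrade $R_\infty \to P$ to an isomorphism. The well-definedness of the deduplicating retract, and the claim that it preserves the factorization, is the other delicate point; both hinge essentially on the epicness of the $a_{ij}$, which is what makes all the relevant extensions unique.
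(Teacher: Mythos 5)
Your proposal is correct and follows essentially the same route as the paper: the cone small object argument applied to $R \to (\ )$, the maximal-cell-choice tracking of a given local form $p \colon R \cof P$ to produce a factor $R_\infty$ with $R_\infty \to P$ admissible, the upgrade to an isomorphism via Lemma~\ref{lemma:cancellation_properties} (using that localizations are epic, exactly as the paper does), the deduplicating retract, and the verification of the universal lifting property by factoring $f \colon R \to Q$ through the ordinary weak factorization system generated by $\Cone^{\downarrow}$, with uniqueness secured by the epicness of the $a_{ij}$. The only cosmetic difference is your explicit appeal to Theorem~\ref{thm:components_multireflection_small} and Lemma~\ref{lemma:initial_object_into_B}, which the paper leaves implicit; this changes nothing of substance.
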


\section{Reduction} \label{sec:reduction}

Using the formalism of the previous section, we will now reinterpret the construction of the localization $R_U$ associated with a distinguished open set $U$. Namely, we will construct a reduction functor $\red \colon \mcA \to \mcA$ -- a reflection on the full subcategory $\red \mcA$ of reduced objects. We will then show that the localization $R_{\Pts k}$ associated with the distinguished open set $\Pts k$ is exactly the reduction $\red K$, in particular independent of the localization $k$ used to describe $U = \Pts k$.

The unit of the multi-reflection from Theorem~\ref{thm:multi_reflection} is a map $\ell_R \colon R \to (P_\alpha)$ in the product completion $\prod \mcA$. Its components are the local forms $p_\alpha \colon R \to P_\alpha$ and these are in bijection with points of $\Spec R$. For this reason, we will index the local forms of $R$ by points of $\Spec R$, i.e.\ we will write $\alpha \in \Spec R$.

\subsection{General reduction}

For the later use, we will construct a reduction functor associated with any factorization system $(\mcE, \mcM)$ on $\mcA$ with $\mcE$ composed of \emph{epimorphisms}. We will then apply this to the (localization, admissible) factorization and later also to the (regular epi, mono) factorization. The previous section describes an extension of $(\mcE, \mcM)$ to a cone weak factorization system on $\prod \mcA$; alternatively, the factorization of $\ell_R$ in the display below can be executed in $\mcA$ by replacing the formal product in $\prod \mcA$ by the actual product in $\mcA$.

We say that $R$ is $\mcM$-\emph{reduced} if the map $\ell_R \colon R \to (P_\alpha)$ lies in $\mcM$. We denote the full subcategory of $\mcM$-reduced objects by $\red_\mcM\mcA$. For a general $R$, there is a unique $(\mcE, \mcM)$-factorization
\[\ell_R \colon R \xlra{\eta_R} \red_\mcM R \lra (P_\alpha).\]
By the uniqueness, $R$ is $\mcM$-reduced iff $\eta_R$ is an isomorphism.

We say that $f \colon R \to S$ is a \emph{geometric isomorphism}, if it induces an isomorphism of spectra $f^* \colon \Spec S \to \Spec R$.

\begin{theorem} \label{thm:properties_reduction_one}
$\red_\mcM \mcA$ is an epi-reflective subcategory of $\mcA$, the unit of the corresponding adjunction is $\eta_R \colon 1 \to \red_\mcM$ and it is a geometric isomorphism.
\end{theorem}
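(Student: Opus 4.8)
The plan is to prove the three assertions in turn, with $\eta_R$ as the candidate reflection unit. Throughout write $\ell_R\colon R\to (P_\alpha)$ for the multireflection of Theorem~\ref{thm:multi_reflection}, with components the local forms $p_\alpha\colon R\cof P_\alpha$, and $\ell_R=m_R\circ\eta_R$ for its $(\mcE,\mcM)$-factorization, so that $m_R\colon \red_\mcM R\to(P_\alpha)$ has components $\bar p_\alpha$ satisfying $p_\alpha=\bar p_\alpha\circ\eta_R$. As noted before the statement, these factorizations may be computed with the genuine product $\prod_\alpha P_\alpha$ in $\mcA$, and $\mcM$-reducedness of $R$ means exactly $\ell_R\in\mcM$.

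First I would establish reflectivity. Given an $\mcM$-reduced $S$ and a map $f\colon R\to S$, the description of the action of $f$ on spectra produces, from each local form $q_\beta\colon S\cof Q_\beta$, a local form $p_{f^*\beta}$ of $R$ together with an admissible $g_\beta\colon P_{f^*\beta}\to Q_\beta$ with $g_\beta p_{f^*\beta}=q_\beta f$; these assemble into a map $\bar f$ of families (equivalently a map $\prod\bar f\colon\prod_\alpha P_\alpha\to\prod_\beta Q_\beta$) with $(\prod\bar f)\circ\ell_R=\ell_S\circ f$. This gives a commuting square with top edge $f$, left edge $\eta_R$, right edge $\ell_S$, and bottom edge $(\prod\bar f)\circ m_R$. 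Since $\eta_R\in\mcE$ and $\ell_S\in\mcM$ (as $S$ is $\mcM$-reduced), the $(\mcE,\mcM)$ lifting property produces $\tilde f\colon\red_\mcM R\to S$ with $\tilde f\eta_R=f$; uniqueness of $\tilde f$ is immediate because $\eta_R$, lying in $\mcE$, is epic. Hence $\eta_R$ exhibits $\red_\mcM R$ as a reflection, its components being epimorphisms makes the reflection epi-reflective, and the unit is $\eta_R$, which settles the first two claims.

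For the geometric isomorphism I would first identify the local forms of $\red_\mcM R$. Each $\bar p_\alpha$ satisfies $\bar p_\alpha\eta_R=p_\alpha$ with $p_\alpha$ a localization and $\eta_R$ epic, so by the cancellation Lemma~\ref{lemma:cancellation_properties} $\bar p_\alpha$ is itself a localization, hence a local form of $\red_\mcM R$. Conversely, for an arbitrary local form $\bar p\colon\red_\mcM R\cof P'$, setting $\alpha=\eta_R^*[\bar p]$ yields an admissible $g$ with $g p_\alpha=\bar p\eta_R$; cancelling the epi $\eta_R$ gives $g\bar p_\alpha=\bar p$, so $(\bar p,\mathrm{id})$ and $(\bar p_\alpha,g)$ are two (localization, admissible)-factorizations of $\bar p$, and uniqueness forces $g$ to be an isomorphism. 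Thus $\alpha\mapsto\bar p_\alpha$ is the point-bijection realizing $\eta_R^*$, the induced stalk map at each point is the isomorphism $g$ (the identity at $\bar p_\alpha$); equivalently, by Lemma~\ref{lemma:factorization_pushout} the square with edges $\eta_R,p_\alpha,\bar p_\alpha$ is cocartesian, so the pushout of $\eta_R$ along every local form is an isomorphism. Finally I would check that $\eta_R^*$ is a homeomorphism: pushout along $\eta_R$ carries finite localizations of $R$ to finite localizations of $\red_\mcM R$, and since $\eta_R$ is epic one gets $(\eta_R^*)^{-1}(\Pts k)=\Pts(\eta_{R,*}k)$ for the pushout $\eta_{R,*}k$; matching the distinguished-open bases on the two sides then upgrades the continuous bijection $\eta_R^*$ to an isomorphism of $\AP$-spaces.

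The main obstacle is precisely this last, topological, step: the point-bijection, the matching of stalks, and the pushout computation are formal consequences of the cancellation lemma, factorization uniqueness, and Lemma~\ref{lemma:factorization_pushout}, but showing that the two topologies genuinely agree — that \emph{every} distinguished open of $\Spec\red_\mcM R$ is, under the bijection, a distinguished open of $\Spec R$ — is the delicate point. It is transparent when $\mcE$ consists of localizations, since then $\red_\mcM R$ is a colimit of finite localizations of $R$ and every finite cell complex over it descends to a finite stage by finite presentability of the domains $A_i$; for a general epimorphic $\mcE$ the cleanest route is to read off the homeomorphism from the cocartesian squares above via the pushout criterion for geometric isomorphisms.
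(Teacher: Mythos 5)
Your proposal is correct and follows essentially the paper's own route: reflectivity via epicness of $\eta_R$ together with a factorization of $\ell_S \circ f$ through the reduction, and the geometric isomorphism via the cocartesian squares of Lemma~\ref{lemma:factorization_pushout} fed into Proposition~\ref{prop:geometric_isomorphism_easy}. The only differences are cosmetic: the paper produces the reflection map as $\eta_S^{-1}\circ\red_\mcM f$ (functoriality of the reduction) rather than by $(\mcE,\mcM)$-orthogonality against $\ell_S$, and your worry about matching distinguished-open bases is moot, since Proposition~\ref{prop:geometric_isomorphism_easy} already contains the openness argument -- falling back on it, as you do in your final sentence, is exactly how the paper concludes.
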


\begin{proof}
Let $R \in \mcA$ and $S \in \red_\mcM \mcA$. Since $\eta_R \colon R \to \red_\mcM R$ is epic (it belongs to $\mcE$), the precomposition map
\[\eta_R^* \colon \red_\mcM \mcA(\red_\mcM R, S) \lra \mcA(R, S)\]
is injective. It is also surjective -- a preimage of $f$ is obtained as $\eta_S^{-1} \circ \red_\mcM f$:
\[\xymatrix{
R \ar[r]^-f \ar[d]_-{\eta_R} & S \ar[d]^-{\eta_S}_-\cong \\
\red_\mcM R \ar[r]_-{\red_\mcM f} & \red_\mcM S
}\]


The second point is a simple application of Proposition~\ref{prop:geometric_isomorphism_easy}: Since $\eta_R$ is epic, and since every local form $p_\alpha$ factors through $\red_\mcM R$ by construction, the square
\[\xymatrix{
R \ar[r]^-{\eta_R} \ar@{ >->}[d]_-{p_\alpha} & \red_\mcM R \ar@{ >->}[d] \\
P_\alpha \ar@{=}[r] & P_\alpha \po
}\]
is a pushout by the proof of Lemma~\ref{lemma:cancellation_properties}.
\end{proof}

Thus, from the point of view of the associated spectrum, we may replace $R$ by its reduction $\red_\mcM R$. In fact, it is useful to work instead in the category $\red_\mcM \mcA$, but in order to do so, we need to equip it with a collection of cones and from the point of view of this section also with a factorization system.

The cones in $\red_\mcM \mcA$ are taken to be the $\mcM$-reductions $\red_\mcM a_i \colon \red_\mcM A_i \to (\red_\mcM B_{ij})$ of the cones in $\mcA$. The factorization system $(\mcE, \mcM)$ in $\mcA$ restricts to one in $\red_\mcM \mcA$, since the following lemma shows that any $(\mcE, \mcM)$-factorization of a map in $\red_\mcM \mcA$ stays in $\red_\mcM \mcA$.

\begin{lemma}
If $f \colon R \to S$ lies in $\mcM$ then $S \in\red_\mcM \mcA$ $\Ra$ $R \in \red_\mcM \mcA$.
\end{lemma}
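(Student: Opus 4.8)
The plan is to show directly that the canonical map $\ell_R \colon R \to \prod_\alpha P_\alpha$ lies in $\mcM$, using the description of $\mcM$ as the right class of the orthogonal factorization system $(\mcE,\mcM)$. Since $\mcE$ consists of epimorphisms, diagonal fillers against maps in $\mcE$ are automatically unique, so $(\mcE,\mcM)$ is orthogonal and $\mcM$ is precisely the class of maps having the (unique) right lifting property against every $e \in \mcE$; I will verify this lifting property for $\ell_R$. Here $\prod_\alpha P_\alpha$ denotes the actual product in $\mcA$, matching the convention that $R$ is $\mcM$-reduced iff $\ell_R \in \mcM$.

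The first ingredient is a comparison map $h \colon \prod_\alpha P_\alpha \to \prod_\beta Q_\beta$ between the products of local forms of $R$ and of $S$, satisfying $h \ell_R = \ell_S f$. I would build it componentwise: for each local form $q_\beta \colon S \cof Q_\beta$ of $S$, the description of the induced map $f^*$ on spectra factors $q_\beta f$ as a localization followed by an admissible map, $q_\beta f = g_\beta \circ p_{f^*(\beta)}$, so that $q_\beta f$ factors through the local form $p_{f^*(\beta)}$ of $R$; taking the $\beta$-component of $h$ to be $g_\beta \circ \pi_{f^*(\beta)}$ gives $h\ell_R = \ell_S f$ after projecting to each $Q_\beta$. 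Note that $h$ itself need not lie in $\mcM$, and this is exactly why a naive argument realizing $\ell_R$ as a composite or a pullback of $\mcM$-maps does not go through.

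Now, given $e \colon C \to D$ in $\mcE$ and a square $\ell_R u = v e$ with $u \colon C \to R$ and $v \colon D \to \prod_\alpha P_\alpha$, I will produce the filler by two successive lifts. First, the square $\ell_S(fu) = (hv)e$ commutes (using $h\ell_R = \ell_S f$ and $\ell_R u = ve$), and since $S$ is $\mcM$-reduced we have $\ell_S \in \mcM$, so there is a unique $w \colon D \to S$ with $we = fu$ and $\ell_S w = hv$. Second, the square $fu = we$ commutes, and since $f \in \mcM$ we obtain a unique $d \colon D \to R$ with $de = u$ and $fd = w$. It remains to check $\ell_R d = v$: both sides agree after precomposition with $e$, since $(\ell_R d)e = \ell_R u = ve$, and $e$ is epic. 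Hence $d$ is the desired diagonal, unique again because $e$ is epic. This shows $\ell_R \in \mcM$, that is, $R \in \red_\mcM\mcA$.

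The main obstacle is conceptual rather than computational: one must resist building $\ell_R$ out of $\mcM$-maps, because the only available comparison map $h$ need not be admissible; the right move is to use the lifting-property characterization of $\mcM$ and chain the two orthogonality fillers, first against $\ell_S$ and then against $f$, with the final compatibility $\ell_R d = v$ forced by epicness of $e$. A minor technical point to handle with care is that the factorization $q_\beta f = g_\beta \circ p_{f^*(\beta)}$ depends on chosen representatives of the isomorphism classes of local forms, but this only affects $h$ up to the canonical identifications and does not disturb the identity $h\ell_R = \ell_S f$.
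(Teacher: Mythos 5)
Your proof is correct and follows essentially the same route as the paper: the paper forms the very same comparison square $h\ell_R = \ell_S f$ and invokes its cancellation lemma (the composite $\ell_S f \in \mcM$ together with epicness of the maps in $\mcE$ forces $\ell_R \in \mcM$), which is exactly what your two successive lifts against $\ell_S$ and then $f$, followed by the epicness argument, unpack in detail. The only cosmetic difference is that you verify the lifting characterization of $\mcM$ directly and construct the bottom map $h$ explicitly, whereas the paper takes both from earlier results.
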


\begin{proof}
Using that $\mcE$ consists of epis, the proof of Lemma~\ref{lemma:cancellation_properties} gives that $\ell_R \in \mcM$ in the following square:
\[\xymatrix{
R \ar[r]^-{f \in \mcM} \ar[d]_-{\ell_R} & S \ar[d]^-{\ell_S \in \mcM} \\
(P_\alpha) \ar[r] & (Q_\beta)
}\qedhere\]
\end{proof}


Now we compare the above introduced notions in $\red_\mcM \mcA$ with those in $\mcA$.

\begin{theorem} \label{thm:properties_reduction_two}
The following claims hold:
\begin{itemize}
\item
	$\Gamma$ takes values in $\red_\mcM \mcA$, so that $\mcP \subseteq \im \Gamma \subseteq \red_\mcM \mcA$.
\item
	The notions of a local object and of an admissible map in $\red_\mcM \mcA$ coincide with those in $\mcA$. Consequently, $\red_\mcM \mcP = \mcP$.
\item
	The adjunction $\Gamma \dashv \Spec$ restricts to the adjunction $\red_\mcM \Gamma \dashv \red_\mcM \Spec$. In particular, the notion of a local form in $\red_\mcM \mcA$ coincides with that in $\mcA$.
\item
	All objects of $\red_\mcM \mcA$ are $\mcM$-reduced in $\red_\mcM \mcA$.
\end{itemize} 
\end{theorem}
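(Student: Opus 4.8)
The plan is to extract two preliminary facts and then settle the four bullets in a logically convenient order, the real work sitting in the first bullet.

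\emph{Two preliminaries.} First I record that $\mcM$ has \emph{left cancellation}: if $gf\in\mcM$ then $f\in\mcM$. To see this, factor $f=me$ with $e\in\mcE$, $m\in\mcM$; the square whose top edge is the identity and whose bottom edge is $gm$ admits a diagonal (because $e\in\mcE$ and $gf=(gm)e\in\mcM$), exhibiting $e$ as a split monomorphism. Since $\mcE$ consists of epimorphisms, $e$ is an epic split mono, hence an isomorphism, so $f=me\in\mcM$. Second I check $\mcP\subseteq\red_\mcM\mcA$: for a local object $P$ the identity $\id_P$ is itself a local form, so $\ell_P\colon P\to(P_\alpha)$ is a section of the projection onto that component; the $\mcE$-part of its $(\mcE,\mcM)$-factorization is then simultaneously a split mono and an epi, hence invertible, so $\ell_P\in\mcM$ and $P$ is $\mcM$-reduced.

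\emph{Second bullet.} For $S\in\red_\mcM\mcA$ the reflection of Theorem~\ref{thm:properties_reduction_one} (its unit $\eta$ being epic) gives bijections $\red_\mcM\mcA(\red_\mcM A_i,S)\cong\mcA(A_i,S)$ and $\red_\mcM\mcA(\red_\mcM B_{ij},S)\cong\mcA(B_{ij},S)$, given by precomposition with $\eta_{A_i}$ and $\eta_{B_{ij}}$. Naturality, i.e.\ $\eta_{B_{ij}}\circ a_{ij}=\red_\mcM a_{ij}\circ\eta_{A_i}$, shows these bijections intertwine precomposition by $\red_\mcM a_{ij}$ with precomposition by $a_{ij}$, so a map extends along $\red_\mcM a_{ij}$ iff the corresponding map extends along $a_{ij}$. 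Hence cone-injectivity with respect to the reduced cones coincides with cone-injectivity in $\mcA$; the identical argument on the lifting square of a map $f\colon R\to S$ in $\red_\mcM\mcA$ gives that admissibility in $\red_\mcM\mcA$ coincides with admissibility in $\mcA$. Together with the preliminary $\mcP\subseteq\red_\mcM\mcA$ this yields $\red_\mcM\mcP=\mcP$.

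\emph{First bullet (the main obstacle).} The inclusion $\mcP\subseteq\im\Gamma$ is immediate from $P=\Gamma P^*$. The hard point is $\im\Gamma\subseteq\red_\mcM\mcA$, i.e.\ that $\mcO_X(X)$ is $\mcM$-reduced for every $\AP$-space $X$. The key lemma I would prove is that the canonical map to the product of stalks, $\mcO_X(X)\to\prod_{p}\mcO_{X,p}$, lies in $\mcM$; this is where the two factorization systems of interest are checked separately. For the (localization, admissible) system I apply Lemma~\ref{lemma:local_criterion} to a map $A_i\to\mcO_X(X)$: if its germ extends through $B_{ij}$ at every stalk then the maximal open set $U_j^{\mathrm{max}}$ of that criterion is all of $X$, so the partial extension is a global one, which (by uniqueness, the $a_{ij}$ being epic) is exactly the required lift witnessing admissibility. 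For the (regular epi, mono) system I test against a finitely presentable $T$: then $\mcA(T,\mcO_X(-))$ is a sheaf of sets, compactness of $T$ gives $\mcA(T,\mcO_{X,p})\cong\mcA(T,\mcO_X(-))_p$, and separatedness forces injectivity, so the map is monic. Now each $\mcO_X(X)\to\mcO_{X,p}$ factors as a local form $\mcO_X(X)\to C_p$ followed by an admissible map, so the stalk-product map factors as $\ell_{\mcO_X(X)}$ (into the product of \emph{all} local forms) followed by a projection-and-admissible map into $\prod_p\mcO_{X,p}$; left cancellation of $\mcM$ then forces $\ell_{\mcO_X(X)}\in\mcM$, i.e.\ $\mcO_X(X)\in\red_\mcM\mcA$.

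\emph{Third and fourth bullets.} These are now formal. Since $\Gamma$ lands in $\red_\mcM\mcA$ and $\red_\mcM\mcA\subseteq\mcA$ is full, the hom-set bijection of $\Gamma\dashv\Spec$ restricts verbatim to $R\in\red_\mcM\mcA$, giving $\red_\mcM\Gamma\dashv\red_\mcM\Spec$; and because $\red_\mcM\mcP=\mcP$ with $\red_\mcM\mcA$ full, the comma category $R/_{\red_\mcM\mcA}\mcP$ agrees with $R/_{\mcA}\mcP$, so Lemma~\ref{lemma:initial_object_into_B} identifies the local forms of $R$ in $\red_\mcM\mcA$ with those in $\mcA$ (the localization/admissible split being unique and the admissible part coinciding by the second bullet). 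Finally, for $S\in\red_\mcM\mcA$ the reduction map computed inside $\red_\mcM\mcA$ is the very same $\ell_S\colon S\to(P_\alpha)$ as in $\mcA$ (same local forms), which lies in the restricted $\mcM$ by hypothesis on $S$ together with the preceding lemma guaranteeing the factorization system restricts; hence every object of $\red_\mcM\mcA$ is $\mcM$-reduced in $\red_\mcM\mcA$. The principal difficulty throughout is the stalk-product lemma of the third bullet; everything else reduces to the cancellation property and the reflection bijections.
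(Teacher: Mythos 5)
Your two preliminaries are correct (the retract argument for left cancellation of $\mcM$, and the observation that the identity local form splits $\ell_P$, giving $\mcP \subseteq \red_\mcM\mcA$), and your second, third and fourth bullets are essentially the paper's own arguments: the reflection bijections transport extension problems along $a_i$ to extension problems along $\red_\mcM a_i$, the hom-bijection of $\Gamma \dashv \Spec$ restricts once one knows $\Gamma$ lands in $\red_\mcM\mcA$ and local forms are identified via the (common) comma category $R/_\mcA\mcP$, and the multireflection unit of $\red_\mcM\mcA$ is the very same map $\ell_S$, which lies in $\mcM$ by hypothesis. The divergence is in the first bullet, and that is where there is a genuine gap.

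The theorem is stated for an \emph{arbitrary} factorization system $(\mcE,\mcM)$ on $\mcA$ with $\mcE$ consisting of epimorphisms -- that is the standing setting of the general reduction construction -- and the paper later invokes it in exactly this generality: the remarks in the monomorphism subsection apply it to the systems $(\mcE_\lambda,\mcM_\lambda)$ generated by all epis between $\lambda$-presentable objects, and to the system whose right class is the monic-and-admissible maps. Your proof of $\im\Gamma \subseteq \red_\mcM\mcA$ rests entirely on the lemma that $\mcO_X(X) \to \prod_p \mcO_{X,p}$ lies in $\mcM$, which you verify only for the (localization, admissible) and (regular epi, mono) systems. Both verifications are correct (the canonical-cover argument, and the test against finitely presentable generators, the latter using that $\mcA$ is lfp), but neither extends to a general $\mcM$: both are tied to finite presentability of the relevant test objects against the stalks, which are merely \emph{filtered} colimits over neighbourhoods -- so already for $\mcM_\lambda$ with $\lambda$ uncountable a map from a $\lambda$-presentable object into $\mcO_{X,p}$ need not factor through any $\mcO_X(U)$, and in the general setting $(\mcE,\mcM)$ is not even assumed cofibrantly generated, so there are no generators to test against. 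The paper's proof is uniform in $(\mcE,\mcM)$ and avoids sheaf theory altogether: by Theorem~\ref{thm:properties_reduction_one} the unit $\eta_R \colon R \to \red_\mcM R$ is a geometric isomorphism, so $\Spec \red_\mcM R \to \Spec R$ is invertible; transposing along $\Gamma \dashv \Spec$ shows that $\Gamma X$ is injective with respect to every $\eta_R$, and taking $R = \Gamma X$ with $f = 1_{\Gamma X}$ exhibits $\eta_{\Gamma X}$ as a split mono, hence (being epic) an isomorphism. Substituting this transposition argument for your stalk-product lemma closes the gap; your lemma remains a nice by-product for the two named systems, since it records the concrete fact that $\Gamma X \to \prod_p \mcO_{X,p}$ is admissible and monic.
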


\begin{proof}
%
%
%
The adjunction $\Gamma \dashv \Spec$ gives equivalence
\[\xymatrix{
R \ar[r]^-f \ar[d]_-{\eta_R} & \Gamma X \ar@{}[rd]|-{\equiv} & & \Spec \red_\mcM R \ar[d]^-\cong \\
\red_\mcM R \ar@{-->}[ru] & & X \ar[r] \ar@{-->}[ru] & \Spec R
}\]
and Theorem~\ref{thm:properties_reduction_one} gives the iso on the right. Thus, $\Gamma X$ is injective w.r.t.\ the unit $\eta_R$; since $\eta_R$ is epic, this easily gives that $\Gamma X \in \red_\mcM \mcA$ (take $f = 1_{\Gamma X}$).

The second point follows easily from the reflectivity: For $R \in \red_\mcM \mcA$,
\[\xymatrix{
A_i \ar[r]^-f \ar[d]_-{a_i} & R \ar@{}[rd]|-{\equiv} & & \red_\mcM A_i \ar[r]^-f \ar[d]_-{\red_\mcM a_i} & R \\
(B_{ij}) \ar@{-->}[ru] & & & (\red_\mcM B_{ij}) \ar@{-->}[ru] 
}\]
Together with $\mcP \subseteq \red_\mcM \mcA$ of the previous point, it follows that $\red_\mcM \mcP = \mcP$.

The previous two points yield $\rMArMPTop = \APTop$ so that $\Spec$ takes values in $\rMArMPTop$ and $\Gamma$ takes values in $\red_\mcM \mcA$; this easily gives the third point -- local forms $R \to P$ are the adjuncts of those maps $P^* \to \Spec R$ that are terminal in the components of $\mcP^* /_{\APTop} \Spec R$.

For the last point, the unit of the multi-reflection for $\red_\mcM \mcA$ is the same as that for $\mcA$ and as such lies in $\mcM$.
\end{proof}

\begin{proposition} \label{prop:geometric_isomorphism_easy}
Let $f \colon R \to S$ be a map whose pushout along any local form
\[\xymatrix{
R \ar[r]^-{f} \ar@{ >->}[d]_-p & S \ar@{ >->}[d]^-{f_*p} \\
P \ar[r]_-\cong & f_* P \po
}\]
is an isomorphism. Then $f$ is a geometric isomorphism.
\end{proposition}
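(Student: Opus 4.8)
The plan is to run everything through the explicit description of $\Spec$ from Section~\ref{section:concrete_description_Spec}: points of $\Spec R$ are local forms $p\colon R\cof P$, the stalk there is $P$, and for a map $f$ the induced $f^*\colon\Spec S\to\Spec R$ sends a local form $q\colon S\cof Q$ to the localization part of the factorization $qf = g p$ (with $p$ a localization and $g\colon P\loc Q$ admissible), the stalk map being exactly $g$. The hypothesis says precisely that for every local form $p\colon R\cof P$ the pushout square of $f$ along $p$ has its lower edge $\iota_p\colon P\to f_*P$ invertible. Since $f_*p\colon S\to f_*P$ is a pushout of the localization $p$ it is again a localization, and its codomain $f_*P\cong P$ is local, so $f_*p$ is a \emph{local form of $S$}; this gives a candidate inverse $\phi$ on points, $\phi(p)=f_*p$.

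First I would show $f^*$ is a bijection on points, with inverse $\phi$. For $f^*\phi=\mathrm{id}$, the pushout square reads $(f_*p)\circ f=\iota_p\circ p$; as $\iota_p$ is invertible this composite is already a localization into a local object, so its admissible part is an isomorphism and $f^*(f_*p)\cong p$. For $\phi f^*=\mathrm{id}$, start from a local form $q\colon S\cof Q$ with $qf=gp$; applying the universal property of the pushout $f_*P$ to the maps $q$ and $g$ yields $h\colon f_*P\to Q$ with $h\circ f_*p=q$ and $h\circ\iota_p=g$. The first relation together with the cancellation Lemma~\ref{lemma:cancellation_properties} (using that the localization $f_*p$ is epic) shows $h$ is a localization, while the second shows $h=g\circ\iota_p^{-1}$ is admissible; a map that is both is an isomorphism, so $q\cong f_*p=\phi(f^*(q))$. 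The very same diagram settles the stalks: the stalk map of $f^*$ at $q$ is $g=h\circ\iota_p$, a composite of two isomorphisms, hence invertible. Thus $f^*$ is a bijection that is an isomorphism on every stalk.

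What remains — and what I expect to be the main obstacle — is to upgrade this continuous bijection to a homeomorphism. I would first record that distinguished opens pull back to distinguished opens, $(f^*)^{-1}(\Pts k)=\Pts(f_*k)$ for each finite localization $k\colon R\cof K$: by the pushout universal property $q$ factors through $f_*k$ iff $qf$ factors through $k$, and $qf=gp$ factors through $k$ iff $p=f^*(q)$ does, by orthogonality of the localization $k$ against the admissible $g$. Then I would prove that the sets $\Pts(f_*k)$ form a \emph{basis} of $\Spec S$. Given a finite localization $l\colon S\cof L$ and a point $q\in\Pts l$, write $p=f^*(q)$ as a filtered colimit $P=\colim_\beta K_\beta$ of finite localizations $k_\beta\colon R\cof K_\beta$ factoring it; since cobase change $f_*$ preserves filtered colimits, $f_*P=\colim_\beta f_*K_\beta$ in $S/\mcA$. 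As $L$ is finitely presentable in $S/\mcA$, the factorization of $f_*p\cong q$ through $l$ descends to some finite stage, so $f_*k_\beta$ factors through $l$ and hence $q\in\Pts(f_*k_\beta)\subseteq\Pts l$. Consequently every basic open decomposes as $\Pts l=\bigcup\Pts(f_*k)$, and since $f^*$ is a bijection, $f^*(\Pts l)=\bigcup\Pts k$ is open; so $f^*$ is open.

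Finally, a continuous open bijection is a homeomorphism, and a homeomorphism that is a stalkwise isomorphism is an isomorphism of $\AP$-spaces by the stalkwise detection of sheaf isomorphisms assumed at the beginning of Part~I. Hence $f^*$ is invertible in $\APTop$, i.e.\ $f$ is a geometric isomorphism. The delicate point throughout is bookkeeping the correct slice category: the finite-presentability step works only because $L$ is finitely presentable in $S/\mcA$ (not in $\mcA$), and the descending factorization must be taken as a morphism under $S$, so that it automatically respects the structure maps $f_*k_\beta\colon S\to f_*K_\beta$.
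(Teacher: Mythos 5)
Your proposal is correct and follows essentially the same route as the paper's own proof: the same identification of $f_*$ and $f^*$ as mutually inverse on points via the (localization, admissible) factorization and the cancellation lemma, the same identification of the stalk maps with the (invertible) bottom maps of the pushout squares, and the same openness argument via writing $P$ as a filtered colimit of finite localizations, pushing out, and using finite presentability of $L$ in $S/\mcA$. The only differences are presentational (you prove $(f^*)^{-1}(\Pts k)=\Pts(f_*k)$ explicitly and phrase openness as a basis statement), not mathematical.
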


Later, we will also prove a converse to this statement, see Theorem~\ref{thm:characterizing_spec_iso}.

\begin{proof}
The action of $f^*$ on a point $q \in \Spec S$, i.e.\ on a local form $q \colon S \to Q$, is obtained by factoring the composite $qf$ into a localization followed by an admissible map
\[\xymatrix{
R \ar[r]^-f \ar@{ >->}[d]_-{f^*q} & S \ar@{ >->}[d]^-q \\
f^*Q \ar[r]_-\sim & Q
}\]
and we show now that $f_*$ and $f^*$ are mutually inverse: Clearly, the pushout square in the statement can be seen as such a factorization for $Q = f_*P$ and thus $P \cong f^* f_* P$. The factorization of the above square through the pushout is a map $f_* f^* Q \to Q$ and Lemma~\ref{lemma:cancellation_properties} shows that it is both a localization and admissible, hence an iso. The action of $f^*$ on stalks is identified with the bottom map $P \to f_*P$ in the square from the statement and is thus an iso. It remains to show that $f^*$ is open, so let $l \colon S \to L$ be a finite localization and $p \in (f^*)^{-1} \Pts l$, i.e.\ $f_* p \in \Pts l$. We thus have the following diagram
\[\xymatrix{
R \ar[r] \ar[dd] & S \ar@{ >->}[d] \\
& L \ar[d] \\
P \ar[r] & f_* P
}\]
and we need to prove existence of a finite localization $R \to K$ containing $P$ for which
\[\underbrace{(f^*)^{-1} \Pts k}_{\Pts (f_*k)} \subseteq \Pts l,\]
i.e.\ such that there is a factorization $S \to L \to f_*K$. Since $P$ is a filtered colimit of finite localizations $K$, its pushout $f_*P$ is a filtered colimit of the pushouts $f_* K$ and the map $L \to f_* P$ factors through some of the $f_* K$, by finiteness of the localization $L$.
\end{proof}

\subsection{Admissible case}

The most important instance of the previous construction is the case of the factorization system generated by the components of the cones $C^\downarrow$. The resulting reduction will be denoted simply by $\red$ instead of $\red_\mcM$ for the class of admissible maps $\mcM$. In concrete terms, we say that $R$ is \emph{reduced} if $\ell_R \colon R \to (P_\alpha)$ is admissible; for general $R$, we get a factorization
\[R \cof \red R \loc (P_\alpha).\]

\begin{proposition}
$\red R$ is the largest localization of $R$ among those localizations $R \cof K$, for which $\Pts k = \Spec R$.
\end{proposition}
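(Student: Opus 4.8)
The plan is to verify two things: first, that the localization $\eta_R \colon R \cof \red R$ itself satisfies $\Pts \eta_R = \Spec R$; and second, that it is terminal among all localizations with this property, in the sense that every localization $k \colon R \cof K$ with $\Pts k = \Spec R$ admits a factorization $R \cof K \cof \red R$ of $\eta_R$. Taken together these say precisely that $\red R$ is the largest such localization when the localizations under $R$ are ordered by factorization. Note at the outset that $\eta_R$ is genuinely a localization, being the left ($\mcE$) part of the $(\mcE,\mcM)$-factorization $R \xrightarrow{\eta_R} \red R \to (P_\alpha)$ of $\ell_R$.

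For the first claim I would recall that this defining factorization of the multireflection unit $\ell_R$ takes place in $\prod \mcA$, and that the components of $\ell_R$ are exactly the local forms $p_\alpha \colon R \cof P_\alpha$, one for each point of $\Spec R$. Reading off the components of the right-hand admissible map $\red R \to (P_\alpha)$ then gives, for each $\alpha$, a factorization $p_\alpha = (\red R \to P_\alpha) \circ \eta_R$. Hence every local form of $R$ factors through $\eta_R$, which by the characterization of distinguished opens means $p_\alpha \in \Pts \eta_R$ for all $\alpha$, i.e.\ $\Pts \eta_R = \Spec R$.

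For the second and main claim, let $k \colon R \cof K$ be any localization with $\Pts k = \Spec R$. By the definition of $\Pts k$, every local form $p_\alpha$ factors through $k$, and since $k$ is epic (localizations are assembled from the epic $a_{ij}$ by coproducts, pushouts and transfinite composition, all of which preserve epimorphisms) this factorization $\tilde p_\alpha \colon K \to P_\alpha$ is unique; assembling the $\tilde p_\alpha$ yields a single map $m \colon K \to (P_\alpha)$ in $\prod \mcA$ with $m \circ k = \ell_R$. I would then factor $m$ through the (localization, admissible) system as $K \xrightarrow{j} M \to (P_\alpha)$ with $j$ a localization and $M \to (P_\alpha)$ admissible. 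Since $j$ and $k$ are both localizations and these are closed under composition, $R \xrightarrow{j k} M \to (P_\alpha)$ is itself an $(\mcE,\mcM)$-factorization of $\ell_R$. Because the generating maps $a_{ij}$ are epic, such factorizations are unique, so this one is isomorphic to $R \xrightarrow{\eta_R} \red R \to (P_\alpha)$; under the identification $M \cong \red R$ we obtain a localization $j \colon K \cof \red R$ with $j \circ k = \eta_R$, which is exactly the required factorization exhibiting $\red R$ as dominating $k$.

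I expect the main obstacle to be bookkeeping rather than anything conceptual. The two places that carry real weight are the well-definedness of $m$ in the product completion (which rests on $k$ being epic, so that the factoring maps $\tilde p_\alpha$ are forced and patch into a single map) and the legitimate invocation of the uniqueness of the orthogonal factorization of $\ell_R$; this is exactly where the standing hypothesis that all $a_{ij}$ are epic, hence that the (localization, admissible) factorization is unique and functorial, does the work. No size issues intervene, since throughout one manipulates only the fixed collection $(P_\alpha)$ of all local forms, which is a set by Theorem~\ref{thm:components_multireflection_small}.
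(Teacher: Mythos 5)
Your proof is correct and follows essentially the paper's argument: the paper likewise builds the map $K \to (P_\alpha)$ from the hypothesis $\Pts k = \Spec R$ and then obtains the diagonal $K \to \red R$ in the square formed with the factorization $R \xrightarrow{\eta_R} \red R \to (P_\alpha)$, only it quotes the lifting property of localizations against admissible maps directly, whereas you reconstruct that lift by re-factoring $m$ and invoking uniqueness of the $(\mcE,\mcM)$-factorization of $\ell_R$ --- two faces of the same orthogonality, the latter just slightly more verbose. Your preliminary check that $\eta_R$ itself satisfies $\Pts \eta_R = \Spec R$ is a harmless addition that the paper leaves implicit.
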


\begin{proof}
In the diagram below, the bottom map exists by the assumption,
\[\xymatrix{
R \ar@{ >->}[r] \ar@{ >->}[d] & \red R \ar[d]^-\sim \\
K \ar[r] \ar@{-->}[ru] & (P_\alpha)
}\]
hence also the diagonal.
\end{proof}

This easily implies the claim from the section introduction.

\begin{corollary}
$R_U = \red K$ for any distinguished open set $U = \Pts k$. \qed
\end{corollary}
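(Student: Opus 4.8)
The plan is to exhibit mutually inverse maps between $R_U$ and $\red K$ over $R$. First I would record the structural fact that every localization is an epimorphism: the generators $a_{ij}$ are epic by hypothesis, and epimorphisms are closed under coproducts, pushouts and transfinite composition, so any cellular map built from the $a_{ij}$ is epic. Consequently, for two localizations $R \cof A$ and $R \cof B$ there is \emph{at most one} map $A \to B$ commuting with the structure maps out of $R$; thus it suffices to produce maps $R_U \to \red K$ and $\red K \to R_U$ over $R$ (equivalently over $K$, since $K \cof R_U$ and $K \cof \red K$), and these will automatically be inverse to one another.

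For the forward map I would use maximality of the reduction. Recall that $R_U = \colim_{k' \in \mcFL_U} K'$ is a localization $r_U \colon R \cof R_U$ with $\Pts r_U = U$, and that the leg $K \to R_U$ of the colimit cocone (corresponding to $k \in \mcFL_U$) is itself a localization. Viewing $K \cof R_U$ as a localization of $K$, the identification $\Spec K \cong U$ together with $\Pts r_U = U$ forces $\Spec R_U \to \Spec K$ to be surjective, so $R_U$ is a localization of $K$ whose associated point set is all of $\Spec K$. Applying the preceding Proposition with $K$ in place of $R$ (maximality of $\red K$ among localizations of $K$ with full point set), I obtain a factorization $R_U \to \red K$ over $K$.

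For the backward map I would use the cellular description of $\red K$. Being a localization of $K$, the reduction is a filtered colimit $\red K = \colim_j L_j$ of finite localizations $K \cof L_j$ factoring through it, $L_j \to \red K$ (Theorem~\ref{thm:components_multireflection_small} together with the rewriting of a relative cell complex as a directed colimit of finite ones). The main obstacle is to verify that each composite $R \cof K \cof L_j$ lies \emph{exactly} in $\mcFL_U$, rather than in some larger collection: it is certainly a finite localization of $R$, being a composite of finite localizations, and since $L_j \to \red K$ we get $\Pts_K L_j \supseteq \Pts_K \red K = \Spec K$ — the latter equality because $\eta_K \colon K \cof \red K$ is a geometric isomorphism by Theorem~\ref{thm:properties_reduction_one} — whence $\Pts_K L_j = \Spec K$ and so the composite $R \to L_j$ has point set $U$. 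Since the transition maps of the diagram $\{L_j\}$ are again finite localizations (Lemma~\ref{lemma:cancellation_properties}), $\{L_j\}$ is a subdiagram of $\mcFL_U$, the cocone legs $L_j \to R_U$ restrict to a cocone on it, and passing to the colimit yields the desired map $\red K \to R_U$ over $K$.

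Finally, by the uniqueness of maps over $R$ between localizations noted at the outset, the composites $R_U \to \red K \to R_U$ and $\red K \to R_U \to \red K$ are both identities, giving $R_U \cong \red K$ as localizations of $R$. Alternatively, one can compress the whole argument into a single cofinality statement: the objects of $\mcFL_U$ that factor through $K$ are cofinal in $\mcFL_U$, since for any $K' \in \mcFL_U$ the pushout $K +_R K'$ again lies in $\mcFL_U$ (pushouts of finite localizations are finite localizations and $\Pts(K +_R K') = \Pts k \cap \Pts k' = U$), and the colimit over this cofinal subdiagram is precisely $\red K$.
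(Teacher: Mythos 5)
Your proof is correct and takes the paper's intended route: the paper deduces this corollary from the preceding Proposition, and your two maps --- $R_U \to \red K$ obtained from maximality of $\red K$ among localizations of $K$ with full point set, and $\red K \to R_U$ obtained by writing the cellular localization $K \cof \red K$ as a directed colimit of finite localizations $L_j$, each shown to lie in $\mcFL_U$, all glued by uniqueness of maps under an epimorphism --- are exactly the details the paper leaves as ``easy''. One caution: your closing ``compression'' is not an independent shortcut, since the claim that the colimit over the cofinal subdiagram (equivalently, over all finite localizations of $K$ with full point set) is precisely $\red K$ is just the $U = \Spec K$ case of the corollary itself, so it still requires your cellular-colimit argument for the backward direction.
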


We have already proved that the distinguished open sets $U$ are faithfully described by the localizations $R_U$, here we present a short alternative proof in terms of reductions.

\begin{lemma}
Let $K$ and $L$ be two localizations of $R$. Then $\Pts k \subseteq \Pts l$ iff there exists a factorization $\red L \cof \red K$.
\end{lemma}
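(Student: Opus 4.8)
The plan is to prove the two implications separately, using throughout the identification (from the subsection on distinguished opens) of the local forms of a localization $m\colon R\cof M$ with the points of $\Pts m$: composing a local form $M\cof P$ with the epi $m$ yields a local form of $R$, and this sets up a bijection between the local forms of $M$ and the points of $\Pts m$. I would also invoke the fact that the reduction unit $\eta_K\colon K\cof\red K$ is a geometric isomorphism (Theorem~\ref{thm:properties_reduction_one}), so that $\Pts(R\cof\red K)=\Pts k$ and likewise $\Pts(R\cof\red L)=\Pts l$; concretely, a local form of $R$ factors through $k$ if and only if it factors through $R\cof\red K$.

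For ($\Leftarrow$), suppose a localization $\red L\cof\red K$ over $R$ is given, so that $R\cof\red K$ factors as $R\cof\red L\to\red K$. If a local form $p\colon R\cof P$ lies in $\Pts k$, then by the remark above it factors as $R\cof\red K\to P$; composing with the given factorization exhibits $p$ as $R\cof\red L\to\red K\to P$, so $p$ factors through $\red L$ and hence lies in $\Pts(R\cof\red L)=\Pts l$. This yields $\Pts k\subseteq\Pts l$ and needs nothing beyond the bookkeeping of the first paragraph.

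The substance lies in ($\Rightarrow$). Writing the local forms of $R$ as $(P_\alpha)$ indexed by $\alpha\in\Spec R$, the reduction of $K$ is the admissible part $\red K\loc(P_\alpha)_{\alpha\in\Pts k}$ of the factorization of $K\to(P_\alpha)_{\alpha\in\Pts k}$, the codomain being exactly the family of local forms of $K$; likewise for $\red L\loc(P_\alpha)_{\alpha\in\Pts l}$. Assuming $\Pts k\subseteq\Pts l$, the inclusion of index sets induces a projection $(P_\alpha)_{\alpha\in\Pts l}\to(P_\alpha)_{\alpha\in\Pts k}$ in $\prod\mcA$, and precomposing with the admissible map for $\red L$ produces a map $\red L\to(P_\alpha)_{\alpha\in\Pts k}$. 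I would then form the square
\[\xymatrix{
R \ar[r] \ar@{ >->}[d] & \red K \ar[d] \\
\red L \ar[r] & (P_\alpha)_{\alpha\in\Pts k}
}\]
whose left edge is the localization $R\cof\red L$ and whose right edge is the admissible map $\red K\loc(P_\alpha)_{\alpha\in\Pts k}$. Both composites around the square have $\alpha$-component the canonical local form $p_\alpha\colon R\cof P_\alpha$ for $\alpha\in\Pts k$, so the square commutes. The defining lifting property of localizations against admissible maps then supplies a diagonal $\red L\to\red K$; its upper triangle shows this map lies over $R$, and the cancellation Lemma~\ref{lemma:cancellation_properties} (applied to $R\cof\red L\to\red K$, with $R\cof\red K$ a localization and $R\cof\red L$ epic) upgrades it to a localization $\red L\cof\red K$, as required.

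The hard part will be purely one of correct bookkeeping: checking that the codomains of the admissible maps for $\red K$ and for $\red L$ are literally the same family $(P_\alpha)$ of local forms of $R$ restricted to the respective point sets, so that the index-set inclusion $\Pts k\hookrightarrow\Pts l$ genuinely induces the projection used above, and verifying componentwise that the square commutes. Once the two admissible maps are arranged to share this common family, the lifting argument is immediate.
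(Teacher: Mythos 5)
Your proof is correct and takes essentially the same route as the paper: the paper's entire proof is the single lifting square of $R \cof \red L$ against the admissible map $\red K \loc (P_\alpha)_{\alpha \in \Pts k}$, with the diagonal upgraded to a localization via Lemma~\ref{lemma:cancellation_properties}, exactly as in your forward direction. You merely spell out the bookkeeping (identifying the local forms of $K$ with $(P_\alpha)_{\alpha\in\Pts k}$ via epicness of $k$) and the easy converse, both of which the paper leaves implicit.
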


\begin{proof}
Similarly to the previous proof:
\[\xymatrix@C=1pc{
R \ar@{ >->}[r] \ar@{ >->}[d] & K \ar@{ >->}[r] & \red K \ar[d]^-\sim \\
\red L \ar[rr] \ar@{-->}[rru] & & \leftbox{(P_\alpha)}{{}_{\alpha \in \Pts k}}
}\qedhere\]
\end{proof}

\begin{corollary} \label{cor:dist_open_finite_localization_correspondence}
Assuming $\mcA = \red \mcA$, isomorphism classes of finite localizations $k \colon R \cof K$ are in bijection with distinguished opens $\Pts k \subseteq \Spec R$ and the canonical presheaf is obtained as the right Kan extension of the partial presheaf $\Pts k \mapsto K$. \qed
\end{corollary}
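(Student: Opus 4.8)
The plan is to separate the statement into the claimed bijection between isomorphism classes of finite localizations and distinguished opens, and the identification of the canonical presheaf; I would prove the bijection first and then read off the presheaf statement as bookkeeping.

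For the bijection, the assignment $k \mapsto \Pts k$ is surjective by the very definition of a distinguished open, so all the content lies in injectivity. I would extract injectivity directly from the preceding lemma, which asserts that $\Pts k \subseteq \Pts l$ holds if and only if there is a factorization $\red L \cof \red K$. Under the standing hypothesis $\mcA = \red\mcA$ every object is reduced, so $\red K = K$ and $\red L = L$; hence $\Pts k = \Pts l$ furnishes maps $w \colon L \to K$ and $v \colon K \to L$ under $R$, that is $wl = k$ and $vk = l$, where $k \colon R \cof K$ and $l \colon R \cof L$ denote the two finite localizations.

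The key step, and essentially the only point where an argument is required, is to see that $w$ and $v$ are mutually inverse. Here I would use that localizations are epimorphisms, being transfinite composites of pushouts of coproducts of the epic $a_{ij}$ (and these three operations preserve epimorphisms). Since $wv \colon K \to K$ satisfies $wvk = wl = k$ and $k$ is epic, we get $wv = 1_K$; symmetrically $vw = 1_L$ because $vwl = vk = l$ and $l$ is epic. Thus $w$ and $v$ are inverse isomorphisms compatible with $k$ and $l$, so $K \cong L$ over $R$ and $k$ and $l$ are isomorphic as finite localizations, giving injectivity. This cancellation is the main obstacle, and it is a mild one.

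For the canonical presheaf, recall that $\mcO_R^{\can}$ was defined as the right Kan extension along the inclusion of distinguished opens of the partial presheaf $U \mapsto R_U$. The earlier corollary gives $R_U = \red K$ for $U = \Pts k$, and under $\mcA = \red\mcA$ this reduces to $R_U = K$. Combining this with the bijection just established, which says that a distinguished open $U$ determines the finite localization $k$, hence $K$, up to isomorphism, the partial presheaf $U \mapsto R_U$ is literally the partial presheaf $\Pts k \mapsto K$. Their right Kan extensions therefore coincide, which is exactly the asserted description of the canonical presheaf.
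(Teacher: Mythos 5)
Your proof is correct and follows essentially the same route the paper intends (the corollary carries a \qed precisely because it is meant to follow from the preceding lemma and the identity $R_U = \red K$, which under $\mcA = \red\mcA$ become $\Pts k \subseteq \Pts l \Leftrightarrow L \cof K$ and $R_U = K$). Your explicit epi-cancellation step showing $w$ and $v$ are mutually inverse — using that localizations are epic as transfinite composites of pushouts of coproducts of the epic $a_{ij}$ — is exactly the detail the paper leaves implicit, and it is the right way to fill it in.
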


In $\red \mcA$, we obtain a ``reduced'' cone weak factorization system, generated by cones $\red a_i \colon \red A_i \to (\red B_{ij})$. According to Theorem~\ref{thm:properties_reduction_two}, the local objects, admissible maps and local forms are exactly those of $\mcA$. Finite localizations are more complicated and it is thus not immediately clear that the distinguished opens in $\Spec R$ and in $\Spec \red R$ agree; this is addressed in Section~\ref{sec:reduced_localizations}. Theorem~\ref{thm:properties_reduction_two} gives that all objects are reduced w.r.t.\ $\mcA$-admissible maps in $\red \mcA$, but these are equal to $\red \mcA$-admissible maps, so that we get:

\begin{corollary}
All objects of $\red \mcA$ are reduced. \qed
\end{corollary}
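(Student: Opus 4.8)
The plan is to obtain this as an immediate consequence of Theorem~\ref{thm:properties_reduction_two}, so the real work is only to line up the definitions. Recall that, in the admissible case, $\red = \red_\mcM$ for $\mcM$ the class of admissible maps, and that an object $R$ is reduced precisely when the multi-reflection unit $\ell_R \colon R \to (P_\alpha)$ is admissible. The subtlety I want to keep in view is that, working inside $\red\mcA$, both the relevant map $\ell_R$ and the class of admissible maps against which it must be tested are \emph{a priori} computed using the reduced cone weak factorization system generated by the $\red a_i$, rather than the original one on $\mcA$; the argument consists of showing these computations agree with those in $\mcA$.

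First I would invoke the fourth point of Theorem~\ref{thm:properties_reduction_two}: every object of $\red\mcA$ is $\mcM$-reduced in $\red\mcA$, i.e.\ its multi-reflection unit lies in the class $\mcM$ of $\mcA$-admissible maps. Concretely this holds because the multi-reflection unit for $\red\mcA$ coincides with the unit $\ell_R$ for $\mcA$ — here I use the third point, which identifies the local forms $(P_\alpha)$ on both sides — and $\ell_R$ lies in $\mcM$ by the very $(\mcE,\mcM)$-factorization that defines $\red R$.

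Next I would apply the second point of Theorem~\ref{thm:properties_reduction_two}, which identifies the admissible maps of $\red\mcA$ with the admissible maps of $\mcA$ on objects of $\red\mcA$. Hence testing admissibility of $\ell_R$ against the reduced system is the same as testing it against the original one, and the previous step already supplies the answer. Combining the two: for every $R \in \red\mcA$ the unit $\ell_R$ is $\mcA$-admissible, hence $\red\mcA$-admissible, so $R$ is reduced in $\red\mcA$; since this is uniform in $R$, every object of $\red\mcA$ is reduced.

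I do not expect any genuine obstacle here — the entire content has been front-loaded into Theorem~\ref{thm:properties_reduction_two}, and the corollary is essentially a bookkeeping statement. The one point deserving care is not conflating the two cone weak factorization systems prematurely: one must first invoke the coincidence of their admissible classes (second point) and of their local forms (third point) before concluding, since \emph{reducedness} is a statement about a specific map lying in a specific class, and both the map and the class shift when passing between $\mcA$ and $\red\mcA$.
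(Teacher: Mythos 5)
Your proof is correct and follows exactly the paper's route: the corollary is obtained by combining the fourth point of Theorem~\ref{thm:properties_reduction_two} (all objects of $\red\mcA$ are reduced w.r.t.\ $\mcA$-admissible maps, because the multi-reflection unit is shared between $\mcA$ and $\red\mcA$) with the second point (the $\mcA$-admissible and $\red\mcA$-admissible maps coincide). Your added care in distinguishing the two cone weak factorization systems before identifying them matches the paper's intent precisely.
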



\subsection{Geometric isomorphisms}

In general, $\Spec$ is not fully faithful and, in particular, it does not reflect isomorphisms. We recall that a map is called a geometric isomorphism if its image under $\Spec$ is an isomorphism.

\begin{theorem} \label{thm:characterizing_spec_iso}
A map $f \colon R \to S$ is a geometric isomorphism if and only if the pushout of $f$ along any local form becomes an isomorphism in $\red\mcA$,
\[\xymatrix{
R \ar[r]^-f \ar@{ >->}[d] & S \ar@{ >->}[d] \\
P \ar[r] \ar@/_1em/[rr]_-\cong & f_* P \ar[r] \po & \red(f_* P)
}\]
i.e.\ if and only if the composite across the bottom is an isomorphism.
\end{theorem}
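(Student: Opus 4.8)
The plan is to prove both implications using the point-and-stalk description of $\Spec$ developed above, treating the statement as the promised converse to Proposition~\ref{prop:geometric_isomorphism_easy} with the reduction inserted. Throughout I would write $\iota\colon P\to f_*P$ for the pushout leg opposite $f$ and $\rho_p:=\eta_{f_*P}\circ\iota\colon P\to\red(f_*P)$ for the composite across the bottom of the square, so that the claim becomes: $f$ is a geometric isomorphism iff every $\rho_p$ is an isomorphism. Two inputs will be used repeatedly: by Theorem~\ref{thm:properties_reduction_one} the reduction unit $\eta_{f_*P}\colon f_*P\to\red(f_*P)$ is itself a geometric isomorphism, and by Theorem~\ref{thm:properties_reduction_two} local objects are reduced, i.e.\ $\mcP\subseteq\red\mcA$.

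For the forward implication I would assume $f^*$ is an isomorphism and fix a local form $p\colon R\cof P$. Let $q\colon S\cof Q$ be the unique point of $\Spec S$ with $f^*[q]=[p]$; by the description of $f^*$ this means $qf$ factors as $g\circ p$ with $g\colon P\to Q$ admissible, and $g$ is precisely the induced map on stalks, hence an isomorphism. The universal property of $f_*P$ then produces $h\colon f_*P\to Q$ with $h\circ f_*p=q$ and $h\circ\iota=g$; since $q$ is a localization and $f_*p$ is epic, Lemma~\ref{lemma:cancellation_properties} makes $h$ a localization, so $\rho_p$ is a localization as a composite of such. Now $g^{-1}h$ is a retraction of $\iota$, and since $P$ is reduced it factors through $\eta_{f_*P}$, yielding a retraction of $\rho_p$. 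As a localization is epic, a split monic localization is an isomorphism, and thus $\rho_p$ is an isomorphism.

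For the backward implication I would assume every $\rho_p$ is an isomorphism, so each $\red(f_*P)\cong P$ is local, and define a candidate inverse on points by $\Phi[p]=[q_p]$ with $q_p:=\eta_{f_*P}\circ f_*p\colon S\cof\red(f_*P)$, a localization into a local object. Since $q_pf=\rho_p\circ p$ with $\rho_p$ an isomorphism (hence admissible), I get $f^*\Phi=\mathrm{id}$. Conversely, a point over $[p]$ is a local form $q=h\circ f_*p$ arising from a local form $h\colon f_*P\cof Q$ with $h\circ\iota$ admissible; writing $h=\pi_\beta\circ\eta_{f_*P}$ for the corresponding component of the multireflection of $f_*P$, the cancellation lemma makes $\pi_\beta$ a localization, and $h\circ\iota=\pi_\beta\circ\rho_p$ shows that admissibility of $h\circ\iota$ forces $\pi_\beta$ to be admissible, hence an isomorphism (a localization that is admissible). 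Therefore $h\cong\eta_{f_*P}$ and $q\cong q_p$, giving $\Phi f^*=\mathrm{id}$; the induced map on stalks at $q_p$ is again $\rho_p$, an isomorphism.

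It then remains to upgrade this stalkwise-isomorphic bijection to an isomorphism of $\AP$-spaces, equivalently to check that $f^*$ is open so that $\Phi$ is continuous, and I expect this to be the main obstacle. I would dispatch it by the filtered-colimit argument used for openness in Proposition~\ref{prop:geometric_isomorphism_easy}, now exploiting that each $P\cong\red(f_*P)$ is a filtered colimit of finite localizations through which any finite localization of $S$ must factor; with openness in hand, $f^*$ is a continuous open bijection that is a stalkwise isomorphism, and since stalks reflect isomorphisms of sheaves this makes $f^*$ an isomorphism in $\APTop$. The genuinely delicate point is the product-completion subtlety that the components $\pi_\beta$ of an admissible map into a product are localizations but need not themselves be admissible; this is exactly what makes the condition ``$h\circ\iota$ admissible'' single out $\eta_{f_*P}$ as the unique point in each fibre, and it is the crux of the backward implication.
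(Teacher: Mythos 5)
Your backward implication is essentially sound: it redoes Proposition~\ref{prop:geometric_isomorphism_easy} with the reductions written out (the paper instead passes to $\red\mcA$ once and for all via Theorems~\ref{thm:properties_reduction_one} and~\ref{thm:properties_reduction_two} and then quotes that proposition verbatim), and your identification of the fibre of $f^*$ over $[p]$ with local forms $h$ of $f_*P$ for which $h\circ\iota$ is admissible is correct, as is your observation that the deferred openness step is where the remaining work lies. The forward implication, however, contains a genuine gap at the step ``$h$ is a localization, so $\rho_p$ is a localization as a composite of such.'' This is a non sequitur: $\rho_p=\eta_{f_*P}\circ\iota$, and $\iota$ is the pushout of $f$ along $p$ --- not the pushout of $p$ along $f$ --- so you have conflated the two legs of the pushout square. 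Nothing makes $\iota$ a localization, or even an epimorphism, and the fact that the \emph{other} map $h\colon f_*P\to Q$ is a localization says nothing about $\iota$. Without epicness of $\rho_p$, your retraction argument establishes only that $\rho_p$ is a split monomorphism whose retraction is a localization, and that configuration genuinely fails to force invertibility: in the integral-domain example (local objects the domains, localizations the iterated quotients), the quotient $k[x,y]/(xy)\to k[x]$ is a localization between reduced objects, its section $x\mapsto x$ is a split mono of a local object into a reduced one, and neither map is an isomorphism. So ``split mono with localization retraction, landing in a reduced object'' is strictly weaker than what you need, and the concluding sentence of your forward argument does not go through.

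The missing ingredient is precisely what the paper's forward proof uses and your argument never invokes: that $f^*$ is a \emph{homeomorphism}, not merely a stalkwise-isomorphic continuous bijection. The paper expresses $P$ as the filtered colimit of its finite sublocalizations $K$, notes that the sets $\Pts k$ form a neighbourhood basis of $p\in\Spec R$, and uses openness of $f^*$ together with Corollary~\ref{cor:dist_open_finite_localization_correspondence} to conclude that the $f_*K$ are cofinal among the finite sublocalizations of $Q$; hence $Q\cong P$ is itself the pushout of $P$ along $f$ computed in $\red\mcA$, which is exactly the statement that $\rho_p$ is an isomorphism. Your purely algebraic manipulation of a single stalk uses only bijectivity on points and the stalk isomorphism $g$, and that data cannot by itself exclude the possibility that $\red(f_*P)$ is strictly larger than $P$; some appeal to the topology of the two spectra, as in the paper's cofinality argument, appears unavoidable. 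I would also flag that your sketched openness argument in the backward direction needs care for a related reason: $\red(f_*P)$ is not presented as a filtered colimit of the objects $f_*K$ but only as a localization of such a colimit, so the finite-presentability argument must be routed through finite localizations of $f_*P$ (or one works in $\red\mcA$ throughout, as the paper does, at the cost of the comparison of finite localizations handled in Section~\ref{sec:reduced_localizations}).
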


\begin{proof}
%
According to Theorem~\ref{thm:properties_reduction_one}, the units $\eta_R$, $\eta_S$ are geometric isomorphisms, so $f$ is a geometric isomorphism iff $\red f$ is a geometric isomorphism:
\[\xymatrix{
R \ar[r]^-f \ar[d]_-{\eta_R} & S \ar[d]^-{\eta_S} \\
\red R \ar[r]_-{\red f} & \red S
}\]
In addition, Theorem~\ref{thm:properties_reduction_two} says that the two potential notions of a geometric isomorphism, defined through $\Spec$ and $\red\Spec$, are the same. We may thus work in the category $\red \mcA$ all the time and ignore all the reductions. The backward implication is then exactly Proposition~\ref{prop:geometric_isomorphism_easy}.

For the forward implication, i.e.\ assuming that $f$ is a geometric iso, denote the unique preimage of $p$ under $f^*$ by $q$; we get a square
\[\xymatrix{
R \ar[r]^-{f} \ar@{ >->}[d]_-p & S \ar@{ >->}[d]^-q \\
P \ar[r]_-\cong & Q
}\]
with the bottom map iso since it is the action of $f^*$ on stalks. It remains to show that this square is cocartesian. To this end, express $P$ as a colimit of all its finite sublocalizations $K$; then the distinguished open sets $\Pts k$ form a neighbourhood basis of $p \in \Spec R$. Since $f^*$ is a homeomorphism, their preimages $\Pts(f_* k)$ form a neighbourhood basis of $q \in \Spec S$ and thus the $f_* K$ form a cofinal subdiagram in the diagram of all finite sublocalizations of $Q$, by Corollary~\ref{cor:dist_open_finite_localization_correspondence}. In particular, $Q$ is the colimit of the $f_* K$ and is thus itself a pushout of $P$.
\end{proof}

We can view the theorem as an obstruction to $\Spec$ being fully faithful, expressed solely in terms of $\mcA$ (or $\red\mcA$); coupled with results of Section~\ref{sec:flat_local_forms}, this will provide a sufficient condition for $\Spec$ being fully faithful. Say that \emph{local forms reflect isomorphisms} if pushouts of $f \colon R \to S$ along all local forms of $R$ are iso $\Ra$ $f$ is iso.

\begin{corollary} \label{cor:Spec_conservative}
Assume that $\mcA = \red\mcA$. Then $\Spec \colon \mcA^\op \to \APTop$ reflects isomorphisms iff local forms reflect isomorphisms. \qed
\end{corollary}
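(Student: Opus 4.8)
The plan is to prove the two implications separately, leaning heavily on Theorem~\ref{thm:characterizing_spec_iso}, which under the hypothesis $\mcA = \red\mcA$ characterizes geometric isomorphisms purely in terms of pushouts along local forms (the reductions $\red(f_*P)$ disappear since everything is already reduced). The key observation is that ``$\Spec$ reflects isomorphisms'' means exactly that every geometric isomorphism is an isomorphism, because $\Spec$ is defined on $\mcA^\op$ and a map $f$ is sent to an isomorphism in $\APTop$ precisely when $f$ is a geometric isomorphism. So the corollary is really asserting the equivalence of two formulations of the same condition, and the bridge between them is Theorem~\ref{thm:characterizing_spec_iso}.

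First I would unwind the definitions. Under $\mcA = \red\mcA$, Theorem~\ref{thm:characterizing_spec_iso} states that $f \colon R \to S$ is a geometric isomorphism if and only if the pushout of $f$ along every local form $p \colon R \cof P$ is an isomorphism (the trailing reduction being vacuous). On the other hand, ``local forms reflect isomorphisms'' is the statement that, whenever all these pushouts are isomorphisms, $f$ itself is an isomorphism. Combining these, ``local forms reflect isomorphisms'' is literally equivalent to ``every geometric isomorphism is an isomorphism.''

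Next I would identify ``geometric isomorphism'' with ``a map sent by $\Spec$ to an isomorphism.'' This is just the definition: $f^* = \Spec f \colon \Spec S \to \Spec R$ is an isomorphism in $\APTop$ exactly when $f$ is a geometric isomorphism. Hence ``$\Spec$ reflects isomorphisms'' (i.e.\ $\Spec f$ iso $\Ra$ $f$ iso) is, word for word, ``every geometric isomorphism is an isomorphism.'' Chaining the two identifications gives
\[
\Spec \text{ reflects isos} \iff \text{every geometric iso is an iso} \iff \text{local forms reflect isos},
\]
which is the claim.

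I do not expect a serious obstacle here, since all the substantive content has already been extracted into Theorem~\ref{thm:characterizing_spec_iso}; the corollary is essentially a repackaging. The one point requiring slight care is the direction of functoriality: because $\Spec$ is contravariant ($\Spec \colon \mcA^\op \to \APTop$), one must be sure that the notion of ``reflecting isomorphisms'' is read correctly against the variance, but since isomorphisms are self-dual this causes no real trouble. The whole argument is therefore a short chain of logical equivalences, and the ``$\qed$'' already present in the statement reflects that no computation beyond invoking Theorem~\ref{thm:characterizing_spec_iso} is needed.
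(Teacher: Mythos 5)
Your proposal is correct and is exactly the argument the paper intends (the corollary carries a \qed because it is an immediate repackaging of Theorem~\ref{thm:characterizing_spec_iso}): under $\mcA = \red\mcA$ the trailing reduction in that theorem is vacuous, so geometric isomorphisms are precisely the maps whose pushouts along all local forms are isomorphisms, and chaining this with the definitions of the two reflection properties gives the claim.
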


\subsection{Monomorphism case}

Here we assume that $\mcA$ is locally presentable, even though some parts only require that a (strong epi, mono) factorization exists in $\mcA$.

We can apply the general reduction construction to the (strong epi, mono) factorization system, denoting the reduction by $\mono$ instead of $\red_\mcM$ for the class of monos $\mcM$. In concrete terms, we say that $R$ is \emph{mono-reduced} if $\ell_R \colon R \to (P_\alpha)$ is monic. This will be utilized in Section~\ref{sec:covers}. Here we want to elaborate on the last point of Theorem~\ref{thm:properties_reduction_two}, saying that all objects of $\mono \mcA$ are reduced w.r.t.\ $\mcA$-monos in $\mono \mcA$. Since $\mono \mcA$ is a reflective subcategory, the $\mcA$-monos in $\mono \mcA$ are exactly the $\mono \mcA$-monos, so we can conclude:

\begin{corollary}
All objects of $\mono \mcA$ are mono-reduced. \qed
\end{corollary}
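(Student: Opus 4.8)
The plan is to deduce the statement as a direct specialization of the last point of Theorem~\ref{thm:properties_reduction_two}, once the two competing notions of monomorphism have been reconciled. Concretely, I would apply that theorem with $\mcM$ taken to be the class of monomorphisms of $\mcA$: its last point then asserts that every object $R \in \mono\mcA$ is reduced in $\mono\mcA$ with respect to the restriction of $\mcM$, i.e.\ that the unit $\ell_R \colon R \to (P_\alpha)$ of the multi-reflection lies in the class of those $\mcA$-monomorphisms which happen to belong to $\mono\mcA$. Here I would silently use the third point of the same theorem, which guarantees that the local forms, and hence the map $\ell_R$ itself, are computed identically whether one works in $\mcA$ or in $\mono\mcA$, so that there is only a single map $\ell_R$ to discuss. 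The only remaining task is then to recognize this class as the class of genuine $\mono\mcA$-monomorphisms, so that membership of $\ell_R$ in it is exactly the intrinsic mono-reducedness of $R$.

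First I would record the easy inclusion: since $\mono\mcA \subseteq \mcA$ is full, any $\mcA$-monomorphism whose source and target lie in $\mono\mcA$ is automatically a $\mono\mcA$-monomorphism, because the cancellation test is only weakened when the test maps are restricted to the subcategory. For the converse inclusion I would invoke the reflectivity of $\mono\mcA$ established in Theorem~\ref{thm:properties_reduction_one}: the inclusion $\mono\mcA \hookrightarrow \mcA$ is a right adjoint (to the reduction $\mono$), and right adjoints preserve monomorphisms, so every $\mono\mcA$-monomorphism is in fact an $\mcA$-monomorphism. Together these two inclusions identify the $\mono\mcA$-monomorphisms with precisely the $\mcA$-monomorphisms lying in $\mono\mcA$.

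Combining the two steps finishes the argument: Theorem~\ref{thm:properties_reduction_two} places $\ell_R$ in the class of $\mcA$-monomorphisms contained in $\mono\mcA$, which we have just identified with the $\mono\mcA$-monomorphisms, so $\ell_R$ is monic in $\mono\mcA$ and $R$ is mono-reduced. I do not expect any real obstacle here, since essentially all of the substance is carried by Theorem~\ref{thm:properties_reduction_two}; the only genuinely new input is the standard observation that a reflective full subcategory inclusion, being a right adjoint, preserves monomorphisms (and, being full, reflects them). The single point requiring care is purely a matter of bookkeeping---making sure that \emph{mono-reduced} in the statement is read as the intrinsic notion in $\mono\mcA$ rather than the ambient one in $\mcA$, relative to which the corollary would be a tautology.
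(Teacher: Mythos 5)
Your proof is correct and takes essentially the same route as the paper: both specialize the last point of Theorem~\ref{thm:properties_reduction_two} to the class $\mcM$ of monomorphisms and then use reflectivity of $\mono\mcA$ to identify the $\mcA$-monos lying in $\mono\mcA$ with the intrinsic $\mono\mcA$-monos. The paper asserts this identification in a single clause; you have merely spelled out its standard verification (the inclusion, as a right adjoint, preserves monos, and the converse cancellation argument is immediate).
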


\begin{remark}
Interestingly, we can cofibrantly generate a factorization system $(\mcE_\lambda, \mcM_\lambda)$ by all epis between $\lambda$-presentable objects and by the general theory, the canonical map $\ell_R \colon R \to (P_\alpha)$ lies in $\mcM_\lambda$ for any fixed point $R \in \fix \mcA$. Since this holds for every $\lambda$, this canonical map $\ell_R$ is a strong mono.
\end{remark}

\begin{remark}
There is also a cofibrantly generated factorization system $(\mcE, \mcM)$ whose right class consists of maps that are monic and admissible at the same time. We then get a reflection onto the intersection $\mono \mcA \cap \red \mcA$.
\end{remark}

\begin{remark}
If $\mcA$ is in fact locally \emph{finitely} presentable then, assuming $\mcA = \red \mcA$, the condition $\mcA = \mono \mcA$ is equivalent to the canonical presheaves $\mcO_R^\can$ on $\Spec R$ being monopresheaves (this is almost clear for distinguished opens and extends easily to all opens by taking appropriate limits). Thus, in order to produce a sheafification $\mcO_R$, it is enough to perform the plus-construction once (see Section~\ref{subsec:Heller_Rowe_formula}).
\end{remark}

\section{First consequences of fully faithful $\Spec$}



For the further use, we record the following implication:

\begin{lemma} \label{lemma:Spec_ff_implies_reduced}
If $\Spec$ is fully faithful then $\mcA = \red\mcA$.
\end{lemma}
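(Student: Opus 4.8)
The plan is to reduce the statement to the elementary fact that a fully faithful functor reflects isomorphisms, applied to the reduction unit. Fix an arbitrary $R \in \mcA$. By construction of the admissible reduction, the unit $\eta_R \colon R \cof \red R$ is a localization, and by definition $R$ is reduced precisely when $\eta_R$ is an isomorphism. So it suffices to show that, under the hypothesis, $\eta_R$ is invertible for every $R$; then $\mcA = \red\mcA$ follows since $R$ was arbitrary.

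First I would recall that $\eta_R$ is a \emph{geometric isomorphism}. This is exactly the content of Theorem~\ref{thm:properties_reduction_one} specialized to the (localization, admissible) factorization system, so that $\red_\mcM = \red$: the theorem asserts that the reflection unit $\eta_R \colon R \to \red R$ induces an isomorphism on spectra, i.e.\ $\Spec(\eta_R) = \eta_R^* \colon \Spec \red R \to \Spec R$ is an isomorphism in $\APTop$. The only bookkeeping point here is the variance: $\eta_R$ is a morphism $R \to \red R$ of $\mcA$, hence a morphism $\red R \to R$ of $\mcA^\op$, and $\Spec$ (defined on $\mcA^\op$) sends it to the displayed isomorphism.

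Next comes the key observation, which is purely formal: since $\Spec \colon \mcA^\op \to \APTop$ is assumed fully faithful, it reflects isomorphisms. Indeed, if $\Spec f$ has an inverse, fullness yields a morphism $g$ with $\Spec g$ equal to that inverse, and faithfulness promotes the two identities $\Spec(gf) = \Spec \mathrm{id}$ and $\Spec(fg) = \Spec \mathrm{id}$ to $gf = \mathrm{id}$ and $fg = \mathrm{id}$. Applying this to $\eta_R$, whose image under $\Spec$ is the isomorphism of the previous step, shows that $\eta_R$ is an isomorphism in $\mcA^\op$, hence in $\mcA$. Therefore $R$ is reduced, and as $R$ ranged over all objects we conclude $\mcA = \red\mcA$.

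The main (and essentially only) obstacle is conceptual rather than computational: one must recognize that "geometric isomorphism" is by definition exactly "morphism sent to an isomorphism by $\Spec$", so that full faithfulness of $\Spec$ upgrades every geometric isomorphism to an honest isomorphism. Once this is seen, no calculation is needed; the remaining care is merely to invoke Theorem~\ref{thm:properties_reduction_one} for the admissible class $\mcM$ and to keep the variance straight between $\mcA$ and $\mcA^\op$.
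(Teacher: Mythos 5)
Your proof is correct, but it takes a different route than the paper's. The paper's proof is a one-liner built on the chain of full subcategories $\fix\mcA \subseteq \red\mcA \subseteq \mcA$: full faithfulness of $\Spec$ is equivalent to the counit $\varepsilon$ being an isomorphism everywhere, i.e.\ to $\fix\mcA = \mcA$, and then both inclusions in the chain are forced to be equalities. The inclusion $\fix\mcA \subseteq \red\mcA$ used there comes from Theorem~\ref{thm:properties_reduction_two} (the image of $\Gamma$ consists of reduced objects). You instead invoke Theorem~\ref{thm:properties_reduction_one} (the reduction unit $\eta_R$ is a geometric isomorphism) and combine it with the elementary fact that a fully faithful functor reflects isomorphisms; your variance bookkeeping and the observation that $R$ is reduced iff $\eta_R$ is invertible are both accurate, and there is no circularity since Theorem~\ref{thm:properties_reduction_one} is established unconditionally before this lemma. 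The trade-off: your argument bypasses fixed points and the image of $\Gamma$ entirely, working only with the spectrum functor and the reduction unit, which makes it arguably more self-contained; the paper's argument, by contrast, needs no isomorphism-reflection step and makes the structural picture $\mcP \subseteq \fix\mcA \subseteq \red\mcA \subseteq \mcA$ do all the work, which is the picture the surrounding sections are organized around.
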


\begin{proof}
If $\fix \mcA \subseteq \red \mcA \subseteq \mcA$ is equality, both inclusions have to be.
\end{proof}

First we study a simple criterion for $\Spec$ being fully faithful. By the above lemma, we may restrict to the case $\mcA = \red\mcA$.

\begin{proposition} \label{prop:Spec_fully_faithful_ORcan_sheaf}
Assume that $\mcA = \red\mcA$. Then $\Spec \colon \mcA^\op \to \APTop$ is fully faithful iff the canonical presheaf $\O_R^\can$ is a sheaf for each $R$ (then it equals $\mcO_R$).
\end{proposition}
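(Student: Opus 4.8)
The plan is to exploit the standard characterization that $\Spec$ is fully faithful precisely when the counit $\varepsilon_R \colon R \to \overline R = \Gamma \Spec R = \mcO_R(\Spec R)$ is an isomorphism for every $R$, and to compare the two presheaves $\mcO_R^{\can}$ and $\mcO_R = \sheafify \mcO_R^{\can}$ through the sheafification unit $\iota \colon \mcO_R^{\can} \to \mcO_R$. The elementary fact that a presheaf is a sheaf exactly when its sheafification unit is an isomorphism already delivers the parenthetical ``then it equals $\mcO_R$'' and reduces the whole statement to the claim that $\iota$ is an isomorphism for every $R$ if and only if $\Spec$ is fully faithful.

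First I would reduce the condition ``$\iota$ is an isomorphism'' to distinguished opens. By definition $\mcO_R^{\can}$ is the right Kan extension along the inclusion of distinguished opens, so $\mcO_R^{\can}(V) = \lim_{\text{d.o. } U \subseteq V} \mcO_R^{\can}(U)$; on the other side, since the distinguished opens form a basis closed under finite intersection, the sheaf property of $\mcO_R$ gives $\mcO_R(V) = \lim_{\text{d.o. } U \subseteq V} \mcO_R(U)$ for every open $V$. As $\iota$ is natural, $\iota_V$ is the induced map of these two limit cones, hence an isomorphism as soon as each $\iota_U$ with $U$ distinguished is. Therefore $\iota$ is an isomorphism iff it is an isomorphism on all distinguished opens.

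Next I would identify $\iota$ on a distinguished open $U = \Pts k$. Using $\mcA = \red\mcA$ together with Corollary~\ref{cor:dist_open_finite_localization_correspondence}, the distinguished open $\Pts k$ corresponds to a unique finite localization $k \colon R \cof K$ with $\mcO_R^{\can}(\Pts k) = K$. The open embedding $\Spec K \cong \Spec R|_{\Pts k}$ established in the section on distinguished opens identifies $\mcO_R(\Pts k)$ with $\Gamma \Spec K = \overline K$, and under these identifications $\iota_{\Pts k}$ is exactly the counit $\varepsilon_K \colon K \to \overline K$. Consequently $\mcO_R^{\can}$ is a sheaf for every $R$ iff $\varepsilon_K$ is an isomorphism for every finite localization $k \colon R \cof K$ of every $R$. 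Since every object $S \in \mcA$ is a finite localization of itself via $\mathrm{id}_S$ (and conversely each such $K$ is itself an object of $\mcA$), this is equivalent to $\varepsilon_S$ being an isomorphism for all $S$, i.e.\ to $\Spec$ being fully faithful.

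The main obstacle I anticipate is the bookkeeping in the reduction step: one must check carefully that the right Kan extension formula for $\mcO_R^{\can}$ and the sheaf-theoretic limit formula for $\mcO_R$ are genuinely indexed by the same poset of distinguished opens and that $\iota$ respects both presentations, so that ``isomorphism on a basis'' really propagates to ``isomorphism on all opens''. The remaining delicate point is the verification that $\iota_{\Pts k}$ coincides with $\varepsilon_K$, which amounts to tracing the construction of the canonical map $\mcO_R^{\can} \to \mcO_R$ out of the counits $K \to \mcO_R(\Pts k)$ through the open-embedding identification $\Spec K \cong \Spec R|_{\Pts k}$.
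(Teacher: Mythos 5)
Your proposal is correct and takes essentially the same route as the paper's own proof: both reduce full faithfulness to the counit $\varepsilon$ being an isomorphism, identify the sheafification map $\mcO_R^\can \to \mcO_R$ on a distinguished open $\Pts k$ with the counit $\varepsilon_K \colon K \to \overline K$ (via Corollary~\ref{cor:dist_open_finite_localization_correspondence} and the identification $\mcO_R(\Pts k) \cong \Gamma \Spec K$), and then propagate from distinguished opens to all opens by comparing the pointwise right Kan extension formula $\mcO_R^\can(U) = \lim_{\Pts k \subseteq U} \mcO_R^\can(\Pts k)$ with the same limit supplied by the sheaf condition for $\mcO_R$. The only difference is organizational — you perform the basis reduction before identifying the map with the counit, the paper after — which does not change the mathematics.
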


\begin{proof}
$\Spec$ is fully faithful if and only $\varepsilon$ is an isomorphism:
\[\varepsilon_R \colon R= \mcO^\can_R(R) \to \mcO_R(R) = \Gamma \Spec R.\]
Thus, the counit is an isomorphism for all finite localizations $K$ of $R$ iff $\mcO_R^\can \to \mcO_R$ is an isomorphism on all distinguished opens $\Pts k \subseteq \Spec R$. It remains to show that in this case, it is an isomorphism for every open $U \subseteq \Spec R$. As a right Kan extension, $\mcO_R^\can(U)$ is given by the limit
\[\mcO_R^\can(U) = \lim_{\Pts k \subseteq U} \mcO_R^\can(\Pts k)\]
over the canonical cover of $U$ by all distinguished opens. Since the same is true for any sheaf such as $\mcO_R$, the canonical map $\mcO_R^\can(U) \to \mcO_R(U)$ is a limit of isomorphisms and thus an isomorphism.
\end{proof}

For the rest of this section, we assume that $\Spec$ is fully faithful. We say that $X \in \APTop$ is \emph{affine} if it is isomorphic to $\Spec R$ for some $R \in \mcA$. Since any isomorphism $\Spec R \cong \Spec S$ is now induced by a unique isomorphism $R \cong S$ and this induces a bijection of sets of finite localizations, we may then say that a subset $U \subseteq X$ of an affine is \emph{distinguished open} if $U \cong \Spec K$ is also affine and the inclusion is induced by a finite localization $R \cof K$.

\begin{lemma} \label{lemma:affine_cancellation_properties}
Assume that $\Spec$ is fully faithful. Let $Z \subseteq Y \subseteq X$ be a chain of open embeddings of affines.
\begin{itemize}
\item
	(composition) If $Z$ is distinguished open in $Y$ and $Y$ is distinguished open in $X$ then $Z$ is distinguished open in $X$.
\item
	(cancellation) If $Z$ is distinguished open in $X$ then it is distinguished open in $Y$.	
\end{itemize}
\end{lemma}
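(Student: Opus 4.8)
The plan is to prove both statements by translating the geometric claims about distinguished opens into purely algebraic statements about finite localizations, using the fact that $\Spec$ is fully faithful and hence induces a bijection between finite localizations of $R$ and distinguished opens of $\Spec R$ (as in Corollary~\ref{cor:dist_open_finite_localization_correspondence}). Let me set up notation: write $X = \Spec R$, $Y = \Spec S$, $Z = \Spec T$, where $Y$ being distinguished open in $X$ means the inclusion is $\Spec$ of a finite localization $R \cof S$, and similarly I will want to express the other inclusions algebraically.

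For the \emph{composition} part, I would start from the finite localizations $R \cof S$ (giving $Y \hookrightarrow X$) and $S \cof T$ (giving $Z \hookrightarrow Y$). The key observation is that finite localizations are closed under composition: a transfinite (here finite) composition of finite localizations is again a finite localization, directly from the definition in terms of finite composites of pushouts of finite coproducts of the $a_{ij}$. Thus the composite $R \cof S \cof T$ is a finite localization, and applying $\Spec$ (using full faithfulness so that this composite localization corresponds to the composite open embedding $Z \hookrightarrow X$) shows that $Z$ is distinguished open in $X$. The only point requiring care is that the composite open embedding of topological $\AP$-spaces really is induced by the composite localization; this follows because $\Spec$ is a functor and the identification of distinguished opens with finite localizations is natural.

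For the \emph{cancellation} part, I have finite localizations $R \cof S$ (for $Y \hookrightarrow X$) and $R \cof T$ (for $Z \hookrightarrow X$), and I must produce a finite localization $S \cof T$ realizing $Z \hookrightarrow Y$. Since $Z \subseteq Y$ as open subsets of $X$, on the level of point sets we have $\Pts(R \cof T) \subseteq \Pts(R \cof S)$, so every local form of $R$ factoring through $T$ already factors through $S$. The natural candidate for the map $S \to T$ is obtained by lifting: since $R \cof T$ is a localization and $R \cof S$ is a localization whose distinguished open contains that of $T$, I would use the inclusion-reversing correspondence between distinguished opens and (reduced) localizations from the Lemma preceding Corollary~\ref{cor:dist_open_finite_localization_correspondence} to obtain a factorization $S \to T$ under $R$, and then argue this factorization is itself a finite localization. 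Concretely, $S$ is a filtered colimit (in fact here a finite localization so the argument is elementary) and the factorization $R \cof S \to T$ together with $R \cof T$ forces $S \to T$ to be a localization by the cancellation Lemma~\ref{lemma:cancellation_properties} (second bullet: $h = (R \cof T)$ a localization and $f = (R \cof S)$ epic imply $g = (S \to T)$ a localization); finiteness follows from the third bullet of the same lemma.

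The main obstacle I anticipate is the \emph{finiteness} of the localization $S \cof T$ in the cancellation case: while Lemma~\ref{lemma:cancellation_properties} gives finiteness of $g$ from finiteness of $h$ together with $f$ epic, I must first be certain that the factorization $S \to T$ of $R \cof T$ through $R \cof S$ actually exists and is unique, which relies on the lifting property of localizations against admissible maps together with the inclusion $\Pts(R \cof T) \subseteq \Pts(R \cof S)$ of point sets. Making this existence argument precise — ensuring the geometric containment $Z \subseteq Y$ translates into the needed algebraic factorization via the correspondence of Corollary~\ref{cor:dist_open_finite_localization_correspondence}, rather than merely a containment of point sets — is the delicate step; once the factorization is in hand, the cancellation lemma disposes of the rest routinely.
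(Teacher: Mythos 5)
Your composition argument is fine and is essentially the paper's: finite localizations are closed under composition, and functoriality plus full faithfulness of $\Spec$ identifies the composite open embedding with $\Spec$ of the composite finite localization. The cancellation argument, however, has a genuine gap at its very first step: you take $Y \hookrightarrow X$ to be given by a finite localization $R \cof S$, but the hypothesis does not grant this. In the cancellation statement, $Y$ is only an \emph{affine open} of $X$ (an open embedding of affines), not a distinguished one -- and that is the whole point of the lemma, which is exactly what Theorem~\ref{thm:affine_communication_lemma} needs (there one must cancel along an affine open $W' \subseteq V$ that is not known to be distinguished in $V$). Everything downstream in your proof leans on this unwarranted assumption: the lemma preceding Corollary~\ref{cor:dist_open_finite_localization_correspondence}, which you invoke to produce the factorization $S \to T$, applies only to two \emph{localizations} of $R$, and the epicity of $R \to S$ that you feed into Lemma~\ref{lemma:cancellation_properties} is justified only because you assumed $R \to S$ is a localization.

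Two repairs are needed, and both appear in the paper's proof. First, the factorization $S \to T$ you labor to construct is automatic: since $\Spec$ is fully faithful and $Z$, $Y$ are affine, the open embedding $Z \subseteq Y$ is induced by a unique map $S \to T$ in $\mcA$, and faithfulness forces the triangle over $R$ to commute; no point-set correspondence is needed. Second -- and this is the key idea you are missing -- one must show that the map $R \to S$ inducing an \emph{arbitrary} affine open embedding $Y \subseteq X$ is an epimorphism in $\mcA$. The paper proves a separate lemma that any open embedding of $\mcA$-spaces is a monomorphism; since fully faithful functors reflect monomorphisms, $\Spec S \to \Spec R$ monic in $\APTop$ gives that $R \to S$ is monic in $\mcA^\op$, i.e.\ epic in $\mcA$. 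With that in hand, your intended application of the third bullet of Lemma~\ref{lemma:cancellation_properties} ($R \to T$ a finite localization and $R \to S$ epi imply $S \to T$ a finite localization) closes the proof. As written, your argument establishes only the special case in which $Y$ is itself distinguished open in $X$.
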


\begin{proof}
The first point is easier, so we concentrate on the second which translates easily to the claim: If the composition $R \to S \to K$ is a finite localization then so is the second map. This holds by Lemma~\ref{lemma:cancellation_properties} provided that the first map is an epi. Since $\Spec$ is assumed fully faithful, we only need $Y \subseteq X$ monic, which is guaranteed by the next lemma.
\end{proof}

\begin{lemma}
Any open embedding of $\mcA$-spaces is a monomorphism.
\end{lemma}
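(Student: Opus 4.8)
The plan is to unwind the definition of monomorphism directly, exploiting that the two defining properties of an open embedding (injectivity on points and invertibility of $i_\sharp$) let us cancel on the topological and on the sheaf level separately. Write the open embedding as $(i, i_\sharp)\colon (X,\mcO_X)\to(Y,\mcO_Y)$, so that $i$ is a homeomorphism onto an open subset of $Y$ and $i_\sharp\colon i^*\mcO_Y\to\mcO_X$ is an isomorphism. Suppose $(f,f_\sharp),(g,g_\sharp)\colon(Z,\mcO_Z)\to(X,\mcO_X)$ are two maps with $(i,i_\sharp)\circ(f,f_\sharp)=(i,i_\sharp)\circ(g,g_\sharp)$; the goal is to conclude $(f,f_\sharp)=(g,g_\sharp)$.

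First I would settle the topological level. The underlying continuous maps satisfy $i\circ f=i\circ g$, and since $i$ is injective (being a homeomorphism onto its image), this forces $f=g$ as continuous maps $Z\to X$. Denote this common map by $h$; in particular $f^*=g^*=h^*$ as inverse-image functors on sheaves, so that $f_\sharp$ and $g_\sharp$ are both maps $h^*\mcO_X\to\mcO_Z$ and may be compared directly.

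Next I would use the composition rule for the $\sharp$-maps. For a composite $Z\xra{h}X\xra{i}Y$ the structure map is $(ih)_\sharp = h_\sharp\circ h^*(i_\sharp)\colon h^*i^*\mcO_Y\to\mcO_Z$. Applying this to both composites and using $f=g=h$ on points, the hypothesis $(if)_\sharp=(ig)_\sharp$ becomes
\[f_\sharp\circ h^*(i_\sharp)=g_\sharp\circ h^*(i_\sharp).\]
Here $i_\sharp$ is an isomorphism, and the inverse-image functor $h^*$ preserves isomorphisms, so $h^*(i_\sharp)$ is an isomorphism as well. Cancelling it on the right yields $f_\sharp=g_\sharp$, and together with $f=g$ this gives $(f,f_\sharp)=(g,g_\sharp)$, as required.

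The argument is essentially formal, so I do not expect a serious obstacle; the only point that needs care is the order of the two cancellations. One must first deduce $f=g$ on the topological level, because only then do $f_\sharp$ and $g_\sharp$ share a common source $h^*\mcO_X$ and the equation of $\sharp$-maps even typechecks; the invertibility of $i_\sharp$ (and hence of $h^*(i_\sharp)$) then handles the sheaf level. It is worth double-checking the contravariant composition law for the $\sharp$-formulation against the equivalent $\varphi^\sharp\colon\mcO_Y\to\varphi_*\mcO_X$ formulation to make sure no adjunction unit is being silently dropped.
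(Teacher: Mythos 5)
Your proof is correct and coincides with the paper's own argument: the paper likewise first cancels on the topological level using injectivity of the open embedding, and then cancels the sheaf maps $\psi_{0\sharp}$, $\psi_{1\sharp}$ against the isomorphism $\psi^*\iota_\sharp$ (your $h^*(i_\sharp)$) obtained from the composition law. Your added remark about the order of the two cancellations is a fair point of care, but there is no substantive difference from the paper's proof.
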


\begin{proof}
Let $(\iota, \iota_\sharp)$ be an open embedding of $X$ into $Y$, i.e.\ $\iota \colon X \to Y$ is an open embedding of topological spaces and $\iota_\sharp \colon \iota^* \mcO_Y \to \mcO_X$ is an isomorphism. Consider a pair of maps
\[\xymatrix{
(\varphi, \varphi_\sharp) \colon T \ar@<.5ex>[r]^-{(\psi_0, \psi_{0\sharp})} \ar@<-.5ex>[r]_-{(\psi_1, \psi_{1\sharp})} & X \ar[r]^-{(\iota, \iota_\sharp)} & Y
}\]
coequalized by $(\iota, \iota_\sharp)$. Since $\iota$ is injective, we get $\psi_0=\psi=\psi_1$. On the level of structure sheaves, 
\[\xymatrix{
\psi^* \iota^* \mcO_Y \ar[r]^-{\psi^*\iota_\sharp} & \psi^*\mcO_X \ar@<.5ex>[r]^-{\psi_{0\sharp}} \ar@<-.5ex>[r]_-{\psi_{1\sharp}} & \mcO_T
}\]
the two maps $\psi_{0\sharp}$, $\psi_{1\sharp}$ get equalized by an isomorphism $\psi^* \iota_\sharp$ and are thus equal.
\end{proof}

%
%
%

Now, we are ready to prove the statement we chose to call ``Affine Communication Lemma'' as it is a generalization of a technical lemma (5.3.1 in \cite{Vakil}) which subsumes the principle usually called the Affine Communication Lemma by algebraic geometers (5.3.2 in \cite{Vakil}).

\begin{theorem} \label{thm:affine_communication_lemma}
Assume that $\Spec$ is fully faithful. Let $U, V \subseteq X$ be two affine opens and $x \in U \cap V$. Then there exists an open affine $W \ni x$ that is distinguished open both in $U$ and $V$.
\end{theorem}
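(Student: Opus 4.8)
The plan is to mirror the classical proof of the affine communication lemma, now phrased entirely in terms of distinguished opens together with the composition/cancellation properties recorded in Lemma~\ref{lemma:affine_cancellation_properties}. Two ingredients will do all the work: first, that for an affine $\Spec R$ the distinguished opens $\Pts k$ (for finite localizations $k\colon R\cof K$) form a basis of the topology, as established in Section~\ref{section:concrete_description_Spec}; second, that distinguished opens of affines compose and cancel along chains of affine open embeddings, which is exactly Lemma~\ref{lemma:affine_cancellation_properties}. I will also use repeatedly that every distinguished open is itself affine, since $\Pts k\cong\Spec K$.

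First I would shrink inside $U$. The intersection $U\cap V$ is open in the affine $U$, so by the basis property there is a distinguished open $U'$ of $U$ with $x\in U'\subseteq U\cap V$. This $U'$ is itself affine and, being contained in $U\cap V$, is in particular an open subset of the affine $V$. Applying the basis property once more, this time inside $V$, I obtain a distinguished open $V'$ of $V$ with $x\in V'\subseteq U'$. The candidate is then $W:=V'$, which by construction is an affine open containing $x$ and distinguished in $V$; the only remaining point is to see that it is also distinguished in $U$.

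This is precisely where Lemma~\ref{lemma:affine_cancellation_properties} is used, in both of its forms. Consider first the chain of affine open embeddings $V'\subseteq U'\subseteq V$: since $V'$ is distinguished in $V$, the cancellation clause yields that $V'$ is distinguished in $U'$. Now consider the chain $V'\subseteq U'\subseteq U$: here $V'$ is distinguished in $U'$ by the previous sentence and $U'$ is distinguished in $U$ by construction, so the composition clause yields that $V'$ is distinguished in $U$. Hence $W=V'$ is an affine open neighbourhood of $x$ that is distinguished open in both $U$ and $V$, as claimed.

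Since the two halves of Lemma~\ref{lemma:affine_cancellation_properties} already isolate the only nontrivial content, I do not expect a genuine obstacle; once the basis property and those cancellation/composition statements are in hand, the argument is essentially formal. The one point deserving care is checking that the hypotheses of Lemma~\ref{lemma:affine_cancellation_properties} are actually met at each application, namely that every object involved is genuinely affine and each inclusion is an open embedding of affines. This holds because $U'$ and $V'$ are distinguished opens of affines and therefore affine, and all the inclusions appearing in the two chains are open embeddings, so both invocations of the lemma are legitimate.
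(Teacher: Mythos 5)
Your proof is correct and follows exactly the same route as the paper's: shrink to a distinguished open $U'$ of $U$ inside $U\cap V$, shrink again to a distinguished open $V'$ of $V$ inside $U'$, then use the cancellation clause of Lemma~\ref{lemma:affine_cancellation_properties} (applied to $V'\subseteq U'\subseteq V$) followed by the composition clause (applied to $V'\subseteq U'\subseteq U$). The only difference is notational ($U', V'$ versus the paper's $W', W$) and that you spell out the two invocations of the lemma which the paper compresses into one sentence.
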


\begin{proof}
Since distinguished open subsets form a basis of topology on affines, we can find $W' \ni x$ contained in $U \cap V$ that is distinguished open in $U$. Further, we can find $W \ni x$ contained in $W'$ that is distinguished open in $V$.
\[\xymatrix@=1pc{
U \ar@{-}[rd]_-{\textrm{d.o.}} & & V \ar@{-}[ld] \ar@{-}@/^1ex/[ldd]^-{\textrm{d.o.}}\\
& W' \ar@{-}[d] \\
& W
}\]
By Lemma~\ref{lemma:affine_cancellation_properties}, we know that $W$ is also distinguished open in $W'$ and thus also in $U$.
\end{proof}

%

\section{Covers, sheafification} \label{sec:covers}

In order to study the counit $\varepsilon \colon R \to \overline R = \Gamma \Spec R$, we need to get hold of the global sections of the structure sheaf $\mcO_R$, i.e.\ of the sheafification of the canonical presheaf. We will make use of the classical formula for the sheafification defined via (hyper)covers, so we will start by setting up notation for them. For us, a (hyper)cover will be a certain simplicial object in the coproduct completion, defined via a condition on its matching maps.


\subsection{Semi-simplicial objects, matching maps}

For an augmented semi-simplicial object
\[\xymatrix{
\cdots \ar@<1ex>[r] \ar@<0ex>[r] \ar@<-1ex>[r] & U_1 \ar@<.5ex>[r] \ar@<-.5ex>[r] & U_0 \ar[r] & U_{-1}
}\]
we define the $n$-th \emph{matching object} $M_nU$ by truncating the object $<n$, then right Kan extending and finally taking the $n$-th object of the result. We will suffice with $M_0U = U_{-1}$ and $M_1U = U_0 \times_{U_{-1}} U_0$ (the kernel pair of the augmentation map). The $n$-th \emph{matching map} is the canonical map $U_n \to M_nU$.

Dually, for an augmented semi-cosimplicial object
\[\xymatrix{
K^{-1} \ar[r] & K^0 \ar@<.5ex>[r] \ar@<-.5ex>[r] & K^1 \ar@<1ex>[r] \ar@<0ex>[r] \ar@<-1ex>[r] & \cdots
}\]
the latching objects $L^nK$ and latching maps $L^nK \to K^n$ are defined using left Kan extensions, with $L^0K = K^{-1}$ and $L^1K = K^0 +_{K^{-1}} K^0$ the cokernel pair of the augmentation map.

\subsection{Covers}

We consider a category $\mcC$ equipped with a Grothendieck topology $J$. By a \emph{cover} of $X \in \mcC$, we will understand a map $U_0 \to X$ in the coproduct completion $\coprod \mcC$ with a singleton codomain whose components form a $J$-cover of $X$ in $\mcC$. More generally, $U_0 \to (X^i)^{i\in I}$ is a cover if each of its summands is a cover of $X^i$ in the previous sense. For us, a \emph{hypercover} will be an augmented semi-simplicial object $U_\bullet \colon \Delta_+^\op \to \coprod \mcC$ such that the matching maps $U_n \to M_nU$ are covers (for our purposes, only $n \leq 1$ will be relevant):
\[\xymatrix{
\cdots \ar@<1ex>[r] \ar@<0ex>[r] \ar@<-1ex>[r] & U_1 \ar@<.5ex>[r] \ar@<-.5ex>[r] & U_0 \ar[r] & X
}\]
This means explicitly that $U_0 = (U_0^i)^{i \in I_0} \to X$ is a cover and that for any $i_0, i_1 \in I_0$ the collection $(U_1^j)^{j \in I_1/(i_0,i_1)} \to U_0^{i_0} \times_X U_0^{i_1}$ is a cover, where we denote by $I_1/(i_0, i_1)$ the subset of the index set $I_1$ of elements mapping to $i_0$ and $i_1$ via the face maps $d_1$ and $d_0$.

The semi-simplicial object $U_\bullet$ is a convenient presentation of a larger diagram $\El U_\bullet$ that contains the same data but not grouped into formal coproducts, see Appendix~\ref{section:Grothendieck_constructions}. Concretely, $\El U_\bullet$ takes $(n, i)$ to $U_n^i$ and $d_k \colon j \mapsto i$ to $d_k^j \colon U_n^j \to U_{n-1}^i$. We abbreviate $H_0 U_\bullet = \colim \El U_\bullet$.

\begin{example}
In $\Top$ or $\APTop$, where the $J$-covers are understood to be jointly surjective collections of open embeddings, $H_0 U_\bullet$ is obtained by gluing the $U_0^i$ together along all the relevant $U_1^j$ and is thus isomorphic to $X$.
\end{example}

By Lemma~\ref{lem:limits_colimits_coproduct_completion}, $H_0 U_\bullet$ is equally obtained by taking $\colim U_\bullet$ in $\coprod\mcC$ and subsequently taking the coproduct of all the components (there may be multiple when $\El I$ is disconnected). Denoting the latter operation by $\Sigma \colon \coprod\mcC \to \mcC$, we thus have alternative presentations
\[H_0 U_\bullet \cong \Sigma \colim U_\bullet \cong \colim \Sigma U_\bullet\]
(the last since $\Sigma$ clearly preserves colimits).

Assume that $\mcA = \red\mcA$. A distinguished hypercover of $\Spec R$ is given by an augmented semi-simplicial object in $\coprod \APTop$ with matching maps distinguished covers, and is induced by an augmented semi-cosimplicial object
\[\xymatrix{
R \ar[r] & K^0 \ar@<.5ex>[r] \ar@<-.5ex>[r] & K^1 \ar@<1ex>[r] \ar@<0ex>[r] \ar@<-1ex>[r] & \cdots
}\]
in the product completion $\prod \mcA$ whose latching maps are distinguished opcovers in the sense of the following definition. We will then say that the above is a \emph{distinguished hyperopcover}.

\begin{definition}
A cone $R \to (K_i)$ whose components are finite localizations is \emph{opcovering} or a distinguished \emph{opcover} if every local form $R \cof P$ factors through one of these components; diagramatically in $\prod \mcA$,
\[\xymatrix{
R \ar@{ >->}[r] \ar[d] & P \\
(K_i) \ar@{-->}[ru]
}\]
\end{definition}

We will also have a chance to encounter opcovers by localizations that are not necessarily finite; their definition is obvious.

\begin{lemma}
Distinguished opcovers are closed under pushouts and compositions. \qed
\end{lemma}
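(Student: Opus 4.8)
The plan is to treat the two closure properties separately, reducing each to the interplay between the $(\text{localization}, \text{admissible})$ factorization generated by $\Cone^\downarrow$ and the cancellation lemma (Lemma~\ref{lemma:cancellation_properties}). In both cases, checking that the components of the new cone are again finite localizations is immediate, since finite localizations, being cellular maps built from finite coproducts of the $a_{ij}$, are closed under both pushout and composition; the real content is the opcovering condition, i.e.\ that every local form of the new summit factors through one of the new components.

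For closure under pushout, I would take a distinguished opcover $R \to (K_i)$ and an arbitrary map $f\colon R \to S$, and show that the pushout cone $S \to (S +_R K_i)$ is again a distinguished opcover. Each component $S \to S +_R K_i$ is a pushout of the finite localization $R \cof K_i$, hence a finite localization. To verify the opcovering property, take any local form $q\colon S \cof Q$ and factor the composite $qf\colon R \to Q$ as a localization $p\colon R \cof P$ followed by an admissible map $m\colon P \loc Q$. Here $P$ is local: for any cone $A_i \to P$, composing with $m$ and using that $Q$ is local yields an extension $B_{ij} \to Q$, which the admissibility of $m$ lifts to $B_{ij} \to P$. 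Thus $p$ is a local form of $R$, so by hypothesis it factors as $R \to K_i \to P$ for some $i$. The legs $q\colon S \to Q$ and $K_i \to P \xra{m} Q$ agree on $R$ (both restrict to $qf = mp$), so the universal property of the pushout produces a map $S +_R K_i \to Q$ through which $q$ factors, as required.

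For closure under composition, I would suppose $R \to (K_i)$ is a distinguished opcover and each $K_i \to (L_{ij})_j$ is a distinguished opcover, and show that the composite cone $R \to (L_{ij})_{i,j}$ is a distinguished opcover. Each $R \to L_{ij}$ is a composite of two finite localizations, hence a finite localization. Given a local form $R \cof P$, the opcovering property of the first stage gives a factorization $R \to K_i \to P$. Since finite localizations are epic (being finite composites of pushouts of coproducts of the epic $a_{ij}$), Lemma~\ref{lemma:cancellation_properties} applies to the triangle $R \cof K_i \to P$ and shows that $K_i \to P$ is itself a localization; as its codomain $P$ is local, it is a local form of $K_i$. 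The opcovering property of the second stage then factors it through some $L_{ij}$, whence $R \cof P$ factors through $L_{ij}$.

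The only genuinely non-formal step is recognizing in each case that the intermediate object qualifies as a \emph{local form}, so that the opcover hypothesis can be invoked: for pushouts this needs the observation that an admissible map into a local object has local domain, and for compositions it needs the cancellation lemma together with the epicness of finite localizations. Once these are in place, the remainder are routine diagram chases with the pushout universal property, so I do not anticipate any serious obstacle.
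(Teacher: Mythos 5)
Your proof is correct. The paper states this lemma with its proof omitted as immediate, and your argument --- stability of finite localizations under pushout and composition, locality of the middle term of the (localization, admissible) factorization of $qf$ via the lifting property of admissible maps, and Lemma~\ref{lemma:cancellation_properties} combined with epicness of finite localizations to see that $K_i \to P$ is again a local form --- is precisely the routine verification the authors had in mind.
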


\subsection{Heller--Rowe formula} \label{subsec:Heller_Rowe_formula}

Later, we will need a concrete description of the sheafification functor, given by the Heller--Rowe formula
\[(\sheafify F) V = \colim_{U_\bullet \to V} H^0(FU_\bullet)\]
where the colimit runs over hypercovers $U_\bullet$ of $V$. This formula works under various sets of assumptions, we will now briefly comment on the case of a locally finitely presentable $\mcA$. A hypercover version of \cite[Proposition~B.III.2.2]{Barr_Grillet_vanOsdol} shows that $\sheafify F$ is a presheaf through which any map from $F$ to a sheaf factors uniquely (this is a bit technical), so that it remains to prove that $\sheafify F$ is a sheaf. Since the functors $\mcA(R, -)$, for finitely presentable $R$, preserve limits and jointly reflect isomorphisms, it is enough to show that $\mcA(R, \sheafify F)$ satisfies the sheaf condition; but since $\mcA(R, -)$ also preserves filtered colimits, we easily get that $\mcA(R, \sheafify F) \cong \sheafify \mcA(R, F)$ and as such is a sheaf by the classical result in $\Set$.

A related result, with analogous proof, says that for a monopresheaf $F$ (such as the canonical presheaves in the case $\mcA = \mono\mcA$), one may restrict the colimit to covers (instead of hypercovers). In both versions, when $V$ is compact, one may further restrict to finite (hyper)covers as they form a cofinal subcategory.

Corollary~\ref{cor:dist_open_finite_localization_correspondence} now easily implies that $\overline R$ is given by the formula
\[\overline R = \colim_{R \to K^\bullet} H^0 K^\bullet\]
(the colimit ranges over all distinguished hyperopcovers of $R$). This is why it is important to have a condition for $R \to H^0 K^\bullet$ being an isomorphism. The lemma below represents a rather degenerate case (a hyperopcover where one of the constituents is everything).

\begin{lemma}
Assume that $\mcA = \red\mcA$.  If one of the components $R \to K^0_{i_0}$ of an augmented semi-cosimplicial object is an isomorphism and $d^0_i \colon K^0_i \to (K^1_j)_{j \in I^1/(i_0,i)}$ is monic for all $i$, then the map $R \to H^0 K^\bullet$ is an isomorphism.
\end{lemma}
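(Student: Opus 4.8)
The plan is to compute $H^0 K^\bullet$ explicitly as an equalizer and then to recognize the augmentation as its inverse, with the monicity hypothesis doing the work of pinning down every component against the distinguished one $i_0$. Dually to the identity $H_0 U_\bullet \cong \colim \Sigma U_\bullet$ used for covers, $H^0 K^\bullet$ is obtained by forming the limit of the cosimplicial object $K^\bullet$ in $\prod\mcA$ and replacing the resulting formal product by the actual product in $\mcA$; concretely it is the equalizer in $\mcA$
\[
H^0 K^\bullet = \mathrm{eq}\Bigl( \prod_{i \in I^0} K^0_i \rightrightarrows \prod_{j \in I^1} K^1_j \Bigr),
\]
the two maps being the products of the cofaces $d^0$ and $d^1$. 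Write $\iota$ for the equalizer inclusion and $\pi_{i_0}$ for the projection to the distinguished factor. The augmentation $\epsilon \colon R \to K^0$, being a map out of a singleton object of $\prod\mcA$, is a family $(\epsilon_i \colon R \to K^0_i)$, and the augmented cosimplicial identity $d^0\epsilon = d^1\epsilon$ shows that the induced map $R \to \prod_i K^0_i$ equalizes the two cofaces. It therefore factors through a canonical $e \colon R \to H^0 K^\bullet$, which is exactly the map in the statement.

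First I would reduce everything to a single monomorphism claim. Set $p = \pi_{i_0}\iota \colon H^0 K^\bullet \to K^0_{i_0}$. By hypothesis $\epsilon_{i_0} = p\,e$ is an isomorphism, so $p$ is a split epimorphism with section $e\,\epsilon_{i_0}^{-1}$; a split epimorphism that is monic is an isomorphism, and then $e = p^{-1}\epsilon_{i_0}$ is an isomorphism as well. Thus it suffices to prove that $p$ is monic, i.e.\ that the $i_0$-component of an element of the equalizer determines all of its components.

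The monicity of $p$ is the heart of the matter and the step I expect to be the main obstacle. I would test it on a pair of generalized elements $x, y \colon T \to H^0 K^\bullet$ with $p x = p y$, and present them through $\iota$ as families $(x_i), (y_i)$ with $x_{i_0} = y_{i_0}$ satisfying the equalizer identities. Fix an index $i$ and any $j \in I^1/(i_0, i)$, so that the two coface components read $d^0_j \colon K^0_i \to K^1_j$ and $d^1_j \colon K^0_{i_0} \to K^1_j$. The equalizer identity at $j$ gives $d^0_j x_i = d^1_j x_{i_0}$ and $d^0_j y_i = d^1_j y_{i_0}$; since $x_{i_0} = y_{i_0}$ the right-hand sides coincide, whence $d^0_j x_i = d^0_j y_i$ for every such $j$. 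This is exactly the data seen by the joint map $(d^0_j)_{j \in I^1/(i_0,i)} \colon K^0_i \to \prod_{j \in I^1/(i_0,i)} K^1_j$, which is the monomorphism $d^0_i$ of the hypothesis; hence $x_i = y_i$. As $i$ was arbitrary, $x = y$, so $p$ is monic and the proof concludes.

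I would finally remark that the argument is purely formal: it invokes only the completeness of $\mcA$ (to form the products and the equalizer) together with the two stated hypotheses, so the standing assumption $\mcA = \red\mcA$ of the subsection --- which is what makes the ambient notion of a distinguished hyperopcover meaningful --- is not itself used here. The one genuine subtlety is the bookkeeping of the coface index maps: one must identify $I^1/(i_0,i)$ as precisely those $1$-indices whose two cofaces land in $i$ and $i_0$, so that the equalizer relation couples the uncontrolled component $x_i$ to the controlled component $x_{i_0}$ through exactly the maps assembled into the monic $d^0_i$.
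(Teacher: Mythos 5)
Your proof is correct and is essentially the paper's own argument in different packaging: the paper writes down the inverse $\epsilon_{i_0}^{-1}\circ\pi_{i_0}$ directly and verifies that the composite $H^0K^\bullet \to H^0K^\bullet$ is the identity by composing with the jointly monic $d^0_i$ and invoking the cosimplicial identities, which is exactly your monicity-of-$p$ computation applied to the pair $(e\circ g, \mathrm{id})$. The split-epi-plus-mono framing and the explicit equalizer presentation of $H^0 K^\bullet$ are sound, and your closing remark that $\mcA = \red\mcA$ is never actually used applies equally to the paper's proof.
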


\begin{proof}
We will show that the inverse is given by $H^0 K^\bullet \to K^0_{i_0} \to R$ where the first map is the limit projection and the the second is the inverse to the isomorphism from the statement. Clearly one of the composites is $1_R$ and the other composite $H^0 K^\bullet \to H^0 K^\bullet$ is induced by the cone whose component $H^0 K^\bullet \to K^0_i$ is the composition across the top
\[\xymatrix{
& & R \ar[ld]_-\cong \ar[rd] \\
H^0 K^\bullet \ar[r] \ar@/_3ex/[rrr] & K^0_{i_0} \ar[rd]_-{d^1_j} & & K^0_i \ar[ld]^-{d^0_j} \\
& & K^1_j
}\]
The composition with $d^0_j$ easily gives the same map as the limit projection $H^0 K^\bullet \to K^0_i$. This holding for each $j$, the joint monicity shows that $H^0 K^\bullet \to H^0 K^\bullet$ is also the identity.
\end{proof}

%

Assuming in addition that $\mcA = \mono \mcA$, the monicity condition is automatically satisfied for distinguished hyperopcovers: The square in the previous proof is equivalently a map from the pushout $K^0_{i_0} +_R K^0_i \cong K^0_i$ and the hyperopcover condition thus says that $K^0_i \to (K^1_j)_{j \in I^1/(i_0,i)}$ is a distinguished opcover, i.e.\ there exists a factorization
\[K^0_i \to (K^1_j)_{j \in I^1/(i_0,i)} \to (P_\alpha)_{\alpha \in K^0_i}.\]
Since the composite is monic, so is the first map.

\begin{proposition} \label{prop:split_hyperopcover}
Assume that $\mcA = \red \mcA$ and $\mcA = \mono \mcA$. If one of the components $R \to K^0_{i_0}$ of a distinguished hyperopcover is an isomorphism, then the map $R \to H^0 K^\bullet$ is an isomorphism. \qed
\end{proposition}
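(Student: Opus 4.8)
The plan is to reduce this to the preceding lemma by verifying that its monicity hypothesis is automatically satisfied under the additional assumption $\mcA = \mono\mcA$. The preceding lemma concludes that $R \to H^0 K^\bullet$ is an isomorphism provided two things hold: one component $R \to K^0_{i_0}$ is an isomorphism, and each latching map $d^0_i \colon K^0_i \to (K^1_j)_{j \in I^1/(i_0,i)}$ is monic. The first hypothesis is given to us directly, so the entire task is to deduce the monicity of these latching maps from the fact that we have a \emph{distinguished} hyperopcover together with $\mcA = \mono\mcA$.

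First I would unpack what the distinguished hyperopcover condition gives on the relevant latching map. The matching maps of the associated semi-simplicial object are distinguished covers, which on the level of the semi-cosimplicial object in $\prod\mcA$ translates to the latching maps being distinguished opcovers. Concretely, for the pair $(i_0, i)$, the latching map at level $1$ is a map out of the cokernel pair $K^0_{i_0} +_R K^0_i$; but since $R \to K^0_{i_0}$ is an isomorphism, this pushout simplifies: $K^0_{i_0} +_R K^0_i \cong K^0_i$. Hence the latching map is identified with a cone $K^0_i \to (K^1_j)_{j \in I^1/(i_0,i)}$ that is a distinguished opcover in the sense that every local form of $K^0_i$ factors through one of its components. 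I would spell this identification out carefully, as it is the place where the hypothesis $R \to K^0_{i_0} \cong R$ is actually used beyond invoking the lemma.

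The decisive step is then the following: the opcover condition means there is a factorization
\[
K^0_i \to (K^1_j)_{j \in I^1/(i_0,i)} \to (P_\alpha)_{\alpha \in \Spec K^0_i}
\]
of the canonical map $\ell_{K^0_i}$ of $K^0_i$ into all its local forms through the latching cone. Now I would invoke $\mcA = \mono\mcA$: by the Corollary following the monomorphism reduction, every object of $\mono\mcA$ is mono-reduced, so $\ell_{K^0_i}$ is monic. A composite that is monic forces its first factor to be monic, hence the latching map $K^0_i \to (K^1_j)$ is monic, which is exactly the hypothesis required by the preceding lemma. With all latching maps $d^0_i$ thereby shown to be monic and the isomorphism component present by assumption, the preceding lemma applies verbatim and yields that $R \to H^0 K^\bullet$ is an isomorphism.

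I expect the main (and really only) obstacle to be the bookkeeping in the middle step: correctly identifying the level-$1$ latching map with the cone $K^0_i \to (K^1_j)$ indexed by $I^1/(i_0,i)$, verifying that the cokernel-pair description $L^1 K = K^0 +_R K^0$ specializes to $K^0_i$ under the isomorphism hypothesis, and checking that the matching/latching duality correctly sends the distinguished hypercover condition to the opcover condition on precisely this map. The rest is a one-line cancellation argument for monomorphisms. Everything needed has already been established earlier in the text, so no genuinely new input is required beyond this translation.
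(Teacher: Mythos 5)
Your proposal is correct and follows essentially the same route as the paper: the paper also deduces the proposition from the preceding lemma by noting that, since $K^0_{i_0} +_R K^0_i \cong K^0_i$, the hyperopcover condition makes $K^0_i \to (K^1_j)_{j \in I^1/(i_0,i)}$ a distinguished opcover, whence the factorization of the (monic, by $\mcA = \mono\mcA$) canonical map $\ell_{K^0_i}$ through it forces the latching map to be monic. The cancellation step and the invocation of the lemma are identical to the paper's argument.
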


\section{Compactness of $\Spec R$} 

In this section, we assume that the cones in $\Cone$ are finite, i.e.\ that each defining cone $a_i \colon A_i \to (B_{ij})_{j \in J_i}$ has a finite indexing set $J_i$. We will prove that $\Spec R$ is then  compact. This was proved in~\cite{Aratake} by identifying $\Spec R$ with a spectrum of a distributive lattice that is a spectral space hence compact. By combining this proof with that of compactness of a spectrum of a distributive lattice, one obtains the following proof which we present here for completeness, but we strongly recommend reading the proof in~\cite{Aratake}.

\begin{theorem} \label{theorem:compactness}
Assume that all cones in $\Cone$ are finite. Then $\Spec R$ is compact for every $R \in \mcA$.
\end{theorem}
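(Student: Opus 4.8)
The plan is to reduce to covers by the basic distinguished opens and then run a prime-filter argument in the spirit of the compactness of the spectrum of a distributive lattice. Since the sets $\Pts k$, for finite localizations $k \colon R \cof K$, form a basis of $\Spec R$ closed under finite intersection (one has $\Pts k \cap \Pts l = \Pts(k\vee l)$, where $k\vee l$ is the pushout $K +_R L$), it suffices to show that every cover $\Spec R = \bigcup_{k \in S}\Pts k$ by distinguished opens admits a finite subcover. Let $L$ be the lattice of quasi-compact opens, i.e.\ of finite unions of distinguished opens; this is a distributive lattice with top $\Spec R = \Pts(\mathrm{id}_R)$ and bottom $\emptyset$ (the empty union). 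Arguing by contradiction, suppose no finite subfamily covers. Then $\mathcal{I} = \{\,W \in L \mid W \subseteq \Pts k_1 \cup \cdots \cup \Pts k_n \text{ for some } k_1,\dots,k_n \in S\,\}$ is a proper ideal of $L$, and by the prime ideal theorem for distributive lattices it extends to a prime ideal $\mathcal{P} \supseteq \mathcal{I}$ with $\Spec R \notin \mathcal{P}$. Writing $\mathcal{F} = L \setminus \mathcal{P}$ for the associated prime filter, $\mathcal{F}$ is upward closed, closed under finite intersection, and whenever a finite union lies in $\mathcal{F}$ so does one of its members; moreover every $\Pts k$ with $k \in S$ lies in $\mathcal{P}$, hence outside $\mathcal{F}$.

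Next I would realize $\mathcal{F}$ as a point of $\Spec R$. The finite localizations $k \colon R \cof K$ with $\Pts k \in \mathcal{F}$ form a filtered diagram under pushout (filteredness uses $\Pts k \cap \Pts l = \Pts(k\vee l)$ together with closure of $\mathcal{F}$ under intersection), so I set $p \colon R \to P := \colim_{\Pts k \in \mathcal{F}} K$, which is a localization of $R$. The key claim is that $P$ is local. Given a map $A_i \to P$, finite presentability of $A_i$ lets it factor as $A_i \to K \to P$ through some $K$ in the diagram. Attaching the cone $a_i$ along $A_i \to K$ produces the finitely many finite localizations $K \cof K_j := K +_{A_i} B_{ij}$, $j \in J_i$, and by cone injectivity every local form of $K$ extends through some $B_{ij}$; hence $\Pts k = \bigcup_{j \in J_i}\Pts k_j'$, where $k_j' \colon R \cof K \cof K_j$. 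This is exactly where the hypothesis that $\Cone$ is finite is decisive: this is a \emph{finite} union in $L$, so primeness of $\mathcal{F}$ forces $\Pts k_j' \in \mathcal{F}$ for some $j$, placing $K_j$ in the diagram. Since $A_i \to K \to K_j$ extends through $B_{ij}$ by construction, so does $A_i \to P$, proving $P \in \mcP$ and that $p$ is a genuine local form, i.e.\ a point of $\Spec R$.

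Finally I would check that $p$ avoids the cover. If $p \in \Pts k$ for some finite localization $k \colon R \cof K$, then $K \to P$ factors through some diagram object $K'$ by finite presentability of $K$ in $R/\mcA$; thus $k'\colon R \cof K'$ factors through $k$, whence $\Pts k' \subseteq \Pts k$, and upward closure of $\mathcal{F}$ gives $\Pts k \in \mathcal{F}$. Consequently, for $k \in S$, where $\Pts k \notin \mathcal{F}$, we must have $p \notin \Pts k$. So $p$ is a point of $\Spec R$ lying in none of the $\Pts k$ with $k \in S$, contradicting that they cover $\Spec R$. This contradiction produces the desired finite subcover, and since distinguished opens form a basis, $\Spec R$ is compact.

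The main obstacle is the realization step of the second paragraph: turning the combinatorial prime filter $\mathcal{F}$ back into an honest local form and verifying its localness. Everything hinges there on the interplay of finite presentability of the $A_i$ (to factor attaching maps through a finite stage), the finiteness of each $J_i$ (to make $\Pts k$ a finite join, so that primeness of $\mathcal{F}$ can be invoked), and cone injectivity (to read off the required extension). The first and third paragraphs are essentially bookkeeping, and the invocation of the distributive prime ideal theorem is precisely the ingredient that the compactness of a spectral space contributes.
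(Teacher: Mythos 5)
Correct, and essentially the paper's own argument: both proofs convert a cover with no finite subcover into a prime filter of distinguished opens and then realize that filter as a point of $\Spec R$ by taking the filtered colimit of the corresponding finite localizations, with finiteness of the cones entering exactly where you say it does (each $\Pts k$ becomes a \emph{finite} union of the $\Pts k_j'$, so primeness yields localness of the colimit). The only difference is packaging: the paper inlines the prime-ideal-theorem step by running Zorn's lemma on \emph{adapted filters} of finite localizations (its maximal-implies-prime argument is precisely your appeal to the distributive-lattice prime ideal theorem), whereas you quote that theorem for the lattice of quasi-compact opens.
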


\begin{proof}
By Alexander lemma, it suffices to find a finite subcover for any cover by subbasic opens, in our case this means by distinguished opens. Thus, let $\mcL = \{L_k\}$ be an opcovering family of finite localizations. Assuming that no finite sub-opcover exists, we will produce a local form that does not factor through any of the $L_k$, yielding a contradiction.

An adapted filter is a subset $\mcF \subseteq \FinLoc(R)$ satisfying
\begin{itemize}
\item
	$\mcF$ is upward closed,
\item
	$\mcF$ is closed under finite distinguished opcovers, i.e.\ if $L$ admits a finite opcover by elements of $\mcF$ then $L$ belongs to $\mcF$.
\end{itemize}
An adapted filter $\langle \mcL \rangle$ generated by $\mcL$ is obtained by first closing the set $\mcL$ upwards and then by closing it under finite opcovers. Thus, the bottom element $R \in \langle \mcL \rangle$ iff $R$ admits a finite opcover from $\mcL$.

Assuming that no finite subcover exists, $\langle \mcL \rangle$ does not contain $R$, i.e.\ it is a proper adapted filter. Let now $\mcF$ be a maximal proper adapted filter above $\langle \mcL \rangle$, which exists by Zorn lemma. We will now show that it is prime in the following sense: If the pushout of $L_0$ and $L_1$ belongs to $\mcF$ then one of the $L_i$ belongs to $\mcF$. For otherwise, there exist two finite opcovers of $R$ -- each by $L_i$ and some finite subset of $\mcF$. Taking pushouts of these finite opcovers, $R$ admits an opcover by $L_0 +_R L_1 \in \mcF$ and a finite subset of $\mcF$, and thus $R \in \mcF$, a contradiction. It is now easy to see that the complement $\mcI$ of $\mcF$ is a collection of finite localizations that is non-empty, downward closed and closed under pushouts, so its colimit $P = \colim \mcI$ is a localization. By filteredness of $\mcI$ and by the second property of $\mcF$, this $P$ is then a local form. Since $L_k \notin \mcI$ and $\mcI$ is downward closed, $P$ does not factor through any of the $L_k$, a contradiction. Thus, a finite sub-opcover exists.
\end{proof}

\section{Flat local forms} \label{sec:flat_local_forms}

%

Throughout this section, we assume that $\mcA$ is locally finitely presentable with $\mcA = \red \mcA$ and $\mcA = \mono \mcA$.

We say that a map $f \colon R \to S$ is \emph{flat} if the cobase change $f_* \colon R/\mcA \to S/\mcA$ preserves finite limits. We will study the counit of the adjunction $\Gamma \dashv \Spec$; for simplicity we denote $\overline R = \Gamma \Spec R$ so that we write the counit as $\varepsilon_R \colon R \to \overline R$.

\begin{theorem} \label{thm:good_objects_one}
If $\Spec R$ is compact and all local forms of $R$ are flat then the counit $\varepsilon_R \colon R \to \overline R$ is a geometric isomorphism.
\end{theorem}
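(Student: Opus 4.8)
The plan is to verify the hypothesis of Proposition~\ref{prop:geometric_isomorphism_easy}: it suffices to show that for every local form $p \colon R \cof P$ the pushout of the counit $\varepsilon_R$ along $p$,
\[\xymatrix{
R \ar[r]^-{\varepsilon_R} \ar@{ >->}[d]_-p & \overline R \ar[d] \\
P \ar[r] & p_* \overline R \po
}\]
is an isomorphism. The starting point is the Heller--Rowe description of $\overline R$ from Section~\ref{subsec:Heller_Rowe_formula}, namely $\overline R = \colim_{R \to K^\bullet} H^0 K^\bullet$ with the colimit ranging over distinguished hyperopcovers of $R$. Since $\Spec R$ is compact by assumption, I may restrict this colimit to \emph{finite} hyperopcovers, so that each $H^0 K^\bullet$ is a \emph{finite} limit, namely an equalizer of the finite products $K^0 \rightrightarrows K^1$. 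As the pushout functor $p_* = P +_R (-) \colon R/\mcA \to P/\mcA$ is a left adjoint, it commutes with this colimit, giving $p_* \overline R \cong \colim_{K^\bullet} p_* H^0 K^\bullet$, with structure map from $P$ the one in the square above.

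Two features now conspire. First, flatness of $p$ means $p_*$ preserves finite limits, so $p_* H^0 K^\bullet \cong H^0 (p_* K^\bullet)$; and because distinguished opcovers are closed under pushout, $p_* K^\bullet$ is again a finite distinguished hyperopcover, this time of $P$. Second, and crucially, since the augmentation $K^0 = (K^0_i)$ is a distinguished opcover of $R$, the local form $p$ factors through one of its components, say $R \to K^0_{i_0} \to P$. Using that finite localizations are epic, this factorization forces the corresponding component $P \to P +_R K^0_{i_0}$ of $p_* K^\bullet$ to be an isomorphism. Thus every $p_* K^\bullet$ is a \emph{split} distinguished hyperopcover of $P$, and Proposition~\ref{prop:split_hyperopcover} (whose standing hypotheses $\mcA = \red\mcA$ and $\mcA = \mono\mcA$ are in force) yields that the augmentation $P \to H^0 (p_* K^\bullet)$ is an isomorphism.

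It then remains to assemble these isomorphisms. The maps $P \to H^0(p_* K^\bullet)$ are precisely the images under $p_*$ of the augmentations $R \to H^0 K^\bullet$, hence they are compatible with refinement of hyperopcovers; being isomorphisms, they force all transition maps in the (filtered) diagram $K^\bullet \mapsto H^0(p_* K^\bullet)$ to be isomorphisms as well. A filtered colimit of such an essentially constant diagram is computed by any of its terms, so the canonical map $P \to p_* \overline R$ is an isomorphism, as required; Proposition~\ref{prop:geometric_isomorphism_easy} then completes the proof.

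I expect the main obstacle to be the interchange $p_* H^0 K^\bullet \cong H^0(p_* K^\bullet)$, as this is the single place where both hypotheses are genuinely needed and must be used together: compactness to cut the colimit down to finite hyperopcovers, so that $H^0$ is a \emph{finite} limit, and flatness of $p$ precisely to move the finite-limit-preserving $p_*$ through it. The remaining points, that pushing out a distinguished hyperopcover again gives one, that the distinguished-opcover condition on $K^0$ supplies an isomorphic component after pushout, and that a filtered colimit of isomorphic augmentations is an isomorphism, are routine once this interchange is in place.
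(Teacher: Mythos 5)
Your proposal is correct and takes essentially the same approach as the paper: both arguments use compactness to reduce to finite distinguished hyperopcovers, flatness to commute the pushout $p_*$ past the finite limit $H^0$, the factorization of $p$ through a component $K^0_{i_0}$ together with Lemma~\ref{lemma:factorization_pushout} to split the pushed-out hyperopcover, and Proposition~\ref{prop:split_hyperopcover} to conclude that $P \to H^0(p_* K^\bullet)$ is an isomorphism. The only difference is the final bookkeeping: you commute $p_*$ (a left adjoint) past the filtered colimit $\overline R = \colim_{R \to K^\bullet} H^0 K^\bullet$ and apply Proposition~\ref{prop:geometric_isomorphism_easy} once to $\varepsilon_R$, whereas the paper applies the pushout criterion (Theorem~\ref{thm:characterizing_spec_iso}) to each finite stage $R \to H^0 K^\bullet$ and then passes to the cofiltered limit of spectra.
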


The same conclusion holds if all finite localizations of $R$ are flat. This may be obtained as a special case of Corollary~\ref{cor:good_objects_two} or by a simple modification of the proof below.

\begin{proof}
Consider a finite distinguished hyperopcover
\[\xymatrix{
R \ar[r] & K^0 \ar@<.5ex>[r] \ar@<-.5ex>[r] & K^1 \ar@<1ex>[r] \ar@<0ex>[r] \ar@<-1ex>[r] & \cdots
}\]
By taking a pushout along any local form $p \colon R \to P$, we obtain
\[\xymatrix{
R \ar[r]^-f \ar[d]_-p & H^0 K^\bullet \ar[r] \ar[d] & K^0 \ar@<.5ex>[r] \ar@<-.5ex>[r] \ar[d] & K^1 \ar@<1ex>[r] \ar@<0ex>[r] \ar@<-1ex>[r] \ar[d] & \cdots \\
P \ar[r] & H^0 \widetilde K^\bullet \ar[r] \po & \widetilde K^0 \ar@<.5ex>[r] \ar@<-.5ex>[r] \po & \widetilde K^1 \ar@<1ex>[r] \ar@<0ex>[r] \ar@<-1ex>[r] \po & \cdots
}\]
where the leftmost square is cocartesian, since $p$ is assumed flat. By picking some $K^0_{i_0}$ through which $p$ factors, we get $P \cong \widetilde K^0_{i_0}$ by Lemma~\ref{lemma:factorization_pushout}; Proposition~\ref{prop:split_hyperopcover} then shows that $P \to H^0 \widetilde K^\bullet$ is an iso, so that Theorem~\ref{thm:characterizing_spec_iso} concludes that $f$ is a geometric iso. By passing to the limit
\[\Spec \overline R = \Spec \colim_{R \to K^\bullet} H^0 K^\bullet = \lim\nolimits_{R \to K^\bullet} \Spec H^0 K^\bullet \lra \Spec R,\]
we obtain the iso from the statement.
\end{proof}

\begin{example} \label{ex: classic spec ff}
In the category $\mcA$ of commutative rings with $\mcP$ the local rings and their local maps, every local form is flat. We conclude that every $R$ is a fixed point, since $\Spec$ reflects isomorphisms by virtue of Theorem~\ref{thm:characterizing_spec_iso} and the well known fact that a map of $R$-modules is an isomorphism iff it is so after localization with respect to every prime of $R$.
\end{example}


\begin{lemma}
Let $f \colon R \to S$ be a map through which a distinguished hyperopcover $R \to K^\bullet$ factors in such a way that the augmentation $S \to K^0$ is an opcovering collection of localizations (not necessarily finite). Then $f$ is a geometric isomorphism.
\end{lemma}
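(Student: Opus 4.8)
The plan is to reduce to the pushout criterion of Theorem~\ref{thm:characterizing_spec_iso} and then feed the hypothesis into the splitting result of Proposition~\ref{prop:split_hyperopcover}. Since we are in the standing situation $\mcA=\red\mcA=\mono\mcA$, Theorem~\ref{thm:characterizing_spec_iso} tells us that $f$ is a geometric isomorphism as soon as, for \emph{every} local form $p\colon R\cof P$, the pushout $p_*f\colon P\to f_*P$ (with $f_*P=P+_RS$) is an isomorphism. So I fix one local form $p$ and aim to prove exactly this. The contrast with Theorem~\ref{thm:good_objects_one} is that there flatness was used to commute the pushout past $H^0$; here no flatness is available, and the work is pushed into a monomorphism argument instead.

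First I would push the whole augmented semi-cosimplicial object $R\to K^\bullet$ out along $p$, obtaining $P\to\widetilde K^\bullet$ with $\widetilde K^n=P+_RK^n$. Because pushing out along $p$ preserves the colimits computing latching objects and sends distinguished opcovers to distinguished opcovers (closedness under pushout), $P\to\widetilde K^\bullet$ is again a distinguished hyperopcover, now of $P$. The point of choosing a local form is the following: since $R\to K^0$ is a distinguished opcover and $p$ is a local form, $p$ factors through some component $K^0_{i_0}$, and Lemma~\ref{lemma:factorization_pushout} then identifies the corresponding degree-zero component $P\to\widetilde K^0_{i_0}=P+_RK^0_{i_0}$ with an isomorphism. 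Proposition~\ref{prop:split_hyperopcover} therefore yields that the augmentation $\phi\colon P\to H^0\widetilde K^\bullet$ is an isomorphism.

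Next I would exploit the factorization through $f$. Because $R\to K^\bullet$ factors (as augmented semi-cosimplicial objects) through $f$, pushout pasting rewrites $\widetilde K^n=P+_RK^n\cong f_*P+_SK^n$, so that the same object $\widetilde K^\bullet$ is \emph{also} the pushout of the $S$-augmented object $S\to K^\bullet$ along $f_*p\colon S\to f_*P$. Its degree-zero augmentation $c\colon f_*P\to\widetilde K^0$ is the pushout of the opcover $S\to K^0$, hence an opcover of $f_*P$ by localizations; as it satisfies the cosimplicial identity it factors through the equalizer, giving $\sigma\colon f_*P\to H^0\widetilde K^\bullet$, and functoriality of the pushout shows $\phi=\sigma\circ p_*f$.

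Finally I would combine the two facts carried by this single object $\widetilde K^\bullet$. On one hand $\phi=\sigma\circ p_*f$ is an isomorphism, so $\sigma$ is a split epimorphism. On the other hand, since $\mcA=\mono\mcA$, the object $f_*P$ is mono-reduced, so the canonical family $\ell_{f_*P}$ of all its local forms is monic in $\prod\mcA$; since every local form of $f_*P$ factors through a component of the opcover $c$, the map $\ell_{f_*P}$ factors through $c$, forcing $c$, and hence $\sigma$, to be a monomorphism. A morphism that is simultaneously a monomorphism and a split epimorphism is an isomorphism, so $\sigma$ is invertible and therefore $p_*f=\sigma^{-1}\phi$ is an isomorphism, which is what Theorem~\ref{thm:characterizing_spec_iso} requires. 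The main obstacle is precisely this last pinning-down: the pushout $f_*P=P+_RS$ is a priori far larger than $P$, and it is exactly the interplay of the two roles of $\widetilde K^\bullet$ — a split hyperopcover of $P$ (forcing $\phi$ iso via Proposition~\ref{prop:split_hyperopcover}) and a pushed-forward augmentation of $f_*P$ (forcing $\sigma$ mono via mono-reducedness) — that collapses $f_*P$ back onto $P$. Everything else is routine bookkeeping with pushouts and latching maps in $\prod\mcA$.
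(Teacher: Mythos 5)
Your proof is correct, but it takes a genuinely different route from the paper's. The paper argues geometrically, on the spectrum side: applying $\Spec$, the distinguished hyperopcover induces a hypercover of $\Spec R$ by open embeddings, so $H_0 \Spec K^\bullet \to \Spec R$ is an isomorphism, while the opcovering hypothesis on $S \to K^0$ makes $H_0 \Spec K^\bullet \to \Spec S$ surjective on points and stalkwise an isomorphism (localizations are stalkwise isomorphisms); hence $f^*$ is a homeomorphism and a stalkwise isomorphism, i.e.\ an isomorphism of $\AP$-spaces. You instead stay entirely in $\mcA$ and $\prod\mcA$: you reduce to the pushout criterion of Theorem~\ref{thm:characterizing_spec_iso} (really its easy direction, Proposition~\ref{prop:geometric_isomorphism_easy}), push the hyperopcover out along a local form, identify one degree-zero component with $P$ via Lemma~\ref{lemma:factorization_pushout}, invoke Proposition~\ref{prop:split_hyperopcover} to get $\phi \colon P \xrightarrow{\cong} H^0\widetilde K^\bullet$, and then exploit the factorization through $S$ to produce $\sigma$ with $\phi = \sigma \circ p_* f$, finishing with the mono-plus-split-epi argument. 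In effect you rerun the proof of Theorem~\ref{thm:good_objects_one} with flatness replaced by mono-reducedness, which is a nice observation. The paper's route is shorter and uses only Part I facts (it does not actually lean on $\mcA = \mono\mcA$), whereas yours is purely algebraic and directly yields the a priori stronger conclusion that every pushout of $f$ along a local form is an isomorphism. Two points you should make explicit: (i) for the augmentation $c$ you need pushouts of \emph{non-finite} opcovers by localizations to remain opcovers -- the paper states closure under pushout only for distinguished opcovers, though the same argument works verbatim; (ii) the rewriting $\widetilde K^n \cong f_*P +_S K^n$ and the existence of $\sigma$ require reading ``factors through $f$'' as a factorization of \emph{augmented} semi-cosimplicial objects, so that $S \to K^0$ equalizes the two cofaces; this is the intended reading, and it is what holds in the application in Corollary~\ref{cor:good_objects_two}.
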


\begin{proof}
%
Since $S \to K^0$ is jointly surjective, $H_0 \Spec K^\bullet \to \Spec S$ is surjective, while $H_0 \Spec K^\bullet \to \Spec R$ is an isomorphism since all matching maps are covering collections of open embeddings.
\[\xymatrix{
\rightbox{\colim \El \Spec K^\bullet \cong {}}{H_0 \Spec K^\bullet} \ar[r]^-{\textrm{surj.}} \ar@{=}[d] & \Spec S \ar[d]^-{f^*} \\
H_0 \Spec K^\bullet \ar[r]_-{\cong} & \Spec R
}\]
Consequently, $f^*$ is bijective with a continuous inverse given by the composition $\Spec R \xla{\cong} H_0 \Spec K^\bullet \to \Spec S$. Moreover, all maps in the cone
\[\El \Spec K^\bullet \to \Spec S\]
are stalkwise isomorphisms (they are embeddings), the same is then true for $\colim \El \Spec K^\bullet \to \Spec S$ and thus also for $f^*$.
%
\end{proof}

We will now apply this to the situation $S = H^0 K^\bullet$ of the limit of any distinguished hyperopcover $R \to K^\bullet$, obtaining the following instance.

\begin{corollary} \label{cor:good_objects_two}
Assume that for any distinguished hyperopcover $R \to K^\bullet$ the induced map $H^0 K^\bullet \to K^0$ from the limit is an opcovering collection of localizations. Then the canonical map $\varepsilon_R \colon R \to \overline R$ is a geometric isomorphism.

If $\Spec R$ is compact, this condition is required only for finite distinguished hyperopcovers. \qed
\end{corollary}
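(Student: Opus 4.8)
The plan is to read the corollary off the preceding lemma together with the Heller--Rowe description of $\overline R$. Under the standing hypotheses $\mcA = \red\mcA$ and $\mcA = \mono\mcA$, Section~\ref{subsec:Heller_Rowe_formula} gives
\[\overline R = \colim_{R \to K^\bullet} H^0 K^\bullet,\]
the colimit ranging over all distinguished hyperopcovers of $R$; when $\Spec R$ is compact the finite ones are cofinal and the colimit may be restricted to them. The counit $\varepsilon_R \colon R \to \overline R$ is the insertion of the trivial hyperopcover, so that for every $K^\bullet$ the canonical map $R \to H^0 K^\bullet$ followed by the colimit leg $H^0 K^\bullet \to \overline R$ recovers $\varepsilon_R$.

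First I would apply the preceding lemma with $S = H^0 K^\bullet$. Indeed the augmentation $R \to K^\bullet$ factors as $R \to H^0 K^\bullet \to K^0$ through the limit, and the hypothesis of the corollary is precisely that this augmentation $H^0 K^\bullet \to K^0$ is an opcovering collection of localizations. The lemma then yields that each $R \to H^0 K^\bullet$ is a geometric isomorphism, that is, $\Spec H^0 K^\bullet \to \Spec R$ is an isomorphism.

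Finally, since $\Spec$ is a right adjoint it carries the defining colimit to a limit,
\[\Spec \overline R \cong \lim_{R \to K^\bullet} \Spec H^0 K^\bullet,\]
and the isomorphisms $\Spec H^0 K^\bullet \to \Spec R$ assemble into a natural comparison from this diagram to the constant diagram at $\Spec R$. The category of distinguished hyperopcovers is connected — it is even filtered once one restricts to ordinary covers, which is legitimate here since $\mcO_R^\can$ is a monopresheaf — so the limit of the constant diagram is $\Spec R$ itself, and under the identification above $\Spec \varepsilon_R$ becomes an isomorphism. The compact case is the same argument with the colimit taken over finite hyperopcovers only.

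The only genuinely non-formal point I expect is the last paragraph: I must check that the legs $\Spec H^0 K^\bullet \to \Spec R$ are natural in the refinement variable, so that they really form a cone comparing the diagram with its constant counterpart, and that the hyperopcover category is connected so that this constant limit collapses to $\Spec R$. Everything else is a mechanical application of the lemma and of the adjunction $\Gamma \dashv \Spec$; indeed this passage to the limit is exactly the step already carried out at the end of the proof of Theorem~\ref{thm:good_objects_one}.
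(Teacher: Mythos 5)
Your proposal is correct and follows essentially the same route as the paper: the corollary is obtained there by applying the preceding lemma to $S = H^0 K^\bullet$ (as announced in the sentence before the statement) and then passing to the limit exactly as at the end of the proof of Theorem~\ref{thm:good_objects_one}, using the Heller--Rowe formula $\overline R = \colim_{R \to K^\bullet} H^0 K^\bullet$ and the cofinality of finite hyperopcovers when $\Spec R$ is compact. Your extra care about naturality of the cone $\Spec H^0 K^\bullet \to \Spec R$ and connectedness of the hyperopcover category is a detail the paper leaves implicit, but it is the same argument.
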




Finally, we specialize the above general result to an important case easily applicable e.g.\ to the integral domains.

\begin{theorem} \label{thm:good_objects_two}
Suppose that admissible maps are monic and that jointly monic families of localizations are opcovering. Then the counit $\varepsilon_R \colon R \to \overline R$ is a geometric isomorphism; finite families are sufficient for compact spectra.
\end{theorem}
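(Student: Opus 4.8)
The plan is to deduce the theorem from Corollary~\ref{cor:good_objects_two}: it suffices to check that for every distinguished hyperopcover $R \to K^\bullet$ (only the finite ones when $\Spec R$ is compact) the family of limit projections $H^0 K^\bullet \to K^0$ is an opcovering collection of localizations. In view of the second hypothesis---jointly monic families of localizations are opcovering---this reduces to proving two things about the projections $\pi_i \colon H^0 K^\bullet \to K^0_i$: that they are localizations and that they are jointly monic.

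Joint monicity is immediate: realizing $H^0 K^\bullet$ as the equalizer of the two face maps (computed in $\mcA$ after taking products of components), the canonical comparison $H^0 K^\bullet \to \prod_i K^0_i$ is monic, and composing it with the jointly monic product projections shows that the $\pi_i$ are jointly monic. The heart of the argument is that each $\pi_i$ is a localization. Since localizations are exactly the maps with the left lifting property against admissible maps, I would fix an admissible $g \colon X \to Y$ together with a square whose top edge is $a \colon H^0 K^\bullet \to X$ and whose bottom edge is $b \colon K^0_i \to Y$, and produce a lift $\lambda \colon K^0_i \to X$. Precomposing $a$ with the canonical map $u \colon R \to H^0 K^\bullet$ into the limit, and using that $\pi_i u \colon R \cof K^0_i$ is the corresponding (finite) localization from the opcover, the lifting property of $\pi_i u$ against $g$ yields $\lambda$ with $g\lambda = b$ and $\lambda\,(\pi_i u) = au$. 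The lower triangle then holds on the nose, while for the upper triangle one computes $g(\lambda \pi_i) = b \pi_i = g a$ and cancels $g$; this is exactly where the first hypothesis enters, since admissible maps are monic. Hence $\pi_i \boxslash g$ for every admissible $g$, so $\pi_i$ is a localization.

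Combining the two steps, $\{\pi_i\}$ is a jointly monic family of localizations, hence opcovering by the standing hypothesis, so the assumption of Corollary~\ref{cor:good_objects_two} is met and $\varepsilon_R$ is a geometric isomorphism. For compact $\Spec R$ the corollary only requires finite distinguished hyperopcovers, whence $K^0$ has finitely many components and only the finite case of the opcovering hypothesis is used, matching the final clause of the statement. I expect the only genuinely delicate point to be the localization property of the projections, i.e.\ the lifting argument above; everything else is a formal consequence of the limit description of $H^0 K^\bullet$ and of the two hypotheses. A minor check to keep in mind is that the canonical map $u \colon R \to H^0 K^\bullet$ really does satisfy $\pi_i u = (R \cof K^0_i)$, which is clear from the augmentation of the semi-cosimplicial object.
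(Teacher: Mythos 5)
Your proposal is correct and follows essentially the same route as the paper: the paper also deduces the statement from Corollary~\ref{cor:good_objects_two} by showing that $H^0 K^\bullet \to K^0$ consists of localizations and that limit projections are jointly monic. Your explicit lifting argument (cancelling the monic admissible $g$ to get the upper triangle) is precisely an inline proof of the dual of Lemma~\ref{lemma:cancellation_properties}.1, which is the step the paper cites instead of spelling out.
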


\begin{proof}
We will deduce this from the last corollary, so we need to verify its hypothesis. For any distinguished hyperopcover $R \to K^\bullet$ and the factorization
\[R \to H^0 K^\bullet \to K^0,\]
the dual of Lemma~\ref{lemma:cancellation_properties}.1 shows that $H^0 K^\bullet \to K^0$ consists of localizations. Since limit projections are always jointly monic, they are opcovering, by assumption.
\end{proof}

\begin{example} \label{example:spec_not_fully_faithful}
In the category $\mcA$ of reduced commutative rings with $\mcP$ integral domains and their monomorphisms, the above condition holds. This is because a finite family of localizations, i.e.\ of quotients $R \to R/I_k$, is jointly monic iff $I_1 \cap \cdots \cap I_n = 0$. In that case, any prime $\mathfrak{p}$ contains some $I_k$ and so the corresponding local form $R \to R/\mathfrak{p}$ factors through the localization $R \to R/I_k$, as required.

We have seen that for every commutative ring $R$ the counit $\varepsilon_R$ is a geometric iso. We will now give an example of a reduced ring where $\varepsilon_R$ is not an iso, i.e.\ $\Spec$ is not fully faithful even on $\red\mcA$. Every noetherian ring $R$ possesses the smallest distinguished opcover $(R/\mathfrak{p}_i)$ with $\mathfrak{p}_i$ ranging over all minimal primes of $R$. Thus, $\overline R$ is the $H^0$ of
\[\xymatrix{
(R/\mathfrak{p}_i) \ar@<.5ex>[r] \ar@<-.5ex>[r] & (R/\rad(\mathfrak{p}_i + \mathfrak{p}_j)) \ar@<1ex>[r] \ar@<0ex>[r] \ar@<-1ex>[r] & \cdots
}\]
In particular, let $R$ be the coordinate ring of a union $X$ of two smooth curves $X_1$, $X_2$ meeting at a single point $X_0$ non-transversely. Then the minimal prime $\mathfrak{p}_i$ consists of functions vanishing on $X_i$ and clearly $\mathfrak{p}_1 \cap \mathfrak{p}_2 = 0$, while $\mathfrak{p}_1 + \mathfrak{p}_2$ is a non-radical ideal whose radical is the prime $\mathfrak{p}$ consisting of functions vanishing at $X_0$. Thus, $R$ is the pullback of
\[R/\mathfrak{p}_1 \to R/(\mathfrak{p}_1 + \mathfrak{p}_2) \la R/\mathfrak{p}_2\]
while $\overline R$ is the pullback of
\[R/\mathfrak{p}_1 \to R/\underbrace{\rad (\mathfrak{p}_1 + \mathfrak{p}_2)}_{\mathfrak{p}} \la R/\mathfrak{p}_2\]
which describes the same two curves intersecting transversely at the point $X_0$. These commutative rings are not isomorphic, for the minimal primes $\overline{\mathfrak{p}}_i$ of $\overline R$ are obtained as kernels of the projection onto $R/\mathfrak{p}_i$ and it is easy to see that $\overline{\mathfrak{p}}_1 + \overline{\mathfrak{p}}_2$, being the kernel of the projection onto $R/\mathfrak{p}$, is also prime, hence radical, unlike in the situation of $R$.
\end{example}

\begin{remark}
There is the following converse to Theorem~\ref{thm:good_objects_one}. If $\Spec R$ is compact and $\ell_R$ is an isomorphism for all $R$, then the local forms $p \colon R \to P$ preserve limits of finite hyperopcovers. This is so since $\O_R^\can$ is then a sheaf and $R \to H^0 K$ is thus an isomorphism; since $P \to H^0 \widetilde K$ is still an iso, the leftmost square in the proof is cocartesian trivially.

Similarly, there is the following converse to Corollary~\ref{cor:good_objects_two}. If  $\ell_R$ is an isomorphism for all $R$, then the hypothesis of Corollary~\ref{cor:good_objects_two} on the factorization of distinguished opcovers holds. This is so because $R \to H^0 K$ is an iso, as above, and thus the condition becomes trivial.

In both cases, in order for $\ell_R$ to be a geometric isomorphism, the fixed points must form a reflective subcategory where the reverse implication holds, so it is a good idea to look for such objects, i.e.\ objects whose local forms are flat w.r.t.\ finite hypercovers (this may be a bit complicated) or objects for which limit cones are opcovering.
\end{remark}

\begin{remark}
We will now give a third instance of idempotency of the adjunction $\Gamma \dashv \Spec$ which, however, does not apply to any of our standard examples. If monos are pushout stable and non-empty products are couniversal we get an (epi, regular mono) factorization in the product completion. Now consider a distinguished hypercover of $\Spec R$, given by a semi-cosimplicial object
\[\xymatrix{
R \ar[r] & K^0 \ar@<.5ex>[r] \ar@<-.5ex>[r] & K^1 \ar@<1ex>[r] \ar@<0ex>[r] \ar@<-1ex>[r] & \cdots
}\]
in the product completion, where the latching maps are opcovering collections of finite localizations. Thanks to $\mcA = \mono\mcA$, such collections are jointly monic and the dual of Theorem~\ref{thm:epi_mono_factorization} gives that the induced map $R \to H^0 K^\bullet$ is epic. Since a filtered colimit of epis is epic we get that the map from the canonical presheaf to its sheafification is objectwise epic; this is the counit of the adjunction $\varepsilon_R \colon R \to \Gamma \Spec R$. Applied to $R = \Gamma X$, it follows from the triangle identity
\[1 \colon \Gamma X \to \Gamma \Spec \Gamma X \to \Gamma X\]
that the counit $\varepsilon_{\Gamma X}$ is an isomorphism; Theorem~\ref{theorem:idempotent_adjunction} then gives idempotency.

As usual, for compact $\Spec R$, finite hypercovers are sufficient, so that finite product completion is sufficient and we only need binary products couniversal.
\end{remark}

\newpage
\part{Functor of points approach}
\section{Generalities on small sheaves}

The last goal of this paper is to explore what algebraic geometers call \emph{the functor of points approach}. In the classical case where $\mcA$ is the category of commutative rings and $\mcP$ is the subcategory of local rings and local homomorphisms, instead of a scheme $X$ in $\APTop$, we can consider the functor $NX:=\APTop(\Spec -, X) \colon \mcA^{\op}\to \Set$. This is a sheaf in the Zariski topology $J$ on $\mcA^{\op}$ and it turns out that the functor $N\colon \APTop \to \SH{J}{\mcA^\op}$ valued in $J$-sheaves is fully faithful when restricted to schemes. This fact is crucial since it enables us to view schemes as a full subcategory of the category of Zariski sheaves which is much better behaved from the categorical standpoint than the category $\APTop$.

In the rest of the paper, we will generalize this to our context. Having a set of cones $C$ in $\mcA$ as before, we can consider the Grothendieck topology $J$ on $\mcA^{\op}$ defined as follows: $J$-covering families correspond to distinguished opcovers $(R\to K_i)$ in $\mcA$. Now, we want to consider the category of $J$-sheaves on $\mcA^{\op}$ -- but as we will see later, we need to handle some issues coming from the fact that $\mcA$ need not be small. In particular, working with sheaves on a large category, it is in general not clear whether the left adjoint to the inclusion of sheaves into presheaves exists. Furthermore we would wish to consider the left Kan extension of $\Spec$ along the Yoneda embedding (composed with sheafification) and we want this to be cocontinuous, but this requires some care if $\mcA$ is large.



\subsection{Presheaves and sheaves over large categeories}

The material of this section will be applied to presheaves and sheaves on $\mcA^\op$, but in order to decrease the number of $\op$'s, and also with a view towards applications outside of this paper, we will be considering presheaves on a category $\mcC$. It will be important that $\mcC$ is not assumed small. In this case, we assume the axiom of universes and consider a universe $\SET$ with respect to which $\mcC$ is small. In addition, we require that the normal universe $\Set$ consists of $\kappa$-presentable objects in $\SET$ for some inaccessible cardinal $\kappa$. Then $\Set$ is closed in $\SET$ under all small (i.e.\ $\kappa$-small) limits and colimits and, in particular, these are computed the same way in the two categories. Since $\mcC$ is small w.r.t.\ $\SET$, the usual theory shows that $\PSH{\mcC} = [\mcC^\op, \SET]$ is a free completion of $\mcC$ under ``large'' colimits. Also, for any Grothendieck topology $J$ on $\mcC$, there exists a sheafification functor $\sheafify$ on $\PSH{\mcC}$, a reflection onto the full subcategory $\SH{J}{\mcC}$ of $J$-sheaves. The sheafification is given locally by a formula
\[(\sheafify F) X = \colim\limits_{U_\bullet \to X} \lim FU_\bullet\]
where the colimit runs over all $J$-hypercovers $U_\bullet \to X$.
Finally, we need to consider the full subcategory $\PSh{\mcC} \subseteq \PSH{\mcC}$ of small presheaves, i.e.\ the closure of the image $\yoneda \mcC$ of the Yoneda embedding under small colimits in $\PSH{\mcC}$; most importantly, $\PSh{\mcC}$ is the free completion of $\mcC$ under small colimits, as is easy to be seen (or apply Theorem~\ref{thm:universal_property_sheaves} to the trivial topology). Similarly, the full subcategory $\Sh{J}{\mcC} \subseteq \SH{J}{\mcC}$ of small $J$-sheaves is the closure of the image $\yonedash \mcC$ of the ``sheafified Yoneda embedding'' $\yonedash = \sheafify \yoneda$ under small colimits in $\SH{J}{\mcC}$.\footnote{While $\PSh{\mcC}$ is easily seen to consist precisely of small colimits of representables, the corresponding claim for sheaves, of being precisely small colimits of sheafified representables, is much harder but still true, see \cite{Shulman}.} We will now show that it has a crucial universal property.

\begin{theorem} \label{thm:universal_property_sheaves}
For any cocomplete category $\mcD$, there exists an adjoint equivalence, given by the left Kan extension and restriction
\[[\mcC, \mcD]_J \simeq [\Sh{J}{\mcC}, \mcD]_\mathrm{ccts},\]
between functors $K \colon \mcC \to \mcD$ that turn $J$-hypercovers to colimit cocones and cocontinuous functors $L \colon \Sh{J}{\mcC} \to \mcD$.
\end{theorem}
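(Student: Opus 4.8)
The plan is to factor the desired equivalence through the category $\PSh{\mcC}$ of small presheaves, realizing it as the composite of two more basic universal properties: that of $\PSh{\mcC}$ as the free cocompletion of $\mcC$ under small colimits, and that of $\Sh{J}{\mcC}$ as a reflective localization of $\PSh{\mcC}$. Concretely, I would establish the two adjoint equivalences
\[[\PSh{\mcC}, \mcD]_{\mathrm{ccts}} \simeq [\mcC, \mcD], \qquad [\Sh{J}{\mcC}, \mcD]_{\mathrm{ccts}} \simeq [\PSh{\mcC}, \mcD]_{\mathrm{ccts}, W},\]
where the subscript $W$ denotes the cocontinuous functors inverting the class of $J$-local isomorphisms, and then identify, under the first equivalence, the image of $[\Sh{J}{\mcC}, \mcD]_{\mathrm{ccts}}$ inside $[\mcC, \mcD]$ with the subcategory $[\mcC, \mcD]_J$ of functors sending $J$-hypercovers to colimit cocones.

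First I would prove the free-cocompletion statement: restriction along the Yoneda embedding $\yoneda \colon \mcC \to \PSh{\mcC}$ induces an equivalence between cocontinuous functors $\PSh{\mcC} \to \mcD$ and arbitrary functors $\mcC \to \mcD$, with pseudo-inverse the left Kan extension $\lan_\yoneda$. The nontrivial point here -- and the one I expect to be the main obstacle, precisely because $\mcC$ is not assumed small -- is the existence of $\lan_\yoneda K$ for $K \colon \mcC \to \mcD$ into a merely cocomplete $\mcD$. Working inside the ambient large presheaf category $\PSH{\mcC} = [\mcC^\op, \SET]$, the pointwise formula would compute $(\lan_\yoneda K) F$ as a colimit over the comma category $\yoneda/F$, which is a priori large; the key is that each small presheaf $F$ is by definition a small colimit of representables, so that a choice of presentation $F \cong \colim_d \yoneda c_d$ yields a small final subcategory of $\yoneda/F$ and the colimit can be computed as the genuinely small colimit $\colim_d K c_d$ in $\mcD$. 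I would make this precise, check independence of the presentation, functoriality, and cocontinuity, and observe that $\lan_\yoneda K \circ \yoneda \cong K$, while every cocontinuous $L$ is canonically $\lan_\yoneda(L\yoneda)$ because $L$ preserves the canonical colimit presentation of each small presheaf.

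Next I would invoke the standard universal property of a reflective localization. Since $\sheafify \colon \PSh{\mcC} \to \Sh{J}{\mcC}$ is left adjoint to a full inclusion (using, as recorded above, that $\sheafify$ preserves small colimits and hence carries small colimits of representables to small colimits of sheafified representables), precomposition with $\sheafify$ identifies $[\Sh{J}{\mcC}, \mcD]_{\mathrm{ccts}}$ with the cocontinuous functors $\PSh{\mcC} \to \mcD$ inverting every map inverted by $\sheafify$, i.e.\ the class $W$ of $J$-local isomorphisms. Because a cocontinuous functor preserves colimits, it inverts all of $W$ as soon as it inverts a generating set, and a convenient such set is furnished by the hypercover realization maps $\colim \yoneda U_\bullet \to \yoneda X$, one for each $J$-hypercover $U_\bullet \to X$. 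Verifying that these generate $W$ under the relevant saturation operations (two-out-of-three, cobase change, transfinite composition, retracts) is where I would lean on the hypercover formula for sheafification from Section~\ref{subsec:Heller_Rowe_formula}.

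Finally I would translate the condition across the first equivalence. For $L = \lan_\yoneda K$, cocontinuity gives $L(\colim \yoneda U_\bullet) \cong \colim K U_\bullet$ and $L(\yoneda X) \cong K X$, so $L$ inverts the generator attached to $U_\bullet \to X$ exactly when the canonical comparison $\colim K U_\bullet \to K X$ is an isomorphism -- that is, exactly when $K$ sends the hypercover $U_\bullet \to X$ to a colimit cocone. Hence the cocontinuous functors $\PSh{\mcC} \to \mcD$ inverting $W$ correspond under the free-cocompletion equivalence precisely to the functors in $[\mcC, \mcD]_J$, and composing the two equivalences yields $[\mcC, \mcD]_J \simeq [\Sh{J}{\mcC}, \mcD]_{\mathrm{ccts}}$, implemented by left Kan extension along $\yonedash = \sheafify \yoneda$ and by restriction, as claimed.
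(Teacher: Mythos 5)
Your first step (the free small-cocompletion property of $\PSh{\mcC}$) and your final translation step are sound, but the middle step -- the one carrying the real content -- contains a genuine gap. You assert that $\sheafify \colon \PSh{\mcC} \to \Sh{J}{\mcC}$ is ``left adjoint to a full inclusion''. This presupposes that $\Sh{J}{\mcC}$ is a full subcategory of $\PSh{\mcC}$, i.e.\ that every small sheaf has a small underlying presheaf, and that the adjunction $\sheafify \dashv i$ between $\PSH{\mcC}$ and $\SH{J}{\mcC}$ restricts to the small objects. Neither holds in general when $\mcC$ is large: your parenthetical justification only shows that $\sheafify$ carries $\PSh{\mcC}$ \emph{into} $\Sh{J}{\mcC}$, not the reverse containment. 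Sheafification is computed by the Heller--Rowe formula as a colimit over the hypercovers of each object, and over a large site these form a proper class; consequently $\sheafify F$ for $F \in \PSh{\mcC}$ (already for $\yonedash X$) need not be a small presheaf, the inclusion $i \colon \Sh{J}{\mcC} \to \PSH{\mcC}$ does not factor through $\PSh{\mcC}$, and no such adjunction exists. (This is also visible in the paper itself: even the weaker containment $\Sh{J}{\mcC} \subseteq [\mcC^\op, \Set]$ is only established under the extra hypothesis that hypercovers of each object form a set, in the proposition on local smallness.) Without the fully faithful right adjoint, the standard localization argument you invoke breaks at its first move: given a cocontinuous $W$-inverting $M \colon \PSh{\mcC} \to \mcD$, you can neither define the descended functor as $M \circ i$, nor transport a morphism $\sheafify F \to \sheafify F'$ back into $\PSh{\mcC}$, since its adjunct $F \to i\, \sheafify F'$ lands outside small presheaves.

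The same size problem undermines the deferred claim that the hypercover realization maps ``generate'' $W$: saturating them under cobase change and transfinite composition requires cell attachments along coproducts indexed by proper classes of hypercovers, and these attachments leave $\PSh{\mcC}$; a functor that is merely cocontinuous for small colimits on $\PSh{\mcC}$ cannot be evaluated along them. This is exactly the difficulty the paper's proof is built to circumvent: it never attempts to realize $\Sh{J}{\mcC}$ inside $\PSh{\mcC}$, but works in the ambient $\PSH{\mcC}$ with the \emph{partial} left Kan extension $\widehat K$, defined on the full subcategory $\widehat\mcC_K$ of weights $F$ for which the weighted colimit $F *_\mcC K$ exists, and then runs the small object argument in $\PSH{\mcC}$ (where the large cell attachments do exist), using the cocontinuity of weighted colimits in the weight -- valid for large diagrams -- to show that $\widehat\mcC_K$ contains all small sheaves and that $\widehat K$ inverts the sheafification maps. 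Any repair of your outline would, as far as I can see, have to reproduce essentially this machinery.
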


By definition, a $J$-hypercover of $X$ is a semi-simplicial object
\[\xymatrix{
\cdots \ar@<1ex>[r] \ar@<0ex>[r] \ar@<-1ex>[r] & U_1 \ar@<.5ex>[r] \ar@<-.5ex>[r] & U_0 \ar[r] & X
}\]
in the coproduct completion $\coprod \mcC$, augmented by $X$, such that all the matching maps are $J$-covers. A functor $K$ turns this into a colimit cocone if the canonical map $H_0 KU_\bullet \to KX$ is an isomorphism.

We will now compare the condition from the theorem for $J$-hypercovers with that for $J$-covers which might be easier to check; here a $J$-cover is interpreted as a particular $J$-hypercover -- one where $U_1 = U_0 \times_X U_0$, etc.

\begin{lemma} \label{lemma:hypercovers}
If a functor $K\colon \C\to \D$ sends $J$-covers to colimit cocones, then it also sends $J$-hypercovers to colimit cocones.
\end{lemma}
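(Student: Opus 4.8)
The plan is to reduce the claim to the levels $0$ and $1$ of the augmented semi-simplicial object, since $H_0$ sees nothing beyond them. First I would record the elementary fact that, for any semi-simplicial object $W_\bullet$ valued in a cocomplete category, the colimit $\colim\El W_\bullet$ is the coequalizer of the two face maps $d_0,d_1\colon W_1\to W_0$: the full subcategory of $\Delta_{\mathrm{inj}}^{\op}$ on $[0]$ and $[1]$, together with its two arrows $[1]\to[0]$, is final, the relevant comma categories being connected precisely because any two vertices of $[n]$ are joined by an edge. Extending $K$ to the coproduct-preserving functor $\bar K\colon\coprod\mcC\to\mcD$ and applying this, for the given $J$-hypercover $U_\bullet\to X$ I obtain
\[
H_0KU_\bullet\;\cong\;\operatorname{coeq}\bigl(\bar Kd_0,\bar Kd_1\colon\bar KU_1\to\bar KU_0\bigr),
\]
and for the \v{C}ech hypercover of the cover $U_0\to X$ the analogous coequalizer is $\operatorname{coeq}(\bar K\pi_0,\bar K\pi_1\colon\bar K(U_0\times_XU_0)\to\bar KU_0)$, which by hypothesis is $KX$ via the augmentation. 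In particular only the data in degrees $\le1$ enters, matching the remark that only $n\le1$ is relevant.

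The crux is to compare these two coequalizers. By definition of the matching map, $d_0$ and $d_1$ factor through $m\colon U_1\to M_1U=U_0\times_XU_0$ as $d_k=\pi_k\circ m$, so $\bar Kd_k=\bar K\pi_k\circ\bar Km$. The key step I would establish is that $\bar Km$ is an epimorphism. For each pair $(i_0,i_1)$ the matching map restricts to a $J$-cover $(U_1^j)_{j\in I_1/(i_0,i_1)}\to U_0^{i_0}\times_XU_0^{i_1}$; feeding this cover (viewed as a \v{C}ech hypercover) into the hypothesis exhibits $K(U_0^{i_0}\times_XU_0^{i_1})$ as a coequalizer whose defining map issues from $\bar K\bigl((U_1^j)_{j}\bigr)$, and coequalizing maps are epic. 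Taking the coproduct over all $(i_0,i_1)$, and using that a coproduct of epimorphisms is an epimorphism, shows that $\bar Km$ is epic.

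Finally, since $\bar Km$ is epic, a morphism out of $\bar KU_0$ coequalizes $\bar K\pi_0\circ\bar Km$ and $\bar K\pi_1\circ\bar Km$ if and only if it coequalizes $\bar K\pi_0$ and $\bar K\pi_1$ (cancel $\bar Km$ on the right). Hence the two coequalizers above agree, yielding
\[
H_0KU_\bullet\;\cong\;\operatorname{coeq}\bigl(\bar K\pi_0,\bar K\pi_1\colon\bar K(U_0\times_XU_0)\to\bar KU_0\bigr)\;\cong\;KX,
\]
and a short check of universal properties identifies this isomorphism with the canonical augmentation map $H_0KU_\bullet\to KX$. The main obstacle is precisely the epimorphism claim for $\bar Km$: this is the one place where the covering hypothesis must be invoked a second time, at degree $1$, after decomposing the level-$1$ matching map into its summand-wise covers.
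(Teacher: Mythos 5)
Your proof is correct and follows essentially the same route as the paper's: compare the hypercover with the \v{C}ech hypercover of its underlying cover, show the degree-one comparison map $\Sigma KU_1 \to \Sigma K(U_0\times_X U_0)$ is a coproduct of epimorphisms by applying the hypothesis to the covers $(U_1^j)_{j\in I_1/(i_0,i_1)} \to U_0^{i_0}\times_X U_0^{i_1}$, and conclude that precomposition with an epimorphism does not change the coequalizer. The only difference is presentational: you spell out the finality argument reducing $H_0$ to the coequalizer in degrees $\le 1$, which the paper uses implicitly.
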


\begin{proof}
We will make use of $H_0 KU_\bullet \cong \colim \Sigma KU_\bullet$ where $\Sigma \colon \coprod \mcD \to \mcD$ is the functor from Section~\ref{sec:covers} (taking coproducts of the components). The hypothesis then guarantees that, for any $J$-cover $U_0$ of $X$, the induced map $\Sigma KU_0 \to KX$ is a coequalizer, hence an epimorphism.

Now consider an arbitrary $J$-hypercover $U_\bullet \to X$, compare it with the underlying $J$-cover $U_0$, apply $K$ and take coproducts:
\[\xymatrix{
\cdots \ar@<1ex>[r] \ar@<0ex>[r] \ar@<-1ex>[r] & \Sigma KU_1 \ar@<.5ex>[r] \ar@<-.5ex>[r] \ar[d] & \Sigma KU_0 \ar[r] \ar@{=}[d] & KX \ar@{=}[d] \\
\cdots \ar@<1ex>[r] \ar@<0ex>[r] \ar@<-1ex>[r] & \Sigma K(U_0 \times_X U_0) \ar@<.5ex>[r] \ar@<-.5ex>[r] & \Sigma KU_0 \ar[r] & KX \\
}\]
The map $\Sigma KU_1 \to \Sigma K(U_0 \times_X U_0)$ is a coproduct of epimorphisms by the first paragraph, and thus itself an epimorphism. By assumption, the lower row is a coequalizer, hence so is the upper one.
\end{proof}

\subsection{Weighted colimits}

It will be convenient to consider colimits weighted by possibly large weights -- these may well exist, e.g.\ a coproduct of an arbitrary number of copies of an initial object. Thus, a weight is just $F \in \PSH{\mcC}$. For a functor $K \in [\mcC, \mcD]$ the weighted colimit $F *_\mcC K$ is a representing object of the covariant functor on the left
\[\PSH{\mcC}(F, \mcD(K, -)) \cong \mcD(F *_\mcC K, -),\]
should it exist. By Yoneda lemma, $\yoneda X *_\mcC K \cong KX$. We have the following important cocontinuity property of the weighted colimit:

\begin{lemma} \label{lemma:cocontinuity_weighted_colimit}
Let $F \colon \mcI \to \PSH{\mcC}$ be a diagram of weights such that for each $i \in \mcI$ the weighted colimit $F_i *_\mcC K$ exists. Then
\[\colim_{i \in \mcI} (F_i *_\mcC K) \cong (\colim_{i \in \mcI} F_i) *_\mcC K\]
in the sense that one side exists iff the other side exists, in which case the canonical comparison map is an isomorphism.
\end{lemma}

\begin{proof}
They are, respectively, representing objects for the functors
\[\lim_{i \in \mcI} \PSH{\mcC}(F_i *_\mcC K, -) \cong \lim_{i \in \mcI} \PSH{\mcC}(F_i, \mcD(K, -)) \cong \PSH{\mcC}(\colim_{i \in \mcI} F_i, \mcD(K, -))\]
that are always isomorphic.
\end{proof}

One of the applications of weighted colimits is a pointwise formula for the left Kan extension: Given functors $K \colon \mcC \to \mcD$ and $I \colon \mcC \to \mcC'$, we have
\[(\lan_I K)X' \cong \mcC'(I-, X') *_\mcC K;\]
more precisely, if the weighted colimit exists then so does the left Kan extension and it is then given by this formula.

\subsection{Proof of the main theorem}

The proof revolves around left Kan extensions of a functor $K \colon \mcC \to \mcD$ along the Yoneda embedding to various full subcategories of presheaves. There is a pointwise formula for the left Kan extension at $F \in \PSH{\mcC}$ in terms of a weighted colimit
\[(\lan_{\yoneda} K)F = \PSH{\mcC}(\yoneda-, F) *_\mcC K \cong F *_\mcC K,\]
where the simplifying isomorphism is the Yoneda lemma. The weights for which this colimit exists form a full subcategory $\widehat\mcC_K$ and a pointwise left Kan extension of $K$ along the restricted Yoneda embedding $\yoneda \colon \mcC \to \widehat\mcC_K$ then exists and is given by the weighted colimit as above. For simplicity, we will denote it as $\widehat K = \lan_{\yoneda} K$:
\[\xymatrix{
\mcC \ar[r]^-K \ar[d]_-\yoneda & \mcD \\
\widehat\mcC_K \ar@{-->}[ru]_-{\widehat K}
}\]

Although the theorem concerns the pointwise left Kan extension along the composite $\yonedash = \sheafify \yoneda$, the isomorphism
\[(\lan_{\yonedash} K) F = \Sh{J}{\mcC}(\yonedash -, F) *_\mcC K \cong \PSH{\mcC}(\yoneda -, F) *_\mcC K \cong F *_\mcC K \cong \widehat K F\]
shows that this is equally a pointwise left Kan extension along the restricted Yoneda embedding $\yoneda \colon \mcC \to \Sh{J}{\mcC}$, so that an important step will be to show $\Sh{J}{\mcC} \subseteq \widehat\mcC_K$:

\begin{lemma}
$\widehat\mcC_K$ is closed under the following constructions:
\begin{itemize}
\item
    $\widehat\mcC_K$ contains all representables.
\item
    If $\mcD$ is (small) cocomplete then $\widehat\mcC_K$ is closed under small colimits and $\widehat K$ is cocontinuous.
\item
    If $K$ turns $J$-hypercovers to colimit cocones then $F \in \widehat\mcC_K$ $\Ra$ $\sheafify F \in \widehat\mcC_K$; moreover, the image $\widehat K F \to \widehat K \sheafify F$ of the sheafification map is an isomorphism.
\end{itemize}
\end{lemma}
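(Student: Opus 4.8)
The plan is to treat the three claims in order, putting essentially all of the work into the third. For the first, the Yoneda lemma recorded above gives $\yoneda X *_\mcC K \cong KX$, so the weighted colimit exists and every representable lies in $\widehat\mcC_K$; nothing more is needed. For the second, I would feed a small diagram $F\colon\mcI\to\PSH{\mcC}$ with all $F_i\in\widehat\mcC_K$ into Lemma~\ref{lemma:cocontinuity_weighted_colimit}: each $F_i*_\mcC K$ exists, and since $\mcD$ is small cocomplete the colimit $\colim_{i\in\mcI}(F_i*_\mcC K)$ exists in $\mcD$; the lemma then forces $(\colim_{i\in\mcI}F_i)*_\mcC K$ to exist and to coincide with it. Thus $\colim_{i\in\mcI}F_i\in\widehat\mcC_K$ and $\widehat K$ preserves this colimit, which is precisely the asserted cocontinuity.

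The crux is the third claim, and the key move I would make is to translate the hypothesis on $K$ into a sheaf condition on the presheaves $\mcD(K-,D)\colon X\mapsto\mcD(KX,D)$. Precisely, I claim that if $K$ turns $J$-hypercovers into colimit cocones, then $\mcD(K-,D)$ is a $J$-sheaf for every $D\in\mcD$. To verify descent along a $J$-hypercover $U_\bullet\to X$, I would observe that the descent object of $\mcD(K-,D)$ is the limit of the contravariant hom $\mcD(-,D)$ applied to the diagram $\El KU_\bullet$, reindexed as the cosimplicial limit $\lim_n\prod_{i\in I_n}\mcD(KU_n^i,D)$. The hypothesis states that $\El KU_\bullet\to KX$ is a colimit cocone (its colimit being $H_0KU_\bullet\cong KX$, cf.\ Section~\ref{sec:covers}), and since $\mcD(-,D)$ sends colimits to limits this cocone becomes a limit cone; hence the descent object is canonically $\mcD(KX,D)=\mcD(K-,D)(X)$. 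As covers are particular hypercovers, $\mcD(K-,D)$ is a $J$-sheaf.

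Granting this, the remaining assertions are formal. For $F\in\widehat\mcC_K$ the object $F*_\mcC K$ represents the functor $D\mapsto\PSH{\mcC}(F,\mcD(K-,D))$. Because each $\mcD(K-,D)$ is a sheaf and the inclusion $\SH{J}{\mcC}\hookrightarrow\PSH{\mcC}$ is fully faithful with left adjoint $\sheafify$, precomposition with the unit $\eta_F\colon F\to\sheafify F$ is a natural bijection
\[\PSH{\mcC}(\sheafify F,\mcD(K-,D))\xrightarrow{\ \cong\ }\PSH{\mcC}(F,\mcD(K-,D)).\]
Hence $F*_\mcC K$ also represents $D\mapsto\PSH{\mcC}(\sheafify F,\mcD(K-,D))$, which means exactly that $\sheafify F*_\mcC K$ exists (so $\sheafify F\in\widehat\mcC_K$) and that the comparison map $\widehat K\eta_F\colon\widehat K F\to\widehat K\sheafify F$ induced by the sheafification unit is an isomorphism.

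I expect the only genuine obstacle to be the sheaf-condition identification of the second paragraph: one must match the cosimplicial descent limit of $\mcD(K-,D)$ with the colimit $\colim\El KU_\bullet$ through the continuity of $\mcD(-,D)$ and the reindexing implicit in Section~\ref{sec:covers}, and confirm that checking the condition on hypercovers (equivalently, by Lemma~\ref{lemma:hypercovers}, on covers) really certifies the sheaf property. Everything downstream of that claim is a routine application of representability and the universal property of $\sheafify$.
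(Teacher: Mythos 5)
Your proof is correct, and for the decisive third bullet it takes a genuinely different route from the paper; the first two bullets coincide with the paper's treatment (Yoneda, and Lemma~\ref{lemma:cocontinuity_weighted_colimit}). For the third point, the paper argues constructively: it characterizes $J$-sheaves as the objects orthogonal (equivalently, injective) with respect to the cell maps $\alpha^n_{U_\bullet}$, produces $\sheafify F$ by the small object argument as a transfinite composite of pushouts of these cells, and then runs a transfinite induction, invoking Lemma~\ref{lemma:cocontinuity_weighted_colimit} at every successor and limit stage to show that each stage of the chain stays in $\widehat\mcC_K$ and that every map in the chain induces an isomorphism on weighted colimits. You instead prove that each nerve presheaf $\mcD(K-,D)$ is a $J$-sheaf --- because $K$ sends $J$-hypercovers to colimit cocones and $\mcD(-,D)$ turns these into limit cones --- and then transfer representability across the natural bijection $\PSH{\mcC}(\sheafify F,\mcD(K-,D))\cong\PSH{\mcC}(F,\mcD(K-,D))$ supplied by the reflection $\sheafify$; by Yoneda this simultaneously gives existence of $\sheafify F*_\mcC K$ and invertibility of $\widehat K F\to\widehat K\sheafify F$. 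Your key observation is exactly the one the paper postpones to Proposition~\ref{prop:nerve_realization_adjunction}, where it shows $N_K$ lands in sheaves; since that observation nowhere depends on the present lemma, there is no circularity in using it here. As for what each approach buys: yours is shorter, avoids all transfinite bookkeeping, and makes the nerve--realization adjunction transparent from the start, at the price of leaning on the universal property of the sheafification reflection and on the identification of the sheaf condition with (hyper)descent --- for $\SET$-valued presheaves the passage between covers and hypercovers is unproblematic, in the spirit of Lemma~\ref{lemma:hypercovers}, but it is a point you rightly flag as needing care. The paper's construction is more self-contained, using only the injectivity characterization of sheaves and cocontinuity of weighted colimits, and it exhibits the sheafification map concretely as a transfinite composite of maps that $\widehat K$ inverts, which fits the cell-by-cell style of the surrounding sections.
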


\begin{proof}
The first point is obvious and the second is just Lemma~\ref{lemma:cocontinuity_weighted_colimit} so we proceed with the third. In order to apply Lemma~\ref{lemma:cocontinuity_weighted_colimit} again, we need to express the sheafification as a colimit in the category of presheaves. $J$-sheaves are exactly the objects orthogonal to the collection of maps $\alpha^0_{U_\bullet} \colon H_0 \yoneda U_\bullet \to y X$ coming from hypercovers $U_\bullet \to X$ or equivalently objects injective w.r.t.\ the $\alpha^0_{U\bullet}$ and the corresponding codiagonal maps from the cokernel pair $\alpha^1_{U_\bullet} \colon \yoneda X +_{H_0 \yoneda U_\bullet} \yoneda X \to \yoneda X$. To simplify the notation, we write $\alpha^n_{U_\bullet} \colon G^n_{U_\bullet} \to H^n_{U_\bullet}$.

The main idea now is that $\sheafify F$ is built from $F$ by gluing cells of the shape $\alpha^n_{U_\bullet}$, which we make more precise in the next paragraph, and these induce isomorphisms on weighted colimits: for $\alpha^0_{U_\bullet}$ this is just the condition from the statement
\[H_0 yU_\bullet *_\mcC K \cong H_0 KU_\bullet \to KX = yX *_\mcC K,\]
and it easily implies the corresponding claim for $\alpha^1_{U_\bullet}$. Consequently, the sheafification map $F \to \sheafify F$ also induces an isomorphism on weighted colimits.

The sheafification $\sheafify F$, being the reflection of $F$ onto this injectivity class, can be produced by the small object argument, since the domains $G^n_{U_\bullet}$ of $\alpha^n_{U_\bullet}$ are $\kappa$-presentable in $\PSH{\mcC}$. There results a smooth chain $F_\lambda$, for $\lambda \leq \kappa$, with $F_0 = F$ and $F_\kappa = \sheafify F$. We prove by transfinite induction that $F_\lambda \in \widehat\mcC_K$ and that the chain $F_\mu *_\mcC K$ consists of isomorphisms, for $\mu \leq \lambda$. For a limit $\lambda$, this is straightforward by Lemma~\ref{lemma:cocontinuity_weighted_colimit}. For a successor $\lambda$, we have a pushout square
\[\xymatrix{
\coprod G^{n_i}_{U_{\bullet,i}} \ar[r]^-{\coprod \alpha^{n_i}_{U_{\bullet,i}}} \ar[d] & \coprod H^{n_i}_{U_{\bullet,i}} \ar[d] & \ar[rd] \ar@/^1em/[rrr] & G^{n_i}_{U_{\bullet,i}} \ar@{.}[l] \ar@{.}[r] \ar[d] \ar@/^1em/[rrr]^-{\alpha^{n_i}_{U_{\bullet,i}}} & \ar[ld] \ar@/^1em/[rrr] & \ar[rd] & H^{n_i}_{U_{\bullet,i}} \ar@{.}[l] \ar@{.}[r] \ar[d] & \ar[ld] \\
F_{\lambda-1} \ar[r] & F_{\lambda} \po & & F_{\lambda-1} \ar[rrr] & & & F_\lambda
}\]
It is easy to rewrite this as $F_\lambda$ being a colimit of a large diagram with objects $F_{\lambda-1}$, $G^{n_i}_{U_{\bullet,i}}$ and $H^{n_i}_{U_{\bullet,i}}$, so that Lemma~\ref{lemma:cocontinuity_weighted_colimit} can be applied again, giving that $F_\lambda *_\mcC K$ is a colimit of a diagram consisting of the $F_{\lambda-1} *_\mcC K$ and isomorphisms; therefore this colimit exists and is isomorphic to $F_{\lambda-1} *_\mcC K$, as required.
\end{proof}

\begin{corollary}
If $\mcD$ is (small) cocomplete and $K$ turns $J$-hypercovers to colimit cocones then $\Sh{J}{\mcC} \subseteq \widehat \mcC_K$.
\end{corollary}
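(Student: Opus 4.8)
The plan is to obtain the corollary as a formal consequence of the three closure properties established in the preceding lemma, exploiting the defining generation of the small sheaves: by definition $\Sh{J}{\mcC}$ is the smallest full subcategory of $\SH{J}{\mcC}$ that contains all sheafified representables $\yonedash X = \sheafify \yoneda X$ and is closed under small colimits formed \emph{in $\SH{J}{\mcC}$}. Thus it suffices to check that the full subcategory $\widehat\mcC_K \cap \SH{J}{\mcC}$ contains all the $\yonedash X$ and is closed under small colimits in $\SH{J}{\mcC}$; minimality then forces $\Sh{J}{\mcC} \subseteq \widehat\mcC_K \cap \SH{J}{\mcC} \subseteq \widehat\mcC_K$.

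First I would record that every sheafified representable lies in $\widehat\mcC_K$. Indeed, the representable $\yoneda X$ lies in $\widehat\mcC_K$ by the first point of the lemma, and since $K$ is assumed to turn $J$-hypercovers into colimit cocones, the third point of the lemma applies and gives $\yonedash X = \sheafify \yoneda X \in \widehat\mcC_K$. As each $\yonedash X$ is tautologically a $J$-sheaf, this places the generators in the intersection $\widehat\mcC_K \cap \SH{J}{\mcC}$.

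Next I would verify closure under small colimits in $\SH{J}{\mcC}$. Here the one point to be handled carefully is that colimits of sheaves are \emph{not} computed as presheaf colimits: since $\sheafify$ is left adjoint to the inclusion $\SH{J}{\mcC} \hookrightarrow \PSH{\mcC}$, a small colimit of $J$-sheaves is the sheafification of the corresponding colimit taken in $\PSH{\mcC}$. So, given a small diagram of objects of $\widehat\mcC_K \cap \SH{J}{\mcC}$, its presheaf colimit remains in $\widehat\mcC_K$ by the second point of the lemma (using that $\mcD$ is small cocomplete, so $\widehat\mcC_K$ is closed under small colimits), and then applying $\sheafify$ keeps us inside $\widehat\mcC_K$ by the third point of the lemma. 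The result is again a $J$-sheaf, so it lies in $\widehat\mcC_K \cap \SH{J}{\mcC}$, as needed.

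Assembling these two observations with the minimality of $\Sh{J}{\mcC}$ completes the argument, and in fact shows a little more: the Yoneda embedding $\yoneda \colon \mcC \to \Sh{J}{\mcC}$ admits a pointwise left Kan extension, which is exactly the restriction of $\widehat K$. I do not anticipate a genuine obstacle, since the corollary is essentially an assembly of the lemma's three bullet points; the only place demanding attention is the reflective-subcategory fact that colimits in $\SH{J}{\mcC}$ factor as sheafification of presheaf colimits, which is precisely what lets the second and third closure properties be chained together.
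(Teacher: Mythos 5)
Your proof is correct and follows essentially the same route as the paper's: both identify $\widehat\mcC_K \cap \SH{J}{\mcC}$ as containing the sheafified representables (lemma points 1 and 3) and as closed under colimits in $\SH{J}{\mcC}$ via the fact that sheaf colimits are sheafifications of presheaf colimits (lemma points 2 and 3), then invoke the definition of $\Sh{J}{\mcC}$ as the closure of $\yonedash\mcC$ under small colimits. Your write-up merely makes explicit the steps the paper compresses into two sentences.
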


\begin{proof}
A colimit in $\SH{J}{\mcC}$ is the sheafification of the colimit in $\PSH{\mcC}$; thus, the previous lemma shows that $\widehat \mcC_K \cap \SH{J}{\mcC}$ is closed under colimits in $\SH{J}{\mcC}$. Since it also contains sheafified representables $\yonedash \mcC$, it must contain $\Sh{J}{\mcC}$.
\end{proof}

We are now in the position to prove the main theorem of this section:

\begin{proof}[Proof of Theorem~\ref{thm:universal_property_sheaves}]
We have seen that a left Kan extension along $\yonedash$ is a restriction of the left Kan extension along $\yoneda \colon \mcC \to \widehat\mcC_K$ and the previous corollary shows its existence under the assumption that $K$ turns $J$-hypercovers to colimit cocones. Clearly the left Kan extension is left adjoint to the precomposition functor; now for $K \in [\mcC, \mcD]$ and $L \in [\Sh{J}{\mcC}, \mcD]$ we need to verify:
\begin{itemize}
\item
    If $K$ turns $J$-hypercovers to colimit cocones, then $\widehat K$ is cocontinuous.
\item
    If $K$ turns $J$-hypercovers to colimit cocones, then $K = \widehat K \yonedash$.
\item
    If $L$ is cocontinuous, then $L \yonedash$ turns $J$-hypercovers to colimit cocones.
\item
    If $L$ is cocontinuous, then $L = \widehat{L \yonedash}$.
\end{itemize}

The first point follows from the previous lemma, since a colimit in $\Sh{J}{\mcC}$ is computed as a sheafification of a colimit in $\PSH{\mcC}$ and $\widehat K$ preserves colimits of presheaves and turns the sheafification map into an isomorphism:
\[\widehat K (\underbrace{\sheafify \colim_i}_{\makebox[0pt]{\scriptsize colimit in $\Sh{J}{\mcC}$}} F_i) \cong \widehat K(\colim_i F_i) \cong \colim_i \widehat K F_i.\]
For the second point, we have $\widehat K \yonedash \cong \widehat K \yoneda \cong K$, since the Yoneda embedding is fully faithful.

For the third point, the map
\[H_0 L \yonedash U_\bullet \lra L \yonedash X\]
is isomorphic to the $L$-image of $\sheafify \alpha^0_{U_\bullet} \colon \sheafify H_0 \yoneda U_\bullet \lra \sheafify \yoneda X$, which is an isomorphism (as follows e.g.\ from the analogous well known fact in $\PSH{\mcC}$). The fourth point follows since the collection of $J$-sheaves, at which the canonical map $\widehat{L \yonedash} \to L$ is an isomorphism, is clearly closed under colimits and contains all sheafified representables, by the iso in the proof of the second point (applied to $K = L \yonedash$).
\end{proof}

Next we promote the extension $\widehat K \colon \Sh{J}{\mcC} \to \mcD$ to an adjunction. Namely, consider the ``nerve functor'' $N_K \colon \mcD \to \PSH{\mcC}$, given as $N_K D = \mcD(K -, D)$.

\begin{proposition} \label{prop:nerve_realization_adjunction}
If $\mcD$ is (small) cocomplete and $K$ turns $J$-hypercovers to colimit cocones then the nerve functor takes values in $\SH{J}{\mcC}$ and there results a partial adjunction $\widehat K \dashv N_K$.

If, in addition, $K$ is fully faithful then $\yonedash = \yoneda$, i.e.\ the topology $J$ is subcanonical. Moreover, $\yoneda X \in \Sh{J}{\mcC}$ and $KX \in \mcD$ constitute a fixed point of this adjunction, for any $X \in \mcC$; in particular, $N_K$ maps the image of $K$ to $\Sh{J}{\mcC}$.
\end{proposition}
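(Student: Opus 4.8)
The plan is to derive everything from the single natural isomorphism
\[\mcD(\widehat K F, D) \cong \PSH{\mcC}(F, N_K D)\]
that expresses the weighted colimit $\widehat K F = F *_\mcC K$ as a representing object; this holds for every weight $F \in \widehat\mcC_K$, in particular for all representables and all small colimits of them. First I would show that $N_K D = \mcD(K-, D)$ is a $J$-sheaf for each $D \in \mcD$. By the description of sheaves recalled just before the lemma preceding this proposition, it suffices to check that $N_K D$ is orthogonal to every hypercover map $\alpha^0_{U_\bullet} \colon H_0 \yoneda U_\bullet \to \yoneda X$.

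Both $\yoneda X$ and $H_0 \yoneda U_\bullet$ lie in $\widehat\mcC_K$ (the first is representable, the second a small colimit of representables, and $\mcD$ is cocomplete), so applying the displayed isomorphism naturally in $F$ identifies
\[\PSH{\mcC}(\alpha^0_{U_\bullet}, N_K D) \colon \PSH{\mcC}(\yoneda X, N_K D) \lra \PSH{\mcC}(H_0 \yoneda U_\bullet, N_K D)\]
with
\[\mcD(\widehat K \alpha^0_{U_\bullet}, D) \colon \mcD(KX, D) \lra \mcD(H_0 K U_\bullet, D).\]
The map $\widehat K \alpha^0_{U_\bullet}$ is exactly $H_0 K U_\bullet \to KX$, which is an isomorphism precisely because $K$ turns $J$-hypercovers into colimit cocones; this identification is the content of the preceding lemma. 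Hence $\PSH{\mcC}(\alpha^0_{U_\bullet}, N_K D)$ is a bijection for every hypercover, so $N_K D \in \SH{J}{\mcC}$. The partial adjunction is then immediate: for $F \in \Sh{J}{\mcC} \subseteq \widehat\mcC_K$ the same isomorphism reads $\mcD(\widehat K F, D) \cong \PSH{\mcC}(F, N_K D) = \SH{J}{\mcC}(F, N_K D)$, the last equality holding because $\SH{J}{\mcC}$ is full in $\PSH{\mcC}$ and both $F$ and $N_K D$ are sheaves.

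Now assume $K$ fully faithful. Then $N_K KX = \mcD(K-, KX) \cong \mcC(-, X) = \yoneda X$, so by the first part $\yoneda X$ is a $J$-sheaf; thus $J$ is subcanonical and $\yonedash = \sheafify \yoneda = \yoneda$. Since the theorem gives $\widehat K \yonedash = K$, we have $\widehat K \yoneda X = KX$ and $N_K \widehat K \yoneda X = N_K KX \cong \yoneda X$. I would then check that the unit $\eta_{\yoneda X} \colon \yoneda X \to N_K \widehat K \yoneda X$, being the transpose of $1_{KX}$, has component at $X'$ sending $g \in \mcC(X', X)$ to $Kg \in \mcD(KX', KX)$; as this is the action of $K$ on hom-sets, it is a bijection, so $\eta_{\yoneda X}$ is an isomorphism. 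The triangle identity $\varepsilon_{KX} \circ \widehat K \eta_{\yoneda X} = 1_{KX}$ then forces the counit $\varepsilon_{KX}$ to be an isomorphism as well (it exists because $N_K KX \cong \yoneda X$ is a small sheaf, hence in the domain of $\widehat K$), so $(\yoneda X, KX)$ is a fixed point of $\widehat K \dashv N_K$. Finally, since $N_K KX \cong \yoneda X \in \Sh{J}{\mcC}$, the functor $N_K$ sends the image of $K$ into $\Sh{J}{\mcC}$.

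The argument is essentially formal once the hom-isomorphism is in place; the only substantive step is the sheaf verification, and there the crux is recognizing that orthogonality of $N_K D$ to $\alpha^0_{U_\bullet}$ translates, under that isomorphism, into the already established fact that $\widehat K$ inverts the hypercover maps. The one point requiring care is to stay within the weights $\widehat\mcC_K$ on which $\widehat K$ is actually defined — which is exactly why the adjunction is only partial — but every presheaf entering the proof ($\yoneda X$, $H_0 \yoneda U_\bullet$, and all small sheaves) lies there by the preceding corollary, so no genuine obstacle arises.
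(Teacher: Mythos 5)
Your proof is correct and takes essentially the same route as the paper's: the weighted-colimit isomorphism $\mcD(\widehat K F, D) \cong \PSH{\mcC}(F, N_K D)$ drives both the sheaf property of $N_K D$ (the paper verifies the limit condition over hypercovers directly, you phrase it as orthogonality to the maps $\alpha^0_{U_\bullet}$ --- the same condition, transported along the same isomorphism) and the partial adjunction, while the second part rests on the same two facts, $N_K(KX) \cong \yoneda X$ and $\widehat K \yoneda X \cong KX$ via Yoneda. Your explicit check that the unit is $g \mapsto Kg$ (bijective by full faithfulness of $K$) and that the counit at $KX$ exists and is invertible via the triangle identity is a more careful verification of the fixed-point claim than the paper's, which stops at the two object-level isomorphisms.
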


\begin{proof}
For the first part, the nerve functor is the composite
\[N_K D = \mcD(K -, D)\]
where $K$ takes any $J$-hypercover to a colimit cocone and $\mcD(-, D)$ turns this into a limit cone, as required for a presheaf to be a $J$-sheaf. Then
\[\mcD(\widehat K F, D) \cong \mcD(F *_\mcC K, D) \cong \PSH{\mcC}(F-, \mcD(K-, D)) \cong \SH{J}{\mcC}(F, N_K D).\]

The second part is easy:
\[N_K (KX) = \mcD(K -, KX) \cong \mcC(-, X) = \yoneda X\]
which must then be a sheaf by the first part, and thus equals $\yonedash X$. The other direction $\widehat K \yoneda X = \yoneda X *_\mcC K \cong K X$ is just Yoneda lemma.
\end{proof}

%

We finish with proving a sufficient condition for local smallness of $\Sh{J}{\mcC}$.

\begin{proposition}
Assume that, for any fixed $X$, the $J$-hypercovers of $X$ up to isomorphism form a (small) set. Then $\Sh{J}{\mcC}$ is locally small.
\end{proposition}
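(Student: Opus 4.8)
The strategy is to run two successive ``closure under small colimits'' arguments, exploiting that $\Sh{J}{\mcC}$ is by definition the closure of the sheafified representables $\yonedash\mcC$ under small colimits in $\SH{J}{\mcC}$. Fix small sheaves $F,G$; I must show that $\Sh{J}{\mcC}(F,G)$ is a small set. The first step establishes that every small sheaf takes small values, and the second uses this to control the hom-sets.

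For the first step I would consider the full subcategory of $\SH{J}{\mcC}$ on those sheaves $H$ with $HX\in\Set$ for all $X$, and show it contains $\yonedash\mcC$ and is closed under small colimits. A small colimit in $\SH{J}{\mcC}$ is the sheafification of the pointwise colimit in $\PSH{\mcC}$, and a pointwise colimit of value-wise small presheaves is again value-wise small; so everything reduces to showing that $\sheafify$ preserves value-wise smallness. This is the crux of the proof and the only place the hypothesis is used. By the Heller--Rowe formula, for a value-wise small $Q\in\PSH{\mcC}$,
\[
(\sheafify Q)X\cong\colim_{U_\bullet\to X}\lim QU_\bullet .
\]
For a fixed $J$-hypercover $U_\bullet$, each $U_n=(U_n^i)_{i\in I_n}$ is a small coproduct, so $QU_n\cong\prod_{i\in I_n}Q(U_n^i)$ is a small product of small sets and $\lim QU_\bullet$ is a small limit of such, hence small. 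By assumption the $J$-hypercovers of $X$ form a small set up to isomorphism; since a colimit in $\Set$ is a quotient of the coproduct of its objects, restricting along the inclusion of a full subcategory on one representative per isomorphism class (an equivalence, hence inducing an isomorphism on colimits) exhibits $(\sheafify Q)X$ as a quotient of a small coproduct of small sets, which is therefore small. Consequently the value-wise small sheaves form a full subcategory closed under small colimits and containing $\yonedash\mcC$ (representables are value-wise small since $\mcC$ is locally small, and $\sheafify$ preserves this); as $\Sh{J}{\mcC}$ is generated by $\yonedash\mcC$ under small colimits, every small sheaf is value-wise small.

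For the second step I would first identify, for any $X\in\mcC$ and any sheaf $G$,
\[
\Sh{J}{\mcC}(\yonedash X,G)=\SH{J}{\mcC}(\sheafify\yoneda X,G)\cong\PSH{\mcC}(\yoneda X,G)\cong GX
\]
using the sheafification adjunction followed by the Yoneda lemma; by the first step this is small whenever $G$ is a small sheaf. Now consider the full subcategory of $\Sh{J}{\mcC}$ on those $F$ for which $\Sh{J}{\mcC}(F,G)$ is small for every small sheaf $G$. It contains all $\yonedash X$ by the previous sentence, and it is closed under small colimits because $\Sh{J}{\mcC}(\colim_i F_i,G)\cong\lim_i\Sh{J}{\mcC}(F_i,G)$ is a small limit of small sets. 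Since $\Sh{J}{\mcC}$ is generated from $\yonedash\mcC$ under small colimits, this subcategory is all of $\Sh{J}{\mcC}$, which is exactly the statement that $\Sh{J}{\mcC}$ is locally small.

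The main obstacle is the preservation of value-wise smallness by $\sheafify$ in the first step: without a size bound on the hypercovers entering the Heller--Rowe colimit, $(\sheafify Q)X$ could a priori be a proper class even for representable $Q$. The hypothesis resolves precisely this, and the remaining reduction to a quotient of a small coproduct is robust, since it never requires the morphisms between hypercovers to form a small set --- only the objects, up to isomorphism.
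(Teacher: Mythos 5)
Your proof is correct and follows essentially the same route as the paper: both arguments reduce local smallness, via the identification $\Sh{J}{\mcC}(\yonedash X, G) \cong GX$ and closure of the relevant subcategory under small colimits, to showing that every small sheaf is value-wise small, and both establish the latter by using the Heller--Rowe formula together with the hypothesis to show that $\sheafify$ preserves presheaves valued in small sets. The only difference is expository: you spell out the size bookkeeping for the Heller--Rowe colimit (passing to a skeleton of hypercovers and noting that only the objects, not the morphisms, need to form a small set), which the paper leaves implicit.
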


\begin{proof}
Consider the collection of all $J$-sheaves $F \in \Sh{J}{\mcC}$ for which the corresponding hom-functor $\Sh{J}{\mcC}(F, -)$ takes values in small sets. Our aim is thus to show that this collection equals $\Sh{J}{\mcC}$. It is clear that it is closed under small colimits and it thus remains to show that it contains sheafified representables. Since we have already seen that
\[\Sh{J}{\mcC}(\yonedash X, G) \cong GX,\]
this in turn amounts to showing that $\Sh{J}{\mcC} \subseteq [\mcC^\op, \Set]$.

The formula for the sheafification shows that $\sheafify$ restricts to an endofunctor on $[\mcC^\op, \Set]$ and thus provides a reflection onto a full subcategory $\mcS$. Since a colimit in $\mcS$ is computed in the same way as in $\SH{J}{\mcC}$, i.e.\ as the sheafification of the colimit in $\PSH{\mcC}$, the subcategory $\mcS$ is closed in $\SH{J}{\mcC}$ under small colimits and thus $\Sh{J}{\mcC} \subseteq \mcS \subseteq [\mcC^\op, \Set]$.
\end{proof}

\section{Schemes and functors}

Throughout this section, we assume that $\mcA = \fix\mcA$, i.e.\ that $\Spec$ is fully faithful. By Lemma~\ref{lemma:Spec_ff_implies_reduced}, this implies $\mcA = \red\mcA$.

Here we add an assumption on $\mcA$: We demand that there exists a (regular epi, mono) factorization system both in $\mcA$ and in the coproduct completion $\coprod \mcA$. This is the case e.g.\ when colimits are universal in $\mcA$, see Theorem~\ref{theorem:factorization_from_universal_colimits}.

We equip $\mcA^\op$ with a Grothendieck topology $J$ given by the distinguished opcovers in $\mcA$. Since $\Spec$ takes distinguished $J$-hyperopcovers to colimit cocones, Proposition~\ref{prop:nerve_realization_adjunction} gives a partial adjunction which we denote for simplicity by $|\ |$ and $N$:
\[\xymatrix{
& \mcA^\op \ar[ld]_-{\yoneda = \yonedash} \ar[rd]^-{\Spec} \\
\Sh{J}{\mcA^\op} \ar@<.5ex>[rr]^-{|\ |} \ar@{c->}[d] & {} \POS[];[ld]**{}?(.33)*{\perp} & \APTop \ar@<.5ex>[lld]^-N \\
\SH{J}{\mcA^\op}
}\]
In addition, since we assume that $\Spec$ is fully faithful, $\yonedash = \yoneda$ and $\yoneda R \in \Sh{J}{\mcA^\op}$ and $\Spec R \in \APTop$ provide a fixed point, for each $R \in \mcA$. It is easy to see that in any adjunction, the right adjoint is fully faithful on maps from a fixed point, giving the following lemma:

\begin{lemma} \label{lemma:N_fully_faithful_from_affine_schemes}
$N$ is fully faithful on maps from affine schemes, i.e.
\[\pushQED{\qed}N \colon \APTop(\Spec R, X) \xlra\cong \Sh{J}{\mcA^\op}(\underbrace{N \Spec R}_{\yoneda R}, NX).\qedhere\popQED\]
\end{lemma}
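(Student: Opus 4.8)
The plan is to deduce this from a purely formal fact about adjunctions, which is why the sentence preceding the lemma already advertises the argument. The fact I would isolate is the following: for any adjunction $L \dashv R$ with unit $\eta$, and for any object $c$ at which the unit $\eta_c \colon c \to RLc$ is an isomorphism (that is, a fixed point), the right adjoint induces a bijection $R \colon \mcD(Lc, d) \to \mcC(RLc, Rd)$ for every $d$. The justification is that the adjunction transpose $\mcD(Lc, d) \xrightarrow{\cong} \mcC(c, Rd)$, which sends $g \mapsto Rg \circ \eta_c$, factors as
\[
\mcD(Lc, d) \xrightarrow{\ R\ } \mcC(RLc, Rd) \xrightarrow{\ (\eta_c)^*\ } \mcC(c, Rd);
\]
the full composite is a bijection by the adjunction, and when $\eta_c$ is invertible the second map $(\eta_c)^*$ is a bijection as well, whence so is $R$.

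First I would specialize this to the partial adjunction $|\ | \dashv N$ supplied by Proposition~\ref{prop:nerve_realization_adjunction}, taking $\mcC = \mcA^\op$, $\mcD = \APTop$, $L = |\ |$, $R = N$, and $c = \yoneda R$. On representables the left adjoint computes, by the Yoneda direction $\widehat K \yoneda X \cong KX$ in that proposition, as $|\yoneda R| \cong \Spec R$. Hence $Lc = \Spec R$ and $RLc = N\Spec R$, so the map named in the statement is exactly $N \colon \APTop(\Spec R, X) \to \Sh{J}{\mcA^\op}(N\Spec R, NX)$, with the domain of the sheaf-hom being $N\Spec R = N|\yoneda R|$, as the underbrace indicates.

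The only non-formal input is then the fixed-point property. Since we are assuming $\Spec$ fully faithful, Proposition~\ref{prop:nerve_realization_adjunction} applies and gives that $\yoneda R$ and $\Spec R$ form a fixed point of the adjunction; concretely this says that the unit $\eta_{\yoneda R} \colon \yoneda R \to N\Spec R$ is the isomorphism $\yoneda R \cong N\Spec R = \APTop(\Spec -, \Spec R)$ arising from full faithfulness of $\Spec$. Feeding $c = \yoneda R$ into the formal fact above then yields at once that $N$ is a bijection on the relevant hom-sets. I do not expect a genuine obstacle here: all the substance sits in having already constructed the partial adjunction and in the fixed-point identity $N\Spec R \cong \yoneda R$, and the present lemma is just the routine observation that a right adjoint restricts to a fully faithful functor on morphisms emanating from (the image under $L$ of) a fixed point.
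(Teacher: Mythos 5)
Your proposal is correct and is essentially the paper's own argument: the text preceding the lemma invokes exactly the fixed-point property of $(\yoneda R, \Spec R)$ from Proposition~\ref{prop:nerve_realization_adjunction} together with the general fact that a right adjoint is fully faithful on maps out of (the image under the left adjoint of) a fixed point. Your factorization of the adjunction transpose through $N$ and $(\eta_{\yoneda R})^*$ is just the spelled-out justification of that general fact, so there is nothing to add.
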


Clearly the fixed points of this adjunction are closed in $\APTop$ under all colimits that the right adjoint $N$ preserves. Since our goal is to show that all schemes are fixed points, this will be our main theorem for this section:

\begin{theorem}
The nerve functor $N$ preserves colimits of hypercovers. Consequently, $N$ is fully faithful on schemes.
\end{theorem}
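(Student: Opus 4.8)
The plan is to prove the two assertions in order. For the first, that $N$ carries the colimit cocone of every hypercover $U_\bullet \to X$ in $\APTop$ to a colimit cocone — equivalently that the canonical comparison $H_0 N U_\bullet \to NX$ is an isomorphism of sheaves — I would first reduce to ordinary covers. Regarding $\APTop$ as a site whose covers are the jointly surjective families of open embeddings, and taking the cocomplete target $\mcD = \Sh{J}{\mcA^\op}$, Lemma~\ref{lemma:hypercovers} applies to $K = N$, so it suffices to check that $N$ turns every cover $\{U^i \hookrightarrow X\}$, viewed through its \v{C}ech nerve $U_\bullet$, into a colimit cocone; here $H_0 N U_\bullet$ is the coequalizer of the two face maps $N U_1 \to N U_0$.

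For this I would show that the canonical map $\theta \colon H_0 N U_\bullet \to NX$ is a local isomorphism of sheaves, which in the Grothendieck topos $\Sh{J}{\mcA^\op}$ forces it to be an isomorphism. For local surjectivity, a section $\varphi \in NX(S) = \APTop(\Spec S, X)$ may be restricted along the preimages $\varphi^{-1}(U^i)$, which form an open cover of $\Spec S$; refining by distinguished opens yields a distinguished opcover $\{\Spec S_a \to \Spec S\}$ — that is, a $J$-cover in $\mcA^\op$, since jointly surjective families of distinguished opens are precisely the distinguished opcovers — with each $\Spec S_a$ mapping into some $U^{i(a)}$, and the resulting $\Spec S_a \to U^{i(a)}$ is a section of $N U^{i(a)}$ over $S_a$ lifting $\varphi|_{S_a}$. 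For local injectivity, two local sections with equal image are represented, after passing to a common such cover, by maps $\Spec S_a \to U^i$ and $\Spec S_a \to U^{i'}$ whose composites into $X$ agree; since open embeddings of $\mcA$-spaces are monic, $\varphi|_{S_a}$ factors through $U^i \times_X U^{i'}$, a component of $U_1$, and the coequalizer relation identifies the two representatives. Making precise the passage between presheaf-level representatives and their images in the sheafification $H_0 N U_\bullet$ is the technical heart of the argument and the main obstacle.

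For the second assertion I would show that every scheme is a fixed point of the partial adjunction $|\ | \dashv N$. A scheme $X$ admits an affine hypercover $U_\bullet \to X$ — built by taking an affine open cover $U_0$, then covering its pairwise intersections by distinguished opens to form $U_1$, and so on — with every $U_n^i = \Spec R_n^i$ affine and $H_0 U_\bullet \cong X$ in $\APTop$. By the first assertion $N$ preserves this colimit, so $NX \cong H_0 N U_\bullet$ is a small colimit of the representables $\yoneda R_n^i$ and in particular lies in $\Sh{J}{\mcA^\op}$, so that $|NX|$ exists. Since the affine schemes $\Spec R_n^i$ are fixed points and the fixed points are closed in $\APTop$ under all colimits preserved by $N$, the space $X$ is itself a fixed point, i.e.\ the counit $|NX| \to X$ is an isomorphism. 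Full faithfulness on schemes is then formal: for schemes $X, Y$, the general principle behind Lemma~\ref{lemma:N_fully_faithful_from_affine_schemes} (that a right adjoint is fully faithful on maps out of a fixed point) gives
\[\APTop(X, Y) \cong \APTop(|NX|, Y) \cong \Sh{J}{\mcA^\op}(NX, NY),\]
with the displayed composite equal to the action of $N$ on hom-sets.
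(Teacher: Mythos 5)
Your second assertion (schemes are fixed points via an affine hypercover, then full faithfulness formally from the fixed-point property) is essentially the paper's own argument, but your treatment of the first assertion takes a genuinely different route. The paper does not reduce to \v{C}ech covers: it works with the hypercover directly, proving (i) surjectivity on sections over every representable -- by pulling the hypercover back to $\Spec R$, refining by a distinguished hyperopcover $\Spec L^\bullet$, and using that $H_0 \yoneda L^\bullet \to \yoneda R$ is invertible in $\Sh{J}{\mcA^\op}$, which produces a \emph{global} (not merely local) lift of $\yoneda R \to NX$ -- and (ii) monicity of $H_0 NU_\bullet \to NX$, by feeding the joint epicity from (i), applied to the matching covers, into the (regular epi, mono) factorization Theorem~\ref{thm:epi_mono_factorization} in the coproduct completion of $\Sh{J}{\mcA^\op}$; since monos of sheaves are objectwise injective, (i) and (ii) together give the isomorphism. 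You instead invoke Lemma~\ref{lemma:hypercovers} to reduce to covers (legitimate: $\APTop$ with the open-cover topology has the needed pullbacks of open embeddings) and then argue that the comparison map is a local isomorphism; your local surjectivity step uses exactly the same geometric input as the paper's (i), namely pulling back and refining by distinguished opens. What your route buys is independence from the appendix machinery; what the paper's route buys is that it never has to manipulate sections of a sheafification.

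That trade-off is precisely where your acknowledged obstacle sits, and two points there need repair. First, $\Sh{J}{\mcA^\op}$ is \emph{not} a Grothendieck topos: $\mcA^\op$ is large and $\Sh{J}{\mcA^\op}$ is the category of small sheaves (the paper even needs a separate proposition to get local smallness), so you cannot cite topos theory for ``local isomorphism implies isomorphism''. The fact survives -- both sides are $J$-sheaves, and the usual separatedness-plus-gluing argument is size-independent -- but it must be proved by hand in this setting. Second, your local-injectivity step needs that every section of $H_0 N U_\bullet$ (a sheafification of a presheaf coequalizer) is locally in the image of the presheaf coequalizer, and that presheaf sections with equal image in the sheafification agree locally; on this large site these must be extracted from the hypercover formula $(\sheafify F)X = \colim_{U_\bullet \to X} \lim F U_\bullet$ rather than from the familiar plus-construction. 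Both verifications are routine and your approach does go through, but discharging them is comparable in effort to the paper's Theorem~\ref{thm:epi_mono_factorization}, which exists exactly so that this step can be avoided.
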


\begin{proof}
Let $U_\bullet \to X$ be a hypercover, i.e.\ an augmented semisimplicial object
\[\xymatrix{
\cdots \ar@<1ex>[r] \ar@<0ex>[r] \ar@<-1ex>[r] & U_1 \ar@<.5ex>[r] \ar@<-.5ex>[r] & U_0 \ar[r] & X
}\]
in the coproduct completion of $\APTop$. Our aim is to show that the map $H_0 NU_\bullet \to NX$ is an isomorphism in $\Sh{J}{\mcA^\op}$,
\[\xymatrix{
\cdots \ar@<1ex>[r] \ar@<0ex>[r] \ar@<-1ex>[r] & NU_1 \ar@<.5ex>[r] \ar@<-.5ex>[r] & NU_0 \ar[r] & H_0 NU_\bullet \ar[r] & NX \\
& & & & \yoneda R \ar[u] \ar@{-->}[lu]^-{\exists!}
}\]
i.e.\ we need to show that $\Sh{J}{\mcA^\op}(\yoneda R, H_0 NU_\bullet) \to \Sh{J}{\mcA^\op}(\yoneda R, NX)$ is an isomorphism for every $R \in \mcA$ (since $\yoneda \mcA^\op$ is a generator).

First we prove surjectivity by finding a lift of an arbitrary $\yoneda R \to NX$. Such a map lies in the image of $N$ by the previous lemma. Pull back the hypercover $U_\bullet \to X$ to a hypercover $V_\bullet \to \Spec R$ and apply $N$ to obtain the top part of the following diagram
and choose a distinguished refinement $\Spec L^\bullet$ of $V_\bullet$ to obtain the bottom part.
\[\xymatrix{
\cdots \ar@<1ex>[r] \ar@<0ex>[r] \ar@<-1ex>[r] & NU_1 \ar@<.5ex>[r] \ar@<-.5ex>[r] & NU_0 \ar[r] & H_0 NU_\bullet \ar[r] & NX \\
\cdots \ar@<1ex>[r] \ar@<0ex>[r] \ar@<-1ex>[r] & NV_1 \ar@<.5ex>[r] \ar@<-.5ex>[r] \ar[u] & NV_0 \ar[r] \ar[u] & H_0 NV_\bullet \ar[r] \ar[u] & N \Spec R \ar[u] \\
\cdots \ar@<1ex>[r] \ar@<0ex>[r] \ar@<-1ex>[r] & \yoneda L^1 \ar@<.5ex>[r] \ar@<-.5ex>[r] \ar[u] & \yoneda L^0 \ar[r] \ar[u] & H_0 \yoneda L^\bullet \ar[r]_-\cong \ar[u] & \yoneda R \ar@{=}[u]
}\]
The map $H_0 \yoneda L^\bullet \to \yoneda R$ is an isomorphism in $\Sh{J}{\mcA^\op}$ and we thus obtain a lift as the composition $\yoneda R \xla\cong H_0 \yoneda L^\bullet \to H_0 NU_\bullet$.

The first part shows, in particular, that any hypercover yields a jointly epic family on nerves. Thus, the hypothesis of Theorem~\ref{thm:epi_mono_factorization} is satisfied for $NU \to NX$ and we may conclude that the map $H_0 NU_\bullet \to NX$ is monic. Together with the first part, it is thus an isomorphism.
\end{proof}

We finish by giving a more explicit description of the fixed points in $\Sh{J}{\mcA^\op}$, closely connected to the definition of schemes in $\APTop$. We will thus define open covers in $\Sh{J}{\mcA^\op}$ and schemes will then be objects admitting an open cover by representables. We define open subfunctors of $\yoneda R$ to be exactly the nerves of open subspaces of $\Spec R$.
Thus, let $U \subseteq \Spec R$ be an open subset and define a subfunctor of $\yoneda R = \mcA(R, -)$ by
\[\yoneda R_U (S) = \biggl\{f \colon R \to S \mathrel{\bigg|} \parbox{\widthof{$qf \colon R \to S \to Q$ factors through some $P \in U$}}{for each local form $q \colon S \to Q$, the composite \\ $qf \colon R \to S \to Q$ factors through some $P \in U$}\biggr\}\]

\begin{remark}
In the case that $U = \Pts k$ is a distinguished open then this is equivalently the representable associated with the localization $K = R_U$ so that the notation is consistent.
\end{remark}

An \emph{open embedding} is a monic $\alpha \colon G \to F$ such that for any map from a representable $\yoneda R$, the resulting pullback of $\alpha$ is an inclusion of some open subfunctor $\yoneda R_U \subseteq \yoneda R$:
\[\xymatrix{
G \ar[r]^-\alpha & F \\
\yoneda R_U \ar[r] \ar[u] \pbd & \yoneda R \ar[u]
}\]
An \emph{open cover} is a collection of open embeddings $G_i \to F$ such that for each local object $P \in \mcP$ the components $G_i(P) \to F(P)$ are jointly surjective (the argument of Lemma~\ref{lemma:surjectivity_of_sheaves_from_local_objects} implies that $\coprod G_i \to F$ is then epic in $\Sh{J}{\mcA^\op}$). Finally, $F$ is said to be a \emph{scheme}, if it admits an open cover by representables.

\begin{lemma}
The nerve of every scheme in $\APTop$ is a scheme in $\Sh{J}{\mcA^\op}$.
\end{lemma}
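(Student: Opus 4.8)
The plan is to start from an affine open cover $\{\iota_i \colon \Spec R_i \hookrightarrow X\}$ witnessing that $X$ is a scheme, and to show that the family $\{N\iota_i \colon \yoneda R_i \to NX\}$ is an open cover of $NX$ by representables, which is exactly what it means for $NX$ to be a scheme. That each $N\Spec R_i$ is the representable $\yoneda R_i$ is precisely the fixed-point clause of Proposition~\ref{prop:nerve_realization_adjunction} (using $\yonedash = \yoneda$ since $\Spec$ is fully faithful). So the two things left to verify are that each $N\iota_i$ is an open embedding and that the family is jointly surjective on local objects.

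For the open-embedding condition I would exploit that $N = \APTop(\Spec -, -)$ is computed pointwise by the hom-functors $\APTop(\Spec S, -)$ and hence preserves all limits, in particular monomorphisms and pullbacks. The (unnamed) lemma that any open embedding of $\mcA$-spaces is a monomorphism then makes each $N\iota_i$ monic. For the pullback clause, take an arbitrary map $\yoneda R \to NX$; by Lemma~\ref{lemma:N_fully_faithful_from_affine_schemes} it is $N\phi$ for a unique $\phi \colon \Spec R \to X$. The pullback of the open embedding $\iota_i$ along $\phi$ exists in $\APTop$ by restricting $\Spec R$ to the open preimage $U = \phi^{-1}(\im \iota_i)$, and equals the open subspace $\Spec R|_U$. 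Since $N$ preserves this pullback, $\yoneda R \times_{NX} \yoneda R_i \cong N(\Spec R|_U)$, which by our definition of open subfunctors as nerves of open subspaces is exactly the open subfunctor $\yoneda R_U \subseteq \yoneda R$. This is precisely the defining condition of an open embedding.

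For joint surjectivity I would evaluate at a local object $P \in \mcP$. Here $\Spec P = P^*$ is the one-point space, so a section in $NX(P) = \APTop(P^*, X)$ is, by Lemma~\ref{lemma:terminal_object_from_Bstar}, a point $x \in X$ together with an admissible map $\mcO_{X,x} \to P$. Since the $\Spec R_i$ jointly cover $X$, the point $x$ lies in the image of some $\iota_i$; writing $x = \iota_i(x_i)$ and using that an open embedding is a stalkwise isomorphism, we get $\mcO_{X,x} \cong \mcO_{\Spec R_i, x_i}$, so the given section factors through $\iota_i$. Hence the maps $\yoneda R_i(P) \to NX(P)$ are jointly surjective, which is the remaining cover condition.

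The main obstacle I anticipate is bookkeeping rather than depth: I must be careful that the pullback taken in $\Sh{J}{\mcA^\op}$ agrees with $N$ of the pullback in $\APTop$ (preservation of limits by $N$ and agreement of limits in sheaves, presheaves, and the small-sheaf subcategory) and that the resulting subobject is matched with the \emph{definitionally} prescribed open subfunctor $\yoneda R_U$; and that the identification of $NX(P)$ over a local $P$ with points-plus-admissible-maps via the one-point spectra is used cleanly to convert the topological joint surjectivity of the cover into the required surjectivity of sections.
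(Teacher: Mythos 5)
Your treatment of the open-embedding condition is sound and is essentially the paper's argument: every map $\yoneda R \to NX$ is $N\varphi$ for a unique $\varphi \colon \Spec R \to X$ by Lemma~\ref{lemma:N_fully_faithful_from_affine_schemes}, $N$ preserves monomorphisms and pullbacks, and the pullback of $\iota_i$ along $\varphi$ is the open subspace of $\Spec R$ on $\varphi^{-1}(\im \iota_i)$, whose nerve is by definition an open subfunctor of $\yoneda R$. The joint-surjectivity step, however, rests on a false identification: for a local object $P$ it is \emph{not} true that $\Spec P = P^*$. The points of $\Spec P$ are the isomorphism classes of \emph{all} local forms of $P$, and a local object generally has many local forms besides $1_P$; in the classical case $\mcA = \CRing$ the spectrum of a local ring has one point for each prime ideal. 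The one-point space $P^*$ is only the cartesian lift of the single point $1_P \in \Spec P$. Consequently $NX(P) = \APTop(\Spec P, X)$ is a different set from $\APTop(P^*, X) = \{(x,\ \text{admissible}\ \mcO_{X,x} \to P)\}$, and your argument establishes joint surjectivity onto the latter set, not the former. (In classical algebraic geometry the two sets do coincide when $X$ is a scheme, but proving that coincidence requires exactly the ingredient your proof is missing, so it cannot be invoked here.)

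What must be shown is that every $\varphi \in NX(P) = \APTop(\Spec P, X)$ factors through some $\iota_i$, and the paper does this using a special feature of the point $1_P \in \Spec P$: a distinguished open $\Pts k$ contains $1_P$ only if $1_P$ factors through the finite localization $k$, i.e.\ $gk = 1_P$ for some $g$; since $k$ is epic this gives $kg = 1$ as well, so $k$ is an isomorphism. As distinguished opens form a basis, the \emph{only} open subset of $\Spec P$ containing $1_P$ is $\Spec P$ itself. Hence in the open cover $\{\varphi^{-1}(U^i)\}$ of $\Spec P$, the member containing $1_P$ equals all of $\Spec P$, so $\varphi$ factors through $U^i \cong \Spec R_i$, which is the required surjectivity. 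Your stalkwise observation only shows that the restriction of $\varphi$ along $P^* \to \Spec P$ factors through $\iota_i$; without the ``only open neighbourhood of $1_P$'' argument this says nothing about $\varphi$ itself.
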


\begin{proof}
Since $N$ is fully faithful on maps from affine schemes by Lemma~\ref{lemma:N_fully_faithful_from_affine_schemes} and preserves pullbacks as a right adjoint, it is easy to see that any open cover $U \to X$ yields a collection of open subfunctors $NU^i \to NX$.
\[\xymatrix{
NU^i \ar[r] & NX \\
\rightbox{\yoneda R_{\varphi^{-1}U^i} = {}}{N(\varphi^{-1} U^i)} \ar[r] \ar[u] \pbd & \leftbox{N\Spec R}{{} = \yoneda R} \ar[u]_-{N\varphi}
}\]
The joint surjectivity means that any map $\varphi \colon \Spec P \to X$ factors through one of the $U^i$. But since the preimages $\varphi^{-1}U^i$ form an open cover of $\Spec P$, one of them must equal $\Spec P$, since this is the only open containing the point $1_P$.
\end{proof}

\begin{theorem} \label{thm:scheme_functors}
Every scheme $F \in \Sh{J}{\mcA^\op}$ is a fixed point, i.e.\ $F \cong N|F|$.
\end{theorem}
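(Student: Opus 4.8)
The plan is to realize an arbitrary scheme $F \in \Sh{J}{\mcA^\op}$ as the $H_0$ of a hypercover all of whose terms are representable, transport this presentation through the realization $|\ |$ and the nerve $N$, and recognize the resulting comparison map as the unit $\eta_F \colon F \to N|F|$. The strategy rests on the fact that representables are fixed points of the partial adjunction (Proposition~\ref{prop:nerve_realization_adjunction}): $|\yoneda R| \cong \Spec R$ by the Yoneda lemma and $N\Spec R \cong \yoneda R$ because $\Spec$ is fully faithful. So the fixed-point property for $F$ will be reduced to that of its affine building blocks.

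First I would build the hypercover. As $F$ is a scheme it carries an open cover by representables $(\yoneda R_i \to F)$, giving an augmentation $U_0 = (\yoneda R_i) \to F$ in $\coprod\Sh{J}{\mcA^\op}$. By the definition of an open embedding, each pullback $\yoneda R_i \times_F \yoneda R_j$ is an open subfunctor of $\yoneda R_i$, hence the nerve of an open subset of $\Spec R_i$; covering each such open by distinguished opens (which are again representable, by the remark identifying $\yoneda R_{\Pts k}$ with $\yoneda K$) produces $U_1$, a coproduct of representables whose two face maps land in the appropriate $\yoneda R_i$ and $\yoneda R_j$. Iterating yields an augmented semisimplicial object $U_\bullet \to F$ with all terms coproducts of representables and all matching maps covers; since $\Sh{J}{\mcA^\op}$ is a sheaf category, $H_0 U_\bullet \cong F$, exactly as in the gluing Example for $\APTop$.

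Next I would push this through the two functors. The realization $|\ | = \widehat{\Spec}$ is cocontinuous by Theorem~\ref{thm:universal_property_sheaves}, so $|F| \cong |H_0 U_\bullet| \cong H_0 |U_\bullet|$, where $|U_\bullet|$ is a semisimplicial object of $\coprod\APTop$ with affine terms $\Spec R_i$ and face maps the open embeddings $\Spec L \hookrightarrow \Spec R_i$ coming from the localizations. This data exhibits $|F|$ as a scheme glued from the affines $\Spec R_i$, with $|U_\bullet|$ a genuine hypercover of it. Granting that, the theorem proved above, that $N$ preserves colimits of hypercovers, gives $N|F| \cong N H_0 |U_\bullet| \cong H_0 N|U_\bullet|$, where $N$ is understood to act componentwise on the coproduct completion. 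To close the loop, observe that each term of $U_\bullet$ is a coproduct of representables and representables are fixed points, so the unit furnishes a componentwise isomorphism $\eta_{U_\bullet}\colon U_\bullet \xrightarrow{\ \cong\ } N|U_\bullet|$ of augmented semisimplicial objects; applying $H_0$ and using naturality of $\eta$, the induced isomorphism $H_0 U_\bullet \to H_0 N|U_\bullet|$ is precisely $\eta_F \colon F \to N|F|$. Hence $\eta_F$ is an isomorphism and $F$ is a fixed point.

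I expect the main obstacle to be the assertion that $|U_\bullet|$ is a hypercover of $|F|$ in $\APTop$. The realization $|\ |$ is not known to preserve the pullbacks defining the matching objects, so one cannot simply apply it to the statement ``$U_1 \to U_0\times_F U_0$ is a cover.'' Instead one must verify directly, from the gluing description of $H_0|U_\bullet|$ together with the fact (from the distinguished-opens section) that $|\ |$ sends face maps between representables to open embeddings of affines, that the $\Spec R_i$ form an open cover of $|F|$ and that $|U_1|$ covers the pairwise intersections. This is where the affine communication lemma (Theorem~\ref{thm:affine_communication_lemma}) and the concrete description of $\Spec$ reenter the argument; the careful refinement of the intersections by distinguished opens in the first step is the other place that requires genuine work.
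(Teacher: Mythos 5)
Your overall skeleton --- glue $F$ from representables via a semisimplicial object built from an open cover and distinguished refinements of the pairwise intersections, then transport through $|\ |$ and $N$ and identify the comparison map with the unit --- matches the paper's argument (including the final transport step, which the paper leaves largely implicit). However, there is a genuine gap at the central step, namely the assertion that $H_0 U_\bullet \cong F$ ``since $\Sh{J}{\mcA^\op}$ is a sheaf category, exactly as in the gluing Example for $\APTop$''. That Example is a statement about $\Top$ and $\APTop$, where covers are jointly surjective open embeddings of spaces, and it has no direct analogue here: the augmentation $U_0 \to F$ and the higher matching maps of $U_\bullet$ are \emph{not} $J$-covering families of the site (the topology $J$ only prescribes covers of representables, and $F$ is not representable), and an ``open cover'' in $\Sh{J}{\mcA^\op}$ is defined merely by joint surjectivity of the components $G_i(P) \to F(P)$ at local objects $P \in \mcP$ --- this does not formally make $\coprod G_i \to F$ an epimorphism of sheaves. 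Proving that epimorphy is itself essentially the argument of Lemma~\ref{lemma:surjectivity_of_sheaves_from_local_objects}; and even granting it, the topos exactness you would then need (epimorphisms are effective, so a hypercover computes its base as a colimit) is available in $\SH{J}{\mcA^\op}$ relative to the larger universe but is never established for $\Sh{J}{\mcA^\op}$. Precisely because of this, the paper proves that $H_0 \yoneda R^\bullet \to F$ is an isomorphism by a different three-step argument: monicity via the (regular epi, mono) factorization for semisimplicial objects (Theorem~\ref{thm:epi_mono_factorization}, using that the higher matching maps, being distinguished open covers, are jointly epic); then that this monomorphism is an open embedding (pulling back along an arbitrary $\yoneda S \to F$ and using that $N$ preserves hypercover colimits of open subspaces of $\Spec S$); and finally Lemma~\ref{lemma:surjectivity_of_sheaves_from_local_objects}, that an open embedding surjective on local objects is an isomorphism. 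This is the heart of the theorem, and your proposal replaces it with an assertion whose stated justification does not apply; it can be repaired along the topos-theoretic lines above, but only by supplying the epimorphy arguments and the universe bookkeeping, which is comparable work to the paper's route.

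A second, smaller point: you honestly flag, but do not resolve, the verification that $|U_\bullet| \to |F|$ is a hypercover in $\APTop$, which is needed before the theorem on $N$ preserving hypercover colimits can be applied. The paper's written proof also stops short of spelling this out, so this is not a defect specific to your proposal; note, though, that the technique is already present in the paper's open-embedding step --- pull back along maps from affines, where Lemma~\ref{lemma:N_fully_faithful_from_affine_schemes} and the fact that $N$ preserves pullbacks identify the realized matching maps as genuine covers --- and that Theorem~\ref{thm:affine_communication_lemma} must be invoked already when building $U_1$, so that the level-one pieces are distinguished in \emph{both} ambient affines, not only in one of them as your construction literally produces.
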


\begin{corollary} \label{corollary:equivalence_of_schemes}
The adjunction $|\ | \dashv N$ restricts to an adjoint equivalence between the respective categories of schemes.\qed
\end{corollary}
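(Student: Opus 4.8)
The plan is to exhibit $N$ as an equivalence from the category of schemes in $\APTop$ onto the category of schemes in $\Sh{J}{\mcA^\op}$, with quasi-inverse $|\ |$. Two of the required facts are already in hand in one direction: the lemma preceding Theorem~\ref{thm:scheme_functors} shows that $N$ sends a scheme in $\APTop$ to a scheme in $\Sh{J}{\mcA^\op}$, and Theorem~\ref{thm:scheme_functors} shows that every scheme $F \in \Sh{J}{\mcA^\op}$ is a fixed point, $F \cong N|F|$. The immediately preceding theorem gives that $N$ is fully faithful on schemes. So $N$, restricted to schemes, is fully faithful and, by $F \cong N|F|$, essentially surjective onto schemes in $\Sh{J}{\mcA^\op}$ \emph{provided} we know that $|F|$ is again a scheme in $\APTop$; supplying this last fact is the crux.

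First I would record that the unit and counit are invertible on the relevant subcategories. On the sheaf side this is exactly Theorem~\ref{thm:scheme_functors}: $\eta_F \colon F \to N|F|$ is an isomorphism for every scheme $F$. On the space side, the affine schemes $\Spec R$ are fixed points by Proposition~\ref{prop:nerve_realization_adjunction}, and by the observation stated just before the previous theorem the fixed points are closed in $\APTop$ under every colimit that $N$ preserves; since $N$ preserves colimits of hypercovers and every scheme $X \in \APTop$ is the $H_0$ of a hypercover built from its affine open cover, every such $X$ is a fixed point, i.e.\ $\varepsilon_X \colon |NX| \to X$ is an isomorphism. Note that this argument is independent of anything about $|\ |$ preserving schemes, so no circularity arises below.

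It then remains to show that $|\ |$ carries a scheme $F \in \Sh{J}{\mcA^\op}$ to a scheme $|F| \in \APTop$. I would pick an open cover of $F$ by representables $\yoneda R_i \to F$, form the associated \v{C}ech hypercover $U_\bullet \to F$, and use descent to write $F \cong \colim U_\bullet$. Being cocontinuous, $|\ |$ gives $|F| \cong \colim |U_\bullet|$, with $|U_0| \cong \coprod_i |\yoneda R_i| = \coprod_i \Spec R_i$. The higher terms are realizations of the iterated pullbacks $\yoneda R_i \times_F \yoneda R_j$, which by the definition of open embedding are open subfunctors $\yoneda (R_i)_V \subseteq \yoneda R_i$; these are by definition the nerves $N V$ of open subspaces $V \subseteq \Spec R_i$, and since each such $V$ is itself a scheme (covered by the distinguished affines $\Pts k$) hence a fixed point by the previous paragraph, we get $|\yoneda (R_i)_V| \cong |N V| \cong V$. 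Thus $|F|$ is assembled by gluing the affines $\Spec R_i$ along the open subspaces $V$, while joint surjectivity of the cover on local objects $P \in \mcP$ translates, through the identification of points with local forms, into $\{\Spec R_i \to |F|\}$ being a jointly surjective family of open embeddings. Hence $|F|$ carries an affine open cover and is a scheme.

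With both functors now restricting to the scheme subcategories and with $\eta$, $\varepsilon$ invertible there, the partial adjunction $|\ | \dashv N$ restricts to an adjoint equivalence between the categories of schemes (this is the standard mechanism underlying Theorem~\ref{theorem:idempotent_adjunction}), which is the claim. The main obstacle is the third paragraph: an open embedding of sheaves is defined through pullbacks, which the cocontinuous functor $|\ |$ need not preserve a priori, so the identification $|\yoneda (R_i)_V| \cong V$ of open subfunctors with open subspaces — and through it the preservation of open covers — must be verified by hand from the explicit description of open subfunctors as nerves, rather than deduced by formal adjunction nonsense.
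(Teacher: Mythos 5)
Your skeleton matches the paper's: the corollary is assembled from the same three ingredients (the lemma that $N$ sends schemes to schemes, the theorem that $N$ preserves hypercover colimits and is hence fully faithful on schemes, and Theorem~\ref{thm:scheme_functors}), and you correctly isolate the one piece left implicit, namely that $|F|$ is a scheme in $\APTop$. Where you genuinely diverge is in how you supply that piece. The paper gets it for free from the \emph{proof} of Theorem~\ref{thm:scheme_functors}: there the pairwise pullbacks $\yoneda R^0_{i_0} \times_F \yoneda R^0_{i_1}$ are refined by \emph{distinguished} open covers via the Affine Communication Lemma (Theorem~\ref{thm:affine_communication_lemma}), producing a hypercover $\yoneda R^\bullet$ with \emph{all terms representable} and $F \cong H_0 \yoneda R^\bullet$; applying the cocontinuous $|\ |$ then exhibits $|F| \cong H_0 \Spec R^\bullet$ as glued from affines along distinguished covers, manifestly a scheme, and no identification of realizations of open subfunctors is ever needed. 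You instead keep the unrefined \v{C}ech nerve, whose level-one terms are non-representable open subfunctors, and identify their realizations with open subspaces by hand. That is workable, but it shifts the burden: the isomorphism $F \cong \colim U_\bullet$ that you dispatch with the word ``descent'' is, in the paper, the hard content of the proof of Theorem~\ref{thm:scheme_functors} (monicity via Theorem~\ref{thm:epi_mono_factorization} --- which is only stated for semi-simplicial objects in the coproduct completion, i.e.\ with representable summands, hence is unavailable for your nerve --- plus Lemma~\ref{lemma:surjectivity_of_sheaves_from_local_objects}). Your version can instead be justified by effective-epi descent in $\SH{J}{\mcA^\op}$, using the epicity of $\coprod_i \yoneda R_i \to F$ that the paper notes parenthetically in its definition of open cover, but this needs to be said rather than waved at.

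Two concrete gaps remain. First, smallness: $|\ |$ is only a \emph{partial} left adjoint, defined and cocontinuous on $\Sh{J}{\mcA^\op}$, so before writing $|F| \cong \colim |U_\bullet|$ you must check that the open subfunctors $\yoneda (R_i)_V$ are small sheaves; they are (each is a colimit of its distinguished representables $\yoneda K$, the pairwise intersections $\yoneda(K +_R K')$ being again representable), but your proposal never addresses this. Second, your second paragraph argues that every scheme $X \in \APTop$ is a fixed point because ``$X$ is the $H_0$ of a hypercover built from its affine open cover'' and fixed points are closed under $N$-preserved colimits; but the naive \v{C}ech hypercover of an affine cover has non-affine intersections at level one, which are not yet known to be fixed points, so the closure argument does not apply as stated. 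The fix is either to refine by Theorem~\ref{thm:affine_communication_lemma} so that every term is a coproduct of affines, or to argue in two steps: first opens of affines (whose \v{C}ech nerves by distinguished opens have all terms affine, distinguished opens being closed under intersection), then general schemes using that their pairwise intersections are opens of affines. You implicitly rely on exactly this first step in your third paragraph, so the patch is consistent with your argument --- but as written, paragraph two is circular at this point, and it is precisely the tool the paper leans on (the Affine Communication Lemma) that your proposal never invokes.
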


\begin{proof}[Proof of Theorem~\ref{thm:scheme_functors}]
Let $\yoneda R^0_i \to F$ be an open cover. Since the pullbacks $\yoneda R^0_{i_0} \times_F \yoneda R^0_{i_1}$ are open subfunctors of both $\yoneda R^0_{i_0}$ and $\yoneda R^0_{i_1}$, Theorem~\ref{thm:affine_communication_lemma} together with the previous lemma produce a distinguished open cover $\yoneda R^1_j$ and we thus obtain an augmented semisimplicial object
\[\xymatrix{
\cdots \ar@<1ex>[r] \ar@<0ex>[r] \ar@<-1ex>[r] & \yoneda R^1 \ar@<.5ex>[r] \ar@<-.5ex>[r] & \yoneda R^0 \ar[r] & F
}\]
whose higher matching maps are distinguished open covers and in particular are jointly epic in $\Sh{J}{\mcA^\op}$. Theorem~\ref{thm:epi_mono_factorization} then shows that $H_0 \yoneda R^\bullet \to F$ is monic and it is easily seen to be an open embedding:
\[\xymatrix{
\cdots \ar@<1ex>[r] \ar@<0ex>[r] \ar@<-1ex>[r] & \yoneda R^1 \ar@<.5ex>[r] \ar@<-.5ex>[r] & \yoneda R^0 \ar[r] & H_0 \yoneda R^\bullet \ar[r] & F \\
\cdots \ar@<1ex>[r] \ar@<0ex>[r] \ar@<-1ex>[r] & \yoneda S_{U_1} \ar@<.5ex>[r] \ar@<-.5ex>[r] \ar[u] \pbd & \yoneda S_{U_0} \ar[r] \ar[u] \pbd & H_0 \yoneda S_{U_\bullet} \ar[r] \ar[u] \pbd & \yoneda S \ar[u]
}\]
Namely, $\yoneda S_{U_\bullet}$ consists of nerves of open subspaces of $\Spec S$ with higher matching maps covers, and since $N$ preserves such colimits, its colimit is indeed an open subfunctor of $\yoneda S$. The proof is finished by an application of the following lemma, applied to $H_0yR^\bullet \to F$.
\end{proof}

\begin{lemma} \label{lemma:surjectivity_of_sheaves_from_local_objects}
An open embedding $\alpha \colon G \to F$ with surjective components on local objects is an isomorphism.
\end{lemma}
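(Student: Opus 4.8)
The plan is to reduce the statement to a pointwise claim about the open subset $U$ that the definition of open embedding attaches to each test map. Since $\alpha$ is already monic, it suffices to show that every map $x \colon \yoneda R \to F$ out of a representable factors through $\alpha$: such factorizations are unique (as $\alpha$ is monic) and hence compatible over the diagram exhibiting $F$ as a small colimit of representables (recall $\mcA = \fix\mcA$ forces $\yonedash = \yoneda$, so representables are sheaves and generate $\Sh{J}{\mcA^\op}$), producing a section $s \colon F \to G$ with $\alpha s = \id_F$; monicity of $\alpha$ then promotes $s$ to a two-sided inverse. By the definition of open embedding, the pullback of $\alpha$ along $x$ is an inclusion $\yoneda R_U \hookrightarrow \yoneda R$ for some open $U \subseteq \Spec R$, and $x$ factors through $\alpha$ exactly when $U = \Spec R$. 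So everything comes down to showing $U = \Spec R$ for each $x$.

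First I would unwind the defining formula for the open subfunctor into geometric terms: $f \in \yoneda R_U(S)$ holds precisely when the induced map on spectra satisfies $f^*(\Spec S) \subseteq U$. Thus $U = \Spec R$ is equivalent to every point of $\Spec R$ lying in $U$, which I test one point at a time. Fix a local form $p \colon R \cof P$ with $P \in \mcP$ (a point of $\Spec R$) and let $x_P = F(p)(x) \in F(P)$ be the restriction of $x$ along $p$. Because $P$ is a \emph{local} object, the surjectivity hypothesis supplies $y_P \in G(P)$ with $\alpha_P(y_P) = x_P$. Reading $y_P$ and $p$ as maps $\yoneda P \to G$ and $\yoneda P \to \yoneda R$ (the latter induced by $p$, and corresponding under Yoneda to the element $p \in \yoneda R(P)$), the equality $\alpha_P(y_P) = x_P$ says exactly that the two composites $\yoneda P \to G \xra{\alpha} F$ and $\yoneda P \to \yoneda R \xra{x} F$ agree. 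The universal property of the pullback $\yoneda R_U = \yoneda R \times_F G$ then yields a factorization of $\yoneda P \to \yoneda R$ through $\yoneda R_U \hookrightarrow \yoneda R$, i.e.\ the element $p \in \yoneda R(P)$ lies in $\yoneda R_U(P)$, which by the formula above means $p^*(\Spec P) \subseteq U$.

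To finish I would observe that $\Spec P$ contains the point given by the identity local form $1_P \colon P \to P$ (the empty transfinite composition is a localization and $P$ is already local), and that this point is carried by $p^*$ back to $p$ itself: factoring $1_P \circ p = p$ into a localization followed by an admissible map recovers $p$ since $p$ is already a local form, so $p^*(1_P) = p$. Hence $p \in p^*(\Spec P) \subseteq U$. As $p$ ranges over all points of $\Spec R$, this gives $U = \Spec R$, which is the reduction I aimed for. The main obstacle is purely the bookkeeping of the last two steps: correctly translating the sheaf-theoretic factorization through the pullback into the membership $p \in \yoneda R_U(P)$, and then back through the defining formula of $\yoneda R_U$ via the distinguished point $1_P \in \Spec P$ with $p^*(1_P) = p$. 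Once these identifications are pinned down, the surjectivity hypothesis on local objects does all the real work.
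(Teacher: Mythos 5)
Your proof is correct and takes essentially the same route as the paper's: reduce via monicity to lifting each $x \colon \yoneda R \to F$, pull back $\alpha$ to get the open subfunctor $\yoneda R_U \subseteq \yoneda R$, and use surjectivity on the local objects $P$ to show every point $p \colon R \cof P$ of $\Spec R$ lies in $U$, forcing $U = \Spec R$ and hence the desired lift. The only difference is presentational: the paper compresses your pointwise lifting step into the remark that the pullback components $\yoneda R_U(P) \to \yoneda R(P)$ remain surjective, and leaves implicit the bookkeeping you spell out via the identity local form $1_P$ and $p^*(1_P) = p$.
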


\begin{proof}
Since $\alpha$ is monic, we need only show that $G(R) \to F(R)$ is surjective for every $R \in \mcA$. Thus, take any $\yoneda R \to F$ and form a pullback
\[\xymatrix{
G \ar[r] & F \\
\yoneda R_U \ar[r] \ar[u] \pbd & \yoneda R \ar[u]
}\]
Clearly, the components $\yoneda R_U(P) \to \yoneda R(P)$ are also surjective and it follows that the open subspace $U \subseteq \Spec R$ contains all points and this inclusion is in fact an isomorphism. A lift is thus obtained as $\yoneda R \xla\cong \yoneda R_U \to G$.
\end{proof}

\newpage
\part*{Appendices}
\setcounter{section}{0}
\renewcommand{\thesection}{\Alph{section}}

\section{Grothendieck constructions and their completeness} \label{section:Grothendieck_constructions}

For a pseudofunctor $R \colon \mcB^\op \to \CAT$ valued in (possibly large) categories, we consider the associated fibration
\[P \colon \smallint R \to \mcB,\]
also called the Grothendieck construction of $R$. The following is well-known:

\begin{theorem}
If $\mcB$ is complete and $R$ is valued in complete categories and continuous functors then $\smallint R$ is also complete and the canonical projection $P$ is also continuous.

More precisely, for a diagram $F \colon \mcK \to \smallint R$ consider the universal cone $\lambda_k \colon \lim PF \to PFk$ in $\mcB$ and denote the cartesian lifts of its components as $\lambda_k^* Fk \to Fk$. Then the $\lambda_k^* Fk$'s form a diagram in the fibre over $\lim PF$ whose limit equals $\lim F$.
\end{theorem}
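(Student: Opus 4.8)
The plan is to run the standard construction of limits in a fibration: lift the limit from the base, refine it inside a single fibre, and then read off the universal property, the only nonformal ingredient being the continuity of the reindexing functors. Throughout I abbreviate $f^* := R(f) \colon R(b') \to R(b)$ for $f \colon b \to b'$ in $\mcB$, so that an object of $\smallint R$ is a pair $(b,x)$ with $x \in R(b)$, a morphism $(b,x) \to (b',x')$ is a pair $(f,\phi)$ with $\phi \colon x \to f^* x'$ in $R(b)$, and the cartesian lift of $f$ at $(b',x')$ is $(f,\mathrm{id}) \colon (b, f^* x') \to (b',x')$. First I would fix a diagram $F \colon \mcK \to \smallint R$, compose with $P$, and use completeness of $\mcB$ to get the universal cone $\lambda_k \colon \lim PF \to PFk$. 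Then I form the reindexed objects $\lambda_k^* Fk \in R(\lim PF)$ together with their cartesian structure maps $\bar\lambda_k \colon \lambda_k^* Fk \to Fk$ lying over $\lambda_k$.

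Next I would promote $k \mapsto \lambda_k^* Fk$ to a functor $\mcK \to R(\lim PF)$. For $u \colon k \to k'$ the composite $Fu \circ \bar\lambda_k \colon \lambda_k^* Fk \to Fk'$ lies over $PFu \circ \lambda_k = \lambda_{k'}$, so the universal property of the cartesian arrow $\bar\lambda_{k'}$ yields a unique fibrewise map $\lambda_k^* Fk \to \lambda_{k'}^* Fk'$ through which it factors; uniqueness of cartesian factorizations makes this assignment functorial. Since $R(\lim PF)$ is complete by hypothesis, this diagram admits a limit $L$ with cone $\mu_k \colon L \to \lambda_k^* Fk$. I then claim that $(\lim PF, L)$, equipped with $\nu_k := \bar\lambda_k \circ \mu_k \colon (\lim PF, L) \to Fk$ (which lie over $\lambda_k$), is the limit of $F$. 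Applying $P$ to this cone returns precisely $\lambda_k$, which will give continuity of $P$ for free once the universal property is established.

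The main work, and the hard part, is verifying the universal property, where continuity of reindexing is indispensable. Given a competing cone $(g_k, \psi_k) \colon (b,y) \to Fk$, the base components form a cone over $PF$ and so induce a unique $g \colon b \to \lim PF$ with $\lambda_k g = g_k$. It remains to build a fibrewise map $y \to g^* L$. Because $g^*$ is continuous, $g^* L \cong \lim_k g^*(\lambda_k^* Fk) \cong \lim_k g_k^* Fk$, where the second isomorphism uses the pseudofunctor coherence $g^* \lambda_k^* \cong (\lambda_k g)^* = g_k^*$. The maps $\psi_k \colon y \to g_k^* Fk$ assemble into a cone over exactly this diagram, hence induce a unique $\psi \colon y \to g^* L$, and $(g,\psi)$ is the sought factorization. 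Uniqueness splits into its two components: the base component is forced by the universal property of $\lim PF$, and, that being fixed, the fibre component is forced by the universal property of $L$.

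I expect the only delicate bookkeeping to be the coherence isomorphisms of the pseudofunctor $R$ entering the identification $g^* \lambda_k^* \cong g_k^*$ and the compatibility of the induced transition maps. To keep these from proliferating, I would phrase as much as possible through the universal properties of cartesian lifts rather than through the chosen reindexing functors, so that the coherence data is invoked only once, at the single place where continuity of $g^*$ is used to transport the fibre limit $L$ back along $g$.
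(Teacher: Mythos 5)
Your proof is correct and follows exactly the route the paper intends: the paper states this theorem as well-known and offers no proof of its own, but its ``more precisely'' clause already prescribes your construction --- cartesian lifts of the limit cone of $PF$, the induced diagram in the fibre over $\lim PF$, and its fibre limit --- and your verification of the universal property via continuity of the reindexing functor $g^*$ is the standard (and only substantive) remaining step. The coherence bookkeeping you flag is indeed routine and hides no gap.
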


Dually, any pseudofunctor $L \colon \mcB \to \CAT$ induces an opfibration $P \colon \smallint L \to \mcB$. Since this is the opposite of the first Grothendieck construction, the previous theorem now holds with limits replaced by colimits.

Finally, we will discuss bifibrations and their bicompleteness. A fibration $\smallint R \to \mcB$ is easily seen to be a bifibration iff the diagram $R \colon \mcB^\op \to \CAT$ is valued in categories and right adjoint functors. If $L \colon \mcB \to \CAT$ is the diagram consisting of the same categories but with all functors replaced by their left adjoints, one observes that $\smallint L \simeq \smallint R$ over $\mcB$ and we thus obtain the following theorem. In this situation we will say that the bifibration is associated to the diagram $A \colon \mcB \to \CAT_\textrm{ad}$ of adjunctions $L \dashv R$.

\begin{theorem}
If $\mcB$ is bicomplete and $A$ is a diagram of bicomplete categories and adjunctions between them then $\smallint A$ is also bicomplete and the canonical projection $P$ is bicontinuous.\qed
\end{theorem}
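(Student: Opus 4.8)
The plan is to exploit the two dual descriptions of the bifibration $\smallint A$ and to invoke each of the two preceding theorems of this section exactly once. Recall, as explained just before the statement, that the bifibration can be viewed simultaneously as the fibration $\smallint R \to \mcB$ associated with the pseudofunctor $R \colon \mcB^\op \to \CAT$ of right adjoints, and as the opfibration $\smallint L \to \mcB$ associated with the pseudofunctor $L \colon \mcB \to \CAT$ of left adjoints; moreover $\smallint R \simeq \smallint L$ over $\mcB$, so that the single projection $P$ is identified in both pictures.

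To obtain completeness of $\smallint A$ and continuity of $P$, I would apply the first theorem of this section to the fibration $\smallint R$. Its hypotheses ask that $\mcB$ be complete (which holds since it is bicomplete), that the fibres be complete (which holds since they are bicomplete), and that the transition functors $R(f)$ be continuous. The last point is precisely where the bifibration assumption enters: each $R(f)$ is a \emph{right} adjoint, and right adjoints preserve limits, hence are continuous. The cited theorem then yields that $\smallint R$ is complete and that $P$ is continuous, together with the explicit recipe for limits (limit in the base, cartesian lift, limit in the fibre).

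Dually, to obtain cocompleteness of $\smallint A$ and cocontinuity of $P$, I would apply the dual theorem to the opfibration $\smallint L$. Here the fibres are cocomplete, $\mcB$ is cocomplete, and each transition functor $L(f)$ is a \emph{left} adjoint, hence cocontinuous. The dual theorem then gives that $\smallint L$ is cocomplete and that $P$ is cocontinuous.

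Combining the two via the identification $\smallint A \simeq \smallint R \simeq \smallint L$ over $\mcB$ shows that $\smallint A$ is bicomplete and that $P$ is bicontinuous, as claimed. I do not expect a genuine obstacle: the whole content is the observation that a right adjoint is automatically continuous and a left adjoint automatically cocontinuous, so the hypotheses of the two earlier theorems are satisfied for free. The only point deserving a word of care is that the equivalence $\smallint R \simeq \smallint L$ is compatible with the projections down to $\mcB$, so that the continuity established in the first picture and the cocontinuity established in the second genuinely refer to one and the same functor $P$ on $\smallint A$.
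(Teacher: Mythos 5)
Your proposal is correct and is essentially the paper's own argument: the theorem is stated with an immediate \qed precisely because it follows by applying the completeness theorem to the fibration $\smallint R$ (right adjoints being continuous), the dual statement to the opfibration $\smallint L$ (left adjoints being cocontinuous), and the equivalence $\smallint L \simeq \smallint R$ over $\mcB$. Your closing remark that this equivalence must commute with the projections is exactly the "over $\mcB$" observation the paper relies on, so nothing is missing.
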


We will now describe two examples that are relevant for this paper.

\begin{example} \label{example:Top}
Consider the diagram $A \colon \Set \to \CAT_\textrm{ad}$ associating to each set $X$ the poset $AX = \operatorname{Top}_X^\op$ of all topologies on $X$ ordered by the reverse inclusion; the adjunction associated to a mapping $f \colon X \to Y$ consists of the direct and inverse image $f_* \dashv f^*$, where the direct image $f_* \mathfrak X$ is the topology generated by $\{V \subseteq Y \mid f^{-1}V \in \mathfrak X\}$. The associated bifibration is then easily seen to be the category $\Top$ of topological spaces and continuous maps. The theorems say that limits and colimits in $\Top$ are created from those in $\Set$.

Although colimits of topological spaces are well known and the general theorem above gives the usual description of the topology as the largest topology making the components of the universal cocone continuous, we will make use of an alternative description of open subssets of the colimit. This description starts with the observation that an open subset of $X$ is given equivalently by a continuous map into the Sierpi\'nski space $\Sigma$. Thus, open subsets of $\colim X_k$ are given equivalently by cocones $X_k \to \Sigma$, i.e.\ by collections of open subsets $U_k \subseteq X_k$ that are compatible in the sense: if $\alpha \colon k \to l$ lies in $\mcK$ then $(\alpha_*)^{-1}(U_l) = U_k$; in terms of points this reads $p \in U_k$ iff $\alpha_*p \in U_l$.
\end{example}

\begin{example} \label{ex:ATop}
Consider the diagram $A \colon \Top \to \CAT_\textrm{ad}$ associating to each space $X$ the category $AX = {\Sh{}{X}}^\op$ of $\mcA$-valued sheaves on $X$, or rather its opposite; the adjunction associated to a map $f \colon X \to Y$ consists of the direct and inverse image functors $f_* \dashv f^*$. The associated bifibration is then easily seen to be the category $\ATop$ of $\mcA$-spaces. The theorems say that limits and colimits in $\ATop$ are created from those in $\Top$.

More explicitly, the colimit of a diagram $(X_k, \mcO_{X_k})$ is the colimit $X = \colim X_k$ of the underlying spaces with universal cocone $\lambda_k \colon X_k \to X$ and with the structure sheaf $\mcO_X = \lim (\lambda_k)_* \mcO_{X_k}$.
\end{example}

\begin{example}
For a category $\mcC$, consider the diagram $R \colon \Set^\op \to \CAT$ associating to each set $I$ the category $\mcC^I$, i.e.\ $R = \CAT(-, \mcC)$. When $\mcC$ has coproducts, each functor in the diagram $R$ has a left adjoint given by the left Kan extension, i.e.\ summation along fibres. The associated (bi)fibration is then easily seen to be the free completion $\coprod \mcC$ of $\mcC$ under coproducts. The limits and colimits as given in the previous theorems can then be spelled out explicitly as in the following lemma.
\end{example}

Let $D_k$ be a diagram in $\coprod \mcC$, i.e.\ it consists of a diagram of sets $I_k$ and a collection $(D_k^i)_{i \in I_k}$ of objects together with maps $D_k^i \to D_l^{\alpha(i)}$ for each $\alpha \colon k \to l$ that are closed under composition. This describes a diagram in $\mcC$, indexed by the category of elements associated with $I \colon \mcK \to \Set$ (Grothendieck construction). We denote this diagram by $\El D \colon \El I \to \mcC$, $(k, i) \mapsto D_k^i$. In the other direction, $D = \lan_P \El D$ is the left Kan extension in
\[\xymatrix{
\El I \ar[rr]^-{\El D} \ar[d]_-P & & \mcC \ar@{c->}[d] \\
\mcK \ar@{-->}[rr]_-{\lan_P \El D} & & \coprod \mcC
}\]
along the canonical projection $P \colon \El I \to \mcK$.

\begin{lemma} \label{lem:limits_colimits_coproduct_completion}
The limit of the diagram $D$ in the coproduct completion has indexing set $\lim I$ and, for each $\sigma \in \lim_{k \in \mcK} I_k$, i.e.\ a section of the projection $\El I \to \mcK$, the corresponding component of the limit is
\[H^0 D|_\sigma = \lim_{(k, i) \in \sigma} D_k^i = \lim_{k \in \mcK} D_k^{\sigma(k)}.\]

The colimit of the diagram $D$ in the coproduct completion has indexing set $\colim I$ and, for each $\gamma \in \colim_{k \in \mcK} I_k$, i.e.\ a path component of $\El I$, the corresponding component of the colimit is
\[\pushQED{\qed}H_0 D|_\gamma = \colim_{(k, i) \in \gamma} D_k^i = \colim_{k \in \mcK} \coprod_{i \in \gamma(k)} D_k^i.\qedhere\popQED\]
\end{lemma}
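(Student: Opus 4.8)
The plan is to read both formulae off the two Grothendieck-construction theorems above, applied to the bifibration presentation of $\coprod\mcC$. Recall from the example preceding the lemma that $\coprod\mcC = \smallint A$ over $\Set$, where $A\colon\Set\to\CAT_\textrm{ad}$ sends a set $I$ to $\mcC^I$ and a map $\phi\colon I\to J$ to the adjunction $\phi_!\dashv\phi^*$: here $\phi^*\colon\mcC^J\to\mcC^I$ is restriction (precomposition with $\phi$) and $\phi_!$ is summation along fibres, $(\phi_! A)^j=\coprod_{i\in\phi^{-1}(j)}A^i$. Under this identification, for a diagram $D\colon\mcK\to\coprod\mcC$ the projection $PD\colon\mcK\to\Set$ is exactly the functor $I\colon k\mapsto I_k$, and $\El D$ is its restriction to the category of elements, as in the statement. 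So the whole proof is a matter of computing the base (co)limit in $\Set$, describing the (co)cartesian lifts concretely, and using that fibre (co)limits in $\mcC^{\,?}$ are componentwise.

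For the limit I would invoke the limit theorem for the fibration $\smallint R\to\Set$ (with $R\phi=\phi^*$, which is continuous as it has both adjoints). Its base part gives $\lim PD=\lim_\mcK I$, whose elements are precisely the sections $\sigma$ of $\El I\to\mcK$, and the cone components $\lambda_k\colon\lim I\to I_k$ evaluate $\sigma$ to $\sigma(k)$. The cartesian lift $\lambda_k^*Dk$ is the restriction of the family $(D_k^i)_{i\in I_k}$ along $\lambda_k$, so its $\sigma$-component is $D_k^{\sigma(k)}$. Since limits in the fibre $\mcC^{\lim I}$ are computed componentwise, the theorem yields that the $\sigma$-component of $\lim D$ is $\lim_{k\in\mcK}D_k^{\sigma(k)}$; and because the image of the section $\sigma$ is a subcategory of $\El I$ isomorphic to $\mcK$, this is the same as $\lim_{(k,i)\in\sigma}D_k^i$. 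That is the first formula, with no further identification needed.

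Dually, for the colimit I would apply the colimit theorem to the opfibration, whose pushforward along $\phi$ is the cocontinuous functor $\phi_!$. The base part gives $\colim PD=\colim_\mcK I$, which is $\pi_0(\El I)$, the set of path components $\gamma$, with cocone components $\iota_k\colon I_k\to\colim I$ sending $i$ to its class. The cocartesian lift $(\iota_k)_!Dk$ is summation along the fibres of $\iota_k$, so its $\gamma$-component is $\coprod_{i\in\gamma(k)}D_k^i$, where $\gamma(k)=\iota_k^{-1}(\gamma)\subseteq I_k$. As colimits in $\mcC^{\colim I}$ are again componentwise, the $\gamma$-component of $\colim D$ is $\colim_{k\in\mcK}\coprod_{i\in\gamma(k)}D_k^i$, the right-hand expression of the second claim.

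It remains to identify this with $\colim_{(k,i)\in\gamma}D_k^i$; this is the only step that goes beyond unwinding definitions, and I expect it to be the main (if mild) obstacle. The clean reason is that $\El I\to\mcK$ is a discrete opfibration (being the category of elements of the $\Set$-valued functor $I$), hence so is its restriction $\pi\colon\gamma\to\mcK$ to a single component. For a discrete opfibration the comma category $\pi/k$ has the discrete fibre $\pi^{-1}(k)=\gamma(k)$ as a final subcategory, so the pointwise left Kan extension $\lan_\pi(\El D|_\gamma)$ is exactly the fibrewise coproduct $k\mapsto\coprod_{i\in\gamma(k)}D_k^i$ (with empty fibres contributing initial objects, automatically and harmlessly). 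Since the colimit of a left Kan extension agrees with the colimit of the original diagram, taking $\colim_\mcK$ recovers $\colim_{(k,i)\in\gamma}D_k^i$, completing the identification. Throughout, each displayed equation is to be read pointwise — the fibre (co)limit exists in $\mcC$ exactly when the (co)limit in $\coprod\mcC$ does — so beyond the bicompleteness hypotheses of the general theorems one needs only the relevant (co)limits of the $\El D$-diagrams in $\mcC$.
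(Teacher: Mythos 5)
Your proposal is correct and takes essentially the same approach as the paper: the lemma is stated there with a QED box and no separate argument, being exactly the explicit spelling-out of the two general Grothendieck-construction theorems applied to the bifibration $\coprod\mcC \to \Set$ with fibres $\mcC^I$ and reindexing/summation adjunctions, which is precisely what you do. The one step beyond pure unwinding --- identifying $\colim_{k\in\mcK}\coprod_{i\in\gamma(k)}D_k^i$ with $\colim_{(k,i)\in\gamma}D_k^i$ via finality of the discrete fibres of $\gamma\to\mcK$ --- is a standard fact the paper leaves implicit, and your treatment of it is sound.
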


Dually, colimits in the product completion are computed over sections and limits over components. Thus, an object $(A_i)_{i \in I} \in \prod \mcA$ is finitely presentable if it is a finite collection of finitely presentable objects, i.e.\ if $I$ is finite and each $A_i$ is finitely presentable.

Any map $f \colon (A^i)^{i \in I} \to (B^j)^{j \in J}$ can be decomposed into a coproduct of maps into singletons: define $I^j \subseteq I$ to be the inverse image of $j \in J$ and then $f$ is a coproduct of $f^j \colon (A^i)^{i \in I^j} \to (B^j)$ with singleton target.

\begin{corollary} \label{cor:mono_epi_coproduct_completion}
A map $f \colon (A^i)^{i \in I} \to (B^j)^{j \in J}$ in the coproduct completion is mono iff its underlying map of index sets is mono and each summand $f^j$ is mono in $\mcC$. The map $f$ is epi iff its underlying map of index sets is epi and each summand consists of a jointly epi family in $\mcC$. \qed
\end{corollary}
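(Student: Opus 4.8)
The plan is to detect both properties by the usual criteria—$f$ is monic iff the diagonal into its kernel pair is an isomorphism, and epic iff the codiagonal out of its cokernel pair is an isomorphism—and to compute these kernel and cokernel pairs with Lemma~\ref{lem:limits_colimits_coproduct_completion}, which presents every limit and colimit in $\coprod\mcC$ as a limit (resp.\ colimit) of index sets in $\Set$ together with componentwise (co)limits in $\mcC$. Throughout I write $\phi\colon I\to J$ for the underlying map of index sets and $f^j\colon(A^i)^{i\in I^j}\to B^j$, with $I^j=\phi^{-1}(j)$, for the summands, and I use that an isomorphism in $\coprod\mcC$ is exactly a bijection of index sets together with componentwise isomorphisms.

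For the monomorphism half, $f$ is monic iff the diagonal $(A^i)^I\to(A^i)^I\times_{(B^j)^J}(A^i)^I$ is an isomorphism. By Lemma~\ref{lem:limits_colimits_coproduct_completion} the kernel pair has index set the pullback $I\times_J I=\{(i_1,i_2)\mid\phi(i_1)=\phi(i_2)\}$ and component $A^{i_1}\times_{B^{\phi(i_1)}}A^{i_2}$ over $(i_1,i_2)$, while the diagonal is induced by $i\mapsto(i,i)$ and, over $(i,i)$, by the diagonal $A^i\to A^i\times_{B^{\phi(i)}}A^i$ in $\mcC$. Hence the diagonal is an isomorphism iff (i) $i\mapsto(i,i)$ is a bijection $I\cong I\times_J I$, i.e.\ $\phi$ is monic in $\Set$, and (ii) each diagonal $A^i\to A^i\times_{B^{\phi(i)}}A^i$ is an isomorphism, i.e.\ each $A^i\to B^{\phi(i)}$ is monic in $\mcC$. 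When $\phi$ is injective the fibres $I^j$ are empty or singletons, so (ii) is exactly the statement that each summand $f^j$ (over $j\in\im\phi$) is monic in $\mcC$.

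The epimorphism half is dual: $f$ is epic iff the codiagonal $(B^j)^J+_{(A^i)^I}(B^j)^J\to(B^j)^J$ is an isomorphism. By the colimit half of Lemma~\ref{lem:limits_colimits_coproduct_completion} this cokernel pair has index set the pushout $J+_I J$ and, over each $j$, the component $B^j+_{\coprod_{i\in I^j}A^i}B^j$, the inner map being the family $(A^i\to B^j)_{i\in I^j}$. Thus the codiagonal is an isomorphism iff (i) $J+_I J\to J$ is a bijection, i.e.\ $\phi$ is epic in $\Set$, and (ii) for each $j$ the induced map $\coprod_{i\in I^j}A^i\to B^j$ is epic, i.e.\ the family $(A^i\to B^j)_{i\in I^j}$ is jointly epic in $\mcC$. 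Since $\phi$ epic forces every $I^j$ to be nonempty, (ii) is precisely the assertion that each summand $f^j$ is a jointly epic family, which closes the argument.

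The only genuine subtlety is existence: invoking the kernel and cokernel pairs presumes that the relevant pullbacks and pushouts exist in $\mcC$, hence in $\coprod\mcC$ by the bicompleteness of Grothendieck constructions recorded above; this holds in all our applications, where $\mcC$ is bicomplete. To remove any hypothesis on $\mcC$ one either unwinds the defining universal properties directly from the explicit hom-sets of $\coprod\mcC$—the same bookkeeping, separating the constraint on $\phi$ from the componentwise constraint on the $f^j$—or observes that the projection $P\colon\coprod\mcC\to\Set$ is both a right adjoint to $(-)\mapsto(0)^{(-)}$ and a left adjoint to $(-)\mapsto(1)^{(-)}$ whenever $\mcC$ has an initial and a terminal object, so that $P$ carries monos to monos and epis to epis by the hom-set argument; this yields the necessity of the index-set conditions on $\phi$ with no completeness assumption, after which the componentwise conditions are checked fibrewise in $\mcC$.
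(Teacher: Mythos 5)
Your main computation is correct and is essentially the paper's own route: the corollary is stated with no separate proof precisely because it is meant to fall out of Lemma~\ref{lem:limits_colimits_coproduct_completion}, and detecting mono/epi via the (co)kernel pair, whose index sets ($I\times_J I$, resp.\ $J+_I J$) and components ($A^{i_1}\times_{B^{\phi(i_1)}}A^{i_2}$, resp.\ $B^j+_{\coprod_{i\in I^j}A^i}B^j$) you read off from that lemma, is exactly the intended bookkeeping. One minor imprecision: the cokernel-pair component over $j$ has the form $B^j+_{\coprod_{i\in I^j}A^i}B^j$ only for $j\in\im\phi$; for $j\notin\im\phi$ the two copies of $B^j$ remain separate components. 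This does not damage your logic, since your condition (i) forces $\phi$ to be surjective before condition (ii) is invoked.

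The one genuine error is in your closing paragraph: the claim that all hypotheses on $\mcC$ can be removed by ``unwinding the defining universal properties directly from the explicit hom-sets'' is false for the monomorphism half. To show that $f$ mono forces $\phi$ mono, you must, given $i_1\neq i_2$ with $\phi(i_1)=\phi(i_2)=j$, produce an object $X$ with maps $x_1\colon X\to A^{i_1}$ and $x_2\colon X\to A^{i_2}$ satisfying $f_{i_1}x_1=f_{i_2}x_2$; without pullbacks (or at least some cone over this cospan) no such $X$ need exist. Concretely, let $\mcC$ have objects $P,Q,R$ and, besides identities, only two morphisms $p\colon P\to R$ and $q\colon Q\to R$. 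Then $f=(p,q)\colon(P,Q)\to(R)$ is a monomorphism in $\coprod\mcC$ --- any family admitting a map to $(P,Q)$ has all members equal to $P$ or $Q$, and then admits exactly one such map, so every parallel pair into $(P,Q)$ is equal --- yet its index map $\{1,2\}\to\{\ast\}$ is not injective. So the mono characterization genuinely fails for arbitrary $\mcC$. Your second alternative is sound: with an initial and a terminal object, the index-set projection $\coprod\mcC\to\Set$ is both a right and a left adjoint, hence preserves monos and epis, and this is the correct minimal weakening (note also that the epi half does hold for arbitrary $\mcC$ by direct unwinding). Since the paper assumes $\mcA$ complete and cocomplete throughout, none of this affects the corollary as stated, and your kernel/cokernel-pair argument stands as written.
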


\section{Cone small object argument}

Here we present a more detailed account of the cone small object argument; our aim here is the greatest generality, so we will not assume the existence of all small colimits -- this may prove useful in homotopically oriented applications.

Let us formulate and prove this variation in a general categorical context. In order to substantially simplify the notation in the argument it is useful to consider the free product completion, see the following well-known definition.
\begin{definition}
Let $\mathcal{K}$ be a category. Then the {\it free product completion} $\mathrm{Prod}(\mathcal{K})$ of $\mathcal{K}$ is defined as follows: It has as objects pairs $(X,A)$, where $X$ is a set and $A \colon X \to \mathcal{K}$ is a functor whose domain is viewed as a discrete category. A morphism is a pair $(p,f) \colon (X,A) \to (Y,B)$, where $p \colon Y \to X$ is a function and $f \colon A \cdot p \Rightarrow B$ is a natural transformation, i.e.\ a collection $(f_i \colon A(pi) \to Bi)_{i \in Y}$.
\end{definition}
\begin{remark}
We will sometimes write $(Ax)_{x \in X}$ instead of $(X,A)$.
\end{remark}
Note that one can use the free product completion to derive the definitions of cone injectivity of an object and of cone injectivity of a morphism.\par
The reason why the free product completion is going to simplify the notation is that at each stage in the cone small object argument we are dealing with possibly a collection of objects rather than with a single object as is the case in the usual small object argument, and the free product completion allows us to consider this collection as a single object instead.\par
When trying to perform the cone small object argument in a category $\mathcal{K}$ one might be tempted to simply pass to $\mathrm{Prod}(\mathcal{K})$ and perform the usual small object argument there. However, this gives a different result than we are looking for: The reason is that an injective object $(Ax)_{x \in X}$ of the free product completion can happen to have a non-injective component. For example, consider the category with only four objects $A, B, C, D$ and only two non-identity morphisms $A \to B$, $A \to C$. If we consider injectivity of objects with respect to $A \to B$, then the object $C$ is not injective because there does not exist an extension of $A \to C$ to $B$, whereas the object $(C,D)$ of the free product completion is injective because there does not exist a morphism $A \to (C,D)$.
\begin{remark}
If a category $\mathcal{K}$ has transfinite composites, then its free product completion also has transfinite composites and they're explicitly given as follows: Suppose that we have a diagram $$(X_0, A_0) \xrightarrow{(p_0,f_0)} (X_1, A_1) \xrightarrow{(p_1,f_1)} (X_2, A_2) \xrightarrow{(p_2,f_2)} \cdots$$ whose objects are indexed by ordinals less than some limit ordinal $\lambda$. Let $(q_\mu \colon X \to X_\mu)_{\mu < \lambda}$ be the limit of the diagram $$\cdots \xrightarrow{p_2} X_2 \xrightarrow{p_1} X_1 \xrightarrow{p_0} X_0$$ in $\mathbf{Set}$. Hence, $X$ consists precisely of the elements $(x_\mu)_{\mu < \lambda} \in \prod_{\mu < \lambda} X_\mu$ such that $p_{\mu, \kappa}(x_\mu) = x_\kappa$ for each $p_{\mu, \kappa} \colon X_\mu \to X_\kappa$ from the diagram. Each element $x = (x_\mu)_{\mu < \lambda} \in X$ induces a diagram $$A_0(x_0) \xrightarrow{f_{0,x_1}} A_1(x_1) \xrightarrow{f_{1,x_2}} A_2(x_2) \xrightarrow{f_{2, x_3}} \cdots.$$ Let $(f_{\mu,x} \colon A_\mu(x_\mu) \to A(x))_{\mu < \lambda}$ be the colimit of that diagram in $\mathcal{K}$. This defines a functor $A \colon X \to \mathbf{Set}$. Finally, it is easy to verify that the cocone ${((q_\mu, f_\mu) \colon (X_\mu, A_\mu) \to (X,A))_{\mu < \lambda}}$ is a colimit in $\mathrm{Prod}(\mathcal{K})$.
\end{remark}
The following theorem is already known (under mildly stronger assumptions than in the theorem statement below), see Theorem 2.53 and Proposition 4.16 in \cite{AdamekRosicky}. However, the proof there uses a different approach than the explicit construction that we give below.
\begin{theorem}
Let $\mathcal{K}$ be a category with pushouts and transfinite composites and let $I$ be a set of cones in $\mathcal{K}$ whose domains are $\lambda$-presentable for some regular cardinal $\lambda$. Then the full subcategory {$I$-Inj} of $\mathcal{K}$ consisting of cone injective objects with respect to $I$ is cone-reflective in $\mathcal{K}$. Explicitly, this means that for each object $A$ in $\mathcal{K}$ there exists a cone $(f_i \colon A \to A_i)_{i \in I'}$ such that each $A_i$ is in {$I$-Inj} and for each morphism $g \colon A \to B$ whose codomain $B$ belongs to {$I$-Inj} there exists $i \in I'$ and a morphism $h \colon A_i \to B$ such that $h \cdot f_i = g$.
\end{theorem}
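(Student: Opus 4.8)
The plan is to run a transfinite small object argument, but performed inside the free product completion $\Prod(\mathcal{K})$, so that the \emph{branching} over the components of a cone takes over the role played by coproducts in the usual argument; this is exactly what lets us get by with only pushouts and transfinite composites in $\mathcal{K}$. First I would record the reformulation that a cone $(a_{ij}\colon A_i\to B_{ij})_{j}$ is the same thing as a single morphism $\bar a_i\colon A_i\to(B_{ij})_{j}$ out of a one-element family in $\Prod(\mathcal{K})$, and that an object $P$ of $\mathcal{K}$ is cone injective precisely when every lifting problem of $\bar a_i$ against the map $P\to(\ )$ into the terminal object has a solution. The one caveat, already flagged by the $A,B,C,D$ example, is that this characterisation does \emph{not} pass to families: a family $(P_x)_{x\in X}$ can be injective in $\Prod(\mathcal{K})$ while having components that fail to be cone injective. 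For this reason the construction must attach cells \emph{fibrewise}, resolving each component's own lifting problems, rather than only those coming from a global attaching map defined on every component simultaneously.

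Concretely, I would build a chain $(X_\mu,R_\mu)_{\mu\le\lambda}$ in $\Prod(\mathcal{K})$, starting from the one-element family $(\{*\},A)$. At a successor stage I process, for each current component $R_\mu(x)$, one attaching map $f\colon A_i\to R_\mu(x)$ at a time by \emph{branching}: I replace $R_\mu(x)$ by the family $(R_\mu(x)+_{A_i}B_{ij})_{j\in J_i}$, each member of which is a single pushout in $\mathcal{K}$ equipped with its structure map $R_\mu(x)\to R_\mu(x)+_{A_i}B_{ij}$. This defines the index function $X_{\mu+1}\to X_\mu$ (a new component remembers the one it came from) and the comparison $(X_\mu,R_\mu)\to(X_{\mu+1},R_{\mu+1})$; crucially each fibre map is an ordinary pushout, so no coproducts are needed. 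At limit stages I take the transfinite composite in $\Prod(\mathcal{K})$, which by the preceding Remark is the inverse limit of the index sets, whose elements are the \emph{threads} of choices, together with the corresponding transfinite composite in $\mathcal{K}$ in each fibre; both ingredients exist by hypothesis. The cone $(f_i\colon A\to A_i)_{i\in I'}$ of the statement is then the family $\big(A\to R_\lambda(x)\big)_{x\in X_\lambda}$ of structure maps into the final components.

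It remains to verify the two defining properties. For cone injectivity of each final component $R_\lambda(x)$ I would use that the domains $A_i$ are $\lambda$-presentable: any map $A_i\to R_\lambda(x)$ factors through some stage $R_\mu(x_\mu)$, where it is an attaching map scheduled to be resolved at a later stage, so the chosen branch supplies an extension to some $B_{ij}$ inside $R_\lambda(x)$. For the universal property I would argue by thread selection: given $g\colon A\to B$ with $B$ cone injective, I build by transfinite recursion a compatible thread $x=(x_\mu)$ together with maps $g_\mu\colon R_\mu(x_\mu)\to B$ extending $g$. At the branching of $R_\mu(x_\mu)$ along $f\colon A_i\to R_\mu(x_\mu)$, cone injectivity of $B$ supplies an index $j$ and an extension of $g_\mu f$ along $a_{ij}$; I take this $j$ as the next choice $x_{\mu+1}$, and the pushout property glues it with $g_\mu$ to give $g_{\mu+1}$, while limits are handled by the universal property of the transfinite composite. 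The resulting thread names a component $A_i=R_\lambda(x)$ and a map $h=g_\lambda$ with $hf_i=g$, as required (note that only existence of $h$ is claimed, so the $a_{ij}$ need not be epimorphisms here).

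The main obstacle I anticipate is the scheduling inside the successor step. Since coproducts are forbidden, ``resolve all currently present attaching maps'' cannot be done in one stroke; it must be spread over a nested transfinite composite of single-cell branchings, and one has to organise the stages together with the new attaching maps created by each branching so that every attaching map into a \emph{final} component has genuinely been resolved before the colimit is formed. The clean way to arrange this is to let each stage $R_\mu\to R_{\mu+1}$ resolve exactly the attaching maps present at the \emph{start} of that stage, and to let $\lambda$-presentability (with $\lambda$ regular) guarantee that any attaching map into $R_\lambda(x)$ already factors through such a stage. Checking that this organisation terminates correctly, and that it remains compatible with the fibrewise transfinite composites described in the Remark, is the delicate part of the argument.
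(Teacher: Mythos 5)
Your proposal follows essentially the same route as the paper's own proof: a cone small object argument carried out in the free product completion $\mathrm{Prod}(\mathcal{K})$, with fibrewise pushout branching over the cone components, threads (inverse limits of index sets) at limit stages, $\lambda$-presentability of the cone domains to verify injectivity of the final components, and thread selection against a cone injective codomain $B$ for the reflection property. The scheduling issue you flag as the delicate point is resolved in the paper exactly as you suggest: an outer chain of length $\lambda$ whose each successor step is an inner, well-ordered transfinite composition of single-cell branchings resolving precisely the attaching maps present at the start of that step.
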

\begin{proof}
We will transfinitely define pairwise disjoint sets $I_\alpha$, where $\alpha \in \lambda$, and a diagram $D \colon \bigcup_{\alpha \in \lambda} I_\alpha \to \mathcal{K}$ such that on $\bigcup_{\alpha \in \lambda} I_\alpha$ there is the obvious partial order such that for $x \in I_\alpha$, $y \in I_\beta$: $x < y$ iff $\alpha < \beta$, $I_0 = \{0\}$, $D0 = A$, each morphism ${D(x \to y)}$, where $x \in I_\alpha$, $y \in I_{\alpha + 1}$, is a transfinite composition of pushouts of morphisms contained in cones in $I$ for each ordinal $\alpha < \lambda$, and $DI_\alpha$ will be a ``fat colimit'' of all $Dx$, where $x \in I_\beta$, $\beta < \alpha$ for each limit ordinal $\alpha < \lambda$. The precise meaning of this imprecisely stated last fact will be clear from the construction.\par
Let $\mathscr{J}$ be the set of all spans
\begin{center}
\begin{tikzcd}
{(1,E)} \arrow[r, "g"] \arrow[d, "f\in I"'] & {(1,A)} \\
{(Z, C)}                                       &        
\end{tikzcd}
\end{center}
in $\mathrm{Prod}(\mathcal{K})$. When interpreted in $\mathcal{K}$, $g$ is a morphism and $f$ is a cone, and we can reinterpret the set $\mathscr{J}$ as $\{(f_{j,z},g_j) \mid j \in J, z \in Z_j\}$ for some indexing set $J$.
Well-order the set $J$, i.e.\ let $J = \delta$, where $\delta$ is an ordinal. Using transfinite induction we will obtain an object $(X_\beta, A_\beta)$ for each ordinal $\beta \leq \delta$. More precisely, we will define a chain
\begin{center}
\begin{tikzcd}
{(X_0,A_0)} \arrow[r, "{(p_1,\overline{f}_1)}"] & {(X_1,A_1)} \arrow[r, "{(p_2,\overline{f}_2)}"] & {(X_2,A_2)} \arrow[r, "{(p_3,\overline{f}_3)}"] & \cdots \arrow[r] & {(X_{\delta}, A_{\delta})}
\end{tikzcd}
\end{center}
in $\mathrm{Prod}(\mathcal{K})$. Such a chain induces for each pair $\beta \leq \beta' \leq \delta$ of ordinals a map $(p_{\beta', \beta}, \overline{f}_{\beta, \beta'}) \colon (X_\beta, A_\beta) \to (X_{\beta'}, A_{\beta'})$.\newline
Base Case: Define $D0 := A$. This gives us an object $(X_0,A_0)$ of $\mathrm{Prod}(\mathcal{K})$, where $X_0 := 1$ and $A_0(0) := A$.\newline
Successor step: If $\beta < \delta$ is an ordinal such that we've already performed the construction for all the ordinals less or equal to $\beta$, then for each $z \in Z_\beta$ and each $x \in X_\beta$ consider the following pushout
\begin{center}
\begin{tikzcd}
E_\beta(0) \arrow[d, "f_{\beta, z}"'] \arrow[r, "\ell_{\beta, x}\cdot g_\beta"] & A_{\beta}(x) \arrow[d, "\overline{f}_{\beta + 1, z, x}"] \\
C_{\beta}(z) \arrow[r, "h'_{\beta + 1, z, x}"']                                & A_{\beta + 1}(z, x)                     
\end{tikzcd}
\end{center}
where $\ell_{\beta, x}$ is the transfinite composite
$$A_0(p_{\beta,0}(x)) \xrightarrow{\overline{f}_{1,p_{\beta,1}(x)}} A_1(p_{\beta, 1}(x)) \xrightarrow{\overline{f}_{2,p_{\beta,2}(x)}} A_2(p_{\beta,2}(x)) \xrightarrow{\overline{f}_{3,p_{\beta,3}(x)}} \cdots \xrightarrow{} A_\beta(x).$$
In this way we obtain an object $(X_{\beta + 1}, A_{\beta + 1})$ of $\mathrm{Prod}(\mathcal{K})$ that's defined as follows: ${X_{\beta + 1} := Z_\beta \times X_\beta}$ and $A_{\beta + 1}(z, x)$ is the pushout from the diagram above. Moreover, the morphism $(p_{\beta + 1}, \overline{f}_{\beta + 1}) \colon (X_\beta, A_\beta) \to (X_{\beta + 1}, A_{\beta +1})$ is defined as follows: $p_{\beta + 1} \colon Z_\beta \times X_\beta \to X_\beta$ is the product projection and the component $\overline{f}_{\beta + 1, z, x}$ of $\overline{f}_{\beta + 1}$ at $(z, x)$ is the vertical morphism from the right side of the pushout square above.\newline
Limit Step: If $\beta \leq \delta$ is a limit ordinal such that we've already performed the construction for all the ordinals less than $\beta$, then we define the object ${(X_{\beta}, A_\beta) := \operatorname{colim}_{\gamma < \beta} (X_\gamma, A_\gamma)}$, where the transifinite composite is taken in $\mathrm{Prod}(\mathcal{K})$.\par
The first transfinite construction is finished. In this way we obtain a cone $(\tilde{f}_{0,x} \colon D0 \xrightarrow{} A_{\delta}(x))_{x \in X_{\delta}}$, and we define $I_1 := X_{\delta}$ and $D$ on $I_1$ to be $A_{\delta}$. Furthermore,  let $(Y_0, B_0) := (X_0, A_0)$ and $(Y_1, B_1) := (X_{\delta}, A_{\delta})$. There is an obvious morphism $(Y_0, B_0) \xrightarrow{} (Y_1, B_1)$. We repeat this transfinite procedure for each $x \in I_1$ by using $Dx$ instead of $D0$ and in this way we obtain a cone $(g_{x,y} \colon Dx \xrightarrow{} Dy)_{y \in I_{2,x}}$. Putting together all the codomains from these cones, where $x \in I_1$, we obtain the definition of $D$ on $I_2 := \bigcup_{x \in I_1} I_{2,x}$. This gives us an object $(Y_2, B_2)$ of $\mathrm{Prod}(\mathcal{K})$ in an obvious way and also an obvious morphism $(Y_1, B_1) \xrightarrow{} (Y_2, B_2)$. In this way we continue in each successor step. In limit steps $\beta \leq \lambda$ we take $(Y_\beta, B_\beta) := \mathrm{colim}_{\gamma < \beta} (Y_\gamma, B_\gamma)$. This transfinite construction finally gives us the functor $D$. Now define the cone $(f_i \colon A \to A_i)_{i \in I'}$ to be the colimit injection $(Y_0, B_0) \to (Y_\lambda, B_\lambda)$.\par
Now that we've finished the construction let us show that each $A_i$, where $i \in I'$, belongs to $I$-Inj. Suppose that we have a cone $(h_j \colon A \xrightarrow{} L_j)_{j \in J'}$ from $I$ and a morphism $g \colon A \to A_i$. By construction, there exists a transfinite composition from $A$ to $A_i$, in other words for each $\alpha < \lambda$ there exists $x_\alpha \in I_\alpha$ such that $A_i = \operatorname{colim}_{\alpha < \lambda} Dx_\alpha$. Using the fact that $A$ is $\lambda$-presentable we get that there exists a factorization of $g$ through some $Dx_\alpha$ via some morphism $f \colon A \to Dx_\alpha$. The diagram
\begin{center}
\begin{tikzcd}
{(1,A)} \arrow[r, "f"] \arrow[d, "h"] & {(1,Dx_\alpha)} \\
{(J',L)}                              &                
\end{tikzcd}
\end{center}
is one of the diagrams from the construction. Therefore there exists an index $j \in J'$ and a morphism $u \colon L_j \to Dx_{\alpha + 1}$. Composing this morphism with the colimit injection $\iota_{\alpha + 1} \colon Dx_{\alpha + 1} \to A_i$ we obtain a morphism $\iota_{\alpha + 1} \cdot u \colon L_j \to A_i$ and this morphism satisfies $$\iota_{\alpha + 1} \cdot u \cdot h_j = \iota_{\alpha + 1} \cdot (Dx_{\alpha} \to Dx_{\alpha + 1}) \cdot f = \iota_\alpha \cdot f = g.$$
Now suppose that $g \colon A \to B$ is a morphism whose codomain $B$ belongs to {$I$-Inj}. We want to show that there exists $i \in I'$ and a morphism $h \colon A_i \to B$ such that $h \cdot f_i = g$. Using {$B \in I$-Inj} we get that there exists $z \in Z_0$ and $g' \colon C_{0}(z) \to B$ such that $g \cdot g_0 = g' \cdot f_{0, z}$. Using the universal property of the pushout $A_{1}(z,0)$ we get a morphism $h_1 \colon A_{1}(z,0) \to B$ such that $h_1 \cdot \overline{f}_{1,z,0} = g$ and $h_1 \cdot h'_{1,z,0} = g'$. The previous two sentences are summed up in the following diagram.
\[\begin{tikzcd}
	{E_0(0)} && A \\
	{C_0(z)} & {A_1(z,0)} \\
	&& B
	\arrow["{f_{0,z}}", from=1-1, to=2-1]
	\arrow["g", from=1-3, to=3-3]
	\arrow["{g_0}", from=1-1, to=1-3]
	\arrow["{\overline{f}_{1,z,0}}", from=1-3, to=2-2]
	\arrow["{h'_{1,z,0}}"', from=2-1, to=2-2]
	\arrow["{h_1}", dashed, from=2-2, to=3-3]
	\arrow["{g'}"', curve={height=12pt}, dashed, from=2-1, to=3-3]
\end{tikzcd}\]
Repeating the previous procedure we get $z' \in Z_1$ and $h_2 \colon A_{2}(z', (z, 0)) \to B$ such that the obvious diagrams commute. We continue in this way in each successor step. In the limit steps we use the universal property of colimits. After this transfinite construction is finished we obtain an element $x \in I_1$ and a morphism $h_x \colon Dx \to B$ such that the obvious diagrams commute. Repeating this procedure we obtain $y \in I_2$ and a morphism $h_y \colon Dy \to B$ such that the obvious diagrams commute. In each successor step we repeat this. In limit steps we use the universal property of the colimit. In the end we obtain $i \in I'$ and a morphism $h \colon A_i \to B$ such that $h \cdot f_i = g$.
\end{proof}

\section{(Regular epi, mono) factorization in coproduct completion}

We give a brief account on the (regular epi, mono) factorization with applications towards semi-simplicial objects; most importantly, this will be applied to the coproduct completion of a category in order to study colimits of covers. Therefore, it will also be important that the (regular epi, mono) factorization exists in the coproduct completion.

\subsection{Factorization and colimits of semi-simplicial objects}

Assume that $\mcA$ is a complete and cocomplete category that admits a factorization system with the left class the regular epis and the right class the monos. Since
\begin{itemize}
\item
    every regular epi is the coequalizer of its kernel pair and
\item
    in the (regular epi, mono) factorization $f=me$, the kernel pair of $f$ is the same as that of $e$,
\end{itemize}
the factorization of $f$ is the so called coimage factorization, i.e.\ the middle object is the coequalizer of the kernel pair of $f$:
\[\xymatrix{
A \times_B A \ar@<.5ex>[r]^-{d_0} \ar@<-.5ex>[r]_-{d_1} & A \ar[r] \ar@/^2ex/[rr]^-f & \operatorname{coeq}(d_0, d_1) \ar[r] & B
}\]
Quite easily, the coimage is also the coequalizer of any pair $K \rightarrows A$ for which the induced map $K \fib A \times_B A$ is epic, since this condition implies that the coequalizer is the same as that of the kernel pair, thus proving the following theorem:

\begin{theorem}
Assume that a (regular epi, mono) factorization exists in $\mcA$. Let
\[\xymatrix{
\cdots \ar@<1ex>[r] \ar@<0ex>[r] \ar@<-1ex>[r] & A_1 \ar@<.5ex>[r] \ar@<-.5ex>[r] & A_0 \ar[r]^-f & B
}\]
be an augmented semi-simplicial object with matching maps $A_n \to M_nA$ epic for $n > 0$. Then the induced factorization
\[f \colon A_0 \fib \colim A_\bullet \cof B\]
consists of a regular epi followed by a mono. \qed
\end{theorem}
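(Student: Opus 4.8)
The plan is to reduce the statement to the observation recorded immediately before it: that the (regular epi, mono) factorization of a map $f$ is its coimage factorization $A_0 \to \operatorname{coeq}(A_0 \times_B A_0 \rightrightarrows A_0) \to B$, and that this coimage is moreover the coequalizer of \emph{any} parallel pair $K \rightrightarrows A_0$ whose induced comparison $K \to A_0 \times_B A_0$ into the kernel pair of $f$ is epic. Granting this, only two things remain: to compute $\colim A_\bullet$, and to recognize the relevant comparison as a matching map.

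First I would identify the intermediate object $\colim A_\bullet$ with the coequalizer of the two bottom face maps $d_0, d_1 \colon A_1 \rightrightarrows A_0$. Indeed, for a semi-simplicial object a cocone with apex $C$ is the same datum as a single map $\lambda_0 \colon A_0 \to C$ satisfying $\lambda_0 d_0 = \lambda_0 d_1$: every vertex map $A_n \to A_0$ is a composite of face maps, and since any two vertices of $[n]$ are joined by an edge $[1] \hookrightarrow [n]$, the coequalizing condition on $A_1$ forces all vertex maps of $A_n$ to agree after postcomposition with $\lambda_0$, so the higher levels impose no further constraint. Hence $\colim A_\bullet = \operatorname{coeq}(d_0, d_1)$, and the augmentation $f$, which satisfies $f d_0 = f d_1$, factors canonically as $A_0 \to \colim A_\bullet \to B$.

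It then remains to apply the coimage observation. The comparison map $A_1 \to A_0 \times_B A_0$ assembled from $(d_0, d_1)$ is precisely the first matching map $A_1 \to M_1 A$, since $M_1 A$ is by definition the kernel pair of the augmentation; by hypothesis (the case $n = 1$) it is epic. The observation therefore yields $\operatorname{coeq}(d_0, d_1) = \operatorname{coim}(f)$, so that $A_0 \to \colim A_\bullet \to B$ is exactly the coimage factorization of $f$, i.e.\ a regular epi followed by a mono. The map $A_0 \to \colim A_\bullet$ is in any case a regular epi, being a coequalizer; the real content delivered by the argument is the monicity of $\colim A_\bullet \to B$. I expect the only genuinely nontrivial point to be the first identification of $\colim A_\bullet$ with the coequalizer of the bottom faces; it is also worth remarking that only the $n = 1$ matching map enters, the epicness of the higher matching maps (assumed in the statement to fit the hypercover setting) being irrelevant to this particular factorization result.
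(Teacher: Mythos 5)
Your proposal is correct and follows essentially the same route as the paper: the paper's (very terse) proof is precisely the coimage observation applied to the pair $d_0, d_1 \colon A_1 \rightrightarrows A_0$, whose comparison to the kernel pair $A_0 \times_B A_0 = M_1A$ is the first matching map, with the identification $\colim A_\bullet \cong \operatorname{coeq}(d_0,d_1)$ left implicit. Your two supplements -- the finality-style argument for that identification, and the remark that only the $n=1$ matching map is actually used -- are both accurate and merely make explicit what the paper suppresses.
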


We will now apply this to the coproduct completion $\coprod\mcA$, assuming that it admits a (regular epi, mono) factorization -- its existence will be addressed later in the section. Considering for simplicity the case of a singleton $B$, a direct translation of the above theorem is simple enough: We require an augmented semi-simplicial object with matching maps epic, which in $\coprod\mcA$ means that all summands are non-empty jointly epic families; it is the non-emptiness assumption that we want to remove. On a related note, $\colim A$ may have more than one component and the object of interest is their coproduct in $\mcA$ -- by Lemma~\ref{lem:limits_colimits_coproduct_completion}, it is $H_0 A_\bullet = \colim \El A_\bullet$, where $\El A_\bullet$ is the diagram composed of all the components of all the $A_n$ and the maps between them the components of all the maps in the semi-simplicial object. In some sense, we are really thinking $\El A_\bullet$ but pack the information into $A_\bullet$ for simplicity.

Formally, introduce the summation functor $\Sigma \colon \coprod\mcA \to \mcA$, given on $A = (A^i)^{i \in I}$ as $\Sigma A = \sum_{i \in I} A^i$, clearly left adjoint to the inclusion functor, hence cocontinuous. Applying $\Sigma$, we replace the augmented semi-simplicial object in $\coprod\mcA$, together with its colimit, by a similar picture in $\mcA$:
\[\xymatrix{
\cdots \ar@<1ex>[r] \ar@<0ex>[r] \ar@<-1ex>[r] & \Sigma A_1 \ar@<.5ex>[r] \ar@<-.5ex>[r] & \Sigma A_0 \ar[r] & \underbrace{\Sigma \colim A_\bullet}_{{}\cong H_0 A_\bullet} \ar[r] & B
}\]
We still desire that this gives the (regular epi, mono) factorization of $\Sigma A_0 \to B$, and the first map, being a coequalizer, is indeed a regular epi. We will now repair the deficiency of the matching maps being possibly empty, by adding to each $A_n$ a number of dummy components consisting of the initial object $\emptyset$, in a consistent way. This does not alter the above diagram, while making the general theorem directly applicable.

For $B \in \coprod\mcA$, denote by $\widehat B$ the family with the same indexing set as $B$ but with all components consisting of the initial object, and let $\widehat B \to B$ be the unique map that is the identity on the underlying index sets. For $f \colon A \to B$, define $A' = A \amalg \widehat B$ and $f' \colon A' \to B$ to be $f$ on $A$ and the above canonical map on $\widehat B$. Now starting with an augmented semi-simplicial object $A$ in $\coprod\mcA$, inductively w.r.t.\ $n$, consider the $n$-th matching map $m_n \colon A_n \to M_nA$ and apply the above construction $m_n' \colon A_n' \to M_nA$. Replacing $A_n$ by $A_n'$, but keeping the rest, the map $m_n'$ says how to make this into an augmented semi-simplicial object. The final result is an augmented semi-simplicial object with all matching maps surjective on the indexing sets (i.e.\ all summands non-empty families) and with the same $\Sigma$-image. We thus obtain:

\begin{theorem} \label{thm:epi_mono_factorization}
Assume that a (regular epi, mono) factorization exists in $\coprod\mcA$. Let
\[\xymatrix{
\cdots \ar@<1ex>[r] \ar@<0ex>[r] \ar@<-1ex>[r] & A_1 \ar@<.5ex>[r] \ar@<-.5ex>[r] & A_0 \ar[r]^-f & B
}\]
be an augmented semi-simplicial object in $\coprod\mcA$ with a singleton $B$ and with matching maps $A_n \to M_nA$ jointly epic for $n > 0$. Then the induced factorization
\[f \colon \Sigma A_0 \fib \underbrace{\Sigma \colim A_\bullet}_{{}\cong H_0 A_\bullet} \cof B\]
consists of a regular epi followed by a mono. \qed
\end{theorem}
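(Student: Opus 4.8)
The plan is to reduce everything to the factorization theorem just proved for a single category, instantiated at the coproduct completion $\coprod\mcA$ itself, and then to transport the outcome down to $\mcA$ along the summation functor $\Sigma\colon\coprod\mcA\to\mcA$. Since $\Sigma$ is left adjoint to the inclusion $\mcA\hookrightarrow\coprod\mcA$ it is cocontinuous, and Lemma~\ref{lem:limits_colimits_coproduct_completion} gives $\Sigma\colim A_\bullet\cong H_0 A_\bullet$; as the colimit of a semi-simplicial object is the coequalizer of its two bottom face maps, the leg $\Sigma A_0\to H_0 A_\bullet$ is a coequalizer and hence a regular epi with no further work. The whole problem therefore reduces to producing the monomorphism $H_0 A_\bullet\cof B$.

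First I would repair the matching maps. The hypothesis only asserts that they are \emph{jointly} epic, whereas by Corollary~\ref{cor:mono_epi_coproduct_completion} epimorphy in $\coprod\mcA$ additionally requires the underlying map of index sets to be surjective; a component of the matching object $M_nA$ carrying an \emph{empty} covering family is simply not in the image. I would cure this by padding: for each $n$ set $A_n'=A_n\amalg\widehat{M_nA}$, where $\widehat{M_nA}$ has the index set of $M_nA$ with every component the initial object, and extend the matching map by the canonical map that is the identity on index sets. Carrying this out inductively in $n$ turns $A_\bullet$ into an augmented semi-simplicial object $A_\bullet'$ whose matching maps are now surjective on index sets. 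The key point — and the step I expect to be the main obstacle — is that this really does make the matching maps epic in $\coprod\mcA$: over a component of $M_nA$ that was already covered the adjoined $\emptyset$-map changes nothing, while over a previously empty component the very hypothesis that the empty family be jointly epic forces that component to be subterminal, so that the padded singleton family $\{\,\emptyset\to(M_nA)^j\,\}$ is again jointly epic. Thus joint epimorphy together with surjectivity of index sets combine, via Corollary~\ref{cor:mono_epi_coproduct_completion}, into genuine epimorphy.

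With the matching maps of $A_\bullet'$ epic in $\coprod\mcA$, I would apply the single-category factorization theorem with the ambient category taken to be $\coprod\mcA$ (which carries a (regular epi, mono) factorization by assumption, and which is complete and cocomplete by Lemma~\ref{lem:limits_colimits_coproduct_completion}). It yields that $A_0'\fib\colim A_\bullet'\cof B$ is a (regular epi, mono) factorization in $\coprod\mcA$. Because $B$ is a singleton, a monomorphism $\colim A_\bullet'\cof B$ forces, again by Corollary~\ref{cor:mono_epi_coproduct_completion}, the index set of $\colim A_\bullet'$ to inject into a point; hence $\colim A_\bullet'$ has a single component, which must therefore be $\Sigma\colim A_\bullet'\cong H_0 A_\bullet$, and the mono $H_0 A_\bullet\cof B$ already lives in $\mcA$.

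Finally I would descend along $\Sigma$. Padding leaves the summation unchanged, $\Sigma A_n'=\Sigma A_n$ since initial objects are absorbed by coproducts, and likewise $H_0 A_\bullet'=H_0 A_\bullet$; so applying the cocontinuous functor $\Sigma$ to the regular epi $A_0'\fib\colim A_\bullet'$ yields the regular epi $\Sigma A_0\fib H_0 A_\bullet$, while the mono $H_0 A_\bullet\cof B$ is untouched. Splicing the two gives the desired factorization $\Sigma A_0\fib H_0 A_\bullet\cof B$ in $\mcA$. The only genuinely delicate point in this programme is the epimorphy bookkeeping of the second paragraph; everything else is a formal consequence of the cocontinuity of $\Sigma$ and the behaviour of monos and epis in the coproduct completion recorded in Corollary~\ref{cor:mono_epi_coproduct_completion}.
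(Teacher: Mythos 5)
Your proposal is correct and follows essentially the same route as the paper's own proof: the identical padding construction $A_n' = A_n \amalg \widehat{M_nA}$ (dummy initial-object components, identity on index sets), application of the single-category factorization theorem inside $\coprod\mcA$, and descent along the cocontinuous functor $\Sigma$ using the singleton codomain together with Corollary~\ref{cor:mono_epi_coproduct_completion}. Your explicit bookkeeping for previously empty summands of the matching maps --- where joint epimorphy of the empty family forces any two maps out of that component of $M_nA$ to agree, so the padded singleton family $\{\emptyset \to (M_nA)^j\}$ remains jointly epic --- is precisely the point the paper leaves implicit, and you handle it correctly.
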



\subsection{Factorization in coproduct completion}

\begin{theorem} \label{theorem:factorization_from_universal_colimits}
If colimits are universal in $\mcA$ then the (regular epi, mono) factorization exists in $\coprod \mcA$.
\end{theorem}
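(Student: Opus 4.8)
The plan is to build the factorization componentwise, reducing everything to the (regular epi, mono) factorization in $\mcA$, and then to check that the ``epi part'' is genuinely a \emph{regular} epi in $\coprod\mcA$ by computing a kernel pair; this last verification is the only place where universality of colimits really enters. First I would record that $\mcA$ itself is regular: universality of colimits makes regular epis (being coequalizers, hence colimits) stable under pullback, and together with the assumed completeness and cocompleteness this makes $\mcA$ a regular category, so it carries a (regular epi, mono) factorization. By Lemma~\ref{lem:limits_colimits_coproduct_completion} the category $\coprod\mcA$ is complete and cocomplete, so it has kernel pairs and coequalizers. Since regular epis are orthogonal to monomorphisms in any category, it will suffice to produce, for each $f\colon (A^i)^{i\in I}\to(B^j)^{j\in J}$, a factorization into a regular epi followed by a monomorphism.

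For the construction I would factor the underlying map of index sets as $I\twoheadrightarrow K\hookrightarrow J$ with $K=\im\varphi$. For each $k\in K$ the components of $f$ over the fibre $\varphi^{-1}(k)$ assemble into a single map $\coprod_{i\in\varphi^{-1}(k)}A^i\to B^k$ in $\mcA$, which I factor as a regular epi $\coprod_{i\in\varphi^{-1}(k)}A^i\twoheadrightarrow C^k$ followed by a mono $C^k\hookrightarrow B^k$. Setting $C=(C^k)^{k\in K}$, the coproduct injections postcomposed with these regular epis define $e\colon (A^i)\to C$ (with underlying surjection $I\twoheadrightarrow K$), the monos $C^k\hookrightarrow B^k$ define $m\colon C\to (B^j)$ (with underlying inclusion $K\hookrightarrow J$), and $m\circ e=f$. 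By Corollary~\ref{cor:mono_epi_coproduct_completion}, $m$ is a monomorphism.

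The crux is to show that $e$ is a regular epi, which I would do by exhibiting it as the coequalizer of its own kernel pair. By Lemma~\ref{lem:limits_colimits_coproduct_completion}, the kernel pair of $e$ has index set $I\times_K I$ with component $A^i\times_{C^k}A^{i'}$ over a pair $(i,i')$ lying above $k$, and the coequalizer of this kernel pair in $\coprod\mcA$ has index set $K$ with, over each $k$, the component
\[\operatorname{coeq}\Bigl(\coprod_{(i,i')}A^i\times_{C^k}A^{i'}\rightrightarrows\coprod_{i\in\varphi^{-1}(k)}A^i\Bigr).\]
Here universality of colimits is used decisively: it supplies the distributivity isomorphism $\bigl(\coprod_i A^i\bigr)\times_{C^k}\bigl(\coprod_{i'}A^{i'}\bigr)\cong\coprod_{(i,i')}A^i\times_{C^k}A^{i'}$, identifying the displayed parallel pair with the kernel pair in $\mcA$ of the chosen regular epi $\coprod_i A^i\twoheadrightarrow C^k$. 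As that map is a regular epi, it is the coequalizer of its kernel pair, so the component above is exactly $C^k$. Hence the coequalizer of the kernel pair of $e$ is $C$ and the comparison map is $e$ itself, proving $e$ is a regular epi and completing the factorization system.

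I expect the main obstacle to be precisely this distributivity step and its bookkeeping: the substance is that pullbacks distribute over the (possibly large) coproducts indexing the families, which is what universality of colimits provides, and one must check that the two projections of the kernel pair of $e$ correspond under this isomorphism to the two projections of the kernel pair of $\coprod_i A^i\twoheadrightarrow C^k$, so that the two coequalizers genuinely agree. Everything else (orthogonality, and the verifications via Lemma~\ref{lem:limits_colimits_coproduct_completion} and Corollary~\ref{cor:mono_epi_coproduct_completion}) is formal.
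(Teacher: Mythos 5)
Your proof is correct, but it takes a genuinely different route from the paper's. The paper never constructs the factorization componentwise: it first shows that \emph{all} epis in $\mcA$ are pullback-stable (using universality of pushouts and the characterization of an epi $f$ via the pushout square with two copies of $f$ and two identities), proves an abstract theorem that in any category with kernel pairs, coequalizers, and pullback-stable epis the coimage factorization $A \twoheadrightarrow \operatorname{coeq}(A\times_B A \rightrightarrows A) \to B$ is (regular epi, mono) — the point being that the induced map on kernel pairs $A\times_B A \to C \times_B C$ is epic, forcing the two projections of $C\times_B C$ to agree — and then transfers epi-stability from $\mcA$ to $\coprod\mcA$ via Corollary~\ref{cor:mono_epi_coproduct_completion} (an epi in $\coprod\mcA$ is a surjection of index sets with jointly epic summands, and a jointly epic family is a single epi out of a coproduct, so universality of coproducts gives stability). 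The coimage factorization is then taken directly in $\coprod\mcA$. You instead build the factorization explicitly: image factorization $I \twoheadrightarrow K \hookrightarrow J$ of index sets, $\mcA$-factorization of the fibrewise total maps $\coprod_{i\in\varphi^{-1}(k)}A^i \to B^k$, and then a kernel-pair computation in $\coprod\mcA$ in which universality enters as distributivity of coproducts over pullbacks; you also need $\mcA$ itself to be regular, which the paper's argument never uses (it needs stability of all epis, not just regular ones, and gets the mono part of the factorization in $\mcA$ as a special case of its abstract theorem). Each approach has its merits: the paper's is more modular — the abstract coimage criterion and pullback-stability of epis in $\coprod\mcA$ are reusable elsewhere (e.g.\ in Theorem~\ref{thm:epi_mono_factorization}) — while yours produces a concrete description of the factorization (the middle object is the family of $\mcA$-images of the total maps over the image of the index map), which the paper's proof leaves implicit, and it makes the single place where universality is indispensable (distributivity of pullbacks over coproducts) very transparent. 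Your identification of the kernel pair of $e$ and of its coequalizer via Lemma~\ref{lem:limits_colimits_coproduct_completion} is exactly right, including the fact that the coequalizer of $I\times_K I \rightrightarrows I$ in $\Set$ is $K$, so the bookkeeping you flag as the main obstacle does go through.
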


The proof will consist of a series of implications.

\begin{lemma}
If pushouts are universal then epis are closed under pullbacks.
\end{lemma}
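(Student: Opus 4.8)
The plan is to characterize epimorphisms by a single pushout condition and then transport that condition along a pullback using the universality hypothesis. First I would record the elementary fact that a map $f \colon A \to B$ is epic if and only if the square
\[\xymatrix{
A \ar[r]^-f \ar[d]_-f & B \ar@{=}[d] \\
B \ar@{=}[r] & B
}\]
is a pushout. The forward implication is exactly the special case $h = f$, $g = \mathrm{id}_B$, $T = B$ of Lemma~\ref{lemma:factorization_pushout}. The converse is immediate: given $u, v \colon B \to X$ with $uf = vf$, the pair $(u,v)$ is a cocone on the span $B \xleftarrow{f} A \xrightarrow{f} B$, so it factors uniquely through the pushout $B$ (whose two coprojections are both $\mathrm{id}_B$), and this forces $u = v$.

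Next, given an epi $f \colon A \to B$ and an arbitrary map $g \colon C \to B$, I would form the pullback $P = A \times_B C$ with projection $f' \colon P \to C$ and show that $f'$ is again epic by verifying that the analogous square for $f'$ is a pushout. The mechanism is base change. View the entire pushout square for $f$ as living over its bottom-right corner $B$: the corner $A$ maps there via $f$, while the three copies of $B$ map there via identities. Colimits in the slice $\mcA/B$ are created by the forgetful functor, so this square is a pushout in $\mcA/B$. Applying the base-change functor $g^* \colon \mcA/B \to \mcA/C$, the object $A$ over $B$ becomes $P = A \times_B C$ over $C$, each copy of $B$ over $B$ becomes $C$ over $C$, the two edges labelled $f$ become the projection $f'$, and the identities remain identities. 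Thus the image of the square is precisely
\[\xymatrix{
P \ar[r]^-{f'} \ar[d]_-{f'} & C \ar@{=}[d] \\
C \ar@{=}[r] & C
}\]

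The one substantive point, and the place where the hypothesis enters, is that this image square is again a pushout: this is exactly the statement that pushouts are universal, i.e.\ that $g^*$ preserves them. Once the square for $f'$ is known to be a pushout, the characterization from the first step yields that $f'$ is epic, which completes the proof. I do not expect any genuine obstacle beyond careful bookkeeping; the only points requiring attention are to identify correctly which maps into the bottom-right corner are being pulled back — so that $A$ is pulled back along $f$ (giving $P$) while the copies of $B$ are pulled back along identities (giving $C$) — and to confirm that both parallel legs of the original square are genuinely $f$, so that their pullbacks coincide with the single projection $f'$.
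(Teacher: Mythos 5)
Your proof is correct and is essentially the paper's own argument: the paper likewise characterizes an epi $f$ by the pushout square with both legs $f$ and identities on the other sides, pulls that square back along $B' \to B$, and invokes universality to conclude the resulting square is again a pushout, so the pulled-back map is epic. Your write-up merely fills in the details (the converse direction of the characterization and the slice-category/base-change bookkeeping) that the paper leaves implicit.
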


\begin{proof}
This comes from the characterization of epis as maps $f$ for which
\[\xymatrix{
A \ar[r]^-f \ar[d]_-f & B \ar[d]^-1 \\
B \ar[r]_-1 & B
}\]
is a pushout square. Pulling back such a square along $B' \to B$ yields another such square and thus the pullback $f' \colon A' \to B'$ is epi too.
\end{proof}

Here is the main technical statement of the theory of Reedy categories applied to pullbacks:

\begin{lemma}
Fix a class $\mcE$ of maps closed under pullbacks and composition. In the diagram
\[\xymatrix{
A' \ar[r] \ar[d]^-a & C' \ar[d]^-c & B' \ar[l] \ar[d]^-b & A' \times_{C'} B' \ar[d]^-{a \times_c b} \\
A \ar[r] & C & B \ar[l] & A \times_C B
}\]
if $c \colon C' \to C$ and both pullback corner maps $A' \to A \times_C C'$ and $B' \to B \times_C C'$ belong to $\mcE$ then so does the induced map $a \times_c b$.

More generally, the same conclusion holds if $a$ and the right pullback corner map belong to $\mcE$.
\end{lemma}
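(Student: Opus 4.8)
The plan is to exhibit the comparison map $a \times_c b \colon A' \times_{C'} B' \to A \times_C B$ as a composite of two maps, each of which is a pullback of a map assumed to lie in $\mcE$; closure of $\mcE$ under pullbacks and composition then finishes the argument. I would prove the more general statement first, namely that $a \in \mcE$ together with the right corner map $B' \to B \times_C C'$ in $\mcE$ already suffices. The displayed special case is then immediate: when $c \in \mcE$ and the left corner map $A' \to A \times_C C'$ lies in $\mcE$, the map $a$ factors as $A' \to A \times_C C' \to A$, with first map the left corner map and second map the projection $A \times_C C' \to A$ (a pullback of $c$ along $A \to C$); hence $a \in \mcE$ by pullback-stability and closure under composition, so the hypotheses of the general statement are met.

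For the factorization I would introduce the intermediate object $A' \times_C B$, where $A' \to C$ is the common composite $A' \to C' \xrightarrow{c} C = A' \xrightarrow{a} A \to C$ (well defined because the left square commutes), and write
\[A' \times_{C'} B' \xrightarrow{\ u\ } A' \times_C B \xrightarrow{\ v\ } A \times_C B,\]
whose composite is exactly $a \times_c b$ (on elements, $(a',b') \mapsto (a', b(b')) \mapsto (a(a'), b(b'))$). The second map $v$ is plainly the pullback of $a \colon A' \to A$ along the projection $A \times_C B \to A$, so $v \in \mcE$. For the first map I would use the canonical isomorphism $A' \times_{C'} (B \times_C C') \cong A' \times_C B$ over $C'$ and identify $u$ with the pullback of the right corner map $B' \to B \times_C C'$ (a map over $C'$) along $A' \to C'$; pullback-stability then gives $u \in \mcE$, and composing with $v$ yields $a \times_c b \in \mcE$.

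The two verifications to record are the canonical isomorphism $A' \times_{C'} (B \times_C C') \cong A' \times_C B$ (a point of either side is an element of $A'$ together with an element of $B$ having equal image in $C$, the $C'$-coordinate being forced by the $A'$-coordinate), and the general principle, which I would isolate as a short sublemma, that a comparison map into a pullback is stable under pullback: for a commutative square with comparison map $\gamma \colon X' \to X \times_Z Z'$ over $Z'$ and any $Y \to Z'$, the induced map $X' \times_{Z'} Y \to (X \times_Z Z') \times_{Z'} Y \cong X \times_Z Y$ is precisely the pullback of $\gamma$ along a projection. I expect this sublemma to be the conceptual core and thus the main thing to state cleanly; once it is in place, both $u$ and $v$ are immediate instances, and the only real choice in the proof is the order in which the primed objects $A'$, $B'$ are reduced to their unprimed counterparts.
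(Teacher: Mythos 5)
Your proof is correct. Note that the paper itself supplies no proof of this lemma at all --- it is invoked as the known ``main technical statement of the theory of Reedy categories applied to pullbacks'' and left unproved --- so there is nothing to compare against; your write-up fills in exactly the canonical argument. The factorization $A' \times_{C'} B' \xrightarrow{u} A' \times_C B \xrightarrow{v} A \times_C B$, with $u$ identified (via the pasting isomorphism $A' \times_{C'} (B \times_C C') \cong A' \times_C B$) as a pullback of the right corner map $B' \to B \times_C C'$ and $v$ as a pullback of $a$ along the projection $A \times_C B \to A$, is the standard Reedy-style proof, and your reduction of the displayed two-sided case to the ``more general'' one-sided case --- writing $a$ as the left corner map followed by the projection $A \times_C C' \to A$, itself a pullback of $c$ --- is precisely how the final clause of the statement subsumes the first.
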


By applying this to the kernel pair, obtainable as a pullback with its two sides identical, we easily obtain:

\begin{proposition}
Assume that epis are closed under pullbacks and that in the square
\[\xymatrix{
A' \ar[r]^-{f'} \ar[d]_-a & B' \ar[d]^-b \\
A \ar[r]_-f & B
}\]
both $b$ and the pullback corner map are epis. Then so is the induced map $\ker f' \to \ker f$ on the kernel pairs. \qed
\end{proposition}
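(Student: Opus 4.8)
The plan is to deduce this directly from the preceding lemma, applied to the class $\mcE$ of epimorphisms. This class is closed under composition automatically and, by the standing hypothesis, closed under pullbacks, so it is a legitimate choice for the lemma. The observation driving the argument is that a kernel pair is nothing but a pullback whose two legs coincide, so the proposition is precisely the instance of the lemma in which the cospan is degenerate.

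Concretely, I would view $\ker f = A \times_B A$ as the pullback of the cospan $A \xrightarrow{f} B \xleftarrow{f} A$ with both legs equal to $f$, and likewise $\ker f' = A' \times_{B'} A'$ as the pullback of $A' \xrightarrow{f'} B' \xleftarrow{f'} A'$. In the notation of the lemma this amounts to the substitution $C = B$, $C' = B'$, $c = b$, with both outer legs equal to $f$ (respectively $f'$) and both vertical maps equal to $a$. Under this identification the bottom pullback $A \times_C B$ becomes $\ker f$, the top pullback $A' \times_{C'} B'$ becomes $\ker f'$, and the induced map $a \times_c b$ becomes precisely the functorial comparison map $\ker f' \to \ker f$ whose epicness we must establish.

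It then remains to verify the hypotheses of the first (symmetric) form of the lemma, namely that $c$ and both pullback corner maps lie in $\mcE$. Here $c = b$ is an epimorphism by assumption. For the corner maps, observe that because the two legs of each cospan coincide, the two corner maps $A' \to A \times_C C'$ and $B' \to B \times_C C'$ collapse to one and the same map, namely the corner map $A' \to A \times_B B'$ of the original square; and this is an epimorphism by assumption. Both hypotheses of the lemma are thus satisfied, and it yields that $\ker f' \to \ker f$ is an epimorphism, as desired.

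There is essentially no genuine obstacle here: the entire content lies in recognizing the correct instantiation, and the only point requiring a moment's care is checking that the degeneracy of the cospan forces the two corner maps of the lemma to reduce to the single given corner map. Once this bookkeeping is done the conclusion is immediate; I would note in passing that the ``more general'' form of the lemma is not even needed, since the symmetric form already applies directly.
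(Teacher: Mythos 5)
Your proof is correct and is exactly the argument the paper intends: the paper deduces the proposition by applying the preceding lemma to the kernel pair viewed as a pullback with both legs equal to $f$ (resp.\ $f'$), which is precisely your instantiation $C = B$, $C' = B'$, $c = b$, with the two corner maps of the lemma collapsing to the single given corner map. Your write-up simply makes explicit the bookkeeping that the paper leaves to the reader.
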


Now we apply this to factorization through the coimage:

\begin{theorem}
Assume that epis are closed under pullbacks and $f \colon A \to B$. Then the factorization through the coimage
\[f \colon A \fib C \cof B\]
consists of a regular epi followed by a mono.
\end{theorem}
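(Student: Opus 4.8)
The plan is to note that the left-hand factor is a regular epimorphism essentially by construction, and then to reduce the monomorphism claim for the right-hand factor to the preceding Proposition by feeding it a well-chosen square. Write $q \colon A \to C$ for the coimage projection and $m \colon C \to B$ for the induced map, so that $f = mq$ and $C = \operatorname{coeq}(d_0, d_1)$, where $d_0, d_1 \colon A \times_B A \rightrightarrows A$ is the kernel pair of $f$. Since $q$ is by definition a coequalizer, it is a regular epimorphism, which settles the left-hand factor. It remains to prove that $m$ is a monomorphism.

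First I would recall the standard characterization that $m$ is a monomorphism exactly when the two projections $\pi_0, \pi_1 \colon C \times_B C \rightrightarrows C$ of its kernel pair coincide. To force this, I would apply the previous Proposition to the commutative square with top edge $f \colon A \to B$, bottom edge $m \colon C \to B$, left edge $q \colon A \to C$, and right edge $\mathrm{id}_B$; commutativity is exactly the identity $f = mq$. Here the right vertical map $\mathrm{id}_B$ is an epimorphism, and the pullback of the cospan $C \xrightarrow{m} B \xleftarrow{\mathrm{id}} B$ is canonically $C$, so that the induced pullback corner map $A \to C \times_B B \cong C$ is simply $q$, again an epimorphism. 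Thus the hypotheses of the Proposition are met, and it yields that the induced map on kernel pairs
\[ A \times_B A \longrightarrow C \times_B C \]
is an epimorphism; concretely this map sends $(x_0, x_1) \mapsto (q x_0, q x_1)$.

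The final step is a cancellation. Composing $A \times_B A \to C \times_B C$ with the two projections $\pi_0$ and $\pi_1$ gives $q d_0$ and $q d_1$ respectively, and these agree because $q$ coequalizes $d_0$ and $d_1$. Since $A \times_B A \to C \times_B C$ is an epimorphism, I may cancel it on the right to conclude $\pi_0 = \pi_1$, whence $m$ is a monomorphism and the factorization is of the required form.

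The main obstacle, and the only genuinely non-formal point, is the choice of square that makes the Proposition applicable: one must arrange that both the right vertical map and the resulting pullback corner map are epimorphisms, and the square above achieves this precisely by placing $\mathrm{id}_B$ on the right (so that the corner map collapses to the regular epi $q$). Once the comparison map on kernel pairs is known to be epic, the passage to $\pi_0 = \pi_1$ is the familiar trick showing that the kernel pair of $q$ coincides with the kernel pair of $m$, which is exactly what monicity of $m$ demands.
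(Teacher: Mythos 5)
Your proof is correct and follows essentially the same route as the paper: the same square with $\mathrm{id}_B$ on the right is fed to the preceding Proposition, yielding that $A \times_B A \to C \times_B C$ is epic, and then the same cancellation argument shows the kernel pair projections of $m$ coincide, hence $m$ is monic. The only cosmetic difference is that you state explicitly the characterization of monos via coinciding kernel pair projections, which the paper uses implicitly.
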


\begin{proof}
The first map is a regular epi by definition. Form the square
\[\xymatrix{
A \ar[r]^-f \ar[d] & B \ar[d]^-1 \\
C \ar[r] & B
}\]
Since $1$ is epi and so is the pullback corner map (being the map $A \to C$), so is the induced map on kernel pairs:
\[\xymatrix{
A \times_B A \ar@<.5ex>[r] \ar@<-.5ex>[r] \ar@{->>}[d] & A \ar[r]^-f \ar@{->>}[d] & B \ar[d]^-1 \\
C \times_B C \ar@<.5ex>[r] \ar@<-.5ex>[r] & C \ar[r] & B
}\]
Now the two composites $A \times_B A \to C$ are equal by the definition of $C$ as the coequalizer; since the left vertical map is epic, the two structure maps $C \times_B C \to C$ must be equal and consequently $C \to B$ is monic.
\end{proof}

Lastly, in order to apply this theorem to the coproduct completion $\coprod \mcA$, we need to show that epis are pullback stable in $\coprod \mcA$. Since a map $f$ is epic iff each factor of $f$ is non-empty with components jointly epic, this translates to the requirement that (non-empty) jointly epic families should be pullback stable in $\mcA$. A jointly epic family $(A^i) \to B$ is equivalently a single epi $\coprod A^i \to B$. This implies easily the following claim:

\begin{proposition}
Assume that (non-empty) coproducts are universal in $\mcA$. If epis are closed under pullbacks in $\mcA$ then the same holds in $\coprod \mcA$. \qed
\end{proposition}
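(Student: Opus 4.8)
The plan is to combine the explicit description of epimorphisms in $\coprod\mcA$ with the explicit description of pullbacks there, reducing the claim to the two hypotheses about $\mcA$. First I would recall from Corollary~\ref{cor:mono_epi_coproduct_completion} that a map $f\colon(A^i)^{i\in I}\to(C^c)^{c\in I_C}$ is epic in $\coprod\mcA$ precisely when its underlying index map $\phi\colon I\to I_C$ is surjective and, for each $c$, the fibre family $(A^i\to C^c)_{i\in\phi^{-1}(c)}$ is jointly epic; since $\phi^{-1}(c)$ is non-empty, this is the same as saying the single map $h_c\colon\coprod_{i\in\phi^{-1}(c)}A^i\to C^c$ is epic in $\mcA$. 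Next, applying Lemma~\ref{lem:limits_colimits_coproduct_completion} to the cospan shape, the pullback of $A\xrightarrow{f}C\xleftarrow{g}B$ in $\coprod\mcA$ has index set $I\times_{I_C}I_B$ and, at a pair $(i,j)$ lying over $c=\phi(i)=\psi(j)$, the component is the pullback $A^i\times_{C^c}B^j$ formed in $\mcA$; this is routine to confirm against the universal property, so I would only indicate the verification.

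With these descriptions, I would take $f$ epic and $g\colon B\to C$ (with index map $\psi\colon I_B\to I_C$) arbitrary, and check the two conditions for the projection $f'\colon A\times_C B\to B$ to be epic. On index sets, the projection $I\times_{I_C}I_B\to I_B$ is surjective because $\phi$ is: for $j\in I_B$ with $c=\psi(j)$ there is some $i\in\phi^{-1}(c)$, whence $(i,j)$ lies over $j$. Fixing $j$ and writing $c=\psi(j)$, the remaining task is to show that the fibre family $(A^i\times_{C^c}B^j\to B^j)_{i\in\phi^{-1}(c)}$ is jointly epic, i.e.\ that $\coprod_{i\in\phi^{-1}(c)}(A^i\times_{C^c}B^j)\to B^j$ is epic in $\mcA$.

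This is the step I expect to carry the weight, and it is where both hypotheses are used. The idea is to pull the epi $h_c\colon\coprod_{i\in\phi^{-1}(c)}A^i\to C^c$ back along $g^j\colon B^j\to C^c$, viewing $h_c$ as the coproduct in the slice $\mcA/C^c$ of the objects $A^i\to C^c$. Universality of non-empty coproducts says the base-change functor $(g^j)^*\colon\mcA/C^c\to\mcA/B^j$ preserves this coproduct, giving $\bigl(\coprod_{i}A^i\bigr)\times_{C^c}B^j\cong\coprod_{i}(A^i\times_{C^c}B^j)$ over $B^j$; under this identification the projection to $B^j$ is exactly $(g^j)^*h_c$. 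Pullback-stability of epis in $\mcA$ then makes $(g^j)^*h_c$ epic, which is the desired joint epimorphism. The only genuine care needed is the bookkeeping around non-emptiness — every fibre $\phi^{-1}(c)$ met is non-empty by surjectivity of $\phi$, so only non-empty coproducts, for which universality is assumed, are ever base-changed — and the observation that the relevant coproduct must be taken in the slice over $C^c$ so that universality applies verbatim. Assembling the two verified conditions yields that $f'$ is epic.
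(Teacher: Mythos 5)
Your proof is correct and takes essentially the same approach as the paper: the paper's (sketched) argument likewise uses Corollary~\ref{cor:mono_epi_coproduct_completion} to reduce pullback-stability of epis in $\coprod \mcA$ to pullback-stability of non-empty jointly epic families in $\mcA$, rewrites such a family as a single epi $\coprod A^i \to B$, and concludes from universality of (non-empty) coproducts together with pullback-stability of epis in $\mcA$. Your write-up simply makes explicit the componentwise description of pullbacks in $\coprod \mcA$ and the slice-category bookkeeping that the paper leaves implicit.
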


\section{Reduction and distinguished open sets} \label{sec:reduced_localizations}

While an $\red\mcA$-localization happens to be the same as an $\mcA$-localization, as can be shown quite easily, the analogue for finite localizations is not true. Since these define distinguished open sets, it is not immediately clear whether the notions of distinguished opens w.r.t.\ $\red\mcA$ and $\mcA$ coincide in $\Spec \red R = \Spec R$. It is the purpose of this section to show that they do.

Clearly, the reduction of a finite $\mcA$-localization $R \cof K$ is a finite $\red\mcA$-localization $\red R \cof \red K$ and they give the same points, inducing a diagram
\[\xymatrix{
\FinLoc_{\mcA}(R) \ar[d]_-{\red} \ar[r]^-{\Pts} & \DistOp(\Spec R)^\op \ar@{c->}[d] \\
\FinLoc_{\red \mcA}(\red R) \ar[r]_-{\Pts}^-\cong & \DistOp(\Spec \red R)^\op
}\]
with the bottom map bijective by Corollary~\ref{cor:dist_open_finite_localization_correspondence}.

\begin{theorem}
The map $\DistOp(\Spec R) \to \DistOp(\Spec \red R)$ is bijective.
\end{theorem}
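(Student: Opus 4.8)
The plan is to reduce the statement to the assertion that the reduction functor induces a surjection, up to isomorphism, on finite localizations, i.e.\ that the left-hand vertical map $\red\colon\FinLoc_\mcA(R)\to\FinLoc_{\red\mcA}(\red R)$ of the displayed square hits every isomorphism class. Granting this, the theorem follows by chasing the square: the bottom $\Pts$ is a bijection by Corollary~\ref{cor:dist_open_finite_localization_correspondence} applied inside $\red\mcA$ (where every object is reduced), the top $\Pts$ is a surjection by the very definition of distinguished opens, and $\Pts(\red k)=\Pts k$ because reduction preserves point sets. Thus an arbitrary $\red\mcA$-distinguished open $U'=\Pts k'$ may be rewritten, for a lift $k$ of $k'$, as $U'=\Pts(\red k)=\Pts k$, exhibiting it as an $\mcA$-distinguished open; together with the evident inclusion $\DistOp(\Spec R)\subseteq\DistOp(\Spec\red R)$ --- the reduction of a finite $\mcA$-localization being a finite $\red\mcA$-localization with the same points --- this gives bijectivity.

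To produce the lift $k$, I would induct on the number of reduced cells $\red a_{ij}\colon\red A_i\to\red B_{ij}$ assembling a given finite $\red\mcA$-localization $\red R\cof K'$, decomposing pushouts of coproducts into single-cell attachments. The base case $K'=\red R$ is covered by the identity localization of $R$. For the inductive step write $\red R\cof K'_n\cof K'$ with the last map a single cell glued along $f'\colon\red A_i\to K'_n$, and assume a finite $\mcA$-localization $R\cof K_n$ with $\red K_n\cong K'_n$ over $\red R$. The crucial manoeuvre is to lift the attaching map: since $A_i$ is finitely presentable and $K_n\cof\red K_n$ is a possibly infinite localization, hence a filtered colimit of its finite sublocalizations $K_n\cof M$ (\cite{fsoa}), the composite $A_i\xrightarrow{\eta_{A_i}}\red A_i\xrightarrow{f'}K'_n\cong\red K_n$ factors through some $g\colon A_i\to M$. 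Attaching the \emph{unreduced} cell $a_{ij}$ along $g$ yields a finite $\mcA$-localization $K_{n+1}=B_{ij}+_{A_i}M$, and it remains to check $\Pts(R\cof K_{n+1})=\Pts(\red R\cof K')$; the bijection inside $\red\mcA$ then forces $\red K_{n+1}\cong K'$ over $\red R$, closing the induction.

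The heart of the matter --- and the step I expect to be the main obstacle --- is this equality of point sets, which reduces to a single-cell comparison of extension problems. Both opens are cut out among the local forms factoring through $\red K_n$; for such a form $p\colon R\cof P$, with $q\colon\red K_n\to P$ the induced factorization and $\phi=q\circ f'\colon\red A_i\to P$, membership in $\Pts(R\cof K_{n+1})$ amounts to $\phi\circ\eta_{A_i}$ extending along $a_{ij}$, whereas membership in $\Pts(\red R\cof K')$ amounts to $\phi$ extending along $\red a_{ij}$. These conditions are equivalent: one direction precomposes a given extension $\red B_{ij}\to P$ with $\eta_{B_{ij}}$; the converse uses that the local object $P$ is reduced (Theorem~\ref{thm:properties_reduction_two}), so that any $B_{ij}\to P$ factors uniquely through $\eta_{B_{ij}}\colon B_{ij}\to\red B_{ij}$, and then that $\eta_{A_i}$ is epic (Theorem~\ref{thm:properties_reduction_one}) to cancel it after applying naturality of $\eta$ to $a_{ij}$. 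The delicate part is the bookkeeping that keeps every lifted localization finite while matching point sets across the infinite localizations $\red R$ and $\red K_n$; the extension equivalence itself, resting on reducedness of local objects together with epicness of the reduction units, is the conceptual core.
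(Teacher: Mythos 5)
Your proof is correct, and its skeleton is the paper's: you reduce the theorem, via the commuting square, to surjectivity of $\red \colon \FinLoc_{\mcA}(R) \to \FinLoc_{\red\mcA}(\red R)$ (Proposition~\ref{proposition:reduction_surjective_on_finite_localizations}), and you produce the lift cell by cell, factoring each attaching map through a finite sublocalization of $K_n \cof \red K_n$ by finite presentability of $A_i$ --- precisely the manoeuvre inside the paper's Lemma~\ref{lemma:pushouts_of_finite_localizations}. The genuine difference is how the inductive step is closed. The paper stays entirely on the algebraic side: having attached the unreduced cell, it identifies the reduction of the result with $K'$ formally, because $\red$ is a reflection, hence a left adjoint, hence preserves the defining pushout, giving $\red K_{n+1} \cong \red B_{ij} +_{\red A_i} \red M \cong \red B_{ij} +_{\red A_i} K'_n \cong K'$ (this, plus a pushout-pasting argument, is what Lemma~\ref{lemma:pushouts_of_finite_localizations} packages); no points are mentioned. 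You instead prove the equality of point sets $\Pts(K_{n+1}) = \Pts(K')$ by the extension-problem comparison --- reducedness of local objects, the universal property of the epireflection, epicness of $\eta_{A_i}$, all available from Theorems~\ref{thm:properties_reduction_one} and~\ref{thm:properties_reduction_two} --- and then convert this equality of opens into an isomorphism $\red K_{n+1} \cong K'$ over $\red R$ by a second appeal to Corollary~\ref{cor:dist_open_finite_localization_correspondence}. Both routes are valid; the paper's is shorter at this point and yields the pushout lemma as a reusable statement (geometrically: $f^* \colon \Spec S \to \Spec R$, corestricted to its image, takes distinguished opens to distinguished opens), whereas yours trades the pushout bookkeeping for a point-level argument whose ingredients are already on hand, at the cost of routing through the distinguished-open bijection once more. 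If you write yours up, make explicit the scoping inclusion $\Pts(K_{n+1}) \subseteq \Pts(\red K_n)$ needed before the pointwise comparison: it rests on $\Pts(K_n) = \Pts(M) = \Pts(\red K_n)$, i.e.\ on the unit $K_n \to \red K_n$ being a geometric isomorphism (Theorem~\ref{thm:properties_reduction_one}), and likewise note that $\red K_{n+1}$ is indeed a finite $\red\mcA$-localization (reduction preserves cell attachments), which is what legitimizes invoking the bijection.
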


\begin{proof}
The map on the left is surjective by Proposition~\ref{proposition:reduction_surjective_on_finite_localizations}, hence so is the map on the right.
\end{proof}

\begin{lemma} \label{lemma:pushouts_of_finite_localizations}
Let $f \colon R \cof S$ be a localization. Then the pushout along $f$ gives a surjective map
\[f_* \colon \FinLoc(R) \to \FinLoc(S).\]
In other words, the induced map on spectra, $f^* \colon \Spec S \to \Spec R$ when (co)restricted to its image takes distinguished opens to distinguished opens.
\end{lemma}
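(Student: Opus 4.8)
The plan is to prove surjectivity of $f_*$ directly, the reformulation in terms of distinguished opens being a routine translation via the identity $(f^*)^{-1}\Pts k = \Pts(f_* k)$ already recorded in the proof of Proposition~\ref{prop:geometric_isomorphism_easy}. So fix a finite localization $l \colon S \cof L$; the goal is to produce a finite localization $k \colon R \cof K$ whose pushout $f_* k = S +_R K$ is isomorphic to $l$ over $S$.

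The key structural input is that the (possibly transfinite) localization $f \colon R \cof S$ expresses $S$ as a \emph{filtered} colimit $S = \colim_\beta R_\beta$ of its finite sub-localizations, where each $R_\beta$ is a finite localization of $R$ and the transition and cocone maps are localizations. This is the standard presentation of a relative cell complex as a directed colimit of finite subcomplexes, see~\cite{fsoa}, used already in Section~\ref{section:concrete_description_Spec}; that the transition maps are again finite localizations follows from Lemma~\ref{lemma:cancellation_properties} since localizations are epic. I would then lift the finite cellular tower $S = L_0 \cof L_1 \cof \cdots \cof L_n = L$ of $l$ stage by stage, maintaining the invariant that $L_m \cong S +_{R_{\beta_m}} K_m$ for some finite localization $K_m$ of $R_{\beta_m}$. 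At each step I rewrite $L_m = \colim_{\beta \ge \beta_m}(R_\beta +_{R_{\beta_m}} K_m)$ as a filtered colimit of finite localizations $K_m^{(\beta)} = R_\beta +_{R_{\beta_m}} K_m$ of $R$ (finite by pushout-stability of finite localizations and closure under composition); since the $m$-th attaching map has finitely presentable domain $\coprod_t A_{i_t}$, it factors through a finite stage $R_{\beta_{m+1}}$, where I attach the corresponding cells to obtain $K_{m+1}$. Pushout pasting then upgrades the invariant to $L_{m+1} \cong S +_{R_{\beta_{m+1}}} K_{m+1}$.

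At the final stage I obtain $L \cong S +_{R_{\beta_n}} K_n$ with $K_n$ a finite localization of $R_{\beta_n}$, hence, by composition, of $R$; calling this composite $k \colon R \cof K_n$, it remains to identify $S +_{R_{\beta_n}} K_n$ with the genuine pushout $f_* k = S +_R K_n$. This follows by pasting once more, $S +_R K_n \cong S +_{R_{\beta_n}}(R_{\beta_n} +_R K_n)$, together with the observation that $R_{\beta_n} +_R K_n \cong K_n$: the map $R \to R_{\beta_n}$ is a localization and hence epic, which forces the two legs into the pushout to agree and collapses it onto $K_n$. I expect the main obstacle to be purely bookkeeping, namely keeping the reindexing of the filtered colimit consistent across stages so that each successive attaching map really factors through the chosen finite stage, and checking at each step that the intermediate objects $K_m^{(\beta)}$ remain finite localizations of $R$. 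Once the invariant is stated and threaded carefully, all the pasting manipulations are routine.
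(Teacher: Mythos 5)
Your proof is correct and follows essentially the same route as the paper's: an induction over the cells of $L$, using the presentation of a localization as a filtered colimit of its finite sublocalizations together with finite presentability of the $A_i$ to descend each attaching map to a finite stage, and collapsing pushouts along epic localizations (Lemma~\ref{lemma:factorization_pushout}) to repair the base. The only difference is bookkeeping: the paper keeps the invariant $L' \cong S +_R K'$ over $R$ itself and re-expresses each intermediate localization $K' \cof L'$ as a filtered colimit of finite sublocalizations afresh at every step, whereas you fix the presentation $S = \colim_\beta R_\beta$ once, carry the invariant over a moving finite stage $R_{\beta_m}$, and convert to a pushout over $R$ only at the end.
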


\begin{proof}
Clearly finite complexes are closed under pushouts, so the map is well defined. The main idea of the proof is quite simple: Thinking of $S$ as a localization of $R$, express it as a filtered colimit of its finite sublocalizations $S_\alpha$. Any finite localization of $S$ is obtained by attaching a cell along a map $A_i \to S$ and this lands in some $S_\alpha$. When this cell is attached to $S_\alpha$ instead of $S$, one obtains the required finite localization of $R$. Performing this inductively for all cells of $R$ produces the required finite localization of $R$.

In more detail, let $S \cof L$ be a finite localization and by induction we may assume that $S \cof L'$ lies in the image and $L' \cof L$ is a pushout of a single generator $a_{ij}$ as in the rectangle on the right of
\[\xymatrix{
R \ar@{ >->}[r] \ar@{ >->}[d] & K' \ar@{ >->}[d] & A_i \ar@{ >->}[r]^-{a_{ij}} \ar[d] & B_{ij} \ar[d] \\
S \ar@{ >->}[r] & L' \ar@{=}[r] \po & L' \ar@{ >->}[r] & L \po
}\]
Now since $K' \cof L'$ is a filtered colimit of its finite sublocalizations, the map $A_i \to L'$ factors through some finite subcomplex $K''$; denoting by $K$ the pushout as in
\[\xymatrix{
R \ar@{ >->}[r] \ar@{=}[d] & K' \ar@{ >->}[d] & A_i \ar@{ >->}[r]^-{a_{ij}} \ar[d] & B_{ij} \ar[d] \\
R \ar@{ >->}[r] \ar@{ >->}[d] & K'' \ar@{=}[r] \ar@{ >->}[d] & K'' \ar@{ >->}[r] \ar@{ >->}[d] & K \ar[d] \po \\
S \ar@{ >->}[r] & L' \ar@{=}[r] \po & L' \ar@{ >->}[r] & L \po
}\]
we observe that the bottom left square is also pushout (since $K' \cof K''$ is epi) and conclude that $L \cong f_* K$.
\end{proof}

\begin{proposition} \label{proposition:reduction_surjective_on_finite_localizations}
Let $R \in \mcA$. Then $\red \colon \FinLoc_\mcA (R) \to \FinLoc_{\red \mcA} (\red R)$ is surjective.
\end{proposition}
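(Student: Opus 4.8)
The plan is to argue by induction on the number of generating cells, lifting them one at a time through a filtered-colimit presentation of the reduction. By definition a finite $\red\mcA$-localization $\red R \cof L$ is assembled from finitely many pushouts of finite coproducts of the reduced generators $\red a_{ij}\colon \red A_i \to \red B_{ij}$; splitting finite coproducts into single attachments, I may assume that $L$ arises from $\red R$ by attaching finitely many single cells $\red a_{ij}$, one at a time. The case of zero cells is handled by $K = R$. For the inductive step, suppose the penultimate stage $L'$ has already been realized as $L' \cong \red K'$ for some finite $\mcA$-localization $R \cof K'$, and that $L = L' +_{\red A_i} \red B_{ij}$ is the pushout of $\red a_{ij}$ along some attaching map $g\colon \red A_i \to \red K'$.

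The heart of the argument is to lift this cell attachment back to $\mcA$. Using that $\eta_{K'}\colon K' \cof \red K'$ is a (possibly infinite) localization, write $\red K'$ as the filtered colimit of its finite sublocalizations $S_\beta$ over $K'$ (a relative cell complex is the filtered colimit of its finite subcomplexes, cf.\ \cite{fsoa}). Since $A_i$ is finitely presentable, the composite $A_i \xrightarrow{\eta_{A_i}} \red A_i \xrightarrow{g} \red K'$ factors as $A_i \xrightarrow{\tilde g} S_\beta \xrightarrow{s_\beta} \red K'$ through one of these finite sublocalizations, where $s_\beta$ is the colimit inclusion. Then $R \cof K' \cof S_\beta$ is again a finite $\mcA$-localization, and I attach the genuine cell $a_{ij}$ along $\tilde g$, setting $K := S_\beta +_{A_i} B_{ij}$; this is a finite $\mcA$-localization of $R$, the candidate preimage.

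It remains to verify $\red K \cong L$. Since $\red$ is a reflection onto $\red\mcA$ (Theorem~\ref{thm:properties_reduction_one}) it preserves colimits, so $\red K \cong \red S_\beta +_{\red A_i} \red B_{ij}$, a single-cell attachment of $\red a_{ij}$ in $\red\mcA$. Two identifications then close the induction. First, $\red S_\beta \cong \red K'$: the inclusion $s_\beta\colon S_\beta \cof \red K'$ is a localization, and since $\eta_{K'}$ is a geometric isomorphism one has $\Spec \red K' = \Spec K'$, so every local form of $K'$ factors through $\red K'$ and hence (by the cancellation in Lemma~\ref{lemma:cancellation_properties}) corresponds to a local form of $S_\beta$; thus $\ell_{S_\beta} = \ell_{\red K'}\circ s_\beta$ is a (localization, admissible) factorization, and its uniqueness yields $\red S_\beta \cong \red K'$ with unit $\eta_{S_\beta} = s_\beta$. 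Second, the reduced attaching map equals $g$: naturality of $\eta$ gives $\red\tilde g \circ \eta_{A_i} = \eta_{S_\beta}\circ \tilde g = s_\beta \circ \tilde g = g\circ \eta_{A_i}$, and since $\eta_{A_i}$ is epic (being a localization) we conclude $\red\tilde g = g$. Combining, $\red K \cong \red K' +_{\red A_i}\red B_{ij} = L$.

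The step I expect to be the main obstacle is the identification $\red S_\beta \cong \red K'$: one must ensure that passing from the full reduction $\red K'$ to a finite sublocalization $S_\beta$ large enough to carry the attaching map does not shrink the reduction, i.e.\ that $S_\beta$ still has the full point set $\Spec K'$. The clean route is the uniqueness of the (localization, admissible) factorization sketched above, rather than a direct manipulation of point sets. As an alternative organization, one could first invoke Lemma~\ref{lemma:pushouts_of_finite_localizations} together with the pushout-preservation of $\red$ to reduce to the case where $R$ is already reduced; but the cell-by-cell lifting through the filtered colimit is unavoidable and constitutes the genuine content of the statement.
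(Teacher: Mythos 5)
Your proof is correct, and its engine is the same as the paper's: a cell-by-cell induction in which the attaching map of each new cell is factored, using finite presentability of $A_i$, through a finite stage of a filtered colimit of finite sublocalizations. The difference is organizational, and in how the reductions are matched at the end. The paper writes the finite $\red\mcA$-localization as an alternating chain $R \cof \red R \cof L_1 \cof \red L_1 \cof L_2 \cof \cdots$ of $\mcA$-cell attachments and reductions, and repeatedly invokes Lemma~\ref{lemma:pushouts_of_finite_localizations} (which is where the filtered-colimit lifting lives) to commute each reduction past the next block of cells; the reductions are then identified by the observation that $\red$ preserves pushouts and inverts its own unit, so the comparison map $\red K \to \red L$ is a pushout of an isomorphism, hence an isomorphism. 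You instead inline the lifting, applying it directly to the unit $\eta_{K'} \colon K' \cof \red K'$, and identify $\red S_\beta \cong \red K'$ by exhibiting $\ell_{S_\beta} = \ell_{\red K'} \circ s_\beta$ as a (localization, admissible) factorization and invoking its uniqueness, comparing attaching maps via naturality of $\eta$ and epicness of localizations. Both identifications are sound, but note what each costs: the paper's never mentions local forms at all (no need for the geometric-isomorphism input or for knowing that $\red K'$ is reduced), while yours makes explicit the geometric content that a large enough finite stage $S_\beta$ already carries the full spectrum of $K'$. One step of yours worth spelling out more carefully is the surjectivity hidden in ``$\ell_{S_\beta} = \ell_{\red K'}\circ s_\beta$'': given a local form $q \colon S_\beta \cof Q$, its composite with the epic $K' \cof S_\beta$ is a local form of $K'$, which factors through $\red K'$ by construction of the reduction, and epicness of $K' \cof S_\beta$ then forces $q$ itself to factor as a local form of $\red K'$ precomposed with $s_\beta$; this, together with injectivity (again from epicness), is what lets you write $\ell_{S_\beta}$ literally as $\ell_{\red K'} \circ s_\beta$.
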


\begin{proof}
Since the factorization system $\red \mcA$ is generated by $\mathrm r a_{ij}$, every step of the localization in $\red \mcA$ as on the left can be viewed as the reduction of the localization in $\mcA$ as on the right
\[\xymatrix{
\red A_i \ar[r]^-{\red a_{ij}} \ar[d] & \red B_{ij} \ar[d] & & A_i \ar[r]^-{a_{ij}} \ar[d] & B_{ij} \ar[d] \\
X \ar[r] & \leftbox{Y}{{} = \red Y'} \po & & X \ar[r] & Y' \ar[r] \po & \red Y'
}\]
i.e.\ as a step of a localization in $\mcA$ followed by a reduction. Lemma~\ref{lemma:pushouts_of_finite_localizations} then shows that we may reorganize such a composition in the top row
\[\xymatrix{
R \ar@{ >->}[r] \ar@{ >-->}[rrd] & \red R \ar@{ >->}[r] & L_1 \ar@{ >->}[r] & \red L_1 \ar@{ >->}[r] & L_2 \ar@{ >->}[r] & \red L_2 \ar@{ >->}[r] & \cdots \\
& & K_1 \ar@{ >->}[r] \ar@{ >-->}[rrd] & \red K_1 \ar@{ >->}[r] \ar@{=}[u] & L_2 \ar@{ >->}[r] \ar@{=}[u] & \red L_2 \ar@{ >->}[r] \ar@{=}[u] & \cdots \\
& & & & K_2 \ar@{ >->}[r] & \red K_2 \ar@{ >->}[r] \ar@{=}[u] & \cdots
}\]
by replacing the first three maps (the composition across the bottom left in the diagram below)
\[\xymatrix{
R \ar@{ >->}[d] \ar@{ >-->}[r] & K \ar@{ >-->}[d] \ar@{ >-->}[r] & \red K \ar@{-->}[d]^-\cong \\
\red R \ar@{ >->}[r] & L \ar@{ >->}[r] \po & \red L
}\]
by the composition across the top (the map on the right is a pushout in $\red \mcA$ of the reduction of $R \cof \red R$ on the left and as such is an iso); repeating this process, any finite localization in $\red \mcA$ is obtained as a finite localization in $\mcA$ followed by a reduction.
\end{proof}

\section*{Acknowledgements}
We would like to thank John Bourke and Ivan di Liberti for very useful discussions and for their enlightening explanations.

\vskip 20pt
\vfill
\vbox{\footnotesize%
\noindent\begin{minipage}[t]{0.6\textwidth}
{\scshape Jan Jurka, Tomáš Perutka, Lukáš Vokřínek}\\
Department of Mathematics and Statistics,\\
Masaryk University,\\
Kotl\'a\v{r}sk\'a~2, 611~37~Brno,\\
Czech Republic
\end{minipage}
\hfill
\begin{minipage}[t]{0.3\textwidth}
jurka@math.muni.cz\\
xperutkat@math.muni.cz\\
vokrinek@math.muni.cz
\end{minipage}
}


\begin{thebibliography}{1}
\bibitem{AdamekRosicky} J.\ Adámek and J.\ Rosický: \textit{Locally presentable and accessible categories}, vol. 189 of London Mathematical Society Lecture Note Series, Cambridge University Press, Cambridge, 1994.

\bibitem{HowNice}
J. Adámek and J. Rosický: \textit{How nice are free completions of categories?}, Topology Appl. 273, 2020.

\bibitem{Aratake}
    H. Aratake: \textit{Limits, Colimits, and Spectra of Modelled Spaces,} 	 	arXiv:2203.10711, 2022.

\bibitem{Barr_Grillet_vanOsdol}
    M. Barr, P. A. Grillet, and D. H. van Osdol: \textit{Exact categories and categories of sheaves,} Lecture Notes in Math., Vol. 236, Springer, Berlin, 1971

\bibitem{Brandenburg} M. Brandenburg: \textit{Tensor categorical foundations of algebraic geometry}, arXiv:1410.1716, 2014.

\bibitem{Coste}
    M. Coste: \textit{Localisation, Spectra and Sheaf Representation,} In: Applications of Sheaves. Ed. by M. P. Fourman, C. Mulvey, and D. Scott. Lecture Notes in Mathematics 753. Springer-Verlag, 1979, pp. 212–238. doi: 10.1007/BFb0061820.

\bibitem{Deitmar} A. Deitmar: \textit{Schemes over $\mathbb{F}_1$.} Number fields and function fields -- two parallel worlds, Progr. Math., vol. 239, 2005.

\bibitem{Demazure}  M. Demazure and P. Gabriel: \textit{Introduction to Algebraic Geometry and Algebraic Groups}, North Holland, Amsterdam, 1980.

\bibitem{Diers} Y. Diers: \textit{Une construction universelle des spectres, topologies spectrales et faisceaux structuraux}. In: Communication in Algebra (1984).

\bibitem{Hakim} M. Hakim: \textit{Topos annelés et  schémas relatifs,} Ergebnisse der Mathematik und ihrer Grenzgebiete, Band64, Springer, Berlin, NewYork, 1972.

\bibitem{Joyce}
	D. Joyce: \textit{Algebraic Geometry over $C^{\infty}$-rings,} 	arXiv:1001.0023, 2016.

\bibitem{Funland}
	J. Lorenzo Peña and O. Lorscheid: \textit{Mapping $\mathbb{F}_1$-land: An overview of geometries over the field with one element,} 	arXiv:0909.0069, 2009.

\bibitem{DAG} 
J. Lurie: \textit{Derived algebraic geometry V. Structured spaces,} Preprint from the webpage of the author, February 2011.
 
\bibitem{fsoa} M.\ Makkai, J.\ Rosický, L.\ Vokřínek: \textit{On a fat small object argument.} Advances in Mathematics 254 (2013), p.\ 49--68. \url{10.1016/j.aim.2013.12.012}.

\bibitem{MoerdijkReyes} I. Moerdijk, N. van Quê and G.E. Reyes: \textit{Rings of smooth functions and their localizations. II}, pages 277–300 in D.W. Kueker, E.G.K. López Escobar and C.H. Smith, editors, Mathematical logic and theoretical computer science, Lecture Notes in Pure and Applied Math. 106, Marcel Dekker, New York, 1987.

\bibitem{Osmond} A. Osmond: \textit{On Diers theory of Spectrum II: Geometries and dualities.} arXiv:2012.02167

\bibitem{Shulman}
    M. Shulman: \textit{Exact completions and small sheaves,} Theory and Applications of Categories, Vol. 27, 2012, No. 7, pp 97-173

\bibitem{TV}
	B. Toën and M. Vaquié: \textit{Au-dessous de $\Spec\Z$,} J.K-Theory 3(3), 437–500 (2009).

\bibitem{Vakil} R. Vakil: \textit{The Rising Sea -- Foundations of Algebraic Geometry.} Available at \url{https://math.stanford.edu/~vakil/216blog/FOAGaug2922public.pdf}.

\bibitem{Vezzani} A. Vezzani: \textit{Deitmar's versus Toën-Vaquié's schemes over $\mathbb{F}_1$,} Math.Z., 271(34): 911–926, 2012.



\end{thebibliography}
\end{document}